\numberwithin{equation}{section}
\numberwithin{equation}{section}
\definecolor{darkred}{rgb}{0.5,0,0}
\definecolor{darkgreen}{rgb}{0,0.5,0}
\definecolor{darkblue}{rgb}{0,0,0.5}
\numberwithin{equation}{section}
\newtheorem{thm}{Theorem}[section]
\newtheorem{cor}[thm]{Corollary}
\newtheorem{conj}[thm]{Conjecture}
\newtheorem{assm}[thm]{Assumption}
\newtheorem{prop}[thm]{Proposition}
\newtheorem{ques}[thm]{Question}
\newtheorem{lemma}[thm]{Lemma}
\newtheorem{def-lemma}[thm]{Definition-Lemma}
\theoremstyle{definition}
\newtheorem{defn}[thm]{Definition}
\theoremstyle{remark}
\theoremstyle{remark}
\newtheorem{rem}[thm]{Remark}
\newtheorem{example}[thm]{Example}
\newcommand{\beq}{\begin{equation}}
\newcommand{\eeq}{\end{equation}}
\newcommand{\beqn}{\begin{equation*}}
\newcommand{\eeqn}{\end{equation*}}
\newcommand{\curly}{\mathrel{\leadsto}}
\newcommand{\make@circled}[2]{%
  \ooalign{$\m@th#1\smallbigcirc{#1}$\cr\hidewidth$\m@th#1#2$\hidewidth\cr}%
}
\newcommand{\smallbigcirc}[1]{%
  \vcenter{\hbox{\scalebox{0.77778}{$\m@th#1\bigcirc$}}}%
}
\newcommand{\colim@}[2]{%
  \vtop{\m@th\ialign{##\cr
    \hfil$#1\operator@font colim$\hfil\cr
    \noalign{\nointerlineskip\kern1.5\ex@}#2\cr
    \noalign{\nointerlineskip\kern-\ex@}\cr}}%
}
\newcommand{\colim}{%
  \mathop{\mathpalette\colim@{\rightarrowfill@\textstyle}}\nmlimits@
}
\title{Quantum Adams operations in quasimap $K$-theory}
\author{Shaoyun Bai}
\address{Department of Mathematics, MIT, Boston, MA, 02139, USA}
\email{shaoyunb@mit.edu}
\author{Jae Hee Lee}
\address{Department of Mathematics, Stanford University, Stanford, CA, 94305, USA}
\email{jaeheelee@stanford.edu}
\thanks{The first-named author is supported by NSF DMS-2404843.}
\begin{document}

\maketitle

\begin{abstract}
We define quantum deformations of Adams operations in $K$-theory, in the framework of quasimap quantum $K$-theory. They provide $K$-theoretic analogs of the quantum Steenrod operations from equivariant symplectic Gromov--Witten theory. We verify the compatibility of these operations with the K\"ahler and equivariant $q$-difference module structures, provide sample computations via $\mathbb{Z}/k$-equivariant localization, and identify them with $p$-curvature operators of the K\"ahler $q$-difference connections as studied in Koroteev--Smirnov. We also formulate and verify a $K$-theoretic quantum Hikita conjecture at roots of unity, and propose an indirect algebro-geometric definition of quantum Steenrod operations. 
\end{abstract}

\setcounter{tocdepth}{1}
\tableofcontents

\section{Introduction}\label{sec:intro}

The goal of this paper is to study quantum deformations of Adams operations in $K$-theory induced from the enumerative geometry of genus $0$ curves in the context of quasimap counts of holomorphic symplectic quotients. We equate such operations with the analog of $p$-curvature for $q$-difference modules in the K\"ahler variables and discuss their relation with Frobenius-constant quantizations in the $K$-theoretic setting under the $3D$ mirror symmetry correspondence.

\subsection{Background}
Before stating the main results, we would like to provide three different contexts which all lead to quantum Adams operations: power operations in topology, $p$-curvature and their $q$-analogs in ODE theory, and symplectic duality in representation theory.

\subsubsection{Quantum deformation of cohomological operations}
Gromov--Witten invariants, which originated from enumerating stable maps, give rise to operations on rational cohomology that can be organized in to a Cohomological Field Theory \cite{kontsevich-manin}. In particular, genus $0$ Gromov--Witten invariants with primary insertions define a quantum deformation of the ordinary cup product. Ordinary cohomology with positive characteristic coefficients carry additional interesting cohomological operations, the Steenrod operations. Parallel to the quantum product, as proposed in \cite{Fuk97} and pursued in \cite{Wil20, seidel-wilkins}, one can construct quantum Steenrod operations deforming the Steenrod operations using genus $0$ stable maps. These operations have led to unexpected advances in both enumerative aspects of geometric representation theory \cite{Lee23b,bai-lee} and arithmetic study of quantum $D$-modules \cite{chen2024exponential}.

Moduli spaces of stable maps can also be used to define $K$-theoretic counts which carry rich structures \cite{givental-lee,lee-K}. Steenrod operations, which are the power operations in ordinary cohomology, have their $K$-theoretic cousin given by Adams operations \cite{adams}. In fact, Steenrod operations and Adams operations are closely related under the tight relation between ordinary cohomology and $K$-theory \cite{atiyah-power}. The following question arises naturally from a theoretical perspective.

\begin{ques}
    How to construct quantum deformations of Adams operations? How to compare them with quantum Steenrod operations?
\end{ques}

\subsubsection{$p$-curvature in $q$-difference modules}
Let $p$ be a prime number. Given a ($t$-)connection over $\mathbb{F}_p(\!(z)\!)$ with regular singularity at $z=0$
\begin{equation}
    \nabla_{z \partial_z} = t z \frac{d}{dz} + A(z), \quad \quad \quad A \in GL(n, \mathbb{F}_p[\![z]\!]),
\end{equation}
the (Grothendieck--Katz) $p$-curvature of $\nabla$ is the $\mathbb{F}_p(\!(z)\!)$-linear endomorphism
\begin{equation}\label{eqn:p-curvature}
    \nabla_{z \partial_z}^p - t^{p-1} \nabla_{z \partial_z}.
\end{equation}
The $p$-curvature encodes important information of the connection: it is the basic obstruction to formal solvability of the differential equation in characteristic $p$ (Cartier), and is the subject of the famous Grothendieck--Katz curvature. The conjecture posits that for a connection over a characteristic $0$ field with well-defined mod $p$ reduction for almost all primes $p$, it is equivalent that the $p$-curvature of the mod $p$ reduction vanishses for almost all primes and that a full set of \emph{algebraic} solutions exists. Moreover, nilpotency behavior of $p$-curvatures imposes strong constraints on connections, which conjecturally characterizes the connection to be geometric (cf. \cite{katz-nilpotent}).

There is a $q$-difference analog of $D$-modules, where the role of the derivation $\frac{d}{dz}$ is replaced by $q$-shifting $f(z) \mapsto f(qz)$, and a $q$-difference connection has the form
\begin{equation}
    f(z) \mapsto f(qz) + M(q,z)f(z).
\end{equation}
Reducing mod $p$ for $D$-modules corresponds to specializing $q$ to be a root of unity for $q$-difference modules. In this setting, the $p$-curvature is
\begin{equation}\label{eqn:q-p-curvature}
    M(q,q^{p-1}z) \cdot M(q, q^{p-2}z) \cdot \cdots \cdot M(q, qz) \cdot M(q,z) \ |_{q=\zeta_p},
\end{equation}
where $\zeta_p$ denotes a $p$-th root of unity. One can formulate and study a $q$-difference version of the Grothendieck--Katz conjecture, see \cite{q-grothendieck-katz}.

We are interested in $D$-modules and $q$-difference modules of enumerative-geometric origin, namely, quantum $D$-modules and $q$-difference modules arising from quantum cohomology and quantum $K$-theory \cite{MO19, Oko17}. For the former, the second-named author conjectured the equivalence between $p$-curvatures of quantum connections and quantum Steenrod operations, and established the conjecture for a large class of symplectic resolutions \cite{Lee23b}, thereby giving a geometric interpretation of the $p$-curvature in this context. For the latter, in parallel, we are led to the following question.
\begin{ques}
    How to compute $p$-curvatures of K\"ahler $q$-difference connections using simply describable moduli spaces?
\end{ques}

\subsubsection{Large center at roots of unity}
Continuing our discussion of ($q$-difference) connections, the $p$-curvatures \eqref{eqn:p-curvature} and \eqref{eqn:q-p-curvature} are both covariantly constant under the ($q$-difference) connection, i.e. they commute with the action of covariant derivatives/$q$-shifts. In other words, they act by \emph{$D$-module/$q$-difference endomorphisms} of the $D$-modules/$q$-difference modules, and give rise to a distinguished submodule preserved by the connection which arise from ``large centers'' in the ring of differential operators. In fact, there is a well-known analogy between reduction mod $p$ and specializing the variable $q$ to a root of unity in representation theory \cite{zbMATH00596170}, with one manifestation being that large centers naturally arise in both settings.

We are interested in one particular instance of the large center phenomena, known as the \emph{Frobenius-constant quantization} \cite{bezrukavnikov-kaledin-quantp}, which, for a Poisson variety $X$ over a field $k$ characteristic $p > 0$, is a quantization $A$ of $\mathcal{O}(X)$ and an algebra map
    \begin{equation}
        \Lambda : \mathcal{O}(X)^{(1)} \to Z (A)
    \end{equation}
such that $\Lambda(f) \equiv f^p$ mod $\hbar^{p-1}$, where the superscript ${}^{(1)}$ denotes the Frobenius twist. Here $Z(A)$ is the center of $A$. In the pioneering work \cite[Section 4]{lonergan}, Lonergan proved that the quantization of Coulomb branches \cite{BFN2} induced from the loop-equivariant homology of the moduli space of triples actually gives rise to a Frobenius-constant quantization by discretizing the loop rotation. In the $K$-theoretic setting, the multiplicative version of Coulomb branches arise from $K$-homology of moduli of triples, and the loop-equivariant variable $q$ can be viewed as the generator of the representation ring of $\mathbb{C}^{\times}$. In \cite[Section 4]{lonergan}, instead of reducing mod $p$, it is shown that specializing $q$ to a root of unity leads to an analog of Frobenius-constant quantization for the multiplicative Coulomb branch.

Under the $3D$ mirror symmetry duality \cite{3d-mirror, kamnitzer-survey}, the representation-theoretic information of the Coulomb branch associated to a pair $(G, V)$, where $G$ is a complex reductive group and $V$ is a finite-dimensional $G$-representation, should be reflected by enumerative geometry of the Higgs branch variety $T^*V  /\!\!/\!\!/\!\!/ G$, the holomorphic symplectic reduction of the cotangent $T^*V$ under the natural $G$-action. Therefore, we can ask the following question.
\begin{ques}
    How does one interpret Lonergan's Frobenius-constant quantization of a multiplicative Coulomb branch in terms of the dual Higgs branch?
\end{ques}

\subsection{Statement of results}
In this paper, we provide uniform answers to Questions 1.1 - 1.3 by studying equivariant $K$-theoretic counts of quasimaps into Higgs branches. Suppose $X = T^*V  /\!\!/\!\!/\!\!/ G$ is a hyperK\"ahler reduction acted on by the group $\mathbf{T} = \mathbb{G}_m \times T$, where $\mathbb{G}_m$ comes from scaling the cotangent direction with weight $\hbar^{-1}$ and $T$ is a Hamiltonian torus. Let $k \geq 2$ be an integer, and denote by $\mu_k$ the cyclic group of order $k$. In Section \ref{sec:oper}, using variants of $\mu_k$-equivariant quasimaps from $\mathbb{P}^1$ to $X$ where $\mu_k$ acts by multiplication of $k$-th roots of unity, we define \emph{quantum Adams operators} $Q\psi_{\mathcal{F}}^k$ for any $K$-theory class $\mathcal{F} \in K_{\mathbf{T}}(X)$. We show that quantum Adams operators satisfy the following properties.

\begin{thm}[See Proposition \ref{prop:adams-property} and Lemma \ref{lem:qadams-additivity}]\label{thm:intro-1}
$Q\psi_{\mathcal{F}}^k$ is a $z$-linear endomorphism of $K_{\mathbf{T}}(X)[\![ z^{\mathrm{eff}} ]\!]$, where $z^{\mathrm{eff}}$ denotes the K\"ahler variable from the cone of effective curve classes, such that the following holds.
    \begin{enumerate}
        \item For any $\mathcal{F}, \mathcal{G} \in K_{\mathbf{T}}(X)$, we have
        \begin{equation}
            Q\psi_{\mathcal{F} + \mathcal{G}}^k = Q\psi_{\mathcal{F}}^k + Q\psi_{\mathcal{G}}^k.
        \end{equation}
        \item $Q\psi_{\mathcal{F}}^k |_{z=0} = \psi^k_{\mathcal{F}} \otimes (-)$, where $\psi^k_{\mathcal{F}}$ is the $k$-th Adams operation applied to $\mathcal{F}$.
        \item Let $\star$ be the PSZ quantum product (cf. \cite{PSZ-quantum}) on $K_{\mathbf{T}}(X)[\![ z^{\mathrm{eff}} ]\!]$. Then
        \begin{equation}
            Q\psi_{\mathcal{F} \star \mathcal{G}}^k = Q\psi_{\mathcal{F}}^k \circ Q\psi_{\mathcal{G}}^k.
        \end{equation}
        \item $Q\psi_{\mathcal{F}}^k$ is covariantly constant under both the K\"ahler and equivariant $q$-difference connections defined from quasimap $K$-theory \cite{Oko17}.
    \end{enumerate}
\end{thm}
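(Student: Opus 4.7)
The plan is to derive each of the four properties as structural consequences of the definition of $Q\psi^k_{\mathcal{F}}$ by pushforward from a $\mu_k$-equivariant quasimap moduli space $QM^{\mu_k}_d(\mathbb{P}^1, X)$ with insertion $\mathcal{F}$ at a $\mu_k$-fixed point of $\mathbb{P}^1$. The overall strategy parallels the cohomological argument for quantum Steenrod operations developed in \cite{seidel-wilkins, Lee23b, bai-lee}, adapted to the $K$-theoretic setting using virtual structure sheaves and symmetrized Euler characteristics in the sense of Okounkov \cite{Oko17}. The $z$-linearity and the endomorphism property are immediate from the construction, since the pushforward is through a proper morphism and $z$ enters only as a formal expansion parameter for the effective curve class.

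Properties (1) and (2) are the easier steps. Additivity (1) follows because $\mathcal{F}$ enters only through a linear tautological $K$-theoretic insertion at the $\mu_k$-fixed point, so the pushforward is linear in $\mathcal{F}$. For (2) I would isolate the $z^0$-coefficient, which is supported on degree-zero quasimaps; these are constant maps $\mathbb{P}^1 \to X$, automatically $\mu_k$-equivariant, and a computation of the pushforward of the insertion against the $\mu_k$-character of the virtual normal bundle at the constant locus identifies the output with the classical Adams operation $\psi^k_{\mathcal{F}}$. This step is essentially a $\mu_k$-equivariant localization computation at the trivial fixed locus.

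The multiplicativity (3) is the technical heart of the theorem and the step I expect to be the main obstacle. The plan is to apply a $\mu_k$-equivariant degeneration argument: degenerate $\mathbb{P}^1$ into a nodal curve $\mathbb{P}^1 \cup \mathbb{P}^1$ meeting at a $\mu_k$-fixed node, with the $\mu_k$-action restricting compatibly to each component and the two original fixed points $0, \infty$ separated onto the two components. The resulting virtual splitting formula in $K$-theory pairs two $\mu_k$-equivariant quasimap moduli spaces across the diagonal in $K_{\mathbf{T}}(X)$, producing the composition $Q\psi^k_{\mathcal{F}} \circ Q\psi^k_{\mathcal{G}}$ on one side; on the other, combining the same total class with the description of the PSZ product via three-point quasimap functions yields $Q\psi^k_{\mathcal{F} \star \mathcal{G}}$. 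The delicate point is setting up the $\mu_k$-equivariant virtual gluing theorem and correctly tracking the $\mu_k$-character acting on the smoothing parameter at the node, which requires more care than in cohomology because $K$-theoretic invariants are genuine classes rather than numerical invariants.

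Finally, covariant constancy (4) follows from $\mu_k$-equivariant analogs of the divisor and shift equations satisfied by ordinary quasimap $K$-theory. For the Kähler connection, I would relate $\mu_k$-equivariant quasimap moduli in effective classes $d$ and $d + \beta$ via twisting by a line bundle at a $\mu_k$-fixed insertion point, extracting the expected compatibility with shifts in the K\"ahler variable $z$. For the equivariant $q$-difference connection, one combines the $q$-shift (scaling of $\mathbb{P}^1$ centered at a $\mu_k$-fixed point) with the equivariant action of $\mathbf{T}$, and exploits the fact that the $\mu_k$-fixed structure intertwines this combined shift with the $\mu_k$-character at the insertion point. Both compatibilities should reduce to a $K$-theoretic residue comparison in the $\mu_k$-equivariant setting, with care taken for the character weights at the $\mu_k$-fixed points.
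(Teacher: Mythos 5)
Your argument for additivity (property (1)) contains a fundamental error. You claim that ``$\mathcal{F}$ enters only through a linear tautological $K$-theoretic insertion\ldots so the pushforward is linear in $\mathcal{F}$.'' This is false: by construction $Q\psi^k_{\mathcal{F}}$ is the $q=\zeta$ specialization of the cyclic power operator $Q\Psi^k_{\mathcal{F}}$, whose insertion is the $k$-fold external power $(\mathbf{G}^{-1}\mathcal{F})^{\boxtimes k}_{eq}$ at the $k$ marked points $p_0',\dots,p_{k-1}'$, and this is manifestly not linear in $\mathcal{F}$. Indeed the cyclic power $Q\Psi^k_{\mathcal{F}}$ is not additive (just as $\mathcal{F}\mapsto\mathcal{F}^{\otimes k}$ is not additive), and additivity is a genuinely new feature of the Adams level. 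The correct mechanism, and the content of \cref{lem:qadams-additivity}, is that the cross terms in the expansion of $(\mathbf{G}^{-1}(\mathcal{F}+\mathcal{G}))^{\boxtimes k}_{eq}$ are induced from proper subgroups $\mu_{k'}<\mu_k$, and induced classes lie in the kernel of the projection $K_{\mu_k}(\mathrm{pt})\to \mathbb{Z}[q]/\Phi_k(q)$; only after passing to $q=\zeta$ do these cross terms vanish. Any proof of (1) has to explain why specializing to a primitive root of unity is essential, which your proposal does not.

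The argument for (3) is also underspecified. The paper's proof is WDVV-style: one constructs the $k$-fold cyclic branched cover $\overline{\mathcal{M}}_0^{\mathrm{cyc},k}\cong\mathbb{P}^1$ of the universal family over $\overline{\mathcal{M}}_{0,4}$, ramified to order $k$ at $0$ and $\infty$, so that the $\mu_k$-orbits of marked points deform in a $\mu_k$-equivariant one-parameter family, and then compares the contributions of the two boundary divisors $D_0^{\mathrm{cyc}}$ and $D_1^{\mathrm{cyc}}$ via their linear equivalence: one boundary fiber places all the $\mathcal{F}$-insertion points on one half and all the $\mathcal{G}$-insertion points on the other (yielding the composition), while the other pairs each $\mathcal{F}$-point with a $\mathcal{G}$-point on $k$ cyclically permuted bubbles (yielding $Q\Psi^k_{\mathcal{F}\star\mathcal{G}}$). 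Your description of a single degeneration of $\mathbb{P}^1$ into $\mathbb{P}^1\cup\mathbb{P}^1$ from which you read off both sides is not a valid mechanism; you need two distinct $\mu_k$-equivariant degenerations linked by linear equivalence in the base, and the branched-cover construction is what makes this work equivariantly. For (2) and (4) your proposal is broadly in the right spirit (degree-zero restriction for (2), degeneration at $p_1$ and $p_2$ for (4)), though (2) does not require localization and (4) uses $\sigma$-twisted quasimaps and target localization rather than the literal $q$-scaling of $\mathbb{P}^1$ you describe (the loop rotation is discretized). Note also the structural point: the paper proves (2), (3), (4) at the level of $Q\Psi^k_{\mathcal{F}}$ and obtains the $Q\psi^k_{\mathcal{F}}$ statements by specializing $q=\zeta$; only (1) requires working after specialization from the start.
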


Theorem \ref{thm:intro-1} provides the first systematic study of quantum Adams operators. The second item shows that our $Q\psi_{\mathcal{F}}$ is indeed a quantum deformation of the Adams operation. Item (4) should be viewed as the $K$-theory version of the results proved in \cite{seidel-wilkins,Lee23b}. In fact, before constructing the quantum Adams operators, we define \emph{quantum cyclic powers} (see Section \ref{ssec:oper-definition}), the quantum deformation of full power operations in $K$-theory, from which the quantum Adams operators are obtained by specializing the generator $q$ of $K_{\mu_k}(\mathrm{pt}) \cong \mathbb{Z}[q]/(q^k=1)$ to a $k$-th root of unity.

\begin{rem}
    In our definition of $Q\psi_{\mathcal{F}}$, the class $\mathcal{F}$ plays the role of insertions placed at $k$-th roots of unity on $\mathbb{P}^1$. Following \cite{Oko17}, we can either ask the quasimap to have either \emph{relative} or \emph{descendant} insertions at the marked points. So, we also define descendant versions of quantum Adams operators $Q\psi^{desc,k}_{\tau}$ where $\tau$ is a $K$-theory class of the ambient stack in which $X$ is an open substack. The two operations $Q\psi_{\mathcal{F}}$ and $Q\psi^{desc,k}_{\tau}$ are related (Proposition \ref{prop:qadams-equivalence}). In general, it is easier to use $Q\psi_{\mathcal{F}}$ for proving general statements and making contact with PSZ $K$-theory, but $Q\psi^{desc,k}_{\tau}$ are better for computational purposes due to the simplicity of moduli spaces: see Section \ref{ssec:oper-example} for a sample computation for $X = T^* \mathbb{P}^n$.
\end{rem}

\begin{rem}\label{rem:stable}
    Differently from the setting of Question 1.1, we construct the quantum Adams operators using quasimaps instead of stable maps. We view this as a feature instead of a loss of generality: one particular reason is that quasimap $K$-theory is much better understood following Okounkov's framework \cite{Oko17}. Nevertheless, we will investigate quantum Adams operators using stable maps in forthcoming work where the target space can be any compact \emph{symplectic manifold}.
\end{rem}

As mentioned above, \cite[Section 8]{Oko17} constructs $q$-difference module structures of K\"ahler variables from quasimap counts of Higgs branch. Our next main result answers Question 1.2 in full generality under the slogan ``$p$-curvature = quantum power operation."

\begin{thm}[= Theorem \ref{thm:qadams=pcurvature}]\label{thm:intro-2}
    Suppose $L$ is a line bundle of the ambient stack in which $X$ is an open substack. Then the $p$-curvature of the K\"ahler $q$-difference connection along $L$ is equal to the descendant quantum Adams operator $Q\psi^{desc,p}_L$.
\end{thm}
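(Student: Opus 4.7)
The plan is to carry out a $\mu_p$-equivariant degeneration argument for quasimap $K$-theory, adapting to this setting the ``stretching'' strategy from \cite{Lee23b} that established the analogous identification of quantum Steenrod operations with $p$-curvatures of quantum connections in the cohomological setting.

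First, I would make the $p$-curvature explicit using Okounkov's formula. Following \cite[Section 8]{Oko17}, the K\"ahler shift operator $\mathbf{M}_L(q, z)$ along $L$ decomposes as a capping operator $\mathbf{B}_L(q, z)$ built from two-point quasimap counts with a relative condition involving $L$, composed with the $q$-shift $T_L \colon z^\beta \mapsto q^{\langle L, \beta\rangle} z^\beta$. Iterating $p$ times, and observing that $T_L^p$ acts trivially at $q = \zeta_p$, the $p$-curvature becomes the product
\begin{equation*}
    \mathbf{B}_L(\zeta_p, z) \circ \mathbf{B}_L(\zeta_p, \zeta_p^{L} z) \circ \cdots \circ \mathbf{B}_L(\zeta_p, \zeta_p^{(p-1)L} z),
\end{equation*}
an endomorphism of $K_{\mathbf{T}}(X)[\![z^{\mathrm{eff}}]\!]$.

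Second, I would reinterpret this $p$-fold product as a single quasimap count on a $\mu_p$-equivariant chain of $p$ copies of $\mathbb{P}^1$ glued cyclically at nodes, where the $\mu_p$-rotation produces precisely the equivariant weights $\zeta_p^{jL}$ on the tangent spaces at the nodes. The splitting/gluing formula for quasimap $K$-theoretic capping operators then assembles this chain count into the $\mu_p$-equivariant count on the smooth $\mathbb{P}^1$ (with the standard rotation action) carrying the $\mu_p$-orbit of $p$ descendant $L$-insertions at $\{1, \zeta_p, \ldots, \zeta_p^{p-1}\}$. By the definition recalled in Section \ref{ssec:oper-definition}, the latter is exactly $Q\psi^{desc,p}_L$.

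The main obstacle is reconciling the relative insertions inherent in the capping operators $\mathbf{B}_L$ with the descendant insertions that define $Q\psi^{desc,p}_L$, which should proceed via the relative--descendant comparison underlying Proposition \ref{prop:qadams-equivalence}. A related technical difficulty is verifying that all ``anomalous'' $\mu_p$-fixed strata in the quasimap moduli space, namely those supported on chains with fewer than $p$ components or with irregular distribution of the curve class among the components, contribute trivially after specialization $q = \zeta_p$; this is the $K$-theoretic analog of the vanishing of short-orbit contributions in \cite{Lee23b}, and should follow from a careful virtual localization computation exploiting the fact that $\zeta_p^p = 1$ forces cancellations in the relevant equivariant Euler characters.
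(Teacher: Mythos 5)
Your proposal diverges in several places from the paper's argument, and the central missing idea is the one that makes the paper's proof clean: both $M_{L^p}^{(p)}(z)$ (which by the flatness relation \eqref{eqn:flat} directly equals the $p$-fold product you write down) and $Q\Psi^{desc,p}_L(z)$ are pushforwards from the \emph{same} moduli space $\mathsf{QM}_d(X)_{\mathrm{rel}\,p_1,p_2}$, so the whole comparison reduces to matching the two $K$-theoretic insertion classes. The paper does this by observing that $\det H^\bullet(L^p\otimes\pi^*\mathcal{O}_{p_1}) = \det H^\bullet(L^{\oplus p}\otimes\pi^*\mathcal{O}_{p_1})$ and then proving a short geometric lemma (\cref{lem:deformation-to-equator}): the class $H^\bullet(C,(\pi^*\mathcal{O}_{p_1})^{\oplus p})$ agrees $\mu_p$-equivariantly with $H^\bullet(C,\pi^*\mathcal{O}_{p_0'}\oplus\cdots\oplus\pi^*\mathcal{O}_{p_{p-1}'})$, via a $\mu_p$-equivariant deformation sliding $p$ copies of $p_1$ out to the $p$-th roots of unity. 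That identity already holds in unlocalized $K_{\mu_p\times\mathbf{T}}$, before taking the quotient by $\Phi_p(q)$, so there is nothing to cancel or localize.

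Several concrete steps in your sketch would not go through as stated. First, the geometric picture of ``$p$ copies of $\mathbb{P}^1$ glued cyclically at nodes'' is a genus-$1$ object; a linear chain of $p$ rational components does not carry the cyclic $\mu_p$-symmetry you want, and the only $\mu_p$-symmetric picture compatible with a genus-$0$ quasimap is the single parametrized $\mathbb{P}^1$ with insertions at the roots of unity, which is exactly where the paper's deformation lemma does the work. Second, there is no relative-vs-descendant mismatch to reconcile: the K\"ahler connection coefficients $M_L^{(p)}$ are already defined with a \emph{descendant}-type insertion $\det H^\bullet(L\otimes\pi^*\mathcal{O}_{p_1})$ on the relative space $\mathsf{QM}_d(X)_{\mathrm{rel}\,p_1,p_2}$, precisely the same space underlying $Q\Psi^{desc,p}_L$, so \cref{prop:qadams-equivalence} plays no role in this proof. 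Third, the proposed virtual localization argument ruling out ``anomalous $\mu_p$-fixed strata'' is both unnecessary and a step backwards: the paper emphasizes that this is exactly the algebraic machinery the $K$-theoretic argument avoids, since the comparison of insertion classes is an exact, non-localized statement.
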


To clarify, we do not ask $p$ to be a prime number here: it works for any integer $p \geq 2$, though we would like to keep the notations more streamlined in the introduction. In contrast to the cohomological setting, where a similar result crucially relies on \emph{algebraic} arguments from shift operators and semi-simplicity of the quantum multiplication, the proof of Theorem \ref{thm:intro-2} is purely \emph{geometric}. In other words, we argue from the point view of moduli spaces of quasimaps that the $p$-curvature of the K\"ahler $q$-difference connection of $X$ allows a modular interpretation using quantum Adams operators.

Theorem \ref{thm:intro-2} also inspires our approach to the second part of Question 1.1. In \cite{koroteev-smirnov}, Koroteev--Smirnov argued that certain limit of the $p$-curvature of the K\"ahler $q$-difference connection recovers the $p$-curvature of the quantum connection. In view of the slogan ``$p$-curvature = quantum power operation," we have a schematic diagram
\begin{equation}
    \begin{tikzcd}
\text{Quantum Adams} \arrow[d, dotted] \arrow[rr] &  & q\text{-difference } p\text{-curvature} \arrow[d, dotted] \arrow[ll] \\
\text{Quantum Steenrod} \arrow[rr]                &  & \text{differential } p\text{-curvature} \arrow[ll]                      
\end{tikzcd}
\end{equation}
where the vertical arrows indicate taking the cohomological limit. We formulate precise conjectures (cf. Conjecture \ref{conj:qadams-degenerates-to-qst-divisors} and Conjecture \ref{conj:qadams-degenerates-to-qst}) to relate quantum Adams operators and quantum Steenrod operators, which provide a conjectural algebro-geometric approach to quantum Steenrod operators that was only available using symplectic enumerative geometry (see, e.g., \cite[Section 3.1]{bai-lee} for a more detailed discussion).

Finally, regarding Question 1.3, we appeal to the \emph{quantum Hikita conjecture} \cite{KMP21} and take inspriation from the mod $p$ arithmetic version \cite{bai-lee} to provide a conjectural understanding of Lonergan's Frobenius-constant quantization as distinguished central elements of $q$-difference modules.

\begin{conj}[= \cref{conj:arithmetic-k-hikita}]
    Given a complex reductive group $G$ and a finite-dimensional complex $G$-representation $V$, the $q = \zeta_k$ specialization of the Calabi--Yau specialization of the K\"ahler $q$-difference module of the Higgs branch $T^*V  /\!\!/\!\!/\!\!/ G$ is isomorphic to the $q = \zeta_k$ specialization of the $q$-difference module of twisted traces associated with the multiplicative Coulomb branch of $(G,V)$, and the quantum Adams operators correspond to the endomorphisms induced by Lonergan's Frobenius-constant quantization under this isomorphism.
\end{conj}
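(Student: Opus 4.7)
The plan is to split the conjecture into two parts: first, an isomorphism of $q$-difference modules at generic $q$ (a $K$-theoretic enhancement of the quantum Hikita conjecture \cite{KMP21}), and second, a matching of distinguished central endomorphisms after specializing $q$ to a $k$-th root of unity. The first part is the essential enumerative input, while the second part reduces, given the first, to a formal computation combining Theorem \ref{thm:intro-2} with Lonergan's construction. Throughout I would work over a common ring of Laurent polynomials in the K\"ahler parameters $z$ and in the equivariant/mass parameters, and impose the Calabi--Yau specialization of the equivariant parameters in a way that is preserved by the $q$-shift structure.

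For the first part the natural vehicle is the descendant vertex function of the Higgs branch $X = T^*V /\!\!/\!\!/\!\!/ G$, which satisfies the K\"ahler $q$-difference equation on the Higgs side and, under 3D mirror symmetry, is predicted to compute twisted traces for modules of the multiplicative Coulomb branch of $(G, V)$ with the roles of K\"ahler and mass parameters swapped. Following the bispectral program of Aganagic--Okounkov and Koroteev--Smirnov, I would construct the desired isomorphism of $q$-difference modules by explicitly matching the connection matrices $M(q, z)$ governing the $z$-shifts on both sides, checking in particular that the Calabi--Yau slice is preserved. A tractable intermediate goal is the case where $X$ is a Nakajima quiver variety of type $A$, where both vertex functions and twisted traces on the Coulomb side admit computable Bethe-type or integral presentations.

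Granted the generic-$q$ isomorphism, matching the two distinguished endomorphism families at $q = \zeta_k$ becomes a compatibility check. By Theorem \ref{thm:intro-2}, the descendant quantum Adams operator $Q\psi^{desc,k}_L$ equals the $p$-curvature of the K\"ahler $q$-difference connection, which in coordinates is the iterated product $M(\zeta_k, \zeta_k^{k-1} z) \cdots M(\zeta_k, z)$ of Higgs-side connection matrices as in \eqref{eqn:q-p-curvature}. On the Coulomb side, Lonergan's construction shows that specializing the loop-rotation parameter $q$ to $\zeta_k$ produces a Frobenius-constant quantization whose associated central elements act on the $q$-difference module of twisted traces by precisely the iterated $q$-shift of the Coulomb-side connection matrix. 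Since the isomorphism of the previous step intertwines the two connection matrices, it intertwines their $k$-fold iterates, and therefore identifies quantum Adams operators with the central elements produced by Lonergan's quantization.

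The principal obstacle is clearly the first step: even the $K$-theoretic quantum Hikita conjecture is open at the level of quantum products, and upgrading it to an isomorphism of $q$-difference modules demands uniform control of the connection matrices on both sides. Beyond the quiver case, one must also check that the Calabi--Yau specialization is compatible with the $q$-shift, and that vertex functions behave well enough under $q \to \zeta_k$ for the root-of-unity specialization to make sense term by term; the root-of-unity formalism developed around Theorem \ref{thm:intro-2} should provide the right framework for the last point. With these foundations in place, the Frobenius-constant quantization half of the conjecture then comes essentially for free.
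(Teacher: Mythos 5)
Your decomposition of the conjecture — generic-$q$ $K$-theoretic quantum Hikita first, then a matching of $p$-curvatures at $q=\zeta_k$ — is exactly the structure the paper sets up: \cref{conj:K-quantum-hikita} is the generic-$q$ statement, and the root-of-unity refinement is verified, in the hypertoric case, precisely by the observation that both distinguished operator families act as $p$-curvatures of the respective $q$-difference modules (Proposition \ref{prop:arithmetic-k-hikta-hypertoric-rank1}). The paper does not attempt the general case; it proves Lemmas \ref{lem:kahler-qdiff-computation}--\ref{lem:trace-qdiff-computation} to establish, by direct comparison of generators and relations (Smirnov--Zhou on the Higgs side, the BFN/Lonergan computation on the Coulomb side), an isomorphism for arbitrary hypertoric targets (\cref{thm:arithmetic-k-hikta-hypertoric}). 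Your roadmap stops at the same frontier the paper does, and you correctly identify the generic-$q$ isomorphism as the hard open step.

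Two assertions in your second step are stated more optimistically than what is actually established. First, the claim that Lonergan's central elements ``act on the $q$-difference module of twisted traces by precisely the iterated $q$-shift of the Coulomb-side connection matrix'' is justified in the paper only by citing Lonergan's Example 4.4, which is a computation specific to the abelian gauge theory $(\mathbb{G}_m,\mathbb{C})$. For a nonabelian $(G,V)$, that $\Lambda(x)$ is the $p$-curvature of $M_{q,eq}(X)$ is \emph{not} automatic from Lonergan's construction and would require its own argument; what the paper's Proposition preceding \cref{conj:arithmetic-k-hikita} establishes in general is only that $\Lambda(x)$ is a $q$-difference endomorphism, not that it equals $(q^L)^k$. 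Second, \cref{thm:qadams=pcurvature} identifies $Q\psi^{desc,k}_{L}$ with the $p$-curvature only for $L$ a line bundle; to upgrade this to the full matching of the algebras $QK(X^!)$ and $B_z(X)$ one must reduce to line-bundle insertions via the algebra property and additivity (\cref{prop:adams-property}, \cref{lem:qadams-additivity}) and then invoke generation by line bundles, which the paper gets from Kirwan surjectivity for hypertoric varieties. Outside the hypertoric or quiver setting, such generation is not automatic and the reduction step requires an extra hypothesis or argument. These are the two places where ``comes essentially for free'' should be replaced by ``requires the analogue of Lonergan's computation and a Kirwan-type generation statement.''
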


We refer the reader to Section \ref{sec:quantumHikita} for relevant definitions. As a proof of principle, we verify the conjecture for hypertoric varieties, i.e., $G$ being a complex torus, with an emphasis on the simplest case of $G = \mathbb{C}^{\times}$ which acts on $V = \mathbb{C}$ by the standard weight $1$ action.

\subsection{Further directions}
The major goal of this paper is to initiate the study of quantum Adams operators and reveal its distinguished position in the recent developments of enumerative geometry. Pursuing the conjectures posted in this paper for concrete examples should be interesting, and pinning down closed-form formulas of quantum Adams operators based on equivariant localization techniques as exhibited in Section \ref{ssec:oper-example} may also be meaningful. 

We would like to point out one potential line of investigation based on Theorem \ref{thm:intro-2}. Note that the formula for $p$-curvature in Definition \ref{defn:p-curvature} is essentially a quantum multiplication operator for $L^k$, whose description is obtained as an iterated product of wall-crossing operators from the associated quantum group action of $U_\hbar (\hat{\mathfrak{g}}_Q)$ by the main theorem of \cite{okounkov-smirnov} when $X$ is a Nakajima quiver variety. Hence, the main results in \emph{loc. cit.} provide a representation-theoretic interpretation of the quantum Adams operator in terms of quantum affine algebras at roots of unity. It is an interesting question to understand the role of quantum Adams operations in representation theory, especially $\mu_k$-localization method should make the computation quite approachable. We hope to pursue this relationship in more detail in forthcoming work.

Of course, continuing the discussion in Remark \ref{rem:stable}, using stable maps, one should be able to construct quantum Adams operations for more general targets building on either the algebro-geometric \cite{lee-K} or the symplectic \cite{AMS2} framework. More generally, power operations exist for larger classes of generalized cohomology theories (see e.g. \cite[Section 9]{devalapurkar-satake}), and it would be interesting to see if they admit quantum deformations.

\subsection{Organization}
In Section \ref{sec:background}, we recall background knowledge of quasimap counts following \cite{quasi-map} and \cite{okounkov15}. We define the two flavors (relative/descendant) of quantum cyclic power operators and quantum Adams operators, and study their properties in Section \ref{sec:oper}. Section \ref{ssec:oper-example} is devoted to the computation of quantum Adams operators of $T^* \mathbb{P}^n$, which relies on $\mu_k$-localization. In Section \ref{sec:pcurvature}, we prove that the $p$-curvature of the K\"ahler $q$-difference connection is equal to the quantum Adams operator, and provide conjectures on the relation between the latter and quantum Steenrod operators. Finally, in Section \ref{sec:quantumHikita}, we state the $K$-theoretic quantum Hikita conjecture at roots of unity and demonstrate its validity in the example of abelian gauge theories.

\subsection{Acknowledgements}
We thank Sanath Devalapurkar, Hunter Dinkins, Vasily Krylov, Andrei Okounkov, and Andrey Smirnov for very helpful conversations. In particular, we thank Vasily Krylov and Andrey Smirnov for explaining their works on the quantum Hikita conjecture and quantum $K$-theory of quiver varieties at roots of unity, respectively.

\section{Background on quasimaps}\label{sec:background}
\subsection{Stable quasimaps and Higgs branches}\label{ssec:oper-quasimaps}
In this subsection we briefly review the definition of stable quasimaps into GIT quotients of affine varieties and their relative compactifications following \cite{quasi-map} and \cite{Oko17}. We work over the field of complex numbers $\mathbb{C}$.

Let $W$ be an affine algebraic variety acted on by a reductive algebraic group $G$. Choosing a character $\theta \in \chi(G)$, denote by $W \! \sslash \! G = W \! \sslash_\theta \! G$ the GIT quotient under the stability condition specified by $\theta$, which is a quasi-projective variety with a proper morphism $W \! \sslash_\theta \! G \to W/G$ over the affine quotient. We assume that the semi-stable locus $W^{ss} = W^{ss}(\theta)$ coincides with the stable locus $W^s = W^s(\theta)$, and $W^s$ is nonsingular on which $G$ acts freely. Then $W \! \sslash \! G$ coincides with the quotient stack $[W^s/G]$, which is an open substack of $[W/G]$, and it comes with a polarization $L_{\theta}$.

Write $\mathrm{Pic}^G(W)$ the group of isomorphism classes of $G$-linearized line bundles on $W$. Given a prestable pointed curve $(C, z_1, \dots, z_k)$, a map from $(C, z_1, \dots, z_k)$ to the quotient stack $[W/G]$ encodes the information of a principal $G$-bundle $P \to C$ together with a section $u: C \to P \times_G W$. In particular, given $L \in \mathrm{Pic}^G(W)$, we obtain a line bundle $u^*(P \times_G L)$ on $C$. The degree of $(P, u) \in \mathrm{Map}(C, [W/G])$ is the homomorphism 
\begin{equation}
    d: \mathrm{Pic}^G(W) \to \mathbb{Z}, \quad \quad \quad d(L) = \mathrm{deg}(u^*(P \times_G L)).
\end{equation}

\begin{defn}
    A stable quasimap from $(C, z_1, \dots, z_k)$ to $W \! \sslash \! G$ of degree $d$ consists of $(P, u) \in \mathrm{Map}(C, [W/G])$ of class $s$ such that all but finitely points (called \emph{base points}) $p \in C$ satisfies $u(p) \in W^s$. An isomorphism between two quasimaps is given by an isomorphism of pointed curves covered by an isomorphism of principal bundles under which the sections are intertwined. 
\end{defn}

Any $d: \mathrm{Pic}^G(W) \to \mathbb{Z}$ that is realized as the degree of a quasimap is called an effective class. We will use the notation $z^d$ to record the degrees of quasimaps in generating series, and use the symbol $[\![ z^{\mathrm{eff}} ]\!]$ to denote the Novikov ring generated by the effective classes $\mathrm{Eff}(X)$.

In this paper, we shall focus on the case when the targets are Higgs branches of cotangent type which is equipped with a polarization. In other words, we consider a finite-dimensional vector space $V$ on which $G$ acts. Then, the $G$-action extends to a symplectic action on $T^*V = V \oplus V^{\vee}$ with moment map $\mu: T^*V \to \mathfrak{g}$. The variety $W$ will be $\mu^{-1}(0)$. Upon choosing $\theta \in \chi(G)$, we obtain the hyperK\"ahler reduction
\begin{equation}
X := T^*V  /\!\!/\!\!/\!\!/_{\theta} G = \mu^{-1}(0) \sslash_{\theta} G \subset [\mu^{-1}(0) /  G].
\end{equation}
In our setting, the $G$-action on $T^* V$ admits an extension to a $G \times \mathbb{G}_m \times T$-action, where $T$ is a torus giving rise to a Hamiltonian action and $\mathbb{G}_m$ scales the cotangent direction with weight $\hbar^{-1}$. Denote by $\mathbf{T} = \mathbb{G}_m \times T$, which acts on $X$. 

\begin{defn}
    Let $X$ be the hyperK\"ahler reduction as above. A \emph{polarization} is a $K$-theoretic class $T^{1/2}X \in K_{\mathbf{T}}(X)$ such that 
    \begin{equation}
        TX = T^{1/2}X + \hbar^{-1}(T^{1/2}X)^{\vee} \in K_{\mathbf{T}}(X).
    \end{equation}
\end{defn}

\begin{example}[Nakajima quiver varieties \cite{Nak94}]
Consider a quiver with set of vertices $I$ and $m_{ij}$ arrows from $i \in I$ to $j \in I$. Let $n = |I|$ be the number of vertices. Then the quiver variety $\mathcal{M}(\mathsf{v}, \mathsf{w})$ with dimension vector $\mathsf{v}, \mathsf{w} \in \mathbb{Z}_{\geq 0}^n$ is the holomorphic symplectic reduction
\begin{equation}
    \mathcal{M}(\mathsf{v}, \mathsf{w}) = T^*M  /\!\!/\!\!/\!\!/_{\theta} G = \mu^{-1}(0) \sslash_{\theta} G,
\end{equation}
where $M$ is the quiver representation
\begin{equation}
    M = \bigoplus_{i,j \in I} \mathrm{Hom}(V_i, V_j) \otimes Q_{ij} \oplus \bigoplus_{i \in I}\mathrm{Hom}(W_i, V_i)
\end{equation}
with the dimensions of $V_i$ and $W_i$ specified by $\mathsf{v}$ and $\mathsf{w}$, and $Q_{ij}$ is the vector space of dimension $m_{ij}$. The group $G$ is $\prod_{i \in I}GL(V_i)$ and $\mu$ is the moment map $\mu: T^*M \to \mathfrak{g}^{\vee}$. The stability condition $\theta$ comes from $\theta_i \in \mathbb{Z}$ for $i \in I$, corresponding to the character
\begin{equation}
    G \ni (g_i)_{i \in I} \to \prod_{i \in I} \det(g_i)^{\theta_i}.
\end{equation}
The quiver variety $\mathcal{M}(\mathsf{v}, \mathsf{w})$ admits an action by $\mathbf{T} = \mathbb{G}_m \times T$, where $\mathbb{G}_m$ scales the cotangent direction by weight $\hbar^{-1}$ and $T$ is the maximal torus of the automorphism group $\prod_{i \in I} GL(W_i)$ from the framing spaces.

The space $M$ is a Lagrangian subspace of $T^* M$, and the $\mathbf{T}$-equivariant $K$-theory class
\begin{equation}
    M - \sum_{i} \mathrm{End}(V_i)
\end{equation}
descends to $\mathcal{M}(\mathsf{v}, \mathsf{w})$ and defines a polarization \cite[Section 6.1]{Oko17}.
\end{example}

\begin{example}[Hypertoric varieties \cite{hyper-toric}]\label{exmp:hypertoric}
    Consider $V = \mathbb{C}^n$ together with the coordinate scaling action of the torus $(\mathbb{G}_m)^n$. For a fixed map $(\mathbb{G}_m)^n \to T$ for $T$ some torus, there is an associated exact sequence of algebraic groups
\begin{equation}
    \begin{tikzcd}
        1 \rar & K \rar & (\mathbb{G}_m)^n \rar & T \rar & 1.
    \end{tikzcd}
\end{equation}
By restriction, $\mathbb{C}^n$ admits a $K$-action, which extends to a Hamiltonian $K$-action on $T^*\mathbb{C}^n$. Denote the corresponding moment map by $\mu : T^*\mathbb{A}^n \to \mathfrak{k}^*$, where $\mathfrak{k}^* := \mathrm{Hom}(K, \mathbb{G}_m)$ is the Lie coalgebra. By choosing a generic stability condition $\chi \in \mathfrak{k}^*$, the associated projective GIT quotient $X : = \mu^{-1}(0)/\!\!/_\chi K$ is a \emph{hypertoric variety}, acted on by $\mathbf{T} = \mathbb{G}_m \times T$ where the $\mathbb{G}_m$-action is induced from scaling the cotangent directions. Let $L_i \to X$ be the line bundle associated with the character $\theta_i \in \chi(K)$ given by the composition $K \to (\mathbb{G}_m)^n$ and the restriction to the action to the $i$-th coordinate. Then the $\mathbf{T}$-equivariant $K$-theory class
\begin{equation}
    \sum_i L_i - \mathcal{O}^{\oplus k}
\end{equation}
defines a polarization of $X$, where $k = \dim K$ \cite{smirnov-zhou}.
\end{example}

From now on, $X$ denotes a Higgs branch of cotangent type with polarization $T^{1/2}X$. We also assume that the $\mathbf{T}$-fixed point locus of $X$ is proper, which is relevant for Definition \ref{defn:equiv-shift}. Write $\mathfrak{X}$ the quotient stack $[\mu^{-1}(0) / G]$. 

We study quasimaps of $X$ following Okounkov \cite{Oko17}. The base situation is when $C$ is fixed to be $\mathbb{P}^1$. Denote by $\mathsf{QM}_d(X)$ the moduli space parametrizing stable quasimaps from $\mathbb{P}^1$ to $X$ of degree $d$. Then $\mathsf{QM}_d(X)$ admits a perfect obstruction theory \cite{behrend-fantechi} with virtual tangent complex given by the restriction of $R^{\bullet}\pi_*f^*T_{\mathfrak{X}}$, where $\pi$ is the projection and $f$ is the evaluation
\begin{equation}
    \begin{tikzcd}
{\mathbb{P}^1 \times \mathrm{Map}(\mathbb{P}^1, \mathfrak{X})} \arrow[d, "\pi"'] \arrow[r, "f"] & \mathfrak{X} \\
{\mathrm{Map}(\mathbb{P}^1, \mathfrak{X}).}                                                      &             
\end{tikzcd}
\end{equation}
Working $\mathbf{T}$-equivariantly, one can construct the virtual structure sheaf $\mathcal{O}_{\mathrm{vir}} \in K_{\mathbf{T}}(\mathsf{QM}_d(X))$ \cite{lee-K}. It can be twisted as follows. Denote by $p_1 = 0$ and $p_2 = \infty$ on $\mathbb{P}^1$. Then consider
\begin{equation}\label{eqn:o-twist}
    \hat{\mathcal{O}}_{\mathrm{vir}} := \mathcal{O}_{\mathrm{vir}} \otimes \big(\mathcal{K}_{\mathrm{vir}} \frac{\det f^* (T^{1/2}X)|_{p_2}}{\det f^* (T^{1/2}X)|_{p_1}}\big)^{\frac12},
\end{equation}
where $\mathcal{K}_{\mathrm{vir}}$ is the determinant of the virtual cotangent complex. Note that $\mathsf{QM}_d(X)$ admits another $\mathbb{C}^{\times}_q$-action induced from $\mathrm{Aut}(\mathbb{P}^1, p_1, p_2)$, and the twisted virtual structure sheaf $\hat{\mathcal{O}}_{\mathrm{vir}}$ lifts to a class in $K_{\mathbb{C}^{\times}_q \times \mathbf{T}}(\mathsf{QM}_d(X))$.

We denote by $\mu_k \cong \mu_k(\mathbb{C})$ the cyclic group of order $k$ with a preferred isomorphism to $\mathbb{Z}/k$. In this paper, we will look at cyclic subgroups $\mu_k \subset \mathbb{C}^{\times}_q$, which consist of $k$-th roots of unity, and study $\mu_k$-equivariant enumerative invariants. Unless otherwise stated, the twisted virtual structure sheaf is viewed as an element
\begin{equation}
    \hat{\mathcal{O}}_{\mathrm{vir}} \in K_{\mu_{k} \times \mathbf{T}}(\mathsf{QM}_d(X)).
\end{equation}
For our later discussions, we will use
\begin{equation}
    (\mathsf{QM}_d(X), \hat{\mathcal{O}}_{\mathrm{vir}}) := \hat{\mathcal{O}}_{\mathrm{vir}}
\end{equation}
to denote the twisted virtual structure sheaf on $\mathsf{QM}_d(X)$, similarly for other variants of quasimap moduli spaces discussed below. More generally, we will use $(\mathsf{QM}_d(X), -)$ to denote (equivariant) $K$-theory classes over $\mathsf{QM}_d(X)$.

$K$-theoretic quasimap invariants are defined by pairing $\hat{\mathcal{O}}_{\mathrm{vir}, \mathsf{QM}_d}$ with $K$-theory classes of either $\mathfrak{X}$ or $X$ using the evaluation maps. To have insertions from $X$, due to non-properness of $\mathsf{QM}_d(X)$, one can use \emph{relative} moduli spaces to carry out the construction. 

For example, the relative moduli space with degeneration at $p_2$, denoted by $\mathsf{QM}_d(X)_{\mathrm{rel} \  p_2}$, is the following. Write 
\begin{equation}\label{eqn:chain-p-1}
    \mathbb{P}^1[l] := \mathbb{P}^1 \cup \mathbb{P}^1 \cup \cdots \cup \mathbb{P}^1
\end{equation}
the nodal curve obtained by attaching a chain of $l$ $\mathbb{P}^1$'s to the point $\infty$, and we mark the point $\infty$ on the last component as $p_2$. Then, a \emph{relative quasimap} from $\mathbb{P}^1[l]$ to $X$ is a stable quasimap from $\mathbb{P}^1[l]$ to $X$ such that $p_2$ is not a base point. By the general theory in \cite{li-relative}, there exists a smooth Artin stack $\mathcal{B}$ with universal family $\mathcal{C} \to \mathcal{B}$ which parametrizes all possible extended pairs $(\mathbb{P}^1[l], p_2)$, in the sense that a geometric point of $\mathcal{B}$ is represented by the curve $(\mathbb{P}^1[l], p_2)$ for some $l \geq 0$, and the automorphism group at this point is $(\mathbb{C}^{\times})^{l}$, given by rotating the bubble components. Then a family of relative quasimaps with respect to $\infty \in \mathbb{P}^1$ over $S$ consists of a Cartesian diagram
\begin{equation}
    \begin{tikzcd}
\mathcal{C}_S \arrow[r] \arrow[d] & \mathcal{C} \arrow[d] \\
S \arrow[r]                       & \mathcal{B}          
\end{tikzcd}
\end{equation}
together with a map $\mathcal{C}_S \to \mathfrak{X}$ such that each fiber is a relative quasimap from $\mathbb{P}^1[l]$ to $X$. As usual, a relative quasimap is called stable if its automorphism group is finite. Finally, $\mathsf{QM}_d(X)_{\mathrm{rel} \  p_2}$ is the moduli stack parametrizing all stable relative quasimaps with respect to $\infty \in \mathbb{P}^1$ of degree $d$. It is a Deligne--Mumford stack with a perfect obstruction theory, and one can define its virtual structure sheaf and the twisted version, which will also be denoted by $\hat{\mathcal{O}}_{\mathrm{vir}}$ when the context is clear. The key feature is that the evaluation map at $p_2$
\begin{equation}
    \mathrm{ev}_{\infty}: \mathsf{QM}_d(X)_{\mathrm{rel} \  p_2} \to X
\end{equation}
is well-defined and proper. Similarly, for any $p \in \mathbb{P}^1$, one can construct the moduli space of relative quasimaps with respect to $p$ of degree d, denoted by $\mathsf{QM}_d(X)_{\mathrm{rel} \  p}$, and the twisted virtual structure sheaf thereon in the same fashion. More generally, given multiple marked points $p_1, \dots, p_m \in \mathbb{P}^1$, we can consider the relative degenerations at all these marked points, and we write the resulting moduli space as $\mathsf{QM}_d(X)_{\mathrm{rel} \  p_1,\dots, p_m}$. It comes with the $\mathbf{T}$-equivariant proper map
\begin{equation}
    \mathrm{ev}_{p_1} \times \cdots \times \mathrm{ev}_{p_m}: \mathsf{QM}_d(X)_{\mathrm{rel} \  p_1,\dots, p_m} \to X^{\times m}.
\end{equation}
When $m \leq 2$, the map is $\mu_k$-equivariant, where $\mu_k$ comes from discretized loopration,  if $X^m$ is endowed with the trivial $\mu_k$-action.

On the other hand, recall that the evaluation map $\mathrm{ev}: \mathbb{P}^1 \times \mathsf{QM}_d(X) \to \mathfrak{X}$ is proper \cite[Theorem 4.1.2]{quasi-map}, and the pullback in $K$-theory from the quotient stack gives rise to \emph{descendant insertions} (cf. \cite[Section 7]{Oko17}). Also, to construct important objects like the vertex functions, it is customary to consider quasimap moduli spaces with nonsingular conditions, i.e., we require that the evaluation of the quasimap at given marked point lies in the stable locus. We will use the subscript ${()}_{\mathrm{ns} \ p}$ to indicate the condition that the point $p$ is mapped into the stable locus.

There is a variant of quasimaps, known as \emph{twisted quasimaps}, defined using a cocharacter $\sigma: \mathbb{C}^{\times} \to T$. In our setting, the fact that $W$ admits a $G \times \mathbb{G}_m \times T$-action means that $T$ acts on the stack $[W/G]$ as automorphisms. Viewing $\mathbb{P}^1$ as the quotient of $\mathbb{C}^2 \setminus \{0\}$ by the diagonal $\mathbb{C}^{\times}$-action of weight $1$, we consider 
\begin{equation}
    [W/G]^{\sim}_{\sigma} := [W/G] \times_{\mathbb{C}^{\times}} \big( \mathbb{C}^2 \setminus \{0\} \big) \to \mathbb{P}^1,
\end{equation}
which can be alternatively described as $[W^{\sim}_{\sigma} / G]$ where
\begin{equation}
    W^{\sim}_{\sigma} := W \times_{\mathbb{C}^{\times}} \big( \mathbb{C}^2 \setminus \{0\} \big) \to \mathbb{P}^1,
\end{equation}
in which $G$ acts on the $W$-factor. Denote by $\mathsf{QM}^{\sigma}_d(X)$ the moduli space of sections $u^{\sim}$ of $[W/G]^{\sim}_{\sigma} \to \mathbb{P}^1$ such that 
\begin{enumerate}
    \item all but finitely many points of $p \in \mathbb{P}^1$ lies in $W^s$ under the section;
    \item the degree of $u^{\sim}$ is $d$, which is the association 
    \begin{equation}
        \mathrm{Pic}^G(W_{\sigma}^{\sim}) \ni L^{\sim} \mapsto \mathrm{deg}((u^{\sim})^* L^{\sim}).
    \end{equation}
\end{enumerate}
Note that $d$ takes value in an $\mathrm{Eff}(X)$-torsor. We can similarly define moduli spaces $\mathsf{QM}^{\sigma}_d(X)_{\mathrm{rel} \  p_1,\dots, p_m}$ by introducing relative degenerations, with the understanding that the evaluation map at $p_i$ takes value in the fiber over $p_i \in \mathbb{P}^1$. We will use $(\mathsf{QM}^{\sigma}_d(X)_{\mathrm{rel} \  p_1,\dots, p_m}, -)$ to denote $K$-theory classes over $\mathsf{QM}^{\sigma}_d(X)_{\mathrm{rel} \  p_1,\dots, p_m}$, including the twisted virtual structure sheaf $(\mathsf{QM}^{\sigma}_d(X)_{\mathrm{rel} \  p_1,\dots, p_m}, \hat{\mathcal{O}}_{\mathrm{vir}})$.

Alternatively, given a cocharacter $\sigma: \mathbb{C}^{\times} \to T$, an element in $\mathsf{QM}^{\sigma}_d(X)$ encodes the information of a principal $G$-bundle $P \to \mathbb{P}^1$ together with a section of $(P \times_{\mathbb{P}^1} \mathbb{C}^{\times}(1)) \times_{G \times \mathbb{C}^{\times}} W \to \mathbb{P}^1$ which takes value in $W^s$ way from finitely many points on $\mathbb{P}^1$ such that the degree is $d$, where $\mathbb{C}^{\times}(1) \to \mathbb{P}^1$ is the frame bundle associated with $\mathcal{O}(1)$.

\subsection{Quantum $K$-theory}\label{ssec:oper-quantumK}
In this subsection we review quasimap quantum $K$-theory, including the PSZ quantum $K$-theory \cite{PSZ-quantum}, and the $q$-difference modules associated to K\"ahler and equivariant variables and their solutions, which are given by vertex functions \cite{Oko17}.

Recall that we have the action $T$ of a Hamiltonian torus on $X$, and $\mathbb{G}_m$ by the conical scaling action on $X$. We denote the equivariant $K$-theory of $X$ by $K_{\mathbf{T}}(X)$. It is a module over the representation ring $K_\mathbf{T}(\mathrm{pt}) \cong \mathbb{Z}[\hbar^\pm , a_1^\pm, \dots, a_r^\pm]$ where $\hbar$ and $a_1, \dots a_r$ for $ r= \mathrm{rank}(T)$ are the equivariant parameters for the $\mathbb{G}_m$ and $T$-actions respectively. When the rank of the Hamiltonian torus is understood, by a slight abuse of notation we will denote the equivariant parameters by $K_\mathbf{T}(\mathrm{pt}) \cong \mathbb{Z}[\hbar^\pm, a^\pm]$. Note that any class $\mathcal{F}' \in K_{\mathbf{T}}(X \times X)$ gives rise to a $K_{\mathbf{T}}$-linear operator
\begin{equation}\label{eqn:convolution}
    K_{\mathbf{T}}(X) \ni \mathcal{F} \mapsto (\pi_1)_{*}(\mathcal{F}' \otimes \pi_2^*\mathcal{F}\otimes K_X^{-1/2}) \in K_{\mathbf{T}}(X)
\end{equation}
where $\pi_i: X \times X \to X$ is the projection to the $i$-th factor.

To capture the $\mu_k$-action on quasimaps, we equip $X$ with the trivial $\mu_k$-action and consider the $\mu_k \times \mathbf{T}$-equivariant $K$-theory $K_{\mu_k \times \mathbf{T}}(X)$ of $X$, which is also a module over the representation ring $K_{\mu_k}(\mathrm{pt}) \cong \mathbb{Z}[q,q^{-1}]/(q^k - 1)$, and $q$ will be referred to as the \emph{loop parameter}.

\begin{defn}[Gluing operator]
We define the gluing operator to be the map
\begin{equation}
    \mathbf{G} \in \mathrm{End}(K_{\mu_k \times \mathbf{T}}(X))[\![ z^{\mathrm{eff}} ]\!]
\end{equation}
induced by the $K$-theory class
\begin{equation}
    \sum_d z^d (\mathrm{ev}_{p_1} \times \mathrm{ev}_{p_2})_* \big(\mathsf{QM}_d(X)_{\mathrm{rel} \  p_1, p_2}, \hat{\mathcal{O}}_{\mathrm{vir}}\big) \in K_{\mu_k \times \mathbf{T}}(X^{\times 2})[\![ z^{\mathrm{eff}} ]\!].
\end{equation}
under the map \eqref{eqn:convolution}.
\end{defn}

By construction, $\mathbf{G} = \mathrm{Id} + O(z)$, which shows that it is invertible. 

\begin{rem}
    In \cite[Section 6.5]{Oko17}, an alternative construction of $\mathbf{G}$ is provided using the moduli spaces $\mathsf{QM}_d(X)^{\sim}_{\mathrm{rel} \ p_1, p_2}$, which consists of quasimaps with relative degenerations at $p_1$ and $p_2$ such that each irreducible component is unparametrized. The proof of \cite[Theorem 7.1.4]{Oko17}, which relies on the degeneration formula \cite[Proposition 6.5.27]{Oko17}, applies to the $\mu_k$-equivariant setup here. In particular, the gluing operator $\mathbf{G}$ does not depend on $q$ and defines an element 
    \begin{equation}
    \mathbf{G} \in \mathrm{End}(K_{\mathbf{T}}(X))[\![ z^{\mathrm{eff}} ]\!].
\end{equation}
\end{rem}

In the following definition, we work $\mathbf{T}$-equivariantly.
\begin{defn}[PSZ quantum $K$-theory ring]
    The Pushkar--Smirnov--Zeitlin quantum $K$-theory ring of $X$ is the unital commutative ring such that 
    \begin{itemize}
    \item the underlying space is $K_{\mathbf{T}}(X)[\![ z^{\mathrm{eff}} ]\!]$;
    \item the product, which is bilinear in the $z$-variable, is given by
    \begin{equation}
        (\mathcal{F}, \mathcal{G}) \mapsto \mathcal{F} \star \mathcal{G} := \sum_{d} z^d (\mathrm{ev}_{p_2})_* \big(\mathsf{QM}_d(X)_{\mathrm{rel} \ p_1, p_2, p_3 }, \hat{\mathcal{O}}_{\mathrm{vir}} \otimes \mathrm{ev}_{p_1}^* (\mathbf{G}^{-1} \mathcal{F}) \otimes \mathrm{ev}_{p_3}^* (\mathbf{G}^{-1} \mathcal{G}) \big);
    \end{equation}
    \item the identity class is 
    \begin{equation}
        \mathbf{1}(z) := \sum_d z^d (\mathrm{ev}_{p_1})_* \big( \mathsf{QM}_d(X)_{\mathrm{rel} \ p_1}, \hat{\mathcal{O}}_{\mathrm{vir}} \big).
    \end{equation}
    \end{itemize}
\end{defn}
For the proof of unitality and commutativity, see \cite[Section 3]{PSZ-quantum}. Moreover, $K_{\mathbf{T}}(X)[\![ z^{\mathrm{eff}} ]\!]$ can be equipped with a pairing
\begin{equation}
    \langle \mathcal{F}, \mathcal{G} \rangle := \sum_d z^d \chi\big(\mathsf{QM}_d(X)_{\mathrm{rel} \ p_1, p_2}, \hat{\mathcal{O}}_{\mathrm{vir}} \otimes \mathrm{ev}_{p_1}^* (\mathbf{G}^{-1} \mathcal{F}) \otimes \mathrm{ev}_{p_2}^* (\mathbf{G}^{-1} \mathcal{G}) \big),
\end{equation}
where $\chi$ is the $\mathbf{T}$-equivariant holomorphic Euler characteristic. It also follows from \cite[Section 3]{PSZ-quantum} that the PSZ quantum $K$-theory ring is a Frobenius algebra under this pairing. Later on, when discussing $\mu_k$-equivariant quasimaps, we extend the quantum product $\star$ $q$-linearly, which defines a product on $K_{\mu_k \times \mathbf{T}}(X)$.

Next, we recall the definitions of $q$-difference operators in both K\"ahler and equivariant variables in quasimap $K$-theory. We work $\mu_k \times \mathbf{T}$-equivariantly here, where $\mu_k$ acts on the quasimap domains by discretized loop rotation.

\begin{defn}[K\"ahler shifts]\label{defn:kahler-shift}
Let $L$ be a class in $\mathrm{Pic}^G(W)$. Then define the operator
\begin{equation}\label{eqn:kahler-shift}
\begin{aligned}
 M_{L}^{(k)}(z) &\in \mathrm{End}(K_{\mu_k \times \mathbf{T}}(X))[\![ z^{\mathrm{eff}} ]\!] \\
 M_{L}^{(k)}(z) &:= \big( \sum_d z^d (\mathrm{ev}_{p_1} \times \mathrm{ev}_{p_2})_* \big( \mathsf{QM}_{d}(X)_{\mathrm{rel} \ p_1, p_2}, \hat{\mathcal{O}}_{\mathrm{vir}} \otimes \mathrm{det} H^{\bullet}(L \otimes \pi^*(\mathcal{O}_{p_1})) \big) \big)\circ \mathbf{G}^{-1},
\end{aligned}
\end{equation}
where $\pi: \mathbb{P}^1[l] \to \mathbb{P}^1$ is the projection map from \eqref{eqn:chain-p-1} and we identify the $K$-theory class of $X \times X$ defined by the pushforward from $\mathsf{QM}_{d}(X)_{\mathrm{rel} \ p_1, p_2}$ under the evaluation map as an endomorphism on $K$-theory via \eqref{eqn:convolution}.
\end{defn}
We will write $q^L z^d := q^{\langle L, d \rangle}z^d$. The operator $M_{L}^{(k)}(z)$ is the connection coefficient of the $q$-difference operator along the direction specified by $L$. 

A similar construction works for shifts in the equivariant variables using the moduli spaces of $\sigma$-twisted quasimaps.

\begin{defn}[Equivariant shifts]\label{defn:equiv-shift}
    Let $\sigma: \mathbb{C}^{\times} \to \mathbf{T}$ be a cocharacter. Then define the operator
    \begin{equation}\label{eqn:shift}
        \begin{aligned}
            S_{\sigma}(z) &\in \mathrm{End}(K_{\mu_k \times \mathbf{T}} (X)_{loc})[\![ z^{\mathrm{eff}} ]\!] \\
            S_{\sigma}(z) &:= \big( \sum_d z^d(\mathrm{ev}_{p_1} \times \mathrm{ev}_{p_2})_* \big( \mathsf{QM}^{\sigma}_{d}(X)_{\mathrm{rel} \ p_1, p_2}, \hat{\mathcal{O}}_{\mathrm{vir}} \big) \big)\circ \mathbf{G}^{-1},
        \end{aligned}
    \end{equation}
    where we identify the $K$-theory class of $X \times X$ defined by the pushforward from $\mathsf{QM}^{\sigma}_{d}(X)_{\mathrm{rel} \ p_1, p_2}$ under the evaluation map as an endomorphism on $K$-theory via \eqref{eqn:convolution}.
\end{defn}
It is worth noting that in Equation \eqref{eqn:shift}, the shift operator is an endomorphism of the \emph{localized} equivariant $K$-theory, where we localize with respect to the $\mathbf{T}$-action on the target. This is due to the noncompactness of moduli spaces $\sigma$-twisted quasimaps, which is remedied by imposing the properness assumption on the $\mathbf{T}$-fixed locus of $X$ and localization. Unlike \cite[Section 8]{Oko17}, which defines the shift operators using the $\mathbb{C}^{\times}_q$-action on the source curve, we need to localize over the target because the loop rotation is discretized in our setting.

Given an integral vector $\mathbf{n} = (n_1, \dots, n_r)$ writing $a^{\mathbf{n}} = a_1^{n_1} \cdots a_r^{n_r}$ as the $T$-equivariant parameter in $K$-theory. Define the equivariant shift to be $q^{\sigma}a^{\mathbf{n}} := q^{\langle \sigma, \mathbf{n} \rangle} a^{\mathbf{n}}$. Then we think $S_{\sigma}(q,z)$ as the connection coefficient of the $q$-difference connection in the equivariant variables.

\begin{rem}
    Strictly speaking, the degrees of $\sigma$-twisted quasimaps is an $\mathrm{Eff}(X)$-torsor. To make the identification, we can choose a stable fixed point of the induced $\mathbb{C}^{\times}$-action from $\sigma$, which defines a constant $\sigma$-twisted quasimap, to trivialize the torsor. Since the choice is auxiliary, we omit it from the formulas.
\end{rem}

Going back to $X$, it is equipped with the trivial $\mu_k \subset \mathbb{C}^{\times}_q$ action. Then the map
\begin{equation}\label{eqn:K-res}
    K_{\mathbb{C}^{\times}_q \times \mathbf{T}} (X) \to K_{\mu_k \times \mathbf{T}} (X)
\end{equation}
by the restriction homomorphism is induced by modulo the ideal generated by $q^k - 1$. For the moduli space $\mathsf{QM}_{d}(X)_{\mathrm{rel} \ p_1, p_2}$, the $\mu_k$-action thereon comes from restricting the $\mathbb{C}^{\times}_q \cong \mathrm{Aut}(\mathbb{P}^1, p_1, p_2)$ on the quasimap domains. Working $\mathbb{C}^{\times}_q \times \mathbf{T}$-equivariantly, we can similarly define an operator
\begin{equation}\label{eqn:kahler-connection}
    M_{L}(z) \in \mathrm{End}(K_{C^{\times}_{q} \times \mathbf{T}}(X))[\![ z^{\mathrm{eff}} ]\!]
\end{equation}
as in \eqref{eqn:kahler-shift}.
\begin{lemma}\label{lemma:trivial}
    The image of $M_{L}(z)$ under the restriction homomorphism in $K$-theory \eqref{eqn:K-res} coincides with $M_{L}^{(k)}(z)$.
\end{lemma}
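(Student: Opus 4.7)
The plan is to deduce the equality from the naturality of proper equivariant pushforward under restriction of the equivariance group, combined with the $q$-independence of the gluing operator.

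First, I would note that the moduli space $\mathsf{QM}_{d}(X)_{\mathrm{rel}\ p_1, p_2}$ carries a natural $\mathbb{C}^{\times}_q \times \mathbf{T}$-action: the $\mathbb{C}^{\times}_q$ factor acts via the identification $\mathbb{C}^{\times}_q \cong \mathrm{Aut}(\mathbb{P}^1, p_1, p_2)$ together with the induced action on relative degenerations, and $\mu_k \subset \mathbb{C}^{\times}_q$ is just the restriction of this action. Next I would check that all sheaves entering the definition of $M_L(z)$ in \eqref{eqn:kahler-connection} are $\mathbb{C}^{\times}_q \times \mathbf{T}$-equivariantly defined: the twisted virtual structure sheaf $\hat{\mathcal{O}}_{\mathrm{vir}}$ is constructed from the virtual cotangent complex and the polarization line in a $\mathbb{C}^{\times}_q \times \mathbf{T}$-equivariant manner as recalled in Section~\ref{ssec:oper-quasimaps}, and $\det H^{\bullet}(L \otimes \pi^*\mathcal{O}_{p_1})$ is built from the universal section together with the $\mathbb{C}^{\times}_q$-fixed divisor $p_1$, so it also lifts. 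Their images under the restriction \eqref{eqn:K-res} are by construction the classes used to define $M_L^{(k)}(z)$.

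The second step is to observe that the evaluation map $\mathrm{ev}_{p_1} \times \mathrm{ev}_{p_2} : \mathsf{QM}_{d}(X)_{\mathrm{rel}\ p_1, p_2} \to X \times X$ is proper and $\mathbb{C}^{\times}_q \times \mathbf{T}$-equivariant when $X \times X$ is endowed with the trivial $\mathbb{C}^{\times}_q$-action, since $p_1$ and $p_2$ are fixed points of $\mathrm{Aut}(\mathbb{P}^1, p_1, p_2)$. By functoriality, proper equivariant pushforward commutes with the restriction homomorphism \eqref{eqn:K-res} applied to source and target. Applying this to each summand in the generating series and using that restriction is $\mathbb{Z}[\![ z^{\mathrm{eff}} ]\!]$-linear, one concludes that the image of the $X \times X$-class underlying $M_L(z)$ (before composing with $\mathbf{G}^{-1}$) agrees with its counterpart defining $M_L^{(k)}(z)$.

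Finally, I would invoke the remark that $\mathbf{G}$ (and hence $\mathbf{G}^{-1}$) is independent of $q$: it lifts from $K_{\mathbf{T}}(X)$ to $K_{\mu_k \times \mathbf{T}}(X)$ by $q$-linear extension, so the restriction homomorphism sends the $\mathbb{C}^{\times}_q$-equivariant $\mathbf{G}^{-1}$ to the $\mu_k$-equivariant $\mathbf{G}^{-1}$. Composing with the previous step through the convolution \eqref{eqn:convolution}, which is also natural under restriction of equivariance, yields the asserted equality. There is no real obstacle here beyond verifying that each ingredient lifts $\mathbb{C}^{\times}_q$-equivariantly, which is standard; the only point requiring a little care is the equivariance of the universal determinant class, and this follows because $p_1 \in \mathbb{P}^1$ is a $\mathbb{C}^{\times}_q$-fixed point.
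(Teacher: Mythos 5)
Your proposal is correct and takes essentially the same approach as the paper: the paper establishes the compatibility of the proper pushforward with the restriction homomorphism by writing the pushforward concretely as an alternating sum of higher derived functor sheaves of $\mathbb{C}^\times_q \times \mathbf{T}$-equivariant coherent sheaves, while you invoke the same compatibility at the level of functoriality of equivariant $K$-theory. Your extra care in treating the $q$-independence of $\mathbf{G}^{-1}$ as a separate step is a minor organizational refinement (the paper handles this implicitly) but does not amount to a different argument.
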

\begin{proof}
    Working $\mathbb{C}^{\times}_q$-equivariantly, we represent the pushforward of $\big( \mathsf{QM}_{d}(X)_{\mathrm{rel} \ p_1, p_2}, \mathrm{det} H^{\bullet}(L \otimes \pi^*(\mathcal{O}_{p_1})) \big)$ to the coarse space of $\mathsf{QM}_{d}(X)_{\mathrm{rel} \ p_1, p_2}$ as a complex of $\mathbb{C}^{\times}_q \times \mathbf{T}$-equivariant coherent sheaves $\cdots \to \mathcal{F}_{i-1} \to \mathcal{F}_i \to \mathcal{F}_{i+1} \to \cdots$. Then its pushforward under the evaluation map is the alternating sum of the higher derived functor sheaves (see, e.g. \cite[Section 5.2.13]{chriss-ginzburg})
    \begin{equation}
        \sum_i (-1)^i\sum_j(-1)^j R^j (\mathrm{ev}_{p_1} \times \mathrm{ev}_{p_2})_*\mathcal{F}_i.
    \end{equation}
    Note that the same formula computes the pushforward of $\big( \mathsf{QM}_{d}(X)_{\mathrm{rel} \ p_1, p_2}, \mathrm{det} H^{\bullet}(L \otimes \pi^*(\mathcal{O}_{p_1})) \big)$ under the evaluation map as an $\mu_k \times \mathbf{T}$-equivariant $K$-theory class after composing with the group restriction homomorphism. Therefore, $M_{L}^{(k)}(z)$ is obtained from $M_{L}(z)$ by modulo the ideal generated by $(q^k - 1)$. 
\end{proof}
In the literature \cite{Oko17, okounkov-smirnov}, the K\"ahler difference operators are computed as $\mathbb{C}^{\times}_q$-equivariant objects. We include the above simple Lemma to justify that our $\mu_k$-equivariant setting can be viewed as specializing the loop parameter $q$ at $k$-th roots of unity.

\section{Quantum Adams operations}\label{sec:oper}
In this section, we introduce our main objects of study, quantum Adams operators and quantum cyclic powers, both of which come in two flavors, relative and descendant. We establish their properties which in particular prove Theorem \ref{thm:intro-1}.

\subsection{Adams operations}\label{sec:ordinary-adams}
We recall the definition of Adams operations on (equivariant) $K$-theory from the point of view of cyclic powers, which is closely tied to their quantum deformations that will be introduced below.

Consider $X$ with $\mathbf{T}$-action. Then taking the external power gives a map
\begin{equation}
    \begin{aligned}
        (-)^{\boxtimes k}_{eq}: K_{\mathbf{T}}(X) &\to K_{\mu_k \ltimes \mathbf{T}^k}(X^k) \\
        \mathcal{F} &\mapsto \mathcal{F}^{\boxtimes k},
    \end{aligned}
\end{equation}
which agrees with the usual external tensor product after forgetting the $\mu_k$-action. We call $(-)^{\boxtimes k}_{eq}$ the \emph{cyclic power operation}. Using the diagonal map $\Delta_k: X \to X^{k}$, we look at the composition
\begin{equation}\label{eqn:full-power}
    P^k: K_{\mathbf{T}}(X) \xrightarrow{(-)^{\boxtimes k}_{eq}} K_{\mu_k \ltimes \mathbf{T}^k}(X^k) \xrightarrow{\Delta_k^*} K_{\mu_k \times \mathbf{T}}(X),
\end{equation}
where $X$ is equipped with the trivial $\mu_k$-action. Using the isomorphism
\begin{equation}
    K_{\mu_k \times \mathbf{T}}(X) \cong K_{\mu_k}(\mathrm{pt}) \otimes K_{\mathbf{T}}(X)
\end{equation}
and the decomposition
\begin{equation}
    K_{\mu_k}(\mathrm{pt}) \cong \mathbb{Z}[q, q^{-1}] / (q^k - 1) \cong \bigoplus_{m | k} \mathbb{Z}[q, q^{-1}] / \Phi_m(q)
\end{equation}
where $\Phi_m(q)$ is the $m$-th cyclotomic polynomial, we can define the projection
\begin{equation}\label{eqn:projection}
    K_{\mu_k \times \mathbf{T}}(X) \to K_{\mathbf{T}}(X) \otimes \mathbb{Z}[q, q^{-1}] / \Phi_k(q).
\end{equation}
\begin{defn}
    The \emph{Adams operation} $\psi^k: K_{\mathbf{T}}(X) \to K_{\mathbf{T}}(X) \otimes \mathbb{Z}[q, q^{-1}] / \Phi_k(q)$ is the composition of \eqref{eqn:full-power} and \eqref{eqn:projection}.
\end{defn}

In fact, $\psi^k$ takes value in $K_{\mathbf{T}}(X)$, which can be seen from the splitting principle. The following is a list of standard facts about the Adams operations, see, e.g., \cite[Section 2]{atiyah-power}. Note that these properties also characterize the Adams operations uniquely.
\begin{itemize}
    \item For $\mathcal{F}, \mathcal{G} \in K_{\mathbf{T}}(X)$, we have $\psi^k(\mathcal{F}+ \mathcal{G}) = \psi^k(\mathcal{F}) + \psi^k(\mathcal{G})$.
    \item Denote by $\otimes$ the (tensor) product on $K_{\mathbf{T}}(X)$ Then $\psi^k(\mathcal{F} \otimes \mathcal{G}) = \psi^k(\mathcal{F}) \otimes \psi^k(\mathcal{G})$.
    \item Suppose $L \in K_{\mathbf{T}}(X)$ is a class represented by a $\mathbf{T}$-equivariant line bundle. Then we have $\psi^k(L) = L^{\otimes k}$.
    \item Suppose $f: X \to Y$ is a $\mathbf{T}$-equivariant map. Then $\psi^k$ is compatible with pullback, i.e., for $f^*: K_{\mathbf{T}}(Y) \to K_{\mathbf{T}}(X)$ we have $f^* \circ \psi^k = \psi^k \circ f^*$.
\end{itemize}

We can carry out the same construction verbatim for the stack $\mathfrak{X}$ using the cyclic power operation
\begin{equation}\label{eqn:power-stack}
\begin{aligned}
    (-)^{\boxtimes k}_{eq} : K_\mathbf{T}(\mathfrak{X}) &\to K_{\mu_k \ltimes \mathbf{T}^k}(\mathfrak{X}^k) \\
     \tau & \mapsto \tau^{\boxtimes k}_{eq} := \tau \boxtimes \cdots \boxtimes \tau.
\end{aligned}
\end{equation}
We will also denote the resulting Adams operation by $\psi^k$.

\subsection{Quantum cyclic powers}\label{ssec:oper-definition}
In this subsection, we introduce quantum deformations of the cyclic power operations using equivariant counts of quasimaps. They come in two flavors, depending on the type of insertions at the $k$-th roots of unity.

\subsubsection{Definitions}
Consider a parametrized domain $\mathbb{P}^1$ equipped with marked points
\begin{equation}
    p_1 = 0, \ p_2 =\infty, \ p_0' = 1, \  p_1' =  \zeta = e^{2\pi i /k}, \ \dots, \ p_{k-1}' =  \zeta^{k-1},
\end{equation}
by which we denote $\mathbb{P}^1_{\zeta_k}$. The $\mu_k$-action on $\mathbb{P}^1_{\zeta_k}$ whose generator acts by $\iota: z \mapsto e^{2\pi i /k} z$ cyclically permutes the marked points $\{ p_0', \dots, p_{k-1}'\}$. Then we have a $\mu_k$-equivariant diagram
\begin{equation}
    \begin{tikzcd}
{\mathbb{P}^1_{\zeta_k} \times \mathrm{Map}(\mathbb{P}^1_{\zeta_k}, \mathfrak{X})} \arrow[d, "\pi"'] \arrow[r, "f"] & \mathfrak{X} \\
{\mathrm{Map}(\mathbb{P}^1_{\zeta_k}, \mathfrak{X})}                                                      &             
\end{tikzcd}
\end{equation}
where $\mu_k$ acts on $\mathrm{Map}(\mathbb{P}^1_{\zeta_k}, \mathfrak{X})$ by precomposing with $\iota$ and acts trivially on $\mathfrak{X}$. Fixing the degree $d$ and imposing the quasimap condition, now the corresponding $\mu_k$-equivariant moduli space $\mathsf{QM}_d^{\mu_k}(X)$, which is isomorphic to $\mathsf{QM}_d(X)$, has its perfect obstruction theory upgraded to one that is $\mu_k$-equivariant.

Allowing relative degenerations at the marked points, there are two flavors of compactifications of $\mathsf{QM}_d^{\mu_k}(X)$ we can introduce. We can use them to define two versions of the quantum cyclic power operations.

The first one is
\begin{equation}\label{eqn:cyclic-moduli}
    \mathsf{QM}_d^{\mu_k}(X)_{\mathrm{rel} \  p_1, p_2} \cong \mathsf{QM}_d(X)_{\mathrm{rel} \  p_1, p_2},
\end{equation}
for which the $\mu_k$-action extends to one whose generator rotates the parametrized component and fixes the bubbles at $0$ and $\infty$. Introducing the same twisting as in \eqref{eqn:o-twist}, we can define its $\mu_k \times \mathbf{T}$-equivariant twisted virtual structure sheaf 
\begin{equation}
    (\mathsf{QM}_d^{\mu_k}(X)_{\mathrm{rel} \  p_1, p_2}, \hat{\mathcal{O}}_{\mathrm{vir}}).
\end{equation}
The moduli space $\mathsf{QM}_d^{\mu_k}(X)_{\mathrm{rel} \  p_1, p_2}$ comes with the evaluation map
\begin{equation}
    \mathrm{ev}_{p_1} \times (\mathrm{ev}_{p_0'} \times \cdots \times \mathrm{ev}_{p_{k-1}'}) \times \mathrm{ev}_{p_2}: \mathsf{QM}_d^{\mu_k}(X)_{\mathrm{rel} \  p_1, p_2} \to X \times \mathfrak{X}^k \times X.
\end{equation}
For simplicity, we abbreviate
\begin{equation}
    \mathrm{ev}_k^{\mathrm{stack}} := \mathrm{ev}_{p_0'} \times \cdots \times \mathrm{ev}_{p_{k-1}'}.
\end{equation}
Note that $\mathfrak{X}^k$ is equipped with the natural $\mu_k \ltimes \mathbf{T}^k$-action. Under the group homomorphism
\begin{equation}
    \mu_k \times \mathbf{T} \hookrightarrow \mu_k \ltimes \mathbf{T}^k
\end{equation}
induced by the diagonal embedding $\mathbf{T} \hookrightarrow \mathbf{T}^{k}$, the evaluation map $\mathrm{ev}_k^{\mathrm{stack}}$ is equivariant. In particular, we have a map 
\begin{equation}\label{eqn:stack-ev}
    (\mathrm{ev}_k^{\mathrm{stack}})^*: K_{\mu_k \ltimes \mathbf{T}^k}(X^k) \to K_{\mu_k \times \mathbf{T}}(\mathsf{QM}_d^{\mu_k}(X)_{\mathrm{rel} \  p_1, p_2}).
\end{equation}

\begin{defn}
    Fix $ \tau \in K_{\mathbf{T}}(\mathfrak{X})$. The \emph{descendant quantum cyclic $k$-th power operator} associated to $\tau$ is defined as
    \begin{equation}\label{eqn:descendant-power}
        \begin{aligned}
        Q\Psi_\tau^{desc, k}: = \sum_{d} z^d (\mathrm{ev}_{p_1} \times \mathrm{ev}_{p_2})_* \left( \mathsf{QM}_d^{\mu_k}(X)_{\mathrm{rel} \  p_1, p_2}, \hat{\mathcal{O}}_{\mathrm{vir}} \otimes (\mathrm{ev}^{\mathrm{stack}}_k)^* \tau^{\boxtimes k}_{eq} \right) \circ \mathbf{G}^{-1} \\ \in \mathrm{End}(K_{\mu_k \times \mathbf{T}}(X))[\![ z^{\mathrm{eff}} ]\!],
        \end{aligned}
    \end{equation}
    where we view the $K$-theory classes of $X \times X$ as endomorphisms on $K_{\mu_k \times \mathbf{T}}(X)$ via \eqref{eqn:convolution}.
\end{defn}

On the other hand, we can consider the moduli space of quasimaps with relative degenerations at all marked points
\begin{equation}
    \mathsf{QM}_d^{\mu_k}(X)_{\mathrm{rel} \  p_1,p_2, p'} := \mathsf{QM}_d(X)_{\mathrm{rel} \  p_1,p_2, p_0', \dots, p_{k-1}'}.
\end{equation}
In this case, the $\mu_k$-action on the parametrized component extends to an action of $\mu_k$ on $\mathsf{QM}_d^{\mu_k}(X)_{\mathrm{rel} \  p_1,p_2, p'}$ that also cyclically permutes the bubble components associated with $p_0', \dots, p_{k-1}'$. As the $\mu_k$ action covers the $\mu_k$ action on the moduli space of all possible extended pairs, we can define the $\mu_k \times \mathbf{T}$-equivariant twisted virtual structure sheaf 
\begin{equation}
    (\mathsf{QM}_d^{\mu_k}(X)_{\mathrm{rel} \  p_1,p_2, p'}, \hat{\mathcal{O}}_{\mathrm{vir}})
\end{equation}
again using the twisting \eqref{eqn:o-twist}. This time, the evaluation map takes the form
\begin{equation}
    \mathrm{ev}_{p_1} \times (\mathrm{ev}_{p_0'} \times \cdots \times \mathrm{ev}_{p_{k-1}'}) \times \mathrm{ev}_{p_2}: \mathsf{QM}_d^{\mu_k}(X)_{\mathrm{rel} \  p_1,p_2, p'} \to X \times X^k \times X,
\end{equation}
We abbreviate
\begin{equation}
    \mathrm{ev}_k := \mathrm{ev}_{p_0'} \times \cdots \times \mathrm{ev}_{p_{k-1}'}.
\end{equation}
Similar to \eqref{eqn:power-stack}, define the power map
\begin{equation}
    \begin{aligned}
    (-)^{\boxtimes k}_{eq} : K_\mathbf{T}(X) &\to K_{\mu_k \ltimes \mathbf{T}^k}(X^k) \\
     \mathcal{F} & \mapsto \mathcal{F}^{\boxtimes k}_{eq} := \mathcal{F} \boxtimes \cdots \boxtimes \mathcal{F},
    \end{aligned}
\end{equation}
which naturally extends to a map $K_\mathbf{T}(X)[\![ z^{\mathrm{eff}} ]\!] \to K_{\mu_k \ltimes \mathbf{T}^k}(X^k)[\![ z^{\mathrm{eff}} ]\!]$ Frobenius-linear in $z$, i.e., $(z^d \cdot\mathcal{F})^{\boxtimes k}_{eq} = z^{kd} \cdot (\mathcal{F})^{\boxtimes k}_{eq}$. Just as \eqref{eqn:stack-ev}, the evaluation map defines the pullback
\begin{equation}
    (\mathrm{ev}_k)^*: K_{\mu_k \ltimes \mathbf{T}^k}(X^k) \to K_{\mu_k \times \mathbf{T}} (\mathsf{QM}_d^{\mu_k}(X)_{\mathrm{rel} \  p_1,p_2, p'}).
\end{equation}

\begin{defn}
    Take $\mathcal{F} \in K_{\mathbf{T}}(X)[\![ z^{\mathrm{eff}} ]\!]$. The \emph{quantum cyclic $k$th power operator} associated to $\mathcal{F}$ is defined as
    \begin{equation}\label{eqn:power}
    \begin{aligned}
        Q\Psi^k_{\mathcal{F}}: = \sum_{d} z^d (\mathrm{ev}_{p_1} \times \mathrm{ev}_{p_2})_* \left( \mathsf{QM}_d^{\mu_k}(X)_{\mathrm{rel} \  p_1,p_2, p'}, \hat{\mathcal{O}}_{\mathrm{vir}} \otimes \mathrm{ev}_k^* ((\mathbf{G}^{-1}\mathcal{F})^{\boxtimes k}_{eq}) \right) \circ \mathbf{G}^{-1} \\ \in \mathrm{End}(K_{\mu_k \times \mathbf{T}}(X))[\![ z^{\mathrm{eff}} ]\!],
    \end{aligned}
    \end{equation}
    where we view the $K$-theory classes of $X \times X$ as endomorphisms on $K_{\mu_k \times \mathbf{T}}(X)$ via \eqref{eqn:convolution}.
\end{defn}

For our purpose, we will use different versions of the quantum cyclic power in different contexts. As one can see from the definition, the descendant version comes from simpler moduli spaces so that they are better for computations. On the other hand, one may obtain $Q\Psi_\tau^{desc, k}$ from $Q\Psi^k_{\mathcal{F}}$ for certain $\mathcal{F}$ in the following way.

We recall the following definition from \cite[Section 7.4.1]{Oko17}. Let $\mathsf{QM}_d(X)_{\mathrm{rel} \  p_2}$ be the moduli space of degree $d$ quasimaps with relative degeneration at $p_2$ and recall that $p_1$ is the other fixed point of under the action of the automorphism group $\mathbb{C}^{\times}_q$. Write
\begin{equation}
    (\mathsf{QM}_d(X)_{\mathrm{rel} \ p_2}, \hat{\mathcal{O}}_{\mathrm{vir}})
\end{equation}
the $\mathbb{C}^{\times}_q \times \mathbf{T}$-equivariant virtual structure sheaf of $\mathsf{QM}_d(X)_{\mathrm{rel} \ p_2}$ with twisting as in \eqref{eqn:o-twist}. Fix $\tau \in K_{\mathbf{T}}(\mathfrak{X})$. Then the \emph{capped vertex with descendant insertion $\tau$} is
    \begin{equation}
        \widehat{V}^{(\tau)} := \sum_d z^d (\mathrm{ev}_{p_2})_* \left( \mathsf{QM}_d(X)_{\mathrm{rel} \ p_2}, \hat{\mathcal{O}}_{\mathrm{vir}} \otimes (\mathrm{ev}_{p_1}^* \tau) \right) \in K_{\mathbb{C}^{\times}_q \times \mathbf{T}}(X) [\![z^{\mathrm{eff}}]\!].
    \end{equation}

Because the map $\mathrm{ev}_{p_2}$ is proper, following \cite[Definition 10]{PSZ-quantum}, we can take the $q \to 1$ limit and make the following definition.
\begin{defn}
    The \emph{quantum tautological class} $\hat{\tau}(z)$ is defined to be the non-loop-equivariant limit $\widehat{V}^{(\tau)} |_{q=1} \in K_{ \mathbf{T}}(X) [\![z^{\mathrm{eff}}]\!]$.
\end{defn}

\begin{prop}\label{prop:qadams-equivalence}
    Fix $\tau \in K_{\mathbf{T}}(\mathfrak{X})$. Then
    \begin{equation}
        Q\Psi^{desc, k}_\tau = Q\Psi^k_{\hat{\tau}}.
    \end{equation} 
\end{prop}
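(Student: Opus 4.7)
The plan is to deduce the equality via the degeneration formula for relative quasimap $K$-theory from \cite[Section 6.5]{Oko17} applied simultaneously at the $k$ marked points $p'_0, \ldots, p'_{k-1}$, followed by identifying the resulting bubble contributions with the quantum tautological class $\hat{\tau}(z)$.

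First, I would compare the two moduli spaces underlying the two sides. There is a proper $\mu_k$-equivariant contraction morphism $\mathsf{QM}_d^{\mu_k}(X)_{\mathrm{rel} \  p_1,p_2,p'} \to \mathsf{QM}_d^{\mu_k}(X)_{\mathrm{rel} \  p_1,p_2}$ collapsing the bubble trees at the $p'_i$'s, and the degeneration formula rewrites the descendant insertion $(\mathrm{ev}^{\mathrm{stack}}_k)^* \tau^{\boxtimes k}_{eq}$ as a sum, over degree splittings $d = d_0 + \sum_{i=0}^{k-1} d_i'$, of a main contribution from $\mathsf{QM}_{d_0}^{\mu_k}(X)_{\mathrm{rel} \  p_1,p_2,p'}$ glued via $\mathbf{G}^{-1}$ at each of the $k$ nodes to a bubble moduli carrying the descendant insertion $\tau$ at its free marked point. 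By $\mu_k$-symmetry the contributions at the different $p'_i$'s are cyclically identified.

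Second, I would identify each bubble contribution with the quantum tautological class $\hat{\tau}(z)$. The relevant bubble moduli is isomorphic to the one defining $\widehat{V}^{(\tau)}$ from \cite[Section 7.4.1]{Oko17}, but the $\mathbb{C}^{\times}_q$-action used there, namely the rotation of the bubble's parametrized $\mathbb{P}^1$, is internal to each bubble. Since $\mu_k$ merely permutes the bubbles and acts trivially on each individual bubble's rotation torus, this rotation carries no $\mu_k \times \mathbf{T}$-equivariant weight, and the bubble contribution reduces to the non-loop-equivariant specialization $\widehat{V}^{(\tau)}|_{q=1} = \hat{\tau}(z)$, in direct parallel with the mechanism in Lemma \ref{lemma:trivial}.

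Substituting this identification and collecting the $k$ gluing factors produces
\begin{equation*}
Q\Psi^{desc,k}_\tau = \sum_d z^d (\mathrm{ev}_{p_1} \times \mathrm{ev}_{p_2})_* \big( \mathsf{QM}_d^{\mu_k}(X)_{\mathrm{rel} \  p_1,p_2,p'}, \hat{\mathcal{O}}_{\mathrm{vir}} \otimes (\mathrm{ev}_k)^* ((\mathbf{G}^{-1}\hat{\tau})^{\boxtimes k}_{eq}) \big) \circ \mathbf{G}^{-1},
\end{equation*}
which is exactly the definition of $Q\Psi^k_{\hat\tau}$. The main obstacle will be the careful justification in the second step, that the internal bubble rotations carry no $\mu_k$-equivariant weight; this requires analyzing how the $\mu_k$-action on the Artin stack $\mathcal{B}$ of extended pairs interacts with the $(\mathbb{C}^{\times})^k$ automorphism torus of a configuration of $k$ simultaneous single bubbles, and verifying that the $\mu_k$-fixed strata of $\mathcal{B}$ are precisely those cyclically-symmetric ones on which the individual bubble rotations remain free.
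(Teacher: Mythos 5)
Your proposal follows essentially the same approach as the paper's proof: apply the degeneration formula of \cite[Proposition 6.5.27]{Oko17} $\mu_k$-equivariantly at $p_0', \ldots, p_{k-1}'$, insert $\mathbf{G}^{-1}$ at the resulting nodes, and identify the accordion contribution carrying the descendant insertion with the quantum tautological class $\hat{\tau}(z)$. A small caution: the ``main obstacle'' you flag is over-engineered---the accordion bubbles consist of \emph{unparametrized} components which carry no $\mathbb{C}^{\times}_q$-action to begin with (the same reason $\mathbf{G}$ is $q$-independent, cf.\ Remark 2.6), which is why their contribution is the $q=1$ specialization $\hat{\tau}(z) = \widehat{V}^{(\tau)}|_{q=1}$ rather than $\widehat{V}^{(\tau)}$ itself, without any need to analyze the $\mu_k$-fixed strata of the stack $\mathcal{B}$ or the $(\mathbb{C}^{\times})^k$ automorphism torus; also note the accordion moduli is not literally isomorphic to $\mathsf{QM}_d(X)_{\mathrm{rel}\ p_2}$ since the latter has a parametrized main component.
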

\begin{proof}
    This follows from a standard degeneration argument as in \cite[Proposition 6.5.27]{Oko17}. Namely, starting from the moduli space $\mathsf{QM}_d^{\mu_k}(X)_{\mathrm{rel} \  p_1, p_2}$ with the $\mu_k$-action permuting $p_0', \dots, p_{k-1}'$, we introduce accordions at $p_0', \dots, p_{k-1}'$ while maintaining the $\mu_k$-equivariance so that the gluing points come equipped with relative insertions, with inverse of the gluing matrix inserted in between. This is exactly the formula \eqref{eqn:power} with $\hat{\tau}$ inserted.
\end{proof}

In particular, the computation of $Q\Psi_{\mathcal{F}}^k$ can be simplified to the computation of $Q\Psi^{k, desc}_\tau$ for $\tau$ such that $\hat{\tau} = \mathcal{F}$. On the other hand, we will prove general properties of $Q\Psi_{\mathcal{F}}^k$, and they will imply the corresponding statement for $Q\Psi^{k, desc}_\tau$ in view of Proposition \ref{prop:qadams-equivalence} by setting $\mathcal{F} = \hat{\tau}$.

\subsubsection{Properties}
Now we establish some properties of the quantum cyclic powers.

First, we show that the classical $z \to 0$ limit of the quantum cyclic power operator recovers the operation \eqref{eqn:full-power}.

\begin{prop}\label{prop:cyclic-classical-limit}
    Fix $\mathcal{F} \in K_{\mathbf{T}}(X)$. Then the operator
    \begin{equation}
        Q\Psi_{\mathcal{F}}^k |_{z=0}: K_{\mu_k \times \mathbf{T}}(X)[\![ z^{\mathrm{eff}} ]\!] \to K_{\mu_k \times \mathbf{T}}(X)[\![ z^{\mathrm{eff}} ]\!]
    \end{equation}
    is the $z$-linear extension of the endomorphism
    \begin{equation}
        \otimes P^k(\mathcal{F}): K_{\mu_k \times \mathbf{T}}(X) \to K_{\mu_k \times \mathbf{T}}(X),
    \end{equation}
    where $P^k$ denotes the operation defined in \eqref{eqn:full-power}.
\end{prop}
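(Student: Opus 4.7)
The plan is to compute the $z = 0$ specialization term-by-term using the fact that only $d = 0$ contributes and that the degree-zero moduli space is essentially $X$ itself. Setting $z = 0$ kills every positive-degree term in the generating series defining $Q\Psi_{\mathcal{F}}^k$; since $\mathbf{G} = \mathrm{Id} + O(z)$ we also have $\mathbf{G}^{-1}|_{z=0} = \mathrm{Id}$, so $(\mathbf{G}^{-1}\mathcal{F})^{\boxtimes k}_{eq}|_{z=0} = \mathcal{F}^{\boxtimes k}_{eq}$. Thus only the $d = 0$ summand remains, and the outer factor of $\mathbf{G}^{-1}$ can be dropped.

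Next I would identify the $d = 0$ moduli space. A quasimap of degree zero from $\mathbb{P}^1_{\zeta_k}$ to $X$ is simply a constant map to the stable locus, and no bubble components can appear (stability forces bubbles to carry positive degree), so
\begin{equation*}
    \mathsf{QM}_0^{\mu_k}(X)_{\mathrm{rel}\ p_1, p_2, p'} \cong X
\end{equation*}
with trivial $\mu_k$-action, and every evaluation map $\mathrm{ev}_{p_i}, \mathrm{ev}_{p_i'}$ becomes $\mathrm{Id}_X$. Consequently $\mathrm{ev}_{p_1} \times \mathrm{ev}_{p_2}$ is the diagonal $\Delta_X : X \hookrightarrow X \times X$, and $\mathrm{ev}_k : X \to X^k$ is the $k$-fold diagonal $\Delta_k$. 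In particular,
\begin{equation*}
    \mathrm{ev}_k^*\bigl(\mathcal{F}^{\boxtimes k}_{eq}\bigr) = \Delta_k^*\bigl(\mathcal{F}^{\boxtimes k}_{eq}\bigr) = P^k(\mathcal{F})
\end{equation*}
by the definition \eqref{eqn:full-power} of the (ordinary) power operation.

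I would then verify that the twisted virtual structure sheaf at $d = 0$ reduces to the half-canonical class $K_X^{1/2}$. Since $f^* T_{\mathfrak{X}}$ is pulled back from $X$ along the projection $\pi$, the virtual tangent $R^\bullet \pi_* f^* T_{\mathfrak{X}}$ equals $T_X$ concentrated in degree zero with no obstruction, so $\mathcal{O}_{\mathrm{vir}} = \mathcal{O}_X$ and $\mathcal{K}_{\mathrm{vir}} = K_X$. The twisting ratio $\det f^*T^{1/2}X|_{p_2} / \det f^* T^{1/2}X|_{p_1}$ is trivial because both restrictions are literally the same line bundle $\det T^{1/2}X$ on $X$. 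Hence $\hat{\mathcal{O}}_{\mathrm{vir}}|_{d=0} = K_X^{1/2}$. Assembling, the kernel on $X \times X$ at degree zero is $(\Delta_X)_* \bigl( K_X^{1/2} \otimes P^k(\mathcal{F}) \bigr)$.

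Finally I would feed this kernel through the convolution formula \eqref{eqn:convolution} and use the projection formula along $\Delta_X$ to pull $\pi_2^* \mathcal{G}$ and the $K_X^{-1/2}$ normalization back to $X$; the factors $K_X^{1/2}$ and $K_X^{-1/2}$ cancel, yielding the operator $\mathcal{G} \mapsto P^k(\mathcal{F}) \otimes \mathcal{G}$, extended $z$-linearly. I do not expect any real obstacle: this is a standard classical-limit computation. The only point requiring minor care is tracking the $\mu_k$-equivariant structure on the kernel, but this is automatic because the degree-zero moduli space carries the trivial $\mu_k$-action while the class $P^k(\mathcal{F})$ carries the nontrivial $\mu_k$-action inherited from $\mathcal{F}^{\boxtimes k}_{eq}$ under pullback by the diagonal.
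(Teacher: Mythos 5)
Your proof is correct and follows essentially the same route as the paper: restrict to the $d=0$ summand, note $\mathbf{G}|_{z=0}=\mathrm{Id}$, identify $\mathsf{QM}_0^{\mu_k}(X)_{\mathrm{rel}\ p_1,p_2,p'}\cong X$ with all evaluation maps becoming the (iterated) diagonal, observe $\hat{\mathcal{O}}_{\mathrm{vir}}|_{d=0}=K_X^{1/2}$, and let the $K_X^{\pm 1/2}$ factors cancel in the twisted convolution. You spell out the intermediate steps (vanishing obstruction, triviality of the polarization ratio for constant maps, the projection-formula cancellation) in somewhat more detail than the paper, but the argument is the same.
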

\begin{proof}
    Note that for $d=0$, the moduli space $\mathsf{QM}_0^{\mu_k}(X)_{\mathrm{rel} \  p_1,p_2, p'}$, which consists of constant maps, is isomorphic to $X$. The assertion then follows from checking the formula \eqref{eqn:power}. Indeed, firstly, we know that $\mathbf{G}|_{z=0} = \mathrm{Id}$. On the other hand, the twisted virtual structure sheaf \eqref{eqn:o-twist} for $\mathsf{QM}_0^{\mu_k}(X)_{\mathrm{rel} \  p_1,p_2, p'} \cong X$ is given by $K_X^{1/2}$, which means that the pushforward of $\hat{\mathcal{O}}_{\mathrm{vir}} \otimes \mathrm{ev}_k^*(\mathcal{F}^{\boxtimes k}_{eq})$ as an operator using the twisted convolution operation \eqref{eqn:convolution} is indeed $\otimes P^k(\mathcal{F})$ because $\mathrm{ev}_k^*(\mathcal{F}^{\boxtimes k}_{eq})$ agrees with the pullback of $\mathcal{F}^{\boxtimes k}_{eq}$ under the diagonal map $\Delta_k$.
\end{proof}

Note that $X \subset \mathfrak{X}$ as an open substack. The same argument shows that for any $\tau \in K_{\mathbf{T}}(\mathfrak{X})$, the classical limit $Q\Psi^{desc, k}_\tau |_{z=0}$ is the $z$-linear extension of the operator
\begin{equation}
    \otimes P^k(\tau|_{X}): K_{\mu_k \times \mathbf{T}}(X) \to K_{\mu_k \times \mathbf{T}}(X).
\end{equation}

Next, we consider the nonequivariant limit of the quantum cyclic power in the sense of setting the loop parameter $q=1$.

\begin{prop}\label{prop:nonequivariant}
    Fix $\mathcal{F} \in K_{\mathbf{T}}(X)[\![ z^{\mathrm{eff}} ]\!]$. The operator
    \begin{equation}
        Q\Psi_{\mathcal{F}}^k |_{q=1}: K_{ \mathbf{T}}(X)[\![ z^{\mathrm{eff}} ]\!] \to K_{\mathbf{T}}(X)[\![ z^{\mathrm{eff}} ]\!]
    \end{equation}
    is equal to the $k$-fold PSZ quantum product with $\mathcal{F}$
    \begin{equation}
        (\mathcal{F} \star -)^k: K_{ \mathbf{T}}(X)[\![ z^{\mathrm{eff}} ]\!] \to K_{\mathbf{T}}(X)[\![ z^{\mathrm{eff}} ]\!].
    \end{equation}
\end{prop}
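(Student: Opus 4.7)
The strategy is to apply the degeneration formula of \cite[Proposition 6.5.27]{Oko17}, in a manner directly parallel to the proof of Proposition \ref{prop:qadams-equivalence}. The key observation is that setting $q=1$ trivializes the $\mu_k$-equivariance, so the equivariant external power $(\mathbf{G}^{-1}\mathcal{F})^{\boxtimes k}_{eq}$ reduces to the ordinary external product, whose pullback under the evaluation map $\mathrm{ev}_k$ becomes $\bigotimes_{i=0}^{k-1}\mathrm{ev}_{p_i'}^*(\mathbf{G}^{-1}\mathcal{F})$. Consequently,
\begin{equation*}
Q\Psi^k_{\mathcal{F}}|_{q=1} = \sum_d z^d (\mathrm{ev}_{p_1}\times \mathrm{ev}_{p_2})_*\Big(\mathsf{QM}_d(X)_{\mathrm{rel}\ p_1,p_2,p_0',\dots,p_{k-1}'},\, \hat{\mathcal{O}}_{\mathrm{vir}}\otimes \bigotimes_{i=0}^{k-1}\mathrm{ev}_{p_i'}^*(\mathbf{G}^{-1}\mathcal{F})\Big)\circ \mathbf{G}^{-1}.
\end{equation*}

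Next, I would degenerate the parametrized $\mathbb{P}^1$ with its $k+2$ marked points into a chain of $k$ rational components, inserting a node between each consecutive pair $p_{i-1}', p_i'$ (with $p_1$ on the first component and $p_2$ on the last). The resulting moduli space decomposes, as a fibered product over evaluations at the $k-1$ nodes, into $k$ copies of $\mathsf{QM}_{d_i}(X)_{\mathrm{rel}\ a_i, b_i, p_{i-1}'}$ where each component is a standard three-pointed relative quasimap space (those at the two ends retaining $p_1$ or $p_2$). Since the relative insertion $\mathbf{G}^{-1}\mathcal{F}$ sits at the middle marked point of each such piece, each piece is precisely the kernel representing the PSZ multiplication operator $\mathcal{F}\star(-)$, up to the usual $\mathbf{G}^{-1}$ twist at its output evaluation.

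The degeneration formula of \cite[Proposition 6.5.27]{Oko17} inserts the gluing matrix $\mathbf{G}$ at each of the $k-1$ nodes. These gluing $\mathbf{G}$'s fit together with the $\mathbf{G}^{-1}$ factors built into the definition of the PSZ product to produce a clean telescoping: the composition of the $k$ pieces, each acting via convolution with its kernel followed by $\mathbf{G}^{-1}$, realizes exactly $(\mathcal{F}\star-)^k$. The outer $\mathbf{G}^{-1}$ in the displayed formula above precisely pairs with the final gluing at the $p_2$-end, completing the identification.

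The main technical obstacle is the bookkeeping of the gluing matrices in the relative-degeneration formula: one must verify that each of the $k-1$ internal nodes contributes exactly one factor of $\mathbf{G}$ which cancels against the $\mathbf{G}^{-1}$ appearing in the kernel representation of $\mathcal{F}\star(-)$, leaving no residual gluing contributions. Once this cancellation is made precise, the proof follows tautologically, since for $k=1$ the formula already reads $Q\Psi^1_{\mathcal{F}}|_{q=1} = (\mathcal{F}\star-)$ by inspection, and the inductive step is supplied by the degeneration formula applied to cut off one $p_i'$ at a time.
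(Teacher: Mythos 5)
Your proposal matches the paper's own proof: the paper likewise observes that $q=1$ discards $\mu_k$-equivariance, then degenerates the source $\mathbb{P}^1$ to separate $p_0',\dots,p_{k-1}'$ into $k$ components with $\mathbf{G}^{-1}\mathcal{F}$ inserted at each, invoking the degeneration formula of \cite[Section 6.5]{Oko17}. One small caution on the bookkeeping you flag as the main obstacle: in the paper's conventions the degeneration formula inserts $\mathbf{G}^{-1}$ (not $\mathbf{G}$) at each node (see the parallel argument in Proposition \ref{prop:qadams-equivalence}), and these $\mathbf{G}^{-1}$'s are precisely the twisted inputs built into the PSZ kernel rather than cancelling against them, so the count of factors is $k-1$ node insertions plus the outer $\mathbf{G}^{-1}$ giving the $k$ compositions $A\circ\mathbf{G}^{-1}$ that constitute $(\mathcal{F}\star-)^k$.
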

\begin{proof}
    Setting $q=1$ is equivalent to forgetting the $\mu_k$-actions. In that setting, due to the absence of the $\mu_k$-equivariance requirement, we can freely carry out a step-by-step degeneration argument to separate the marked points $p_0', \dots, p_{k-1}'$ into different irreducible components, with incidence insertion given by $\mathbf{G}^{-1}\mathcal{F}$. For each step, we need to insert the gluing matrix to take account of the accordion insertions, see Figure \ref{fig:hand-wave} for an illustration. The final formula follows from the same reasoning as in \cite[Section 6.5]{Oko17}.
    \begin{figure}
    \begin{centering}
    \includegraphics[width=130mm]{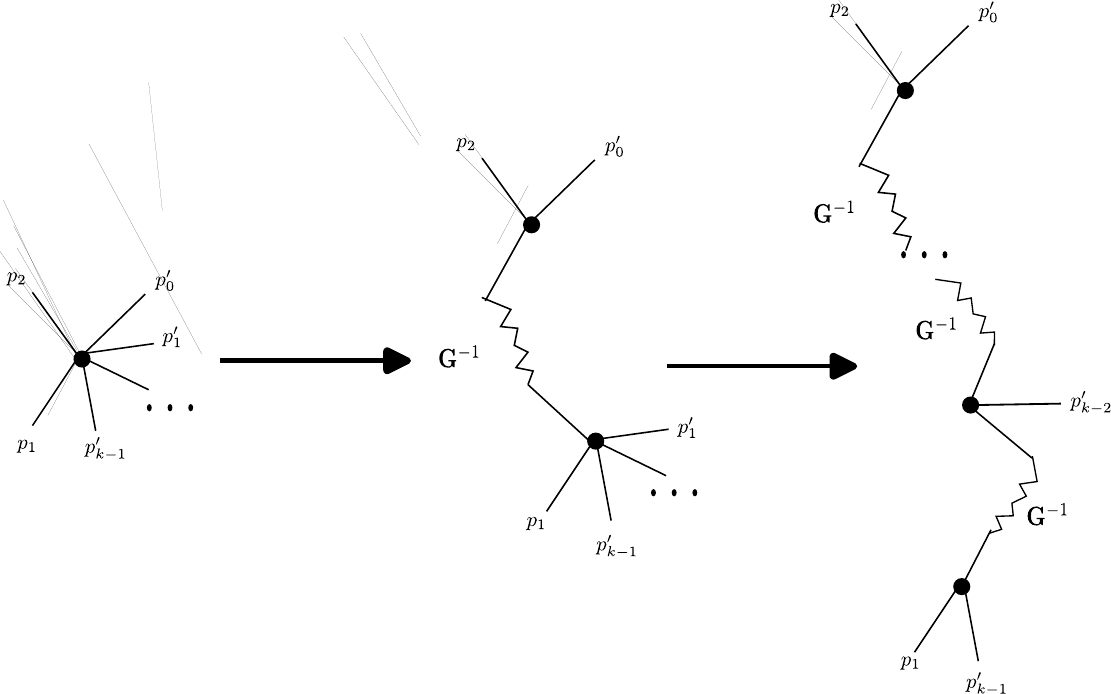}
     \caption{\label{fig:hand-wave}A degeneration argument}
     \end{centering}
     \end{figure}
\end{proof}

\begin{cor}
    The nonequivariant limit of the descendant quantum cyclic $k$=th power $Q\Psi^{desc, k}_\tau|_{q=1}$ coincides with the $k$-fold iteration of the quantum product with the quantum tautological class $\hat{\tau}(z)$.
\end{cor}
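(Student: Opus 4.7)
The plan is to chain together the two preceding results, Proposition \ref{prop:qadams-equivalence} and Proposition \ref{prop:nonequivariant}, with $\mathcal{F} = \hat{\tau}(z)$. Specifically, I would first invoke Proposition \ref{prop:qadams-equivalence} to rewrite
\begin{equation*}
Q\Psi^{desc, k}_\tau \;=\; Q\Psi^k_{\hat{\tau}}
\end{equation*}
as operators on $K_{\mu_k \times \mathbf{T}}(X)[\![z^{\mathrm{eff}}]\!]$. Note that this identification is already valid before taking any limits, so it commutes with the specialization $q=1$.

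Next, I would apply Proposition \ref{prop:nonequivariant} with the insertion $\mathcal{F} = \hat{\tau}(z) \in K_{\mathbf{T}}(X)[\![z^{\mathrm{eff}}]\!]$, which yields
\begin{equation*}
Q\Psi^k_{\hat{\tau}}\big|_{q=1} \;=\; \bigl(\hat{\tau}(z) \star - \bigr)^k
\end{equation*}
as an endomorphism of $K_{\mathbf{T}}(X)[\![z^{\mathrm{eff}}]\!]$. Since $\hat{\tau}(z)$ is itself defined as the $q=1$ limit of the capped vertex $\widehat{V}^{(\tau)}$, this insertion is well-defined in the nonequivariant setting. Combining the two displayed equalities gives the desired statement.

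I do not expect any genuine obstacle here: the content of the corollary is entirely contained in the two propositions already proved. The only mild care needed is to check that the specialization $q \to 1$ is well-behaved on both sides, but this is automatic because the moduli spaces $\mathsf{QM}_d(X)_{\mathrm{rel}\ p_2}$ defining $\hat{\tau}$ (via $\mathrm{ev}_{p_2}$ proper) and $\mathsf{QM}_d^{\mu_k}(X)_{\mathrm{rel}\ p_1,p_2,p'}$ defining $Q\Psi^k_{\hat{\tau}}$ both have proper evaluation maps to $X$ (resp.\ $X \times X$), so the generating series are polynomial in $q$ modulo $q^k-1$ and admit a well-defined $q=1$ specialization.
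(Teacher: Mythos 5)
Your proposal is correct and matches the paper's own proof exactly: the Corollary is stated to follow from combining Proposition \ref{prop:qadams-equivalence} (replacing the descendant insertion $\tau$ by the quantum tautological class $\hat{\tau}$) with Proposition \ref{prop:nonequivariant} (identifying the $q=1$ limit with the $k$-fold quantum product). The extra remarks you add about the $q\to 1$ specialization being well-defined are harmless sanity checks but not required, since the paper treats them as built into the definitions.
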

\begin{proof}
    This follows from combining Proposition \ref{prop:qadams-equivalence} and Proposition \ref{prop:nonequivariant}.
\end{proof}

We then investigate the compatibility between quantum cyclic powers and the PSZ quantum product, which can be viewed as a generalization of the quantum Cartan relation for the quantum Steenrod operation as proven in \cite[Proposition 4.8]{seidel-wilkins}.

\begin{prop}
    Let $\mathcal{F}, \mathcal{G} \in K_{\mathbf{T}}(X)[\![ z^{\mathrm{eff}} ]\!]$, then we have
    \begin{equation}\label{eqn:cartan}
        Q\Psi_{\mathcal{F} \star \mathcal{G}}^k = Q\Psi_{\mathcal{F}}^k \circ Q\Psi_{\mathcal{G}}^k.
    \end{equation}
\end{prop}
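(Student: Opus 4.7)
The plan is to prove \eqref{eqn:cartan} by a $\mu_k$-equivariant degeneration argument in the spirit of Proposition \ref{prop:nonequivariant}, except that the cyclic symmetry is preserved throughout, so the degenerations at all $k$ marked points on the unit circle must be carried out simultaneously and symmetrically rather than one at a time. The two tools I would rely on are the PSZ definition of $\mathcal{F} \star \mathcal{G}$ as a pushforward from the three-pointed quasimap moduli, and a $\mu_k$-equivariant version of Okounkov's degeneration formula \cite[Proposition 6.5.27]{Oko17}, which is obtained by specializing the loop rotation parameter $q$ to $k$-th roots of unity exactly as in Lemma \ref{lemma:trivial}.

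First I would re-interpret the insertion $\mathbf{G}^{-1}(\mathcal{F} \star \mathcal{G})$ appearing at each $p'_i$ in the moduli $\mathsf{QM}^{\mu_k}_d(X)_{\mathrm{rel}\ p_1,p_2,p'}$ defining the LHS. Unwinding the PSZ definition and applying the degeneration formula at all $p'_i$ at once, the LHS is identified with the pushforward of $\hat{\mathcal{O}}_{\mathrm{vir}}$ from a $\mu_k$-equivariant nodal moduli $\mathcal{M}_1$: the main $\mathbb{P}^1_{\zeta_k}$ with $p_1, p_2, p'_i$ relative, together with a three-pointed rational bubble $B_i$ attached at each $p'_i$, carrying $\mathbf{G}^{-1}\mathcal{F}$ at one relative tip and $\mathbf{G}^{-1}\mathcal{G}$ at another, with the $\mu_k$-action cyclically permuting the bubbles $B_0, \dots, B_{k-1}$.

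Second, I would express the RHS via the convolution structure of the operators. The outer $\mathbf{G}^{-1}$ factor of $Q\Psi^k_{\mathcal{F}}$ composes with $Q\Psi^k_{\mathcal{G}}$ to produce, by the same degeneration formula, a single intermediate gluing matrix $\mathbf{G}^{-1}$ between the two constituent moduli. Thus the RHS is the pushforward from a nodal moduli $\mathcal{M}_2$: two parametrized copies of $\mathbb{P}^1_{\zeta_k}$ glued at a single node, with $k$ relative marked points at roots of unity on each copy carrying $\mathbf{G}^{-1}\mathcal{F}$ and $\mathbf{G}^{-1}\mathcal{G}$ insertions respectively, and external relative points $p_1$, $p_2$ on opposite ends. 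The key geometric identification is then that $\mathcal{M}_1$ and $\mathcal{M}_2$ arise as two different $\mu_k$-equivariant degenerations of a common nodal configuration, obtained by linearizing each $B_i$ into a chain ``$\mathcal{F}$-bubble — intermediate node — $\mathcal{G}$-bubble'' and then grouping all $k$ $\mathcal{G}$-bubbles into a separate parametrized component joined through a single central node. Invariance of the virtual pushforward under passage to a common degeneration, together with the $\mu_k$-equivariance of the whole construction, yields \eqref{eqn:cartan}.

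The main obstacle is the $\mu_k$-equivariant bookkeeping of the gluing contributions during the rearrangement between $\mathcal{M}_1$ and $\mathcal{M}_2$: the former carries $k$ nodes (one per $p'_i$) while the latter carries a single central node, so the combinatorics of bubble linearization and regrouping must be carried out carefully to match the number and weight of $\mathbf{G}^{-1}$ factors on both sides. I expect this to follow from iterated application of the $\mu_k$-equivariant degeneration formula once the cyclic symmetry is preserved, analogously to how the non-equivariant step-by-step argument of Proposition \ref{prop:nonequivariant} produces the $k$-fold product $(\mathcal{F}\star -)^k$ upon forgetting the $\mu_k$-action.
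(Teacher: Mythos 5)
Your high-level strategy — a $\mu_k$-equivariant degeneration argument comparing two nodal configurations, one computing each side of \eqref{eqn:cartan} — is the same approach the paper takes, and your descriptions of $\mathcal{M}_1$ and $\mathcal{M}_2$ are essentially correct: $\mathcal{M}_1$ (main curve with a three-pointed bubble $B_i$ at each root of unity carrying $\mathcal{F}, \mathcal{G}$) matches the paper's $D_1^{\mathrm{cyc}}$ configuration, and $\mathcal{M}_2$ (two parametrized $\mathbb{P}^1_{\zeta_k}$'s joined at a central node) matches $D_0^{\mathrm{cyc}}$. Your reduction of the LHS and RHS to pushforwards from these two moduli via the $\mu_k$-equivariant degeneration formula is sound.

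The gap is in the final step connecting the two. You assert that $\mathcal{M}_1$ and $\mathcal{M}_2$ ``arise as two different $\mu_k$-equivariant degenerations of a common nodal configuration,'' obtained by linearizing the bubbles $B_i$ and then regrouping the $\mathcal{G}$-bubbles into a separate parametrized component. But this does not describe a degeneration: splitting each $B_i$ into a chain is indeed a degeneration of $\mathcal{M}_1$, while ``grouping all $k$ $\mathcal{G}$-bubbles into a separate parametrized component'' is a combinatorial rearrangement, not a further degeneration — and $\mathcal{M}_2$ does not degenerate to that split configuration either (degeneration can only add components, never absorb $k$ bubbles into one new parametrized component). So there is no common nodal configuration of which both are degenerations, and the invoked ``invariance of virtual pushforward'' has nothing to apply to.

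What is actually needed is a $\mu_k$-equivariant version of the \emph{WDVV mechanism}: one must construct a one-parameter family $\overline{\mathcal{M}}_0^{\mathrm{cyc},k} \cong \mathbb{P}^1$ whose generic fiber is a smooth $\mathbb{P}^1$ with $0, \infty$ and two $\mu_k$-orbits of marked points, and whose boundary fibers over two divisors $D_1^{\mathrm{cyc}}$ and $D_0^{\mathrm{cyc}}$ are exactly $\mathcal{M}_1$ and $\mathcal{M}_2$. The paper realizes this family as the fiberwise degree-$k$ cyclic branched cover of the universal curve $\overline{\mathcal{C}}_{0,4} \to \overline{\mathcal{M}}_{0,4}$, ramified of order $k$ over the marked points $0$ and $\infty$; the Galois $\mu_k$-action then descends to the $\mu_k$-action cyclically permuting the bubbles (resp.\ the marked points) on the two boundary fibers. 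The equality \eqref{eqn:cartan} then follows from the linear equivalence $D_0^{\mathrm{cyc}} \sim D_1^{\mathrm{cyc}}$ on $\overline{\mathcal{M}}_0^{\mathrm{cyc},k} \cong \mathbb{P}^1$, not from a common degeneration. Identifying this branched cover family is the missing key idea: it is exactly what makes the $\mu_k$-equivariant bookkeeping you flag as the ``main obstacle'' tractable, since the cyclic symmetry is built into the cover from the start.
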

\begin{proof}
    The proof can be viewed as a $\mu_k$-equivariant analog of the proof of associativity of the quantum product, aka the WDVV relation. 
    
    Recall that the Deligne--Mumford space $\overline{\mathcal{M}}_{0,4} \cong \mathbb{P}^1$, where three of the marked points can be normalized to $\{0,1, \infty\}$ while the fourth marked point $p$ parametrizes $\overline{\mathcal{M}}_{0,4}$, which contains the three distinguished divisors $D_0, D_1, D_{\infty}$ where they represent an reducible genus $0$ curve such that $p$ lies in the irreducible component containing $0, 1, \infty$ respectively. Over the locus $\overline{\mathcal{M}}_{0,4} \setminus (D_0 \cup D_1 \cup D_{\infty})$, we can take the fiberwise genus $0$ branced cover of the universal curve $\overline{\mathcal{C}}_{0,4}$ of degree $k$ ramified at $0$ and $\infty$ of order $k$ so that the preimages of $p$ and $1$ both consist of $k$ points, denoted by $\{p_0, \dots, p_{k-1}\}$ and $\{p_0', \dots, p_{k-1}'\}$ respectively. In fact, such a construction extends to the whole family $\overline{\mathcal{C}}_{0,4} \to \overline{\mathcal{M}}_{0,4}$. The reducible marked curves $\overline{\mathcal{C}}_{0,4}|_{D_1}$ and $\overline{\mathcal{C}}_{0,4}|_{D_0}$ are represented by curves depicted in Figure \ref{fig:hand-wave-2}.
        \begin{figure}
    \begin{centering}
    \includegraphics[width=130mm]{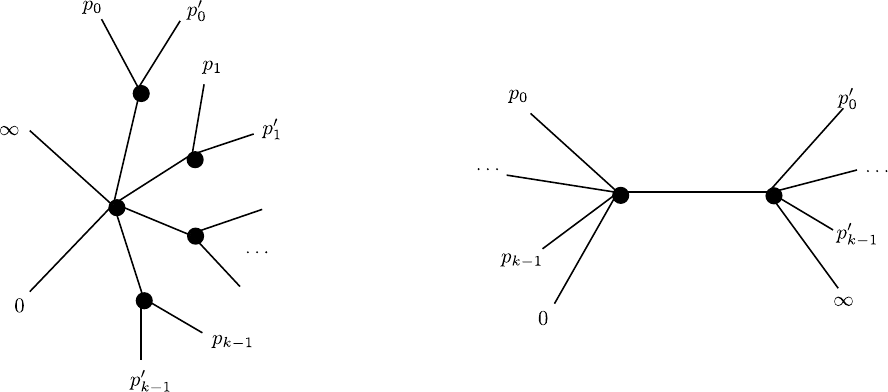}
     \caption{\label{fig:hand-wave-2}Branched covers of the reducible curves $\overline{\mathcal{C}}_{0,4}|_{D_1}$ and $\overline{\mathcal{C}}_{0,4}|_{D_0}$}
     \end{centering}
     \end{figure}
     Here, the Galois group of the covering permutes the irreducible components containing $\{ p_i, p_i' \}_{0 \leq i \leq k-1}$ cyclically for the first curve, while permutes $\{p_0, \dots, p_{k-1}\}$ and $\{p_0', \dots, p_{k-1}'\}$ cyclically for the second curve. We denote the family by $\overline{\mathcal{M}}_{0}^{\mathrm{cyc}, k} \cong \mathbb{P}^1$ with universal curve $\overline{\mathcal{C}}_{0}^{\mathrm{cyc}, k} \to \overline{\mathcal{M}}_{0}^{\mathrm{cyc}, k}$, which has two distinguished divisors $D_0^{\mathrm{cyc}}$ and $D_1^{\mathrm{cyc}}$.

     Taking the fibers of $\overline{\mathcal{C}}_{0}^{\mathrm{cyc}, k} \to \overline{\mathcal{M}}_{0}^{\mathrm{cyc}, k}$ to be the rigid component of quasimaps, we further impose relative degenerations at the marked points and the nodes. By imposing insertions $\mathcal{G}$ at $\{p_0, \dots, p_{k-1}\}$ and $\mathcal{F}$ at $\{p_0', \dots, p_{k-1}'\}$, we see the linear equivalence between $D_0^{\mathrm{cyc}}$ and $D_1^{\mathrm{cyc}}$ implies the Equation \eqref{eqn:cartan}.
\end{proof}

\begin{rem}
    The Koszul sign in the cohomological setting \cite[Proposition 4.8]{seidel-wilkins} does not show up here, as one can see from the fact that the classical limit $z=0$ and the $q = \zeta_k$ specialization recovers the relation $\psi^k(\mathcal{F} \otimes \mathcal{G}) = \psi^k(\mathcal{F}) \otimes \psi^k(\mathcal{G})$.
\end{rem}

\subsubsection{Compatibility with $q$-difference modules}\label{sec:qdiff}
As a final topic of this subsection, we show that the quantum cyclic powers are compatible with the $q$-difference equations in quantum $K$-theory with respect to the K\"ahler (cf. Definition \ref{defn:equiv-shift}) and equivariant variables (cf. Definition \ref{defn:equiv-shift}). These should be considered as $K$-theoretic analogues of the main results of \cite{seidel-wilkins} and \cite{Lee23b}.

We first deal with the covariant constancy of the quantum cyclic powers under the K\"ahler shifts.
\begin{thm}\label{thm:kahler-covariant-constancy}
    The quantum cyclic powers are covariantly constant with respect to the K\"ahler $q$-difference equation, that is, for any $\mathcal{F} \in K_{\mathbf{T}}(X)[\![ z^{\mathrm{eff}} ]\!]$ and $L \in \mathrm{Pic}^G(W)$,
    \begin{equation}
        Q\Psi^k_{\mathcal{F}}(q^L z) \circ M_L^k(z) = M^k_L(z) \circ Q\Psi^k_{\mathcal{F}}(z).
    \end{equation}
\end{thm}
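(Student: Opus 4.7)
The strategy is to realize both sides of the equation as pushforwards from a single master moduli space via two distinct degenerations, with the K\"ahler shift $z \mapsto q^L z$ arising from a standard line bundle identity in $\mu_k$-equivariant $K$-theory. This is the $\mu_k$-equivariant quasimap analog of the K\"ahler compatibility of shift operators in \cite[Section 8]{Oko17}, and it parallels the degeneration/WDVV scheme used in the proof of the quantum Cartan relation.

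Concretely, I would introduce the auxiliary operator
\[
A_P(z) \,:=\, \sum_d z^d\, (\mathrm{ev}_{p_1} \times \mathrm{ev}_{p_2})_*\!\left(\mathsf{QM}_d^{\mu_k}(X)_{\mathrm{rel}\, p_1, p_2, p'},\, \hat{\mathcal{O}}_{\mathrm{vir}} \otimes \mathrm{ev}_k^*((\mathbf{G}^{-1}\mathcal{F})^{\boxtimes k}_{eq}) \otimes \det H^\bullet(L \otimes \pi^* \mathcal{O}_P)\right) \circ \mathbf{G}^{-1}
\]
for an auxiliary point $P$ on the parametrized component of the universal curve. This packages the insertions defining both $Q\Psi^k_\mathcal{F}$ and $M_L^k$ into a single moduli-theoretic quantity. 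When $P = p_1$, a degeneration splitting off the $p_1$-bubble (which inherits the $L$-insertion and yields an $M_L^k$-factor) from the remaining parametrized curve with its $p_i'$-bubbles (carrying the $\mathcal{F}$-insertions and yielding a $Q\Psi^k_\mathcal{F}$-factor) identifies $A_{p_1}(z) = M_L^k(z) \circ Q\Psi^k_\mathcal{F}(z)$; the gluing matrix $\mathbf{G}$ at the node absorbs into one of the $\mathbf{G}^{-1}$ factors, in exactly the same pattern as in the proof of Proposition~\ref{prop:qadams-equivalence}. The symmetric degeneration for $P = p_2$ yields a corresponding factorization with the $L$-insertion placed at $p_2$ rather than at $p_1$, giving $Q\Psi^k_\mathcal{F}(z) \circ M_L^k(z)$ up to the location of that insertion.

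The K\"ahler shift emerges from the identity
\[
\det H^\bullet(L \otimes \pi^* \mathcal{O}_{p_1})\big|_{\mathrm{deg}\, d} \,=\, q^{\langle L, d \rangle} \cdot \det H^\bullet(L \otimes \pi^* \mathcal{O}_{p_2})\big|_{\mathrm{deg}\, d}
\]
in $K_{\mu_k \times \mathbf{T}}(\mathsf{QM}_d^{\mu_k}(X)_{\mathrm{rel}\, p_1, p_2, p'})$, a consequence of the exact sequence on the universal curve relating the skyscrapers at $p_1$ and $p_2$ twisted by $L$, together with the distinct $\mu_k$-weights of the tangent directions at the two $\mu_k$-fixed points of $\mathbb{P}^1$. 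Substituting this identity into the $P=p_2$ expression introduces a factor of $q^{\langle L, d \rangle}$ for each term of degree $d$, which, when reinterpreted via $z^d \mapsto q^{\langle L, d\rangle} z^d$, is precisely the substitution $z \mapsto q^L z$ in the $Q\Psi$-factor; this yields $A_{p_2}(z) = Q\Psi^k_\mathcal{F}(q^L z) \circ M_L^k(z)$. Since $A_{p_1}$ and $A_{p_2}$ differ only by the location of the auxiliary determinant-of-cohomology insertion, the line bundle identity forces $A_{p_1}(z) = A_{p_2}(z)$, proving the theorem.

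The main technical obstacle will be tracking $\mu_k$-equivariance cleanly through the degeneration formula, especially ensuring that the cancellation of gluing matrices respects the $\mu_k$-action and that the key line bundle identity carries the correct sign in the weight exponent under our polarization conventions. Once these are in place, the identity also implies the descendant counterpart of the theorem for $Q\Psi^{desc,k}_\tau$, either by running the same argument on $\mathsf{QM}_d^{\mu_k}(X)_{\mathrm{rel}\, p_1, p_2}$ with a descendant insertion $\tau^{\boxtimes k}_{eq}$, or by combining the relative case with Proposition \ref{prop:qadams-equivalence}.
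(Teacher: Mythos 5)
Your strategy is essentially the paper's: both proofs hinge on the line bundle identity $q^{\langle L,d\rangle} = \det H^\bullet\bigl(f^*L \otimes \pi^*(\mathcal{O}_{p_1} - \mathcal{O}_{p_2})\bigr)$ to convert the K\"ahler shift $z\mapsto q^L z$ into an extra determinant-of-cohomology insertion, followed by a $\mu_k$-equivariant degeneration. The paper degenerates once, simultaneously at both poles, $C \curly C_1 \cup C' \cup C_2$, which places the $\mathcal{O}_{p_1}$-piece of the insertion on $C_1$ (giving $M_L^k(z)$), the $-\mathcal{O}_{p_2}$-piece on $C_2$ (giving $M_L^k(z)^{-1}$), and leaves $C'$ insertion-free to recover $Q\Psi^k_\mathcal{F}(z)$; you instead package the two halves as auxiliary operators $A_{p_1}$, $A_{p_2}$. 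That is a legitimate reorganization, but the final bookkeeping has a gap.

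Concretely, the line bundle identity does \emph{not} give $A_{p_1}(z) = A_{p_2}(z)$: since $\det H^\bullet(L \otimes \pi^*\mathcal{O}_{p_1})\big|_{\deg d} = q^{\langle L,d\rangle}\det H^\bullet(L \otimes \pi^*\mathcal{O}_{p_2})\big|_{\deg d}$, what it gives is $A_{p_1}(z) = A_{p_2}(q^L z)$. Relatedly, your claim $A_{p_2}(z) = Q\Psi^k_\mathcal{F}(q^L z)\circ M_L^k(z)$ does not follow directly from the $p_2$-degeneration: splitting off the $p_2$-chain places the $L$-insertion at the \emph{input} end of the resulting K\"ahler-shift factor (at $p_2$, whereas the operator $M_L^k$ is defined with the insertion at $p_1$), so the $q$-reweighting the line bundle identity produces lands in the degree of the $M_L^k$-factor, not the $Q\Psi$-factor as you asserted. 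The correct outputs of the two degenerations are $A_{p_1}(z) = M_L^k(z)\circ Q\Psi^k_\mathcal{F}(z)$ and $A_{p_2}(z) = Q\Psi^k_\mathcal{F}(z)\circ M_L^k(q^{-L}z)$; combined with $A_{p_1}(z) = A_{p_2}(q^L z)$ these do yield the theorem, so the argument is salvageable. The paper's three-piece degeneration with the full $\mathcal{O}_{p_1} - \mathcal{O}_{p_2}$ insertion sidesteps exactly this tracking issue by leaving the middle piece free of the $L$-insertion, which is why no $q$-shift needs to be chased through the $Q\Psi$-factor there.
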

\begin{proof}
    It is equivalent to proving
    \begin{equation}
        Q\Psi^k_{\mathcal{F}}(q^L z) = M^k_L(z) \circ Q\Psi^k_{\mathcal{F}}(z) \circ M_L^k(z)^{-1}.
    \end{equation}
    Note that for a destabilization $\pi: C \to \mathbb{P}^1$ which is the domain of a quasimap $f: C \to \mathfrak{X}$ in $\mathsf{QM}_d^{\mu_k}(X)_{\mathrm{rel} \  p_1,p_2, p'}$, we have
    \begin{equation}
        q^{\langle L,d \rangle} = \det H^{\bullet}(f^*L \otimes \pi^*(\mathcal{O}_{p_1} - \mathcal{O}_{p_2})).
    \end{equation}
    Plug this into \eqref{eqn:power}, we see that
    \begin{equation}
    \begin{aligned}
        &Q\Psi^k_{\mathcal{F}}(q^L z) = \sum_{d} z^d (\mathrm{ev}_{p_1} \times \mathrm{ev}_{p_2})_* \\
        &\left( \mathsf{QM}_d^{\mu_k}(X)_{\mathrm{rel} \  p_1,p_2, p'}, \hat{\mathcal{O}}_{\mathrm{vir}} \otimes \mathrm{ev}_k^* ((\mathbf{G}^{-1}\mathcal{F})^{\boxtimes k}_{eq}) \otimes \det H^{\bullet}(L \otimes \pi^*(\mathcal{O}_{p_1} - \mathcal{O}_{p_2})) \right) \circ \mathbf{G}^{-1},
    \end{aligned}
    \end{equation}
    where we abuse the notation to write $L$ as the pullback of $L$ under the evaluation map over the universal family. Then we simply ($\mu_k$-)equivariantly degenerate the curve $C$ at both $p_1$ and $p_2$
    \begin{equation}
        C \curly C_1 \cup C' \cup C_2,
    \end{equation}
    where $C_1$ and $C_2$ contains $p_1$ and $p_2$ respectively, while the component $C'$ contains the marked points $p_0', \dots, p_{k-1}'$. Then the degeneration formula reads
    \begin{equation}
        (\mathrm{ev}_{p_1} \times \mathrm{ev}_{p_2})_* \big( \mathsf{QM}_{d_1}(X)_{\mathrm{rel} \ p_1, p_2}, \mathrm{det} H^{\bullet}(L \otimes \pi^*(\mathcal{O}_{p_1})) \big) \big)\circ \mathbf{G}^{-1}
    \end{equation}
    for the $C_1$-component for some $d_1$, producing the connection coefficient for K\"ahler shift, while the minus sign in front of $\mathcal{O}_{p_2}$ means that assembling the contributions from $C_2$-component produces $M_L^k(z)^{-1}$. As for the $C'$-component, that is exactly the configuration which defines the quantum cyclic power operator. This finishes the proof.
\end{proof}

\begin{cor}
    The descendant quantum cyclic powers are covariantly constant with respect to the K\"ahler $q$-difference connection, that is, for any $\tau \in K_{\mathbf{T}}(\mathfrak{X})$ and $L \in \mathrm{Pic}^G(W)$,
    \begin{equation}
        Q\Psi_\tau^{desc, k}(q^L z) \circ M_L^k(z) = M^k_L(z) \circ Q\Psi_\tau^{desc, k}(z).
    \end{equation}
\end{cor}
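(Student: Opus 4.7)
The plan is to derive this corollary immediately from the preceding \cref{thm:kahler-covariant-constancy} together with \cref{prop:qadams-equivalence}, which together provide all the geometric content needed. First I would note that the quantum tautological class $\hat{\tau}(z)$ attached to any $\tau \in K_{\mathbf{T}}(\mathfrak{X})$ lives in $K_{\mathbf{T}}(X)[\![z^{\mathrm{eff}}]\!]$, which is precisely the range of inputs for the relative quantum cyclic power $Q\Psi^k_{(-)}$. Then \cref{prop:qadams-equivalence} yields the identity of operators
\begin{equation*}
Q\Psi^{desc, k}_\tau = Q\Psi^k_{\hat{\tau}}
\end{equation*}
in $\mathrm{End}(K_{\mu_k \times \mathbf{T}}(X))[\![ z^{\mathrm{eff}} ]\!]$, so that the descendant operator is re-expressed as an ordinary relative quantum cyclic power whose insertion is the $z$-dependent class $\hat{\tau}(z)$.

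With this reformulation in hand, the statement reduces to a direct specialization of \cref{thm:kahler-covariant-constancy}: substituting $\mathcal{F} = \hat{\tau}$ in the identity
\begin{equation*}
Q\Psi^k_{\mathcal{F}}(q^L z) \circ M_L^k(z) = M^k_L(z) \circ Q\Psi^k_{\mathcal{F}}(z)
\end{equation*}
and rewriting each $Q\Psi^k_{\hat{\tau}}$ as $Q\Psi^{desc, k}_\tau$ produces exactly the claim. No further degeneration argument or manipulation of virtual structure sheaves is required beyond what already appears in the proofs of \cref{thm:kahler-covariant-constancy} and \cref{prop:qadams-equivalence}.

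The only subtlety worth mentioning — rather than a true obstacle — is that the $q^L z$-shift acts on both the explicit $z$ in the formula \eqref{eqn:power} and on the $z$-dependent insertion $\hat{\tau}(z)$, so one should verify that the equality $Q\Psi^{desc, k}_\tau = Q\Psi^k_{\hat{\tau}}$ is genuinely an equality of operators depending on the K\"ahler variable, not merely a pointwise identification at some fixed $z$. This is automatic from the proof of \cref{prop:qadams-equivalence}, where the degeneration of the parametrized $\mathbb{P}^1$-component at the $\mu_k$-fixed points $p_1, p_2$ is carried out universally in $z$ and the resulting identification holds coefficient-by-coefficient in $z^{\mathrm{eff}}$. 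Hence the substitution $z \mapsto q^L z$ commutes with the identification, and the corollary follows.
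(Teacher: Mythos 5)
Your proof is correct and takes essentially the same route as the paper, which likewise derives the corollary by setting $\mathcal{F} = \hat{\tau}(z)$ in \cref{thm:kahler-covariant-constancy} and invoking \cref{prop:qadams-equivalence}. The observation about the $q^L z$-shift acting on the $z$-dependence of $\hat{\tau}$ is a sensible sanity check, correctly resolved by noting that the identity $Q\Psi^{desc,k}_\tau = Q\Psi^k_{\hat{\tau}}$ holds coefficientwise in $z^{\mathrm{eff}}$.
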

\begin{proof}
    This follows from Theorem \ref{thm:kahler-covariant-constancy} and Proposition \ref{prop:qadams-equivalence} by setting $\mathcal{F} = \hat{\tau}(z)$.
\end{proof}

We then show that the quantum cyclic power is convariantly constant under the $q$-difference connection defined by the equivariant shifts.

\begin{thm}
    For any $\mathcal{F} \in K_{\mathbf{T}}(X)[\![ z^{\mathrm{eff}} ]\!]$ and cocharacter $\sigma: \mathbb{C}^{\times} \to \mathbf{T}$, we have
    \begin{equation}
        Q\Psi^k_{\mathcal{F}}(q^{\sigma} a) \circ S_{\sigma} = S_{\sigma} \circ Q\Psi^k_{\mathcal{F}}(a),
    \end{equation}
    where $a$ denotes the equivariant variables.
\end{thm}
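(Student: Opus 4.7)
The plan is to mirror the proof of Theorem \ref{thm:kahler-covariant-constancy}, with the role of the K\"ahler determinant insertion $\det H^\bullet(L \otimes \pi^*(\mathcal{O}_{p_1} - \mathcal{O}_{p_2}))$ replaced by the $\sigma$-twist in the target family $[W/G]^\sim_\sigma \to \mathbb{P}^1$.

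First I would work with a $\mu_k$-equivariant version of the $\sigma$-twisted moduli space of quasimaps from $\mathbb{P}^1_{\zeta_k}$ to $X$, with relative degenerations at $p_1$, $p_2$ and at each $p'_i$. The $\mu_k$-action on $\mathbb{P}^1_{\zeta_k}$ lifts canonically to $[W/G]^\sim_\sigma$ through the second factor of the fiber product defining the family, so this moduli space carries a $\mu_k$-action cyclically permuting the bubbles at the $p'_i$. Equipping it with the twisted virtual structure sheaf and the insertion $\mathrm{ev}_k^*((\mathbf{G}^{-1}\mathcal{F})^{\boxtimes k}_{eq})$ at the $p'_i$'s, applying $(\mathrm{ev}_{p_1} \times \mathrm{ev}_{p_2})_*$, and composing with $\mathbf{G}^{-1}$, I obtain a well-defined operator $E \in \mathrm{End}(K_{\mu_k \times \mathbf{T}}(X)_{loc})[\![z^{\mathrm{eff}}]\!]$.

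Next I would equivariantly degenerate the parametrized component into $C_1 \cup C' \cup C_2$, where $C_1$ contains $p_1$, $C_2$ contains $p_2$, and $C'$ carries the $\mu_k$-symmetric points $p'_0, \ldots, p'_{k-1}$. The $\sigma$-twist of the target is classified by the degree of the associated $\mathbb{C}^\times$-bundle on each irreducible component, so it can be concentrated entirely on $C_1$ or entirely on $C_2$; both distributions are $\mu_k$-invariant since $\mu_k$ fixes $C_1$ and $C_2$ setwise. Placing the twist on $C_1$, the degeneration formula assembles the $C_1$-contribution into $S_\sigma$, and the $C'$-contribution into $Q\Psi^k_{\mathcal{F}}$ evaluated at $q^\sigma a$ (because $C'$ inherits its $\mathbf{T}$-equivariant parameter from $C_1$ through the shared node, where the twist has already been absorbed), while the $C_2$-contribution is trivial. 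Placing the twist on $C_2$ instead yields the identity on $C_1$, $Q\Psi^k_{\mathcal{F}}(a)$ on $C'$, and $S_\sigma$ from $C_2$. Equating the two expressions for $E$ produces the desired identity.

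The hard part will be the $\mu_k$-equivariant refinement of the degeneration formula \cite[Proposition 6.5.27]{Oko17} for $\sigma$-twisted quasimaps, and specifically the verification that redistributing the $\sigma$-twist among irreducible components is compatible with the $\mu_k$-equivariant twisted virtual structure sheaf. This should be tractable because $\mu_k$ acts trivially on both $C_1$ and $C_2$ and preserves $C'$ setwise, so the twist redistribution respects $\mu_k$-equivariance automatically, and the remaining bookkeeping parallels the K\"ahler case.
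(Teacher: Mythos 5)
Your approach does not match the paper's, and more importantly it has a genuine conceptual gap. The paper's proof does \emph{not} mirror the three-component degeneration used for Theorem \ref{thm:kahler-covariant-constancy}. Instead it performs \emph{two separate two-component degenerations} of the source curve: in the first, the $\mu_k$-symmetric marked points $p'_0,\dots,p'_{k-1}$ coalesce with $p_1$ and bubble off onto a component $C_1$ that maps to the fiber $X_1$ of $E_\sigma$ over $p_1$ (which carries the $\sigma$-shifted $\mathbf{T}$-equivariant structure), while the remaining parametrized component $C'$ (with the twist) contributes $S_\sigma$; this gives $Q\Psi^k_{\mathcal F}(q^\sigma a)\circ S_\sigma$. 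In the second, the $p'_i$ coalesce with $p_2$ onto a bubble $C_2$ mapping to the \emph{untwisted} fiber $X_2$, yielding $S_\sigma\circ Q\Psi^k_{\mathcal F}(a)$. Equating the two boundary computations gives the identity.

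The key step in your proposal --- ``the $\sigma$-twist can be concentrated entirely on $C_1$ or entirely on $C_2$'' --- does not hold. The twist is not a $K$-theory class on the source (unlike $\det H^\bullet(L\otimes\pi^*(\mathcal O_{p_1}-\mathcal O_{p_2}))$ in the K\"ahler case, which genuinely factors into contributions at $p_1$ and at $p_2$); it is the structure of the target fibration $E_\sigma\to\mathbb P^1$, rigidly anchored to the parametrized component. In a degeneration $C_1\cup C'\cup C_2$ where $C_1$, $C_2$ are bubbles and $C'$ is the parametrized component carrying the $p'_i$, the bubbles map to fixed fibers of $E_\sigma$, and $C'$ maps to all of $E_\sigma$. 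There is no freedom to move the twist onto a bubble; the ``degree distribution'' you invoke is determined by which component is parametrized and cannot be varied. Consequently the $C'$-contribution in your picture is the whole operator you started with (the twist \emph{and} the $p'_i$-insertions together), not a factor $Q\Psi^k_{\mathcal F}$ at a shifted parameter, and the factorization into $S_\sigma\circ Q\Psi^k_{\mathcal F}$ never occurs. The remedy, as in the paper, is to instead push the $p'_i$ off the parametrized component onto a bubble via the $\mu_k$-equivariant $\mathbb C^\times$-scaling, so that the twist and the $p'_i$-insertions become separated by the node and the factorization is geometrically visible.
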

\begin{proof}
    The proof, as above, uses degeneration for an operator defined through counts of ($\sigma$-)twisted quasimaps. Consider the moduli space $\mathsf{QM}_d^{\sigma}(X)_{\mathrm{rel} \ p_1, p_2, p'}$, which carries relative evaluation maps
    \begin{equation}
        \mathrm{ev}_{p_1} \times \mathrm{ev}_k \times \mathrm{ev}_{p_2} : \mathsf{QM}_d^{\sigma}(X)_{\mathrm{rel} \ p_1, p_2, p'} \to E_\sigma \times E_\sigma^{\times k} \times E_\sigma
    \end{equation}
    where $E_\sigma \to \mathbb{P}^1$ is the fibration over $\mathbb{P}^1$ with fibers isomorphic to the stack $\mathfrak{X}$. We fix the equivariant structure so that $\mathbb{C}^\times_q$ acts trivially on the fiber over $p_2$, and acts by $\sigma$ on the fiber over $p_1$. The evaluation map at $p_i$ takes value at the fiber of $E_\sigma \to \mathbb{P}^1$ over $p_i \in \mathbb{P}^1$. We consider the relative compactification, hence the evaluation maps in fact land in the stable locus $X$.  Indeed, $\mathrm{ev}_k$ (considered as a map to $E_\sigma^k$) is $\mu_k$-equivariant with respect to the loop rotation of quasimaps and the cyclic permutation on the targets.
    
    Then one can consider the degeneration of the operator
    \begin{equation}
        \sum_ d z^d (\mathrm{ev}_{p_1} \times \mathrm{ev}_{p_2} )_*  \left(  \hat{\mathcal{O}}_{\mathrm{vir} ,\mathsf{QM}_d^{\sigma} (X)} \cdot   \mathrm{ev}_k^* (\mathbf{G}^{-1}\mathcal{F})_{eq}^{\boxtimes k} \right) \mathbf{G}^{-1}
    \end{equation}
    in two different ways. First we may degenerate the source curve $C$ at $p_1$ to  \begin{equation}
        C \curly C_1 \cup C'
    \end{equation}
    so that $C_1$ contains $p_1$ and the marked points $p_0', \dots, p'_{k-1}$ and maps to the fiber of $E_\sigma \to \mathbb{P}^1$ over $p_1 \in \mathbb{P}^1$ denoted by $X_1$, while $C'$ carries two marked points at the poles, one identified with $p_2$. The $\mu_k$-action rotates the two components simultaneously.

    The degeneration formula yields the operator
    \begin{equation}
       \sum_{d_1} z^{d_1} (\mathrm{ev}_{p_1} \times \mathrm{ev}_{p'} )_*  \left(  \hat{\mathcal{O}}_{\mathrm{vir} ,\mathsf{QM}_d (X_1)} \cdot   \mathrm{ev}_k^* (\mathbf{G}^{-1}\mathcal{F})_{eq}^{\boxtimes k} \right) \mathbf{G}^{-1}
    \end{equation}
    for the operator in $C_1$, which is exactly the configuration defining the quantum cyclic $k$th power operation, with the only difference being that $\mu_k$ does not act trivially on $X_1$. Since the identity map $X_1 \cong X$ with $X$ equipped with the trivial $\mu_k$-action is $\mu_k \times \mathbf{T}$-equivariant with respect to the group homomorphism $\mu_k \times \mathbf{T} \to \mu_k \times \mathbf{T}$ given by $(\eta, \theta) \mapsto (\eta, \theta + \sigma(\eta))$ for the cocharacter $\sigma$, this operator can be identified with $Q\Psi^k_\mathcal{F} (q^\sigma a)$.  The operator corresponding to $C'$-component is $S_\sigma$.

    Alternatively we may degenerate the source curve $C$ at $p_2$ to
    \begin{equation}
        C \curly C' \cup C_2
    \end{equation}
    where now $C_2$ contains $p_2$ and the marked point $p_0' , \dots, p'_{k-1}$ and maps to the fiber $X_2$ over $p_2 \in \mathbb{P}^1$. In this case, the operator corresponding to $C_2$ is exactly $Q\Psi^k_\mathcal{F}$, since the $\mu_k$-action on $X_2$ is already trivial. The operator corresponding to $C'$-component is $S_\sigma$.

    By comparing the two degenerations of the same operator, we arrive at the desired result.

\end{proof}
The following corollary is evident.

\begin{cor}
    The descendant quantum cyclic powers are covariantly constant with respect to the K\"ahler $q$-difference connection, i.e., for any $\tau \in K_{\mathbf{T}}(\mathfrak{X})$ and cocharacter $\sigma: \mathbb{C}^{\times} \to \mathbf{T}$,
    \begin{equation}
        Q\Psi_\tau^{desc, k}(q^{\sigma} a) \circ S_{\sigma} = S_{\sigma} \circ Q\Psi_\tau^{desc, k}(a).
    \end{equation}
\end{cor}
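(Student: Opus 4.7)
The strategy is to deduce this corollary directly from the preceding theorem by specializing $\mathcal{F}$ to the quantum tautological class and invoking Proposition~\ref{prop:qadams-equivalence}, in exact parallel with how the K\"ahler-shift version of the corollary was deduced from Theorem~\ref{thm:kahler-covariant-constancy}. Explicitly, Proposition~\ref{prop:qadams-equivalence} provides the operator identity $Q\Psi^{desc,k}_\tau = Q\Psi^k_{\hat{\tau}}$ where $\hat{\tau}(z) \in K_\mathbf{T}(X)[\![z^{\mathrm{eff}}]\!]$ is the quantum tautological class associated to $\tau$. Substituting $\mathcal{F} = \hat{\tau}$ into the equation of the preceding theorem yields
\begin{equation}
    Q\Psi^k_{\hat{\tau}}(q^\sigma a) \circ S_\sigma = S_\sigma \circ Q\Psi^k_{\hat{\tau}}(a),
\end{equation}
which transcribes verbatim to the claimed identity for $Q\Psi^{desc,k}_\tau$.

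The only point that deserves a second look is the compatibility of the substitution $\mathcal{F} \mapsto \hat{\tau}$ with the equivariant shift $a \mapsto q^\sigma a$, since $\hat{\tau}$ itself depends on the equivariant parameters through its $K_\mathbf{T}(X)$-coefficients. However, Proposition~\ref{prop:qadams-equivalence} is an identity of $K_{\mu_k \times \mathbf{T}}$-linear operators on $K_{\mu_k \times \mathbf{T}}(X)[\![z^{\mathrm{eff}}]\!]$ which is valid at every value of $a$; applying it separately at $a$ and at $q^\sigma a$ and then using the theorem on both sides resolves any apparent ambiguity. Thus no new geometric input --- in particular no further degeneration argument on quasimap moduli --- is required, and the corollary is a formal consequence of the pair (Proposition~\ref{prop:qadams-equivalence}, preceding theorem).
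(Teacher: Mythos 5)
Your proof is correct and takes exactly the paper's approach: the paper disposes of this corollary with the single word ``evident,'' the implied argument being the same substitution $\mathcal{F} = \hat{\tau}$ into the preceding theorem via Proposition~\ref{prop:qadams-equivalence} that was written out for the K\"ahler-shift corollary. Your extra paragraph about the interaction of the substitution $\mathcal{F}\mapsto\hat\tau$ with the shift $a\mapsto q^\sigma a$ correctly identifies and resolves the one point the paper elides, so nothing further is needed.
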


\subsection{Quantum Adams operations}\label{ssec:qAdams}
Equations \eqref{eqn:descendant-power} and \eqref{eqn:power} should be understood as the ``total power operation."  To obtain the corresponding Adams operation, we must pass to a further quotient by the ideal generated by $\Phi_k(q) \in K_{\mu_k}(\mathrm{pt})$, the $k$th cyclotomic polynomial, which is parallel to the discussion in Section \ref{sec:ordinary-adams}. This is the point of view taken in, e.g., \cite{atiyah-power} and, in particular for inspiring our research, \cite[Section 4]{lonergan}. Introduce the notation
\begin{equation}
    K_{\mu_k \times \mathbf{T}}(X) |_{q = \zeta} = K_{\mu_k \times \mathbf{T}}(X) / (\Phi_k(q)).
\end{equation}

\begin{defn}
    The \emph{quantum Adams operator} is defined as the induced operator in the quotient
    \begin{equation}
        Q\psi_{\mathcal{F}}^k := Q\Psi_{\mathcal{F}}^k |_{q = \zeta} \in \left( K_{\mu_k \times \mathbf{T}}(X)|_{q = \zeta} \right)^{\otimes 2} [\![z ^{\mathrm{eff}} ]\!].
    \end{equation}
    Similarly, the \emph{descendant quantum Adams operator} is defined as the induced operator in the quotient 
    \begin{equation}
        Q\psi_{\tau}^{desc,k} := Q\Psi_{\tau}^{desc,k} |_{q = \zeta} \in \left( K_{\mu_k \times \mathbf{T}}(X)|_{q = \zeta} \right)^{\otimes 2} [\![z ^{\mathrm{eff}} ]\!].
    \end{equation}
\end{defn}

\begin{rem}
    Passing to the quotient by the ideal $\Phi_k(q)$ is often described in more concrete terms (as in \cite{koroteev-smirnov}) as the $q = \zeta$ specialization, where $\zeta = e^{2\pi i/k}$ is a (preferred choice of) a $k$th roots of unity.
\end{rem}

Specializing $q=\zeta$ for all the results in Section \ref{ssec:oper-definition}, we obtain the following statements.

\begin{prop}\label{prop:adams-property}
     The quantum Adams operator satisfy the following properties.
    \begin{enumerate}
        \item For any $\tau \in K_{\mathbf{T}}(\mathfrak{X})$, we have $Q\psi_{\tau}^{desc,k} = Q\psi_{\hat{\tau}}^k$.
        \item Given $\mathcal{F} \in K_{\mathbf{T}}(X)$, the operator $Q\psi_{\mathcal{F}}^k|_{z=0}$ agrees with the $z$-linear extension of the the endomorphism $\otimes \psi^k(\mathcal{F})$. Similarly, for any $\tau \in K_{\mathbf{T}}(\mathfrak{X})$, the operator $Q\psi_{\tau}^{desc,k}|_{z=0}$ coincides with $z$-linear extension of the the endomorphism $\otimes \psi^k(\tau |_{X})$.
        \item Let $\mathcal{F}, \mathcal{G} \in K_{\mathbf{T}}(X)[\![ z^{\mathrm{eff}} ]\!]$, then we have
        \begin{equation}
        Q\psi^k_{\mathcal{F} \star \mathcal{G}} = Q\psi^k_{\mathcal{F}} \circ Q\psi^k_{\mathcal{G}}.
        \end{equation}
        \item Given $\mathcal{F} \in K_{\mathbf{T}}(X)[\![ z^{\mathrm{eff}} ]\!]$ and $L \in \mathrm{Pic}^G(W)$, we have
        \begin{equation}
            Q\psi^k_{\mathcal{F}}(\zeta^L z) \circ M_L^k(z)|_{q=\zeta} = M^k_L(z) |_{q=\zeta} \circ Q\psi^k_{\mathcal{F}}(z).
        \end{equation}
        Similarly, for $\tau \in K_{\mathbf{T}}(\mathfrak{X})$, we have
        \begin{equation}
            Q\psi_{\tau}^{desc,k}(\zeta^L z) \circ M_L^k(z)|_{q=\zeta} = M_L^k(z)|_{q=\zeta} \circ Q\psi_{\tau}^{desc,k}(z).
        \end{equation}
        \item Given $\mathcal{F} \in K_{\mathbf{T}}(X)[\![ z^{\mathrm{eff}} ]\!]$ and a cocharacter $\sigma: \mathbb{C}^{\times} \to \mathbf{T}$, we have
        \begin{equation}
            Q\psi^k_{\mathcal{F}}(\zeta^{\sigma} a) \circ S_{\sigma}|_{q=\zeta} = S_{\sigma}|_{q=\zeta} \circ Q\psi^k_{\mathcal{F}}(a).
        \end{equation}
        Similarly, for $\tau \in K_{\mathbf{T}}(\mathfrak{X})$, we have
        \begin{equation}
            Q\psi^{desc,k}_{\tau}(\zeta^{\sigma} a) \circ S_{\sigma}|_{q=\zeta} = S_{\sigma}|_{q=\zeta} \circ Q\psi^{desc,k}_{\tau}(a).
        \end{equation}
        \qed
    \end{enumerate}
\end{prop}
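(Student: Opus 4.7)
The proof will proceed by systematically specializing $q = \zeta$ in each of the five corresponding statements for the total quantum cyclic power operators $Q\Psi^k_\mathcal{F}$ and $Q\Psi^{desc,k}_\tau$ that were established in Section \ref{ssec:oper-definition}. Because passing to the quotient by $\Phi_k(q)$ is a ring homomorphism $K_{\mu_k}(\mathrm{pt}) \to \mathbb{Z}[q,q^{-1}]/\Phi_k(q)$, and because the quantum Adams operators are defined precisely as the images of $Q\Psi^k_\mathcal{F}$ and $Q\Psi^{desc,k}_\tau$ under the induced map on endomorphism algebras, every operator identity valid before specialization remains valid after specialization. I will address the five items in turn, keeping the arguments short.

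For item (1), I will invoke Proposition \ref{prop:qadams-equivalence}, which asserts the equality $Q\Psi^{desc,k}_\tau = Q\Psi^k_{\hat{\tau}}$ as endomorphisms of $K_{\mu_k \times \mathbf{T}}(X)[\![z^{\mathrm{eff}}]\!]$, and then reduce modulo $\Phi_k(q)$. For item (2), the plan is to combine Proposition \ref{prop:cyclic-classical-limit}, which identifies $Q\Psi^k_\mathcal{F}|_{z=0}$ with the $z$-linear extension of tensoring with $P^k(\mathcal{F})$, with the definitional fact from Section \ref{sec:ordinary-adams} that $\psi^k$ is precisely the composition of $P^k$ with the projection $K_{\mu_k \times \mathbf{T}}(X) \to K_\mathbf{T}(X) \otimes \mathbb{Z}[q,q^{-1}]/\Phi_k(q)$; hence the $q = \zeta$ specialization of $\otimes P^k(\mathcal{F})$ is exactly $\otimes \psi^k(\mathcal{F})$. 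The corresponding statement for the descendant version follows by the same reasoning combined with the open inclusion $X \subset \mathfrak{X}$.

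For item (3), the quantum Cartan relation $Q\Psi^k_{\mathcal{F} \star \mathcal{G}} = Q\Psi^k_\mathcal{F} \circ Q\Psi^k_\mathcal{G}$ proved after Proposition \ref{prop:nonequivariant} is an identity in $\mathrm{End}(K_{\mu_k \times \mathbf{T}}(X))[\![z^{\mathrm{eff}}]\!]$, and so descends to the quotient. For items (4) and (5), the plan is to apply Theorem \ref{thm:kahler-covariant-constancy} (together with its corollary for the descendant version) and the corresponding theorem for the equivariant shift operator $S_\sigma$ directly, reducing modulo $\Phi_k(q)$ on both sides; since the shift operators $M_L^k(z)$ and $S_\sigma(z)$ are themselves defined $\mu_k$-equivariantly in Definitions \ref{defn:kahler-shift} and \ref{defn:equiv-shift}, their specializations $M_L^k(z)|_{q=\zeta}$ and $S_\sigma|_{q=\zeta}$ are well-defined operators on the specialized $K$-theory, and the conjugation relations transfer without change.

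I do not anticipate a genuine obstacle, since the heavy lifting, in particular the $\mu_k$-equivariant degeneration arguments and the virtual enumerative input, has been carried out at the level of $Q\Psi^k$ rather than $Q\psi^k$. The only bookkeeping point worth stating carefully is that the substitutions $q^L z \mapsto \zeta^L z$ and $q^\sigma a \mapsto \zeta^\sigma a$ appearing in items (4) and (5) are the images of the original substitutions under the quotient map, so the covariant constancy relations hold verbatim after specialization.
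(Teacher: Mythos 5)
Your proposal is correct and matches the paper's approach exactly: the paper introduces the proposition with the single sentence ``Specializing $q=\zeta$ for all the results in Section \ref{ssec:oper-definition}, we obtain the following statements,'' and leaves the proof at that (hence the \qed in the statement). You have simply spelled out which prior result yields each item, which is the intended bookkeeping.
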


We would like to point out that there is no analog of Proposition \eqref{prop:nonequivariant} for the quantum Adams operators because $q$ has been specialized to a primitive root of unity. However, the quantum Adams operators satisfy the additivity relation not satisfied by the cyclic powers.

\begin{lemma}\label{lem:qadams-additivity}
    Given $\mathcal{F}, \mathcal{G} \in K_{\mathbf{T}}(X)[\![ z^{\mathrm{eff}} ]\!]$, we have
    \begin{equation}
        Q\psi_{\mathcal{F}+\mathcal{G}}^k = Q\psi_{\mathcal{F}}^k + Q\psi_{\mathcal{G}}^k.
    \end{equation}
\end{lemma}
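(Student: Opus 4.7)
The plan is to show that the difference $Q\Psi^k_{\mathcal{F}+\mathcal{G}} - Q\Psi^k_{\mathcal{F}} - Q\Psi^k_{\mathcal{G}}$ is annihilated by $\Phi_k(q) \in K_{\mu_k}(\mathrm{pt})$, so that the specialization $q=\zeta$ (i.e.\ reduction modulo $\Phi_k(q)$) yields the desired additivity for $Q\psi^k$. Setting $\mathcal{F}' := \mathbf{G}^{-1}\mathcal{F}$ and $\mathcal{G}' := \mathbf{G}^{-1}\mathcal{G}$, and noting that all constituent operations in formula \eqref{eqn:power}---the evaluation pullback $\mathrm{ev}_k^*$, the tensor product with $\hat{\mathcal{O}}_{\mathrm{vir}}$, the pushforward $(\mathrm{ev}_{p_1} \times \mathrm{ev}_{p_2})_*$, and composition with $\mathbf{G}^{-1}$---are $K_{\mu_k}(\mathrm{pt})$-linear, it suffices to establish that
\[
\alpha := (\mathcal{F}' + \mathcal{G}')^{\boxtimes k}_{eq} - (\mathcal{F}')^{\boxtimes k}_{eq} - (\mathcal{G}')^{\boxtimes k}_{eq} \in K_{\mu_k \ltimes \mathbf{T}^k}(X^k)
\]
is annihilated by $\Phi_k(q)$.

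The next step is a direct orbit decomposition of $\alpha$. Expanding the binomial gives
\[
(\mathcal{F}' + \mathcal{G}')^{\boxtimes k}_{eq} = \sum_{I \subset \{0,\ldots,k-1\}} (\mathcal{F}')^{\boxtimes I} \otimes (\mathcal{G}')^{\boxtimes I^c},
\]
and the cyclic $\mu_k$-action permutes the indexing subsets $I$ by shifts. The pure orbits $I = \{0,\ldots,k-1\}$ and $I = \emptyset$ cancel the subtracted pure powers, leaving $\alpha = \sum_O \alpha_O$ summed over $\mu_k$-orbits of proper nonempty subsets. Each such orbit $O$ has stabilizer $\mu_d \subsetneq \mu_k$ for a proper divisor $d \mid k$, and for a representative $I_0 \in O$ the orbit sum $\alpha_O = \sum_{I \in O}(\mathcal{F}')^{\boxtimes I} \otimes (\mathcal{G}')^{\boxtimes I^c}$ is precisely the image of $(\mathcal{F}')^{\boxtimes I_0} \otimes (\mathcal{G}')^{\boxtimes I_0^c}$ under the transfer/induction map $K_{\mu_d \ltimes \mathbf{T}^k}(X^k) \to K_{\mu_k \ltimes \mathbf{T}^k}(X^k)$ associated to $\mu_d \hookrightarrow \mu_k$.

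The closing ingredient is the character-theoretic computation showing that induction of the trivial $\mu_d$-representation up to $\mu_k$ equals $\tfrac{q^k - 1}{q^d - 1} \in K_{\mu_k}(\mathrm{pt})$, which is divisible by $\Phi_k(q)$ for $d < k$ (since $\Phi_k \mid q^k - 1$ but $\Phi_k \nmid q^d - 1$). By the projection formula for the subgroup transfer, every $\alpha_O$ is therefore annihilated by $\Phi_k(q)$, so $\Phi_k(q) \cdot \alpha = 0$ and the lemma follows by the $K_{\mu_k}(\mathrm{pt})$-linearity reduction of the first paragraph.

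The main obstacle I foresee is making the identification of $\alpha_O$ with an honest subgroup transfer rigorous at the level of the equivariant $K$-group of the nontrivial $\mu_k$-space $X^k$, rather than just at the level of $K_{\mu_k}(\mathrm{pt})$-characters. Modulo this verification, the rest of the argument is essentially the quantum extension of the standard proof that the classical Adams operation $\psi^k$ is additive---with the observation that the enumerative geometry, being $K_{\mu_k}(\mathrm{pt})$-linear in the insertion, preserves membership in the ideal $(\Phi_k(q))$.
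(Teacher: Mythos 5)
Your proof takes essentially the same approach as the paper's: expand $(\mathcal{F}'+\mathcal{G}')^{\boxtimes k}_{eq}$, observe that the cross terms are inductions from proper subgroups $\mu_d \subsetneq \mu_k$, and conclude these vanish after reducing modulo $\Phi_k(q)$. Two small remarks. First, what you actually need is that each orbit sum $\alpha_O$ lies in the \emph{ideal} $(\Phi_k(q))$, i.e. the kernel of reduction modulo $\Phi_k$, not that $\alpha_O$ is \emph{annihilated} by $\Phi_k(q)$ (these differ in $\mathbb{Z}[q]/(q^k-1)$, which has zero divisors); your wording conflates the two, though the intent is clear. Second, you correctly flag that identifying the orbit sums with honest inductions at the level of $K_{\mu_k\ltimes\mathbf{T}^k}(X^k)$ (rather than characters) deserves care; the paper's proof also asserts this without elaboration, invoking it as "induction of classes in $K_{\mu_{k'}\ltimes\mathbf{T}^k}(X^k)$... vanishes" at $q=\zeta$, so you are matching the paper's level of rigor and pointing at the same underlying fact about equivariant $K$-theory of $\mu_k$-spaces.
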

\begin{proof}
    The key observation is
    \begin{equation}\label{eqn:additivity}
        (\mathbf{G}^{-1}(\mathcal{F} + \mathcal{G}))^{\boxtimes k}_{eq} = (\mathbf{G}^{-1}\mathcal{F})^{\boxtimes k}_{eq} + (\mathbf{G}^{-1}\mathcal{G})^{\boxtimes k}_{eq} + \text{ induction of classes in } K_{\mu_k' \ltimes \mathbf{T}^k}(X^k)
    \end{equation}
    where $\mu_{k'} < \mu_k$ denotes a proper subgroup. Once we specialize $q = \zeta$, a primitive root of unity, the classes from induction on the RHS of \eqref{eqn:additivity} vanishes. We then plug the computation into the formula \eqref{eqn:power} to conclude. 
\end{proof}

\begin{rem}
    We do not know if there is a quantum counterpart of the relation $\psi^k \circ \psi^l = \psi^{kl} = \psi^{l} \circ \psi^{k}$.
\end{rem}

\section{Example: $T^*\mathbb{P}^{n}$}\label{ssec:oper-example}
In this section we exhibit the nontriviality of the theory by working out the computation for the target $X = T^*\mathbb{P}^{n-1}$ using $\mathbb{Z}/k$-equivariant localization in $K$-theory. The key input is the stable envelope construction of Aganagic--Okounkov \cite{aganagic-okounkov-bethe}, which allows one to replace relative insertions with descendant insertions.

For computation we will use \emph{equivariant localization} with respect to $\mu_k$. The usual $\mathbb{G}_m$-localization involves inverting all forms of the form $q^\mu - 1$ for $\mu \in \mathbb{Z} - \{0\}$; more generally, for a torus $T$, localization involves inverting $q^\mu =1$ for all characters $0 \neq \mu \in X^\bullet(T)= \mathrm{Hom}(T, \mathbb{G}_m)$. Here, for $\mu_k$-localization, we only invert the forms $q^\mu - 1$ where $1 \le \mu \le k-1$. We denote the localized ring with the subscript $loc$, i.e.
\begin{equation}
    K_{\mu_k}(\mathrm{pt})_{loc} := K_{\mu_k}(\mathrm{pt}) [ (1-q^\mu)^{-1} : 1 \le \mu \le k-1].
\end{equation}

\subsection{Quasimap moduli for $X = T^*{\mathbb{P}^{n-1}}$}

By the main theorem of \cite{aganagic-okounkov-bethe}, operations involving relative insertions such as quantum Adams operators can be replaced by operations only involving descendant insertions, by considering the structure constants in the \emph{stable basis} when the target $X$ is a quiver variety. We first describe the descendant quasimap moduli space (with no insertions) for $X = T^*\mathbb{P}^{n-1}$ using its presentation as a Higgs branch (GIT quotient of a cotangent bundle of the quiver representation).

\begin{rem}
    In general, the moduli space of quasimaps to $X$ depends on the presentation of the target $X$ as a GIT quotient $X = W/\!\!/ G$ of an affine variety $W$ by a reductive group $G$ (more precisely, it depends on the pair $[W/G] \supseteq [W^{ss}/G]$ where $W^{ss} \subseteq W$ denotes the semistable locus, \cite[Proposition 4.6.1]{quasi-map}). For the example of $X = T^*{\mathbb{P}^{n-1}}$, its presentation as a hypertoric variety and as a Higgs branch of a quiver gauge theory in fact agree.
\end{rem}

Consider the quiver with one vertex with one framing satisfying $\mathrm{rank}(V) = k$ and $\mathrm{rank}(W) = n$. On $T^* \mathrm{Hom}(V, W) = \{(I, J) : I \in \mathrm{Hom}(V, W), J \in \mathrm{Hom}(W, V)\}$ one has a natural moment map 
\begin{equation}
\mu = J \circ I : T^*{\mathrm{Hom}(V,W)} \to  \mathfrak{gl}(k)^*. 
\end{equation}
For the determinant character $\theta : g \mapsto \det(g)$, $\theta$-semistability is equivalent to the condition that $J$ is surjective. By taking the GIT quotient of $\mu^{-1}(0) \subseteq T^*\mathrm{Hom}(V,W)$ for $\theta$ one obtains the Higgs branch $X = T^*\mathrm{Gr}(k,n)$.

We will be focused on the case of $k=1, n \ge2$. Consider quasimaps from $C = \mathbb{P}^1$ to $X = T^*\mathrm{Gr}(1,n) = T^*\mathbb{P}^{n-1}$. Degrees of maps are indexed by integers $\mathbb{Z} \cong H_2(X;\mathbb{Z})$. The framing $W$ determines a tautological bundle $\mathcal{O}^{\oplus n}$ over $C$. The vertex $V$ is of weight $1$ with respect to the $\mathrm{GL}(1)$-action, and defines a tautological bundle $\mathcal{O}(d)$ over $C$ for a degree $d$ map. Quasimap from the rigid domain $C = \mathbb{P}^1$ into $ X \subseteq [ \mu^{-1}(0) / \mathrm{GL}(1) ]$ is the data of a choice of $\mathcal{V}$ and a section
\begin{equation}
    f \in H^0 \left( C, \mathcal{H}om(\mathcal{W}, \mathcal{V}) \oplus \mathcal{H}om(\mathcal{V}, \mathcal{W} \otimes \hbar^{-1})  \right) = H^0\left(C, \mathcal{O}(d)^{\oplus n} \oplus \mathcal{O}(-d)^{\oplus n} \otimes \hbar^{-1} \right)
\end{equation}
satisfying the moment map condition. For $d \neq 0$, the moment map condition is automatically satisfied, and by the stability condition, the quasimap moduli space is empty for $d < 0$. The stability condition amounts to the generic non-vanishing of the nonzero section, hence the quasimap moduli space is identified with
\begin{equation}
    \mathsf{QM}_d(X) := \mathbb{P} H^0 \left( \mathbb{P}^1; \mathcal{O}(d)^{\oplus n} \right).
\end{equation}
We omit the superscript $\mu_k$ for the quasimap moduli spaces even if we work equivariantly under $\mu_k$. If one considers $W$ equivariantly with respect to the action of $\mathbb{G}_m^{\mathrm{rank} W} = \mathbb{G}_m^n$ with equivariant parameters $a_1, \dots, a_n$, then the quasimap moduli space is given as
\begin{equation}
    \mathsf{QM}_d(X) := \mathbb{P} H^0 \left( \mathbb{P}^1; \bigoplus_{i=1}^n a_i^{-1} \mathcal{O}(d) \right);
\end{equation}
indeed one may let $a_i =1$ to recover the non-equivariant computation.

\subsection{Ingredients of quantum Adams operations}
For $X = T^*{\mathbb{P}^{n-1}}$, take the moduli space of stable quasimaps of degree $d$, denoted $\mathsf{QM}_d(X)$. We would like to apply localization for the $\mu_k \times \mathbf{T}$-action on $\mathsf{QM}_d(X)$ to compute the quantum Adams operators. 

In the fixed-point basis of localized $K$-theory, the structure constants of the operator with relative insertions are given by operator with descendant insertions by stable envelope classes \cite{aganagic-okounkov-bethe}. To compute the structure constants, one must therefore compute the contribution from (i) the virtual structure sheaf, (ii) the equivariant insertions at equators, and (iii) the stable class descendant insertions at the input and output marked points.

\subsubsection{Virtual structure sheaf}
First, we compute the virtual structure sheaf.
To compute
\begin{equation}
    \hat{\mathcal{O}}_{\mathrm{vir}} := \mathcal{O}_{\mathrm{vir}} \otimes \big(\mathcal{K}_{\mathrm{vir}} \frac{\det f^* (T^{1/2}X)|_{p_2}}{\det f^* (T^{1/2}X)|_{p_1}}\big)^{\frac12},
\end{equation}
where we refer the reader to \eqref{eqn:o-twist} for the explanation of notations, we first compute $\mathcal{O}_{\mathrm{vir}} = \Lambda^\bullet \mathrm{Obs}^\vee$; here $\mathrm{Obs}$ is the obstruction part to the deformation theory given by
\begin{equation}
    T_{\mathrm{vir}} = H^\bullet(\mathcal{M} \oplus \hbar^{-1}\mathcal{M}^\vee) - (1+\hbar^{-1}) \sum \mathrm{Ext}^\bullet (\mathcal{V}_i, \mathcal{V}_i) =  \mathrm{Def} - \mathrm{Obs},
\end{equation}
following the notations in \cite[Section 4.3.16]{Oko17}, where $\mathcal{M}$ denotes the pullback of the (undoubled) quiver representation datum under the evaluation map. The obstruction part arises as
\begin{equation}
  \mathrm{Obs} =  H^1(\mathcal{M} \oplus \hbar^{-1}\mathcal{M}^\vee) + \hbar^{-1} \sum \mathrm{Hom}(\mathcal{V}_i, \mathcal{V}_i);
\end{equation}
the first term is the obstruction to deforming the section and the second term imposes the moment map constraint. The other terms in $T_\mathrm{vir}$ correspond to deformations of the quasimap section and deformations of $\mathcal{V}_i$'s respectively.

Specializing to degree $d$ maps to $T^*\mathbb{P}^{n-1}$, the obstruction part is written equivariantly as
\begin{equation}
    \mathrm{Obs}_d := \hbar^{-1}H^1\left( \bigoplus_{i=1}^n a_i\mathcal{O}(-d) \right) + \hbar^{-1} H^0(\mathcal{O}).
\end{equation}
By Serre duality, $H^1(a_i \mathcal{O}(-d))^\vee \cong H^0(a_i^{-1} \mathcal{O}(d) \otimes K_C) = a_i^{-1} \sum_{j=1}^{d-1} q^j$, from which one can compute that
\begin{equation}
    \mathcal{O}_{\mathrm{vir}} = \Lambda^\bullet \mathrm{Obs}^\vee = \prod_{w \in \mathrm{wts}(\mathrm{Obs}^\vee)} (1-w)= (1-\hbar) \prod_{1 \le i \le n} \prod_{ 1 \le j \le d-1} (1-\hbar a_i^{-1}q^j).
\end{equation}

Similar computation for the whole virtual tangent bundle

\begin{align}
    T_{\mathrm{vir}} &= \mathrm{Def} - \mathrm{Obs} \\
    &= \bigoplus_{i=1}^n H^0 \left( a_i^{-1} \mathcal{O}(d) \right)  - H^0(\mathcal{O}) - \hbar^{-1} \bigoplus_{i=1}^n H^1(a_i \mathcal{O}(-d)) - \hbar^{-1} H^0(\mathcal{O}) \\
    &= \sum_{i=1}^n a_i^{-1} \sum_{j=0}^d q^j -1 - \hbar^{-1} \sum_{i=1}^n a_i \sum_{j=1}^{d-1} q^{-j} - \hbar^{-1}
\end{align}
yields

\begin{align}
    \mathcal{K}_{\mathrm{vir}} = \Lambda^{top} T^\vee_{\mathrm{vir}} &= \prod_{w \in \mathrm{wts}(T^\vee_{\mathrm{vir}})} w \\ &= \prod_{i=1}^n \prod_{j=0}^d a_i q^{-j}  \cdot \left(  \prod_{i=1}^n \prod_{j=1}^{d-1} (\hbar a_i^{-1} q^j) \cdot 1 \cdot \hbar \right)^{-1} 
    \\
    &= \hbar^{-1-n(d-1)} \cdot \prod_{i=1}^n a_i^{2d} \cdot q^{-nd^2}.
\end{align}

If we linearize the action of $(\mathbb{G}_m)_q$ on $\mathbb{P}^1$ such that $\mathcal{O}(d)|_{p_2} = q^d \cdot \mathcal{O}(d)|_{p_1}$, so that
\begin{equation}
    f^*(T^{1/2}X)|_{p_1} = \sum_{i=1}^n a_i^{-1} + 1 , \quad f^*(T^{1/2}X)|_{p_2} = \sum_{i=1}^n a_i^{-1}q^d  + 1,
\end{equation}
we have
\begin{equation}
    \widehat{\mathcal{O}}_{\mathrm{vir}} =  \mathcal{O}_{\mathrm{vir}} \left( \mathcal{K}_{\mathrm{vir}} \frac{\det f^*(T^{1/2}X )|_{p_2}}{ \det f^*(T^{1/2} X)|_{p_1}} \right)^{1/2} = \mathcal{O}_{\mathrm{vir}} \cdot \prod_{i=1}^n a_i^{d} \cdot \frac{\hbar^{(n(d-1)+1))/2}}{q^{nd(d-1)/2}}.
\end{equation}

\subsubsection{Equator insertions}

As explained in the previous section, one may consider different versions of the quantum Adams operator depending on the type of the equator insertions (relative/descednant). The relationship between the two operators are given by \cref{prop:qadams-equivalence} and \cite{aganagic-okounkov-bethe}. Namely, up to computing the gluing matrix $\mathbf{G}$ and the abelianized stable envelope classes of \cite{aganagic-okounkov-bethe}, it is equivalent to consider either relative or descendant insertions along the equator.

Hence, to simplify the example, we consider the case of descendant quantum Adams operator $Q\psi^{desc, k}_V$ for
\begin{equation}
    [V] \in K_{\mathbf{T}} \left( \mathfrak{X} \right) \cong K_{\mathbf{T} \times \mathbb{G}_m} (\mathrm{pt}) \cong \mathbb{Z} [a_{i}^\pm , \hbar^{\pm}, s^{\pm}].  
\end{equation}
The insertions are given by pullback along evaluation maps. A choice of a point $p \in \mathbb{P}^1$ defines an evaluation map
\begin{equation}
    \mathrm{ev}_p : \mathsf{QM}_d(X) \to \mathfrak{X} = \left[\mu^{-1}(0) / \mathbb{G}_m \right]
\end{equation}
inducing the pullback
\begin{equation}
    \mathrm{ev}^*_p : K_{\mathbf{T} \times \mathbb{G}_m} (\mathrm{pt}) \to K_{\mathbf{T}}(\mathsf{QM}_d(X)).
\end{equation}
The map $\mathrm{ev}_p^*$ is linear over $K_{\mathbf{T}}(\mathrm{pt})$ and satisfies (cf. \cite[Lemma 3.21]{Lee23a})
\begin{equation}
    \mathrm{ev}^*_p (s) = [\mathcal{O}_{\mathsf{QM}_d(X)}(1)] \in K_{\mathbf{T}}(\mathsf{QM}_d(X))
\end{equation}
for the generator $s \in K_{\mathbb{G}_m}(\mathrm{pt})$. 

For the distinguished collection of marked points $p_0' = 1, p_1' = \zeta, \dots, p_{k-1} = \zeta^{k-1}$, we have correspondingly the $\mu_k$-equivariant evaluation map
\begin{equation}
    \mathrm{ev}_k: \mathsf{QM}_d(X) \to \mathfrak{X}^k,
\end{equation}
inducing
\begin{equation}
    (\mathrm{ev}_k)^*: K_{\mu_k \ltimes \mathbf{T}^k} (\mathfrak{X}^k)  \to K_{\mu_k \times \mathbf{T}} (\mathsf{QM}_d(X)).
\end{equation}
By the additivity property of $Q\psi_V^{desc,k}$ (\cref{lem:qadams-additivity}) and linearity over $K_\mathbf{T}(\mathrm{pt})$, it suffices to consider the image of
\begin{equation}
    s \in K_\mathbf{T \times \mathbb{G}_m} (\mathrm{pt}) \to s_{eq}^{\boxtimes k} = s_1 \cdots s_k\in K_{\mu_k \ltimes (\mathbf{T} \times \mathbb{G}_m)^k} (\mathrm{pt})
\end{equation}
under this evaluation map, corresponding to the equivariant descendant insertion of the tautological bundle. The image under the equivariant pullback is given by
\begin{equation}
    \mathrm{ev}_k^* (s_1 \cdots s_k) = \bigotimes_{j=1}^k [\mathcal{O}_{\mathsf{QM}_d(X)}(1)] = [\mathcal{O}_{\mathsf{QM}_d(X)}(k)]  \in K_{\mu_k \times \mathbf{T}}(\mathsf{QM}_d(X))
\end{equation}
since the $\mu_k$-action on the rank 1 vector bundle $\bigotimes_{j=1}^k [\mathcal{O}_{\mathsf{QM}_d(X)}(1)] $ given by cyclic permutation is trivial.

\begin{rem}
    As the map $s \mapsto s_{eq}^{\boxtimes k}$ fails to be linear, it will not be the case that the image of linear combinations of the generators under the pullback map $\mathrm{ev}_k^*$ will generally be $q$-independent. Consequently, the (quantum) cyclic power operation $Q\Psi^{desc, k}_{V}$ is not determined by $Q\Psi^{desc,k}_V$ for rank $1$ bundles $L$. Hence it is the additivity property (\cref{lem:qadams-additivity}) which greatly simplifies the computation of $Q\psi^{desc,k}_V$.
\end{rem}

% We use Aganagic--Okounkov's abelianized stable envelopes for descendant insertions on the equator. For $X = T^*\mathbb{P}^1$, there are two fixed points $F_1$ and $F_2$ such that the Chern root $x$ of $\mathsf{V}$ restricts to $a_1$, $a_2$ respectively. Take
% \begin{equation}
%     T^{1/2} = a_1^{-1} x + a_2^{-1} x - 1,
% \end{equation}
% and choose the $\mathsf{A}$-chamber so that $a_1/a_2$ is attracting. Then the abelianized stable envelopes are given by
% \begin{align}
%     \mathbf{f}_{1} &= \Lambda^\diamond ( \hbar T^{1/2}_{>0} ) = \Lambda^\diamond ( \hbar a_2^{-1} x) = (a_2 - \hbar x), \\
%     \mathbf{f}_{2} &= \Lambda^\diamond ( T^{1/2}_{<0} ) = \Lambda^\diamond (a_1^{-1} x) = (a_1 - x).
% \end{align}
% For insertions along the equator, one tensors $\mathbf{f}_F$ with $\mathrm{Reg}_{\mathbb{Z}/k} = (1 + q + \cdots + q^{k-1})$. For insertions along the input and output point on $C$, $\mathbf{f}_F$ can be inserted as it is.

\subsubsection{Input and output insertions}

The input and output insertions use the relative compactification of the quasimap moduli space. One can bypass the usage of relative quasimaps by the following celebrated result of Aganagic--Okounkov:

\begin{thm}[{\cite[Theorem 1]{aganagic-okounkov-bethe}}]
    The operator $K_{\mathbf{T}}(X) \to K_{\mathbf{T} \times \mathbb{G}_m}(X)[\![z]\!]$ given by
    \begin{equation}
        \alpha \mapsto (\mathrm{ev}_{p_2}^{rel})_* \left( (\mathrm{ev}_{p_1}^{desc})^* \mathbf{f}_\alpha \otimes \hat{\mathcal{O}}_{\mathrm{vir}} z^{\deg} \right) 
    \end{equation}
    for the \emph{stable envelope classes} $\mathbf{f}_\alpha$ is equal to the gluing matrix $\mathbf{G}$. Here, $\mathrm{ev}^{rel}$ and $\mathrm{ev}^{desc}$ denote the relative and descendant evaluation maps, respectively.
\end{thm}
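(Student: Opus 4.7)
Since the statement is the main construction theorem cited from \cite{aganagic-okounkov-bethe}, the plan is to follow their characterization-and-uniqueness strategy. First, I would write both operators as convolution kernels on $X \times X$ via \eqref{eqn:convolution}: the gluing matrix $\mathbf{G}$ is the degree-wise pushforward of $\hat{\mathcal{O}}_{\mathrm{vir}} z^{\deg}$ from $\mathsf{QM}_d(X)_{\mathrm{rel}\ p_1, p_2}$ along $\mathrm{ev}_{p_1} \times \mathrm{ev}_{p_2}$, while the right-hand side corresponds to the pushforward of $\hat{\mathcal{O}}_{\mathrm{vir}} z^{\deg} \otimes (\mathrm{ev}_{p_1}^{desc})^* \mathbf{f}_\alpha$ from $\mathsf{QM}_d(X)_{\mathrm{rel}\ p_2}$. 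The task thereby reduces to showing that inserting $\mathbf{f}_\alpha$ as a descendant at $p_1$ is equivalent to inserting $\alpha$ as a relative condition there — in other words, that $\mathbf{f}_\alpha$ realizes the transition from descendant insertions parametrized by $K_\mathbf{T}(\mathfrak{X})$ to relative insertions parametrized by $K_\mathbf{T}(X)$.

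Second, the stable envelope classes $\mathbf{f}_\alpha$ are uniquely characterized by three properties: a support condition on the $\mathbf{T}$-attracting set for a chosen chamber in $\mathrm{Lie}(T)_\mathbb{R}$, a Newton polytope / window condition bounding their $\mathbb{G}_m$-weights with respect to the polarization $T^{1/2}X$, and a diagonal normalization at the $\mathbf{T}$-fixed points. Granting existence of $\mathbf{f}_\alpha$, I would verify the equality degree by degree in $z^d$ using $\mathbf{T}$-equivariant localization on both $\mathsf{QM}_d(X)$ and $X$. The support and window conditions imply that when the right-hand side is restricted to pairs of fixed points on $X \times X$, the resulting matrix is triangular in the attracting order governing $\mathbf{G}$; the diagonal entries agree with those of $\mathbf{G}$ because of the normalization, which is arranged to reproduce the vertex contribution of constant quasimaps at each $\mathbf{T}$-fixed point once one accounts for the twist by the polarization as in \eqref{eqn:o-twist}.

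The main obstacle is the construction of $\mathbf{f}_\alpha$ satisfying all three characterizing properties simultaneously — this is the technical core of \cite{aganagic-okounkov-bethe}. The $K$-theoretic window condition is substantially more delicate than its cohomological counterpart because it constrains Laurent polynomials in $\mathbb{G}_m$-weights rather than homogeneous polynomials, and its interaction with the choice of $T^{1/2}X$ is nontrivial. In practice the construction proceeds by abelianization: one first builds explicit candidate classes on a hypertoric resolution of the abelianized target as products indexed by roots, and then descends to $X$ via a shuffle product formula, verifying the normalization by comparing the diagonal restriction with an equivariant Euler class determined by the polarization. Once such $\mathbf{f}_\alpha$ are in hand, the identification with $\mathbf{G}$ follows from the degree-wise triangular matching and the uniqueness of stable envelopes outlined above.
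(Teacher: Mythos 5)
The paper does not prove this theorem: it is cited verbatim from \cite[Theorem~1]{aganagic-okounkov-bethe} and used as a black box. The only proof-level content the paper attaches to this statement is the short paragraph immediately following it, which observes that since the resulting operator $\mathbf{G}$ is $q$-independent (citing \cite[Section~7.1]{Oko17}), the identity survives the restriction to $K_{\mathbf{T}\times\mu_k}(X)$, i.e.\ specialization to $q^k=1$, which is what the paper actually needs. Your proposal is instead an attempt to reconstruct Aganagic--Okounkov's own argument, which is not present in this paper and therefore cannot be compared to a proof here.

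As a reconstruction of the AO argument your outline is in the right ballpark (characterization of stable envelopes by support, window, and normalization; degree-by-degree localization; triangularity against the attracting order; existence via abelianization). Two caveats are worth flagging. First, the triangularity of the two sides in the fixed-point basis is established by \emph{different} mechanisms --- for $\mathbf{G}$ it comes from the degeneration/gluing structure of relative quasimaps, while for the descendant operator it comes from the support and Newton-polytope conditions on $\mathbf{f}_\alpha$ --- and you fold these into one sentence without separating them. Second, "the diagonal entries agree because of the normalization" undersells the delicate bookkeeping in AO involving the $q$-window of the vertex with descendant insertion and the cancellation of $q$-poles; the normalization alone does not transparently give the diagonal match without controlling those poles. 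But again, none of this is in the paper: the relevant content here is only the one-line remark that $\mathbf{G}$ is $q$-independent, so the theorem descends to $\mu_k$-equivariant $K$-theory.
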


The proof requires $\mathbb{C}^{\times}_q$-localization, but the resulting operator $\mathbf{G}$ is $q$-independent, see \cite[Section 7.1]{Oko17}. The result therefore remains valid upon passing to $K_{\mathbf{T} \times \mu_k}(X) [\![z]\!]$, i.e. by setting $q^k=1$, and we may replace relative insertions with the corresponding descendant insertions by the stable envelope classes $\mathbf{f}_\alpha$.

For the computation of the stable envelope classes $\mathbf{f}_\alpha$ in the case of affine type $A$ quiver varieties, we refer the reader to \cite[Section 3.2]{aganagic-okounkov-bethe} for details. Here we record the results of the computation following \cite{aganagic-okounkov-bethe}. 

The stable envelopes depend on the choice of a polarization $T^{1/2}X \in K_{\mathbf{T}}(\mathfrak{X})$ and a $1$-parameter subgroup $\sigma: \mathbb{G}_m \to T$.  We fix the polarization to be
\begin{equation}
    T^{1/2} X = \mathrm{Hom}(W, V) - \mathrm{Hom}(V,V) =  \sum_{i=1}^n a_i^{-1}x - 1
\end{equation}
where $x$ denotes the Chern root of $V$. The choice of a (generic) $1$-parameter subgroup in $T = (\mathbb{C}^\times)^n$ gives a decomposition of weights into attracting and repelling directions; let us fix the cocharacter $\sigma$ such that $a_i/a_j$ is attracting if and only if $i < j$.

In our case the gauge group $G = \mathbb{C}^\times$ is abelian, and the relevant formulas are \cite[Equation (71, 79)]{aganagic-okounkov-bethe}. There are $n$ $\mathbf{T}$-fixed points of $X$, denoted $F_1, \dots, F_n$, such that $x \mapsto a_i$ under the restriction map $K_\mathbf{T}(\mathfrak{X}) \to K_\mathbf{T}(F_i)$. Our choice of the convention implies
\begin{equation}
    \mathbf{f}_i = \Lambda^{\diamond} \left(T^{1/2}_{<0} + \hbar T^{1/2}_{>0} \right) = \Lambda^\diamond
 \left( \sum_{j<i} \frac{a_i}{a_j} +  \hbar \sum_{j >i} \frac{a_i}{a_j} \right) = \prod_{j<i} (a_j - a_i) \prod_{j>i} (a_j - \hbar a_i) \end{equation}
as the stable envelope classes associated to the fixed point $F_i$, generating $K_\mathbf{T}(\mathfrak{X})_{loc}$.

\subsection{Localization on quasimap moduli space}
As discussed in the previous subsection, the quasimap moduli space of degree $d$ is a projective space on
\begin{equation}
    H^0\left( \bigoplus_{i=1}^n a_i^{-1}\mathcal{O}(d)\right) = \sum_{j=0}^d \sum_{i=1}^n a_i^{-1} q^j = \sum_{\ell=0}^{k-1} \left( \lfloor \frac{d-\ell}{k} \rfloor +1 \right) \sum_{i=1}^n a_i^{-1} q^\ell,
\end{equation}
where the latter is the isotypic decomposition into weights in $K_{\mu_k \times \mathbf{T}}(\mathrm{pt})$ (in particular, $q^k = 1$). By the isotypic decomposition, one sees that the $\mu_k$-fixed strata $\mathsf{F}_\ell$ in $\mathsf{QM}_d(X) = \mathbb{P}H^0( \bigoplus a_i^{-1} \mathcal{O}(d))$ are exactly given by the projective spaces on the each of the $k$ summands in the last expression, for $\ell = 0, \dots, k-1$:
\begin{equation}
    \mathsf{F}_{\ell} := \mathbb{P} \left( ( \lfloor \frac{d-\ell}{k} \rfloor +1 )   \sum_{i=1}^n a_i^{-1} q^\ell \right).
\end{equation}
The $\ell$-th fixed strata have normal bundle (moving part of the deformation space) given by the complement of the vector
\begin{equation}
    N_{\mathsf{F}_{\ell}} = \sum_{m \neq \ell} \left( \lfloor \frac{d-\ell}{m} \rfloor +1 \right)  \sum_{i=1}^n a_i^{-1} q^{m-\ell}.
\end{equation}
This suffices to apply (virtual) localization. Recall that we have the class
\begin{equation}
    \hat{\mathcal{O}}_{\mathrm{vir}} \cdot \mathrm{ev}_{p_1}^* \mathbf{f}_{i} \cdot \mathrm{ev}_k^* \tau^{\boxtimes k}_{eq}  \cdot \mathrm{ev}^*_{p_2} \mathbf{f}_j  \in K_{\mu_k \times \mathbf{T}} (\mathsf{QM}_d(X))
\end{equation}
for which we would like to compute the pushforward to compute the $(i,j)$th matrix coefficient of $z^d$ in $Q\Psi_{\tau}^{desc, k} (z)$. The localization formula in this case is given by
\begin{equation}
    \begin{tikzcd}
        \bigoplus_{\ell=0}^{k-1} K_{\mu_k \times \mathbf{T}} (\mathsf{F}_\ell) \arrow[r, yshift=-2] \arrow[rd] & K_{\mu_k \times \mathbf{T}} (\mathsf{QM}_d(X)) \arrow[l,yshift=2] \arrow[d] \\ & K_{\mu_k \times \mathbf{T}} (\mathrm{pt})
    \end{tikzcd}
\end{equation}
where the right horizontal map is given by
\begin{equation}
    \sum_{\ell=0}^{k-1} \frac{1}{\Lambda^\bullet (N_{\mathsf{F}_\ell}^\vee) } = \sum_{\ell=0}^{k-1} \prod_{m \neq \ell} \frac{1}{( 1- a_i q^{\ell-m}) ^{\lfloor \frac{d-\ell}{m } \rfloor +1}}.
\end{equation}
Here the condition $m \neq \ell$ ensures that the denominators are well-defined in $K_{\mu_k \times \mathbf{T}}(\mathrm{pt})_{loc}$. This reduces any computation of the structure constants of the quantum Adams operators to an algorithmically solvable problem whose complexity is bounded in terms of $k$ and $d$.

\begin{example}[{cf. \cite[Section 6.3]{Lee23a}}]
    To illustrate our strategy for computation, let us consider the case of $X = T^*\mathbb{P}^1$ with the $T$-equivariant parameters specialized to  $a_1, a_2 = 1$. Then the degrees are indexed by $d \in \mathbb{Z}_{\ge 0}$; for convenience write $d = \alpha k + \beta$ for $\alpha \ge 0$ and $0 \le \beta \le k-1$. Then the quasimap space is given by
    \begin{equation}
        \mathsf{QM}_d(X) = \mathbb{P} H^0 (\mathcal{O}(d)^{\oplus 2}) \cong \mathbb{P}^{2d+1},
    \end{equation}
    with fixed strata given by
    \begin{equation}
        \mathsf{F}_{d, \ell} := \mathbb{P}^{2 \lfloor (d-\ell)/k \rfloor +1} = \begin{cases} \mathbb{P}^{2\alpha + 1}  & \mbox{ if } \ell \le \beta \\ \mathbb{P}^{2\alpha - 1} & \mbox { if } \ell > \beta \end{cases}.
    \end{equation}
    The obstruction bundle correction from the normal bundle
    \begin{equation}
        N_{\mathsf{F}_\ell} = \bigoplus_{0 \le m \le k-1, m \neq \ell}  \left( 2 \lfloor \frac{d-m}{k} \rfloor +1 \right) q^{m-\ell  }
    \end{equation}
    is given by
    \begin{equation}
        \frac{1}{\Lambda^\bullet ( N_{\mathsf{F}_\ell}^\vee)} = \prod_{m \neq \ell} \frac{1}{(1- q^{\ell-m})^{2 \lfloor \frac{d-m}{k} \rfloor + 1}} \in \mathbb{Z}[q, (1-q^\ell)^{-1} : \ell \neq k ]/(q^k=1).
    \end{equation}
    The insertions at the input and output marked points can be either $\mathbf{f}_1 = 1-x$ or $\mathbf{f}_2  = 1-\hbar x$ provided as the stable envelope classes.

    For simplicity, let us consider the equator insertion to be induced from $\mathbf{f} = 1-x$, corresponding to relative insertion of $\mathcal{O}(1)$. For the $\mu_k$-equivariant equator insertion, recall that the map $K_{\mu_k \ltimes \mathbb{G}_m^k} (\mathrm{pt}) \to K_{\mu_k \times \mathbb{G}_m}(\mathrm{pt})$ sends elementary symmetric polynomials in the $\mathbb{G}_m$-equivariant parameters $s_i$ to the corresponding polynomial where $s_i$ is replaced with $q^{i-1}s$, hence $e_i(s_1, \dots, s_k) \mapsto e_i (s, qs, \dots, q^{k-1}s)$. It follows that
    \begin{equation}
        \mathbf{f}^{eq} = \prod_{j=0}^{k-1} (1-q^j x) \equiv 1 - x^k \in K_{\mu_k \times \mathbf{T}}(\mathfrak{X})|_{q=\zeta}
    \end{equation}
    where we quotient out by $\Phi_k(q)$ is the equator insertion for the computation of the quantum Adams operation. Here, if one wishes to compute the full $k$th cyclic power operation, then the insertion should be $\prod_{j=0}^{k-1} (1-q^j x) \in K_{\mu_k \times \mathbf{T}}(\mathfrak{X})$.

    To compute the $z^d$-coefficient of the $(1,1)$-matrix coefficient of $Q\psi^k_{\mathcal{O}(1)}$, the relevant insertion is $(\mathbf{f}_1, \mathbf{f}^{eq}, \mathbf{f}_1)$-insertion along $p_1, p'_0, \dots, p'_{k-1}, p_2$. Therefore, we are reduced to computing the pushforward of
    \begin{equation}
        \mathsf{C} := (1-\hbar) \prod_{1 \le j \le d-1}(1-\hbar q^j)^2 \cdot \frac{\hbar^{2d-1/2}}{q^{d(d-1)}} \cdot (1 - L)(1-L^k)(1-L)
    \end{equation}
    where
    \begin{equation}
        K_{\mu_k}(\mathsf{QM}_d(X)) \cong \mathbb{Z}[q^\pm , L]/ \prod_{j=0}^d (1-q^j L)^2.
    \end{equation}
    The purpose of $\mu_k$-localization is to simplify the pushforward from this ring. The restriction map
    \begin{equation}
        K_{\mu_k}(\mathsf{QM}_d(X)) \to K_{\mu_k}(\mathsf{F}_\ell) \cong \mathbb{Z}[q^\pm, L]/(1-L)^{\dim \mathsf{F}_\ell+1}
    \end{equation}
    sends $L \mapsto q^{-\ell} L$, and the pushforward is given by the sum
    \begin{equation}
        \sum_{\ell=0}^{k-1} \frac{\chi( \mathsf{C}|_{\mathsf{F}_\ell})}{\Lambda^\bullet (N_{\mathsf{F}_\ell}^\vee)} = \sum_{\ell=0}^{k-1} \frac{ \chi\left((1-\hbar) \prod_{1 \le j \le d-1}(1-\hbar q^j)^2 \cdot \frac{\hbar^{2d-1/2}}{q^{d(d-1)}} \cdot (1 - q^{-\ell} L)^2(1-q^{-\ell k}L^k)\right)  }{ \prod_{m \neq \ell}(1- q^{\ell-m})^{2 \lfloor \frac{d-m}{k} \rfloor + 1}}
    \end{equation}
    where $L |_{\mathsf{F}_\ell} = q^{-\ell} L$ and $\chi : K_{\mu_k} (\mathsf{F}_\ell) \to K_{\mu_k}(\mathrm{pt})$ sends $L^k \mapsto \binom{\dim \mathsf{F}_\ell + k}{k}$. For the other matrix coefficients, one replaces the $\mathbf{f}_1$ insertions at $p_1$ or $p_2$ with $\mathbf{f}_2$ accordingly.

    The above outline shows that the computation now clearly shows:

    \begin{thm}
        The structure constants of $Q\psi_{\mathcal{F}}$ for $X = T^*\mathbb{P}^{n}$ and $V \in K_\mathbf{T}(X)$ can be fully computed in all degrees algorithmically, in complexity uniformly bounded by $k$ for each degree $d$.
    \end{thm}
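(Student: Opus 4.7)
The plan is to verify that the chain of reductions developed earlier in this section can be assembled into a uniform algorithm whose complexity at degree $d$ is bounded by a function of $k$ and $d$. There are essentially four steps to package, and none of them introduces a genuine obstacle; the content of the theorem is really an observation about the structure of what has already been done.

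First, I would reduce the complexity of the insertion. By the additivity property of $Q\psi_V^{desc,k}$ (\cref{lem:qadams-additivity}) together with $K_\mathbf{T}(\mathrm{pt})$-linearity, it suffices to handle the case where $V$ is a generator of $K_{\mathbf{T} \times \mathbb{G}_m}(\mathrm{pt})$; the only nontrivial case is $V = s$, whose image under $\mathrm{ev}_k^*$ is simply $[\mathcal{O}_{\mathsf{QM}_d(X)}(k)]$ as computed above. Second, I would eliminate the relative insertions at $p_1$ and $p_2$ by invoking \cite[Theorem 1]{aganagic-okounkov-bethe}, replacing them with descendant insertions of the explicit stable envelope classes $\mathbf{f}_i$ recorded above. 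This reduces each $(i,j)$-matrix coefficient of the $z^d$-term to a single $\mu_k \times \mathbf{T}$-equivariant holomorphic Euler characteristic on $\mathsf{QM}_d(X) \cong \mathbb{P}^{n(d+1)-1}$ of the explicit class
\begin{equation*}
\hat{\mathcal{O}}_{\mathrm{vir}} \cdot \mathrm{ev}_{p_1}^* \mathbf{f}_i \cdot \mathrm{ev}_k^* (s^{\boxtimes k}_{eq}) \cdot \mathrm{ev}_{p_2}^* \mathbf{f}_j.
\end{equation*}

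Third, I would apply $\mu_k$-equivariant localization. The fixed loci $\mathsf{F}_\ell$ are the $k$ projective subspaces enumerated above, of dimensions bounded linearly in $d$; the $\mu_k$-weights on their normal bundles lie in $\{q^\mu : 1 \le \mu \le k-1\}$, so $\Lambda^\bullet(N_{\mathsf{F}_\ell}^\vee)$ is invertible in $K_{\mu_k \times \mathbf{T}}(\mathrm{pt})_{loc}$. The pushforward then reduces to a finite sum of exactly $k$ terms, each a classical equivariant holomorphic Euler characteristic on a projective space, computable via the identifications $K(\mathbb{P}^N) \cong \mathbb{Z}[L]/(1-L)^{N+1}$ and $\chi(L^m) = \binom{N+m}{N}$.

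The only point that deserves care is tracking the $\mu_k$-equivariant structure on the line bundles $\mathcal{O}_{\mathsf{QM}_d(X)}(1)$, $\hat{\mathcal{O}}_{\mathrm{vir}}$, and the stable envelope pullbacks under restriction to each $\mathsf{F}_\ell$; this is governed by the explicit twist $L|_{\mathsf{F}_\ell} = q^{-\ell}L$ and the weight decomposition of $H^0(\bigoplus a_i^{-1}\mathcal{O}(d))$ worked out above. Once these rules are specified, the whole computation becomes a polynomial arithmetic task in $\mathbb{Z}[q, a_i^\pm, \hbar^\pm]/(\Phi_k(q))$ after inverting $\{1-q^\mu\}_{1 \le \mu \le k-1}$, whose size at degree $d$ is dominated by the largest $\mathsf{F}_\ell$-dimension (which is $O(d/k)$) and the degree of the stable envelope classes and $\mathcal{O}(k)$-insertion. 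This establishes algorithmic computability in complexity uniformly controlled by $k$ and $d$, and the theorem follows by simply running the steps in order.
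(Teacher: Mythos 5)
Your proposal follows essentially the same chain of reductions the paper itself uses to arrive at the theorem: additivity (\cref{lem:qadams-additivity}) plus $K_\mathbf{T}(\mathrm{pt})$-linearity to reduce the insertion to powers of the tautological class $s$, the Aganagic--Okounkov theorem to replace relative insertions at $p_1,p_2$ with descendant insertions of the explicit stable envelope classes $\mathbf{f}_i$, and $\mu_k$-localization on $\mathsf{QM}_d(X)$ to reduce the pushforward to a $k$-term sum of Euler characteristics on the projective fixed strata $\mathsf{F}_\ell$. The paper states the theorem as the conclusion of precisely these steps (carried out explicitly in the preceding subsections, with the $T^*\mathbb{P}^1$ example giving the closed-form localization sum), so your packaging is correct and not a genuinely different argument.
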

    We emphasize that this is a new computation algorithm for the $p$-curvature of the associated $q$-difference equation (discussed in the next section) that does not depend on the presentation of the $p$-curvature as an iterated product.

    \begin{rem}[{cf. \cite[Remark 7.5]{Lee23a}}]
        Under $\hbar=q^m$ specialization, one can check that the structure constants of $Q\psi^k_{\mathcal{O}(1)}$ in the stable basis are given by certain (generalized) $q$-hypergeometric series; but we do not have an \emph{a priori} explanation for this result.
    \end{rem}    
\end{example}

\section{$p$-curvature of quantum connections and quantum power operations}\label{sec:pcurvature}

In this section, we prove that for any Higgs branch target $X$, the $p$-curvature of the K\"ahler $q$-difference connection is equal to the quantum Adams operator. We would like to point out that the proof we are going to present is geometric in nature, in contrast with the algebraic arguments in \cite{Lee23b, chen2024exponential}. Then, after recalling the cohomological analog of the quantum Adams operator, the quantum Steenrod operator, and its conjectural relation with the $p$-curvature of the quantum connection, we propose some conjectures regarding how to relate the quantum power operations from $K$-theory to the ones from cohomology by degeneration.

\subsection{$p$-curvature of quantum connections}
Quantum connections and the equivariant generalizations thereof provide rich examples of connections coming from enumerative geometry. Under favorable assumptions, the quantum connection can be lifted over $\mathbb{Z}$, therefore admits well-defined mod $p$ reduction. It is natural to ask how to understand the $p$-curvature of quantum connections in terms of enumerative geometry. In \cite{Lee23b}, the second-named author conjectured that the $p$-curvatures of the equivariant quantum connections of equivariant symplectic resolutions are equal to the quantum Steenrod operators, the cohomological counterpart of the $K$-theoretic quantum Adams operators discussed in this paper. The conjecture has been established for a large class of examples, see \cite[Theorem 1.2, Remark 1.4]{Lee23b}. The relationship between $p$-curvatures and quantum Steenrod operators has also been leveraged successfully in the study of the structure of the quantum connection of monotone symplectic manifolds (the symplectic counterpart of smooth Fano varieties), see \cite{chen2024exponential}.

Analogs of $p$-curvature for $q$-difference connections at roots of unity exist, which we will recall below. The main result is that for the K\"ahler $q$-difference equation, the $p$-curvature can be identified with the quantum Adams operators.

\subsection{Equivalence of $p$-curvature and quantum Adams operators}
Following \cite{koroteev-smirnov}, we define the $p$-curvature\footnote{We term it ``$k$-curvature" to emphasize that we do not have to restrict to primes for this discussion.} of the K\"ahler $q$-difference connection in the following way. We consider the quasimap counts of $X$. Let $M_L(z)$ be the connection coefficient associated with $L \in \mathrm{Pic}(X)$ as recalled in \eqref{eqn:kahler-shift}. The flatness of the $q$-difference connection implies that for $L_1, L_2 \in \mathrm{Pic}(X)$, we have
\begin{equation}
    M_{L_1 L_2}(z) = M_{L_2}(zq^{L_1}) \circ M_{L_1}(z) = M_{L_1}(zq^{L_2}) \circ M_{L_2}(z).
\end{equation}
In view of Lemma \ref{lemma:trivial}, by passing to $\mu_k$-equivariant $K$-theory, we have
\begin{equation}\label{eqn:flat}
    M_{L_1 L_2}^{(k)}(z) = M_{L_2}^{(k)}(zq^{L_1}) \circ M_{L_1}^{(k)}(z) = M_{L_1}^{(k)}(zq^{L_2}) \circ M_{L_2}^{(k)}(z),
\end{equation}
where now $q^k = 1$.

For $k \in \mathbb{Z}_{\geq 0}$, we look at
\begin{equation}
\begin{aligned}
    M_{L^k}^{(k)}(z) = M_L^{(k)}(z q^{(k-1)L}) \circ M_L^{(k)}(zq^{(k-2)L}) \circ \cdots \circ M_L^{(k)}(zq^L) \circ M_L^{(k)}(z) \\ \in \mathrm{End}(K_{\mu_k \times \mathbf{T}}(X))[\![z^{\mathrm{eff}}]\!].
\end{aligned}
\end{equation}

\begin{defn}\label{defn:p-curvature}
    The $k$-curvature of the K\"ahler $q$-difference connection of $X$ along $L$ is defined as the projection of $M_{L^k}^{(k)}(z)$ onto the summand associated to the ideal $(\Phi_k(q))$,
    \begin{equation}
        M_{L, \zeta_k}(z) := M_{L^k}^{(k)}(z)|_{q = \zeta}.
    \end{equation}
\end{defn}

The next two statements study some basic properties of the $k$-curvature.

\begin{lemma}\label{lem:p-curvature-constancy}
    For any $L \in \mathrm{Pic}(X)$, the operator $M_{L, \zeta_k}(z)$ is covariantly constant under the K\"ahler $q$-difference connection specialized at $q = \zeta_k$.
\end{lemma}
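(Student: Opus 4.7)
The plan is to deduce the covariant constancy directly from the flatness relation \eqref{eqn:flat} for the K\"ahler $q$-difference connection, combined with the crucial arithmetic observation that $q^{L^k}$ acts trivially at $q=\zeta_k$. Concretely, covariant constancy of $M_{L,\zeta_k}(z)$ means that for every $L' \in \mathrm{Pic}(X)$,
\begin{equation}
    M_{L,\zeta_k}(zq^{L'}) \circ M_{L'}^{(k)}(z)\big|_{q=\zeta_k} \;=\; M_{L'}^{(k)}(z)\big|_{q=\zeta_k} \circ M_{L,\zeta_k}(z).
\end{equation}

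First, I would apply \eqref{eqn:flat} with $L_1 = L^k$ and $L_2 = L'$, which before specialization gives the identity
\begin{equation}
    M_{L^k}^{(k)}(zq^{L'}) \circ M_{L'}^{(k)}(z) \;=\; M_{L^k L'}^{(k)}(z) \;=\; M_{L'}^{(k)}(zq^{L^k}) \circ M_{L^k}^{(k)}(z)
\end{equation}
in $\mathrm{End}(K_{\mu_k \times \mathbf{T}}(X))[\![z^{\mathrm{eff}}]\!]$. The second step is the key simplification at the root of unity: by definition, $q^{L^k}$ acts on the Novikov monomial $z^d$ by $q^{\langle L^k, d \rangle} = q^{k \langle L, d \rangle} = (q^k)^{\langle L, d \rangle}$, which specializes to $1$ at $q = \zeta_k$. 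Hence $M_{L'}^{(k)}(zq^{L^k})\big|_{q=\zeta_k} = M_{L'}^{(k)}(z)\big|_{q=\zeta_k}$, and specializing the displayed flatness identity at $q = \zeta_k$ yields
\begin{equation}
    M_{L^k}^{(k)}(zq^{L'})\big|_{q=\zeta_k} \circ M_{L'}^{(k)}(z)\big|_{q=\zeta_k} \;=\; M_{L'}^{(k)}(z)\big|_{q=\zeta_k} \circ M_{L^k}^{(k)}(z)\big|_{q=\zeta_k}.
\end{equation}
Recognizing $M_{L^k}^{(k)}(z)\big|_{q=\zeta_k} = M_{L,\zeta_k}(z)$ (Definition \ref{defn:p-curvature}) on both sides finishes the argument.

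The proof is essentially formal once the flatness \eqref{eqn:flat} is in place, so I do not anticipate a genuine obstacle; the only subtle point to articulate carefully is the well-definedness of the specialization $q = \zeta_k$, i.e.\ that both $M_{L^k}^{(k)}(zq^{L'})$ and $M_{L'}^{(k)}(zq^{L^k})$ lie in $\mathrm{End}(K_{\mu_k \times \mathbf{T}}(X))[\![z^{\mathrm{eff}}]\!]$ before reduction modulo $\Phi_k(q)$, so that passing to the quotient commutes with the $z$-shifts involved. This is immediate from the observation that the shifts $z \mapsto zq^L$ preserve the $K_{\mu_k}(\mathrm{pt})$-module structure on coefficients. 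Thus the statement reduces cleanly to the flatness relation together with the identity $\zeta_k^{k} = 1$.
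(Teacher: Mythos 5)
Your proof is correct and takes a genuinely cleaner route than the paper's. The paper unravels $M_{L^k}^{(k)}(z)$ into the $k$-fold product $\prod_{j} M_L^{(k)}(zq^{jL})$, applies the flatness relation \eqref{eqn:flat} to each individual factor to rewrite $M_L^{(k)}(zq^{jL}q^{L'})$ in terms of $M_{LL'}^{(k)}$ and $M_{L'}^{(k)}$, and then reassociates the resulting product to get a telescoping cancellation, with the boundary term $M_{L'}^{(k)}(zq^{kL})$ collapsing via $q^k=1$ at the very end. You instead apply \eqref{eqn:flat} \emph{once}, with $L_1 = L^k$ and $L_2 = L'$, and observe that $q^{L^k}$ acts on $z^d$ by $(q^k)^{\langle L, d\rangle}$, which already equals $1$ in the $\mu_k$-equivariant coefficient ring. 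This is shorter, avoids the bookkeeping of the telescoping, and highlights the essential arithmetic input ($q^k = 1$) more transparently. The only thing your argument quietly uses that the paper's keeps explicit is that $M_{L^k}^{(k)}(z)$ — the directly defined connection coefficient for $L^k$ — coincides with the iterated product defining the $k$-curvature; this is exactly what the paper's displayed equation preceding Definition \ref{defn:p-curvature} asserts (itself a consequence of \eqref{eqn:flat}), so your appeal to $M_{L^k}^{(k)}(z)|_{q=\zeta_k} = M_{L,\zeta_k}(z)$ is fully justified.
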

\begin{proof}
    Let $L' \in \mathrm{Pic}(X)$, then
    \begin{equation}
    \begin{aligned}
        &M_{L, \zeta_k}(z q^{L'})|_{q=\zeta_k} = \\
        & \big( M_L^{(k)}(z q^{(k-1)L}q^{L'}) \circ M_L^{(k)}(zq^{(k-2)L}q^{L'}) \circ \cdots \circ M_L^{(k)}(zq^Lq^{L'}) \circ M_L^{(k)}(zq^{L'}) \big)|_{q=\zeta_k} \\
        = & \big((M_{L L'}^{(k)}(zq^{(k-1)L}) \circ M_{L'}^{(k)}(zq^{(k-1)L})^{-1}) \circ \cdots \circ (M_{L L'}^{(k)}(zq^{L}) \circ M_{L'}^{(k)}(zq^{L})^{-1}) \circ (M_{L L'}^{(k)}(z) \circ M_{L'}^{(k)}(z)^{-1}) \big)|_{q=\zeta_k} \\
        = &\big(M_{L L'}^{(k)}(zq^{(k-1)L}) \circ (M_{L'}^{(k)}(zq^{(k-1)L})^{-1} \circ M_{L L'}^{(k)}(zq^{(k-2)L})) \circ \cdots \circ (M_{L'}^{(k)}(zq^{L})^{-1} \circ M_{L L'}^{(k)}(z)) \circ M_{L'}^{(k)}(z)^{-1} \big)|_{q=\zeta_k} \\
        = &\big(M_{L'}^{(k)}(zq^{kL}) \circ M_{L}^{k}(zq^{(k-1)L}) \circ M_{L}^{(k)}(zq^{(k-2)L}) \circ \cdots \circ M_{L}^{(k)}(z) \circ M_{L'}^{(k)}(z)^{-1} \big)|_{q=\zeta_k} \\
        = &M_{L'}^{(k)}(z)|_{q=\zeta_k} \circ M_{L, \zeta_k}(z) \circ M_{L'}^{(k)}(z)^{-1} |_{q=\zeta_k},
    \end{aligned}
    \end{equation}
    where we use the flatness relation \eqref{eqn:flat} and the fact $q^k=1$ in the last equation. This is exactly the condition for $M_{L, \zeta_k}(z)$ being covariantly constant.
\end{proof}

\begin{lemma}\label{lem:p-curvarture-shift}
    The operator $M_{L, \zeta_k}$ is covariantly constant under the equivariant $q$-difference connection specialized at $q = \zeta_k$.
\end{lemma}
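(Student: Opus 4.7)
The plan is to reduce the statement to the joint flatness of the K\"ahler and equivariant $q$-difference connections, iterate $k$ times, and then exploit the specialization $\zeta_k^k = 1$ exactly as in the proof of \cref{lem:p-curvature-constancy}.

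The first step is to establish the mixed flatness relation between K\"ahler and equivariant shifts in the $\mu_k$-equivariant setting:
$$S_\sigma(a, q^L z) \circ M_L^{(k)}(a, z) = M_L^{(k)}(q^\sigma a, z) \circ S_\sigma(a, z). \qquad (\star)$$
Heuristically, if $\Phi(a,z)$ is a solution of the joint $q$-difference module, then computing $\Phi(q^\sigma a, q^L z)$ in two orders yields $(\star)$. Geometrically, $(\star)$ can be verified by a two-way degeneration argument on the moduli space $\mathsf{QM}^{\sigma}_d(X)_{\mathrm{rel}\ p_1, p_2}$ equipped with the K\"ahler insertion $\det H^\bullet(L \otimes \pi^*\mathcal{O}_{p_1})$, entirely parallel to (and simultaneously generalizing) the degenerations used in the proofs of \cref{thm:kahler-covariant-constancy} and its equivariant counterpart. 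Degenerating at $p_2$ produces the LHS, while degenerating at $p_1$ produces the RHS, with the equivariant parameter shift $a \mapsto q^\sigma a$ arising from the $\sigma$-twisted gauge bundle over the component containing $p_1$.

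The next step is a short induction on $k$ using $(\star)$. Expanding
$$M_{L^k}^{(k)}(a, z) = M_L^{(k)}(a, q^{(k-1)L}z) \circ M_L^{(k)}(a, q^{(k-2)L}z) \circ \cdots \circ M_L^{(k)}(a, z),$$
the inductive step applies $(\star)$ with $z$ replaced by $q^{(k-1)L}z$ to move $S_\sigma$ past the outermost factor, and then uses the inductive hypothesis on the remaining product $M_{L^{k-1}}^{(k)}(a, z)$, yielding
$$S_\sigma(a, q^{kL} z) \circ M_{L^k}^{(k)}(a, z) = M_{L^k}^{(k)}(q^\sigma a, z) \circ S_\sigma(a, z). \qquad (\star\star)$$

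Finally, specializing $(\star\star)$ at $q = \zeta_k$: since $\zeta_k^k = 1$, the factor $q^{kL}$ acts as the identity on the Novikov ring, so $S_\sigma(a, q^{kL} z)|_{q=\zeta_k} = S_\sigma(a, z)|_{q=\zeta_k}$. We thus obtain
$$S_\sigma(a, z)|_{q=\zeta_k} \circ M_{L,\zeta_k}(a, z) = M_{L,\zeta_k}(\zeta_k^\sigma a, z) \circ S_\sigma(a, z)|_{q=\zeta_k},$$
which is the covariant constancy under the equivariant $q$-difference connection at $q = \zeta_k$. The main technical obstacle is the justification of $(\star)$ via degeneration on the $\sigma$-twisted relative quasimap moduli space; however, the combinatorial content is completely analogous to earlier degeneration arguments in the paper, and once $(\star)$ is granted, the rest of the proof is a direct iteration followed by the $\zeta_k^k = 1$ specialization.
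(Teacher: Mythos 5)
Your proof is correct and structurally identical to the paper's: establish a compatibility (joint flatness) relation between the K\"ahler connection coefficient $M_L^{(k)}$ and the equivariant shift $S_\sigma$, iterate it $k$ times through the product defining $M_{L^k}^{(k)}$, and then use $\zeta_k^k = 1$ to absorb the accumulated $z$-shift $q^{kL}$ on $S_\sigma$. Your final display is the same equation as the paper's \eqref{eqn:equivairant-constancy} with sides exchanged.

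The two places where you diverge from the paper's presentation are worth pointing out. First, your relation $(\star)$ makes the $z$-shift on $S_\sigma$ explicit; the paper states the compatibility \eqref{eqn:k-e-compatible} as $M_L(aq^\sigma) \circ S_\sigma = S_\sigma \circ M_L(a)$ with the $z$-argument suppressed, and only the explicit form makes the iteration and the role of $q^k=1$ fully transparent — so your version is arguably the more careful one, and it is what the paper implicitly relies on in the final sentence of its proof. Second, and more substantively, you propose to prove $(\star)$ by a two-way degeneration on the $\sigma$-twisted relative moduli $\mathsf{QM}^\sigma_d(X)_{\mathrm{rel}\ p_1,p_2}$ equipped with the K\"ahler insertion $\det H^\bullet(L\otimes\pi^*\mathcal{O}_{p_1})$. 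The paper instead deduces \eqref{eqn:k-e-compatible} algebraically, via the observation that $M_L(a)J(a)$ is again a fundamental solution together with the intertwining identity $J(aq^\sigma)E(\sigma) = S_\sigma J(a)$ from \cite[Theorem 8.2.20]{Oko17}. Your geometric route is more self-contained and more in the spirit of the proofs of \cref{thm:kahler-covariant-constancy} and Theorem~\ref{thm:intro-1}(4), but it requires writing out the degeneration details for a moduli space the paper does not explicitly analyze; the paper's route is shorter by appealing to Okounkov's already-established $J$-function formalism. Both are valid, and either way the iteration and root-of-unity specialization you carry out afterwards are exactly as in the paper.
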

\begin{proof}
    We need to show that for any cocharacter $\sigma: \mathbb{C}^{\times}_q \to \mathbf{T}$, we have
    \begin{equation}\label{eqn:equivairant-constancy}
        M_{L, \zeta_k}(a \zeta_k^{\sigma}) \circ S_{\sigma}|_{q=\zeta_k} = S_{\sigma}|_{q=\zeta_k} \circ M_{L, \zeta_k}(a),
    \end{equation}
    where $a$ is the equivariant variable. This follows from the compatibility between the K\"ahler and equivariant $q$-difference connections. Namely, working $\mathbb{C}_{q}^{\times}$-equivariantly, we know that
    \begin{equation}\label{eqn:k-e-compatible}
        M_L(aq^{\sigma}) \circ S_{\sigma} = S_{\sigma} \circ M_L(a).
    \end{equation}
    This is true by \cite[Section 8]{Oko17}, because if we write $J(a)$ to be the fundamental solution, then $M_L(a) \circ J(a)$ is again a fundamental solution to the $q$-difference equations, which in turn implies that 
    \begin{equation}
        M_L(aq^{\sigma}) \circ J(aq^{\sigma}) \circ E(\sigma) = S_{\sigma} \circ M_L(a) \circ J(a),
    \end{equation}
    where $E(\sigma)$ is the operator from \cite[Equation (8.2.13)]{Oko17}. Then using the relation $J(aq^{\sigma}) \circ E(\sigma) = S_{\sigma} \circ J(a)$ (cf. \cite[Theorem 8.2.20]{Oko17}), we can multiply the above equation by $J(a)^{-1}$ to obtain \eqref{eqn:k-e-compatible}. Finally, to prove \eqref{eqn:equivairant-constancy}, we simply put $S_{\sigma}|_{q=\zeta_k}$ to the right of $M_{L, \zeta_k}$ and apply the commuting relation \eqref{eqn:k-e-compatible} $k$-times and use again $q^k = \zeta_k^k = 1$.
\end{proof}

Now we investigate the relationship between the $k$-curvature $M_{L^k}^{(k)}$ and quantum Adams operator $Q\psi^k_{L}$, where we view $L \in \mathrm{Pic}(X)$ as a descendant insertion. The following lemma is the key observation for showing the equivalence of quantum Adams operators with the $k$-curvature operator. We work with the moduli space $\mathsf{QM}_d^{\mu_k}(X)_{\mathrm{rel} \  p_1, p_2}$ from \eqref{eqn:cyclic-moduli}. Let $\pi: C \to \mathbb{P}^1$ be a destabilization which is the domain of a quasimap $f: C \to \mathfrak{X}$ in $\mathsf{QM}_d^{\mu_k}(X)_{\mathrm{rel} \  p_1, p_2}$. Recall that we have the marked points $p_0', \dots, p_{k-1}'$ on $\mathbb{P}^1$ at the $k$-th roots of unity.

\begin{lemma}\label{lem:deformation-to-equator}
     There is an isomorphism of $K$-theory classes in $K_{\mu_k \times \mathbf{T}}(\mathsf{QM}_d^{\mu_k}(X)_{\mathrm{rel} \  p_1, p_2})$,
    \begin{equation}
        H^\bullet \left(C, \left(\pi^* \mathcal{O}_{p_1} \right)^{\oplus k} \right) \cong H^\bullet \left( C, \pi^* \mathcal{O}_{p_0'} \oplus \cdots \oplus \pi^* \mathcal{O}_{p_{k-1}'} \right).
    \end{equation}
\end{lemma}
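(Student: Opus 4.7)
The strategy is to reduce the asserted equality on the moduli space to an elementary identity in $K_{\mu_k}(\mathbb{P}^1)$, which I then establish via a $\mu_k$-equivariant linear equivalence of divisors on $\mathbb{P}^1$.

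First, I reduce to $\mathbb{P}^1$. The map $\pi : C \to \mathbb{P}^1$ is $\mu_k$-equivariant, as is the projection $\pi_{\mathsf{QM}} : C \to \mathsf{QM}_d^{\mu_k}(X)_{\mathrm{rel}\ p_1, p_2}$. Since $L\pi^*$ and $R\pi_{\mathsf{QM},*}$ are functorial at the level of $K$-theory and preserve $\mu_k \times \mathbf{T}$-equivariant relations, it suffices to establish the corresponding identity of classes
\[
    [\mathcal{O}_{p_1}^{\oplus k}] = \Bigl[\bigoplus_{i=0}^{k-1} \mathcal{O}_{p_i'}\Bigr] \in K_{\mu_k}(\mathbb{P}^1),
\]
where $\mathcal{O}_{p_1}^{\oplus k}$ carries the natural cyclic permutation $\mu_k$-equivariant structure on its summands.

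Next I interpolate through the class $[\mathcal{O}_{k \cdot p_1}]$ of the structure sheaf of the length-$k$ fat point supported at $p_1$. The right-hand side equals $[\mathcal{O}_{k \cdot p_1}]$ via a $\mu_k$-equivariant linear equivalence: in an affine coordinate $z$ with $p_1 = \{z = 0\}$, the rational function $(z^k - 1)/z^k$ is invariant under $z \mapsto \zeta z$ and has divisor $\sum_i p_i' - k \cdot p_1$. This yields a $\mu_k$-equivariant isomorphism $\mathcal{O}(-k \cdot p_1) \cong \mathcal{O}(-\sum_i p_i')$, and subtracting the short exact sequences
\[
    0 \to \mathcal{O}(-k \cdot p_1) \to \mathcal{O} \to \mathcal{O}_{k \cdot p_1} \to 0, \qquad 0 \to \mathcal{O}\Bigl(-\sum_i p_i'\Bigr) \to \mathcal{O} \to \bigoplus_i \mathcal{O}_{p_i'} \to 0
\]
produces $[\mathcal{O}_{k \cdot p_1}] = [\bigoplus_i \mathcal{O}_{p_i'}]$ in $K_{\mu_k}(\mathbb{P}^1)$. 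For the left-hand side, I filter $\mathcal{O}_{k \cdot p_1} \cong \mathbb{C}[z]/(z^k)$ by powers of the maximal ideal $\mathfrak{m}_{p_1}$; the graded pieces are skyscrapers at $p_1$ with $\mu_k$-weights $1, q, \dots, q^{k-1}$, so $[\mathcal{O}_{k \cdot p_1}] = (1 + q + \cdots + q^{k-1})[\mathcal{O}_{p_1}]$. This is the regular representation of $\mu_k$ times $[\mathcal{O}_{p_1}]$, which matches the class of $\mathcal{O}_{p_1}^{\oplus k}$ equipped with the cyclic permutation action.

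The main obstacle is a careful bookkeeping of $\mu_k$-equivariant structures, particularly identifying the correct $\mu_k$-structure on $(\pi^*\mathcal{O}_{p_1})^{\oplus k}$: the naive diagonal structure gives the trivial $k$-dimensional representation, which would fail the identity. The cyclic permutation structure is the geometrically correct one, arising from viewing the $k$ copies as the degenerate limit of the free $\mu_k$-orbit $\{p_0', \dots, p_{k-1}'\}$ as it collapses onto the fixed point $p_1$ in the associated degeneration underlying the gluing formula used in the proof of Theorem~\ref{thm:qadams=pcurvature}. Once this identification is in place, the remainder of the argument is a direct computation in $\mu_k$-equivariant $K$-theory.
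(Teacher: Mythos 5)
Your proof is correct, and it arrives at the result by a different mechanism than the paper's. Both proofs begin by reducing the claim to the identity $[\mathcal{O}_{p_1}^{\oplus k}] = [\mathcal{O}_{p_0'} \oplus \cdots \oplus \mathcal{O}_{p_{k-1}'}]$ in $K_{\mu_k}(\mathbb{P}^1)$, but via different reductions: you pull back along the stabilization map $\pi$ and push forward to the moduli space, so the whole argument is functorial; the paper instead pushes forward to $\mathbb{P}^1$ first, using the projection formula and $R\pi_*\mathcal{O}_C \cong \mathcal{O}_{\mathbb{P}^1}$ (valid since the fibers of $\pi$ are connected nodal genus~$0$ curves). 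Both reductions are fine, and yours is actually more economical, sidestepping the need for $R\pi_*\mathcal{O}_C \cong \mathcal{O}_{\mathbb{P}^1}$. The more substantive difference is in the key step on $\mathbb{P}^1$: the paper establishes the identity in one stroke by the $\mu_k$-equivariant flat family $\{tp_0', \ldots, tp_{k-1}'\}$ over $\mathbb{A}^1_t$, which at $t=0$ degenerates to the length-$k$ fat point $\mathcal{O}_{kp_1}$. You make the same comparison explicit: first the $\mu_k$-invariant rational function $(z^k - 1)/z^k$ gives a linear equivalence of divisors, yielding $[\bigoplus_i \mathcal{O}_{p_i'}] = [\mathcal{O}_{kp_1}]$; then the maximal-ideal filtration of $\mathcal{O}_{kp_1}$ identifies this with the regular representation times $[\mathcal{O}_{p_1}]$, which is exactly the $K$-class of $\mathcal{O}_{p_1}^{\oplus k}$ with the cyclic permutation action. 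Your approach has the advantage of surfacing the intermediate class $[\mathcal{O}_{kp_1}]$ and making precise which $\mu_k$-structure on $(\pi^*\mathcal{O}_{p_1})^{\oplus k}$ is meant — the permutation (regular) one, not the trivially-diagonal one — a point the paper leaves implicit inside the flat-family argument but which is essential for the proof of \cref{thm:qadams=pcurvature} to go through.
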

\begin{proof}
    Both classes are obtained as pushforwards from the universal curve $\mathcal{C} \to \mathsf{QM}_d^{\mu_k}(X)_{\mathrm{rel} \  p_1, p_2}$. It suffices to explain the fiberwise computation, i.e. with a fixed relative quasimap with domain $C$ with stabilization map $\pi: C \to \mathbb{P}^1$.

    The projection $\tilde{p}: C \to \mathrm{pt}$ factors through $ p \circ \pi : C \to \mathbb{P}^1 \to \mathrm{pt}$. Hence by the projection formula
    \begin{equation}
        R\widetilde{p}_* (\pi^*\mathcal{O}_{p_1}^{\oplus k} ) \cong Rp_* R\pi_* (\pi^* \mathcal{O}_{p_1}^{\oplus k}) \cong Rp_* \left( \mathcal{O}_{p_1}^{\oplus k } \otimes_{\mathcal{O}_{\mathbb{P}^1}} R\pi_* \mathcal{O}_{C} \right) \cong Rp_* (\mathcal{O}_{p_1}^{\oplus k} )
    \end{equation}
    where the last isomorphism follows from the fact that the fibers of $\pi: C \to \mathbb{P}^1$ are connected nodal genus 0 curves (or points) so that $R\pi_* \mathcal{O}_{C} \cong \mathcal{O}_{\mathbb{P}^1}$.
    Applying the projection formula to $H^\bullet \left( C, \pi^* \mathcal{O}_{p_0'} \oplus \cdots \oplus \pi^* \mathcal{O}_{p_{k-1}'} \right)$ the same way gives
    \begin{equation}
        R\widetilde{p}_* (\pi^* \mathcal{O}_{p_0'} \oplus \cdots \oplus \pi^* \mathcal{O}_{p_{k-1}'} ) \cong Rp_* \left( \mathcal{O}_{p_0'} \oplus \cdots \oplus \mathcal{O}_{p_{k-1}'}\right), 
    \end{equation}
    so the claim reduces to showing that $Rp_* \mathcal{O}_{p_1}^{\oplus k} \cong Rp_* \left( \mathcal{O}_{p_0'} \oplus \cdots \oplus \mathcal{O}_{p_{k-1}'} \right)$. This follows from the $\mu_k$-equivariant deformation $(t p'_0, \dots, t p'_{k-1})$ for $t \in \mathbb{A}^1$ interpolating $(p_1, \dots, p_1)$ and $(p_0', \dots, p_{k-1}')$ which identifies the corresponding $K$-theory classes.
\end{proof}

Now we are ready to prove the equivalence between the quantum Adams operator and the $k$-curvature operator.

\begin{thm}\label{thm:qadams=pcurvature}
    Let $Q\psi_L^{k}(z) \in \mathrm{End}(K_{\mu_k \times \mathbf{T}}(X)|_{q = \zeta} ) [\![z^{\mathrm{eff}}]\!]$ be the descendant quantum Adams operator for $L \in \mathrm{Pic}(X)$. Then
    \begin{equation}
        Q\psi_L^k(z) = M_{L, \zeta} (z).
    \end{equation}
\end{thm}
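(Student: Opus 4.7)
My plan is to reduce both operators to pushforwards from the same moduli space $\mathsf{QM}_d(X)_{\mathrm{rel}\ p_1, p_2}$ (equipped with its natural $\mu_k$-action from the rotation of $\mathbb{P}^1$ fixing $p_1, p_2$), and then to identify the resulting $K$-theoretic insertions using Lemma~\ref{lem:deformation-to-equator}. First, by the flatness relation~\eqref{eqn:flat} --- which holds verbatim after passing to $\mu_k$-equivariant $K$-theory --- the $k$-curvature collapses to the single-moduli operator
\begin{equation*}
M_{L, \zeta}(z) = M_{L^k}^{(k)}(z)|_{q = \zeta},
\end{equation*}
defined, as in Definition~\ref{defn:kahler-shift}, with the insertion $\det H^\bullet(L^k \otimes \pi^*\mathcal{O}_{p_1})$. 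Since this is the same underlying moduli space, with the same twisted virtual structure sheaf and the same gluing matrix $\mathbf{G}$ as those appearing in $Q\psi_L^{desc, k}(z)$, the theorem reduces to the $K$-theoretic identity
\begin{equation*}
\bigotimes_{j=0}^{k-1} \mathrm{ev}_{p_j'}^* L \;=\; \det H^\bullet\bigl(L^k \otimes \pi^*\mathcal{O}_{p_1}\bigr) \quad \text{in } K_{\mu_k \times \mathbf{T}}(\mathsf{QM}_d(X)_{\mathrm{rel}\ p_1, p_2})|_{q = \zeta}.
\end{equation*}

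To establish this identity, I tensor the $\mu_k$-equivariant deformation family from the proof of Lemma~\ref{lem:deformation-to-equator} with the line bundle $L$. The family of $k$-tuples $(tp_0', \ldots, tp_{k-1}')$ for $t \in \mathbb{A}^1$ provides a $\mu_k$-equivariant family of $0$-dimensional subschemes on the universal curve over $\mathsf{QM}$, and since $L$ extends as a line bundle over the entire family, the class $L \otimes \bigoplus_j \pi^*\mathcal{O}_{tp_j'}$, viewed as a coherent sheaf flat over $\mathbb{A}^1$, has deformation-invariant $K$-theory pushforward. At $t = 1$ this specializes to the descendant insertion $\bigotimes_j \mathrm{ev}_{p_j'}^* L$, since the $p_j'$ are not among the relative markings and each $\pi^*\mathcal{O}_{p_j'}$ is thus reduced; at $t = 0$ it degenerates to the $\mu_k$-equivariant class of $L \otimes \pi^*\mathcal{O}_{p_1}^{\oplus k}$ with cyclic permutation on the $k$ summands.

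The remaining step matches this ``$k$-stacked'' insertion at $p_1$ with $\det H^\bullet(L^k \otimes \pi^*\mathcal{O}_{p_1})$ after specializing $q = \zeta$. The key algebraic input is the vanishing of the $\mu_k$-regular representation class $1 + q + \cdots + q^{k-1}$ in $K_{\mu_k}(\mathrm{pt})|_{q = \zeta}$, which forces the ``off-diagonal'' $\mu_k$-characters appearing in the expansion of the $k$-fold stacked class to cancel and leaves only the ``diagonal'' $k$-th power contribution $L^k$. The main technical hurdle I anticipate is the bookkeeping of the $\mu_k$-equivariant structure in the presence of bubble chains at $p_1$, where $\pi^*\mathcal{O}_{p_1}$ is a non-reduced sheaf whose interaction with $L$ can be intricate; here, the projection formula together with $R\pi_*\mathcal{O}_C \cong \mathcal{O}_{\mathbb{P}^1}$, as invoked in the proof of Lemma~\ref{lem:deformation-to-equator}, should reduce the verification to the base $\mathbb{P}^1$, on which the identification becomes transparent.
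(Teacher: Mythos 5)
Your overall strategy is the same as the paper's: reduce both sides to pushforwards from $\mathsf{QM}_d(X)_{\mathrm{rel}\ p_1,p_2}$, and then use the deformation of \cref{lem:deformation-to-equator} to relate the insertion $\det H^\bullet(L^k \otimes \pi^*\mathcal{O}_{p_1})$ at the pole to the descendant insertions at the equatorial points. The paper's proof proceeds in exactly this way, with \cref{lem:deformation-to-equator} as the key input. However, there are two genuine problems in your final step.

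First, there is a type mismatch in your deformation argument. The $\mu_k$-equivariant family $L \otimes \bigoplus_j \pi^*\mathcal{O}_{tp_j'}$ produces at $t=1$ the \emph{direct sum} class $\bigoplus_j \mathrm{ev}_{p_j'}^*L$, not the tensor product $\bigotimes_j \mathrm{ev}_{p_j'}^*L$ that actually appears as the insertion $(\mathrm{ev}_k^{\mathrm{stack}})^*L^{\boxtimes k}_{eq}$ defining $Q\psi^{desc,k}_L$. To convert the direct sum at $t=0$ into the line-bundle insertion, you must pass through $\det H^\bullet(-)$: the paper applies $\det H^\bullet$ to $L \otimes \bigoplus_j \pi^*\mathcal{O}_{p_j'}$ (using additivity of $\det$ and the fact that the $p_j'$ are reduced to identify the result with $\bigotimes_j \mathrm{ev}_{p_j'}^*L$), and at $t=0$ to $L^{\oplus k} \otimes \pi^*\mathcal{O}_{p_1}$, and the nontrivial step is the isomorphism $\det H^\bullet(L^{\oplus k} \otimes \pi^*\mathcal{O}_{p_1}) \cong \det H^\bullet(L^k \otimes \pi^*\mathcal{O}_{p_1})$ established by the projection formula along $p \circ \pi$. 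Your write-up skips the determinant and so compares a rank-$k$ sheaf with a line bundle.

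Second, the ``key algebraic input'' you identify is not the right one, and the paper does not need it. You invoke the vanishing of the regular representation $1 + q + \cdots + q^{k-1}$ in $K_{\mu_k}(\mathrm{pt})|_{q=\zeta}$ to kill ``off-diagonal'' characters; but the comparison in question is between two \emph{line} bundles (determinants), so any discrepancy would be a single $\mu_k$-character, not a sum that could cancel against the regular representation. In fact, the paper establishes the stronger statement $M_{L^k}^{(k)}(z) = Q\Psi^k_L(z)$ in the full $\mu_k \times \mathbf{T}$-equivariant $K$-theory \emph{before} passing to the quotient by $(\Phi_k(q))$, so the specialization $q=\zeta$ plays no role in the modular identification; it enters only at the very end by definition of $Q\psi^k_L$ and $M_{L,\zeta}$. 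Your proposal thus has the right geometric picture (deformation to the pole and the projection formula along $p \circ \pi$) but misses the determinant step that makes the objects comparable, and replaces the correct $\mu_k$-equivariant identification of line bundles with an inapplicable cancellation argument.
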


\begin{proof}
    This is an analog of \cite[Theorem 16]{PSZ-quantum}. Recall that we have
    \begin{equation}
        M_{L^k}^{(k)}(z) = \big( \sum_d z^d (\mathrm{ev}_{p_1} \times \mathrm{ev}_{p_2})_* \big( \mathsf{QM}_{d}(X)_{\mathrm{rel} \ p_1, p_2}, \hat{\mathcal{O}}_{\mathrm{vir}} \otimes\mathrm{det} H^{\bullet}(L^k \otimes \pi^*(\mathcal{O}_{p_1})) \big) \big)\circ \mathbf{G}^{-1}.
    \end{equation}
    Note that the following relation holds:
    \begin{equation}
        \mathrm{det} H^{\bullet}(L^k \otimes \pi^*(\mathcal{O}_{p_1})) \cong \mathrm{det} H^{\bullet}(L^{\oplus k} \otimes \pi^*(\mathcal{O}_{p_1})).
    \end{equation}
    Indeed, continuing the notations in introduced in Lemma \ref{lem:deformation-to-equator}, we see that
    \begin{equation}
        \mathrm{det} H^{\bullet}(L^{\oplus k} \otimes \pi^*(\mathcal{O}_{p_1})) \cong \bigotimes_k \det Rp_* R\pi_{*}(f^*L \otimes \pi^*(\mathcal{O}_{p_1})) \cong \bigotimes_k\det Rp_*(R\pi_{*}f^*L \otimes \mathcal{O}_{p_1}).
    \end{equation}
    Because the fiber of $\pi$ over $p_1$ is either a point or a genus $0$ nodal curve, the last factor maps isomorphically to $\det Rp_*(R\pi_{*}f^*L^k \otimes \pi^*(\mathcal{O}_{p_1})) \cong \mathrm{det} H^{\bullet}(L^k \otimes \pi^*(\mathcal{O}_{p_1}))$.

    Applying Lemma \ref{lem:deformation-to-equator} and the above computation, we see that in the $\mu_k \times \mathbf{T}$-equivariant $K$-theory, we have
    \begin{equation}
    \begin{aligned}
        &M_{L^k}^{(k)}(z) = \\
        & \big( \sum_d z^d (\mathrm{ev}_{p_1} \times \mathrm{ev}_{p_2})_* \big( \mathsf{QM}_{d}(X)_{\mathrm{rel} \ p_1, p_2}, \hat{\mathcal{O}}_{\mathrm{vir}} \otimes\mathrm{det} H^{\bullet}(L \otimes \pi^*(\mathcal{O}_{p_1})^{\oplus k}) \big) \big)\circ \mathbf{G}^{-1} \\
        &= \big( \sum_d z^d (\mathrm{ev}_{p_1} \times \mathrm{ev}_{p_2})_* \big( \mathsf{QM}_{d}(X)_{\mathrm{rel} \ p_1, p_2}, \hat{\mathcal{O}}_{\mathrm{vir}} \otimes\mathrm{det} H^{\bullet}(L \otimes (\pi^* \mathcal{O}_{p_0'} \oplus \cdots \oplus \pi^* \mathcal{O}_{p_{k-1}'})) \big) \big)\circ \mathbf{G}^{-1} \\
        &= \big( \sum_d z^d (\mathrm{ev}_{p_1} \times \mathrm{ev}_{p_2})_* \big( \mathsf{QM}_{d}(X)_{\mathrm{rel} \ p_1, p_2}, \hat{\mathcal{O}}_{\mathrm{vir}} \otimes (\mathrm{ev}^{\mathrm{stack}}_k)^* L^{\boxtimes k}_{eq} \big) \big)\circ \mathbf{G}^{-1} = Q\Psi^k_{L}(z).
    \end{aligned}
    \end{equation}
    Therefore, by specializing $q = \zeta_k$, we see that $Q\psi_L^k(z) = M_{L, \zeta} (z)$.
\end{proof}

In view of \cref{thm:qadams=pcurvature}, Lemma \ref{lem:p-curvature-constancy} and Lemma \ref{lem:p-curvarture-shift} provide alternative proofs of the compatibility between the quantum Adams operator and the $q$-difference module structure stated in Proposition \ref{prop:adams-property}, and vice versa. We keep both proofs as they offer different insights on covariant constancy.

\subsection{Cohomological analog: Quantum Steenrod operators}\label{ssec:quantumSteenrod-defn}

Quantum Steenrod operations were introduced by \cite{Fuk97} and developed by \cite{Wil-sur, seidel-wilkins}. In \cite{Lee23a, Lee23b}, the second-named author introduced torus-equivariant generalizations of these operations. These are quantum deformations of the total Steenrod power operator for a fixed prime $k=p>2$ such that $H^*_{\mu_p}(\mathrm{pt};\mathbb{F}_p) \cong \mathbb{F}_p [\![ t, \theta]\!]$ generated by commuting variables of degree $|t|=2$, $|\theta|=1$:
\begin{align}
H^*_{\mathbf{T}}(X;\mathbb{F}_p) &\to H^*_{\mu_p \ltimes \mathbf{T}^p}(X^k;\mathbb{F}_p) \overset{\Delta^*}{\to} H^*_{\mu_p \times \mathbf{T}} (X;\mathbb{F}_p) \cong H^*_{\mathbf{T}}(X;\mathbb{F}_p) [\![ t, \theta]\!] \\
b & \mapsto  (b^{\otimes p})_{eq} \mapsto \mathrm{St}(b).
\end{align}
Here, following our convention in $K$-theory, $(b^{\otimes p})_{eq}$ is the (external) $p$-fold cup product of $b$ viewed as a $\mu_k$-equivariant class. The coefficients of $\mathrm{St}(b)$ in the expansion as a polynomial over $t, \theta$ (under the last K\"unneth isomorphism) are the classical Steenrod power operations applied to $b$ (up to signs). 

Consider the moduli space of stable maps $\mathsf{M}_d(X)$ of degree $d$ into $X$ (as a quasiprojective variety) with a rigid component $\mathbb{P}^1$. The moduli space carries evaluation maps
\begin{equation}
    \mathrm{ev}_{p_1} \times \mathrm{ev}_k \times \mathrm{ev}_{p_2} : \mathsf{M}_d(X) \to X \times X^{\times k} \times X.
\end{equation}

\begin{defn}
    Fix $p$ a prime. Take $b \in H^*_{\mathbf{T}}(X)[\![ z^{\mathrm{eff}} ]\!]$. The \emph{quantum Steenrod power operator} associated to $\mathcal{F}$ is defined as
    \begin{equation}\label{eqn:power-H}
    \begin{aligned}
        Q\Sigma_b : = \sum_{d} z^d (\mathrm{ev}_{p_1} \times \mathrm{ev}_{p_2})_* \left( [\mathsf{M}_d(X)]^{vir}_{\mu_p} \cap \mathrm{ev}_k^* (\mathrm{St}(b)) \right) \\ \in \mathrm{End}(H^*_{\mu_k \times \mathbf{T}}(X))[\![ z^{\mathrm{eff}} ]\!],
    \end{aligned}
    \end{equation}
    where we view the cohomology classes of $X \times X$ as endomorphisms on $H^*_{\mu_k \times \mathbf{T}}(X)$ via convolution.
\end{defn}

For the explanation of the counts $(\mathrm{ev}_{p_1} \times \mathrm{ev}_{p_2})_* \left( [\mathsf{M}_d(X)]^{vir}_{\mu_p} \cap \mathrm{ev}_k^* (\mathrm{St}(b)) \right)$ from the (hypothetical) $\mu_p$-equivariant virtual fundamental class, which we do not explain in full detail here, we refer the reader to \cite[Section 3.1.1]{bai-lee} and references therein. For the discussion in this section, it suffices to consider the quantum Steenrod operators simply as the cohomological analogs of quantum Adams operators, defined in $\mathbb{F}_p$-coefficient (quantum) cohomology, using the exact same configuration of $\mu_p$-symmetric parametrized copy of $\mathbb{P}^1$ with marked points at the roots of unity as the source curve. Due to technical difficulties in defining the mod $p$ counts, the construction uses stable map compactification in symplectic enumerative geometry as opposed to an algebro-geometric framework, see \cite[Section 4]{seidel-wilkins} or \cite[Section 2.4]{Lee23a}.

In \cite{Lee23b}, the second-named author verified that for conical symplectic resolutions with isolated torus fixed points and semisimple quantum cohomology, the quantum Steenrod operators agree with the $p$-curvature of the quantum  \emph{differential} connection.  Recall that the quantum differential connection is an $\mathbb{F}_p[\![t, \theta]\!] \otimes H^*_{\mu_p \times \mathbf{T}}(\mathrm{pt})$-linear operator indexed by $x \in H^2_\mathbf{T}(X;\mathbb{Z})$,
\begin{equation}\label{eqn:qconn}
    \nabla_x := t \partial_{\bar{x}} + x \ \star_\mathbf{T} : H^*_{\mu_p \times \mathbf{T}} (X) [\![z^{\mathrm{eff}}]\!] \to H^*_{\mu_p \times \mathbf{T}} (X) [\![z^{\mathrm{eff}}]\!]
\end{equation}
where $\partial_{\bar{x}}$ acts on $z^\alpha \in \mathbb{F}_p[\![z^{\mathrm{eff}}]\!]$ by $\partial_{\bar{x}} z^\alpha = \langle \alpha, \bar{x} \rangle z^\alpha$. Here, $t$ is the equivariant parameter for the $\mu_p(\mathbb{C})$-action on the \emph{source} curve for holomorphic maps $u : \mathbb{P}^1 \to X$ counted for defining the quantum connection, i.e. the cohomological analogue of the loop parameter $q \in K_{\mu_p}(\mathrm{pt}) \cong \mathbb{Z}[q]/(q^p=1)$. A key property of $\nabla_x$ is that it is a flat connection, i.e. the operators $\nabla_x, \nabla_{x'}$ commute for two choices of $x, x' \in H^2_{\mathbf{T}}(X)$.

\begin{thm}[{\cite[Theorem 1.2]{Lee23b}}]\label{thm:qst-is-pcurv}
    Fix a conical Hamiltonian symplectic resolution $X$ with isolated $T$-fixed points. Then for almost all $p$ and $x \in H^2_{\mathbf{T}}(X;\mathbb{F}_p)$, we have
    \begin{equation}
        Q\Sigma_x = \nabla_x^p - t^{p-1} \nabla_x + N
    \end{equation}
    where $N$ is a nilpotent operator.
\end{thm}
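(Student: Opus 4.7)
The plan is to show that both $Q\Sigma_x$ and the $p$-curvature $P_x := \nabla_x^p - t^{p-1} \nabla_x$ are covariantly constant endomorphisms of the equivariant quantum $D$-module, match in a spectral normal form coming from semisimplicity, and hence differ only by a nilpotent summand. For the covariant constancy, flatness of the quantum connection gives $[\nabla_y, \nabla_x] = 0$ for all $y \in H^2_{\mathbf{T}}(X;\mathbb{F}_p)$, so $[\nabla_y, \nabla_x^p] = 0$ and therefore $[\nabla_y, P_x] = 0$. The analogous covariant constancy $[\nabla_y, Q\Sigma_x] = 0$ is the cohomological counterpart of Theorem~\ref{thm:kahler-covariant-constancy} and was established in \cite{seidel-wilkins} via a degeneration of the equatorial insertion configuration.

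The next step is to match the classical $t \to 0$ limits. Expanding $\nabla_x = t \partial_{\bar x} + x \star$, every term in $\nabla_x^p$ other than $(x \star)^p$ carries a factor of $t$, so $P_x \bmod t = x^{\star p}$, the $p$-fold iterated quantum product. A $\mu_p$-equivariant degeneration in the spirit of Proposition~\ref{prop:nonequivariant} similarly identifies the non-loop-equivariant limit $Q\Sigma_x \bmod t$ with $x^{\star p}$. Consequently the difference $D_x := P_x - Q\Sigma_x$ is a covariantly constant endomorphism that is divisible by $t$.

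The semisimplicity hypothesis then forces $D_x$ to be nilpotent. After localizing at a generic K\"ahler parameter and inverting suitable equivariant classes, $x \star$ becomes diagonalizable with distinct eigenvalues indexed by the $T$-fixed points, and the quantum connection acquires an asymptotic normal form with diagonal principal symbol. In this spectral basis the commutant of the connection splits as a semisimple diagonal part plus nilpotent corrections, and the diagonal entries of a covariantly constant endomorphism are read off from its leading symbol, namely $x^{\star p}$. Since the leading symbols of $P_x$ and $Q\Sigma_x$ coincide by the previous step, the diagonal part of $D_x$ vanishes, whence $D_x$ is nilpotent and $N := -D_x$ gives the desired correction.

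The main obstacle is making this spectral analysis descend to $\mathbb{F}_p$ for almost all $p$. Semisimplicity is a characteristic zero input, and one must show that the eigenvalues, spectral projectors, and the normal form of $\nabla_x$ admit integral lifts with well-defined mod $p$ reductions outside a finite bad set of primes; this is the source of the ``almost all $p$'' caveat. A secondary technical point is reconciling the symplectic-geometric definition of $Q\Sigma_x$ via stable maps with the algebro-geometric shift-operator description underlying the computation of $P_x$, which requires integrality of the relevant equivariant Gromov--Witten invariants and rigidity of Maulik--Okounkov stable envelopes under deformation of the K\"ahler parameter.
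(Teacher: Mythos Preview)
This theorem is cited from \cite{Lee23b} and not proved in the present paper; what the paper does tell us is that the argument there ``crucially relies on algebraic arguments from shift operators and semi-simplicity of the quantum multiplication'' (see the discussion preceding \cref{thm:intro-2}). Your plan captures the semisimplicity ingredient but not the shift-operator one, and as written the central step has a gap.

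The problematic step is the claim that ``the diagonal entries of a covariantly constant endomorphism are read off from its leading symbol,'' where by leading symbol you mean the $t \to 0$ limit. This is false: $t^{p}\cdot\mathrm{Id}$ is covariantly constant (since $[\nabla_y, t^{p}\cdot\mathrm{Id}] = t^{p}[\nabla_y,\mathrm{Id}] = 0$), has the correct cohomological degree $2p$, vanishes at $t=0$, and is certainly not nilpotent. So K\"ahler covariant constancy together with $D_x|_{t=0}=0$ does not force $D_x$ to be nilpotent. One might try instead to match at $z=0$ and argue that a covariantly constant endomorphism is determined by its initial value; but over $\mathbb{F}_p$ the quantum connection generally admits no fundamental solution (the $p$-curvature is precisely the obstruction), so this also fails. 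The spectral normal form you invoke implicitly requires integrating the connection, which is exactly what breaks in positive characteristic.

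What supplies the missing rigidity in \cite{Lee23b} is compatibility with the \emph{equivariant} shift operators $S_\sigma$, the cohomological analogues of \cref{defn:equiv-shift}. Both $Q\Sigma_x$ and $P_x$ commute with these (for the former this is the cohomological counterpart of item (5) in \cref{prop:adams-property}; for the latter it follows from K\"ahler--equivariant compatibility as in \cref{lem:p-curvarture-shift}). For a resolution with isolated $T$-fixed points the shift operators are explicit after localizing the equivariant parameters, and simultaneous compatibility with all $\nabla_y$ and all $S_\sigma$ is restrictive enough to reduce the comparison to the classical limit, up to a nilpotent ambiguity coming from the passage between localized and integral equivariant cohomology. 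You mention shift operators only in your final paragraph as a definitional technicality, but they are the main mechanism in the proof, not an afterthought.
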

The expression $\nabla_x^p - t^{p-1}\nabla_x$ on the right hand side is the $p$-curvature of the quantum differential connection. Although the connection $\nabla_x$ \emph{is not} $z$-linear as it satisfies the Leibniz rule, the $p$-curvature $\nabla_x^p - t^{p-1} \nabla_x$ \emph{is} $z$-linear.

Conjecturally the nilpotent error always vanishes so that the quantum Steenrod operators on degree $2$ classes are expected to always be the $p$-curvature of the quantum connection. The nilpotent error certainly vanishes under assumptions of generic semisimplicity. Recall that a collection of pairwise commuting linear operators $\{F_i\}$ acting on a vector space $V$ over a field $K$ has \emph{jointly simple spectrum} if the simultaneous eigendecomposition for $\{F_i\}$ acting on $V \otimes_K \overline{K}$ is into rank $1$ subspaces.

\begin{cor}[{\cite[Corollary 5.2]{Lee23b}}]\label{thm:qst-is-pcurv-onthenose}
    Suppose the quantum multiplication operators $y  \ \star_\mathbf{T}$ associated to degree $2$ classes $y \in H^2_{\mathbf{T}}(X;k)$ have jointly simple spectrum. Then for $x \in H^2_{\mathbf{T}}(X;\mathbb{F}_p)$,
        \begin{equation}
        Q\Sigma_x = \nabla_x^p - t^{p-1} \nabla_x.
    \end{equation}
\end{cor}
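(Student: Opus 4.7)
The goal is to upgrade Theorem \ref{thm:qst-is-pcurv} from an equality up to a nilpotent error to an on-the-nose equality, so the strategy is to show that $N := Q\Sigma_x - (\nabla_x^p - t^{p-1}\nabla_x)$ must vanish under the jointly simple spectrum hypothesis. The plan is to first show $N$ is covariantly constant with respect to the quantum connection, and then use the simple spectrum hypothesis to rule out nonzero such operators by a $t$-adic descent.

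First, I would establish that $N$ commutes with the quantum differential connection, i.e., $[\nabla_y, N] = 0$ for every $y \in H^2_{\mathbf{T}}(X;\mathbb{F}_p)$. For the $p$-curvature side, this is a general fact from the theory of flat connections in characteristic $p$: flatness of $\nabla$ immediately gives $[\nabla_y, \nabla_x^p - t^{p-1}\nabla_x] = 0$. For $Q\Sigma_x$, this is the cohomological Seidel--Wilkins covariant constancy theorem, i.e., the analog in quantum cohomology of Theorem \ref{thm:kahler-covariant-constancy} for quasimap $K$-theory. Subtracting yields $[\nabla_y, N] = 0$.

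Next I would exploit the simple spectrum. Expanding $[\nabla_y, N] = 0$ via $\nabla_y = t\,\partial_{\bar y} + y\,\star_{\mathbf{T}}$ gives the relation $t\,\partial_{\bar y} N + [y\,\star_{\mathbf{T}}, N] = 0$. Reducing modulo $t$ shows that $N_0 := N|_{t=0}$ commutes with every quantum multiplication operator $y\,\star_{\mathbf{T}}$, and $N_0$ remains nilpotent since $N$ was. After base-change to the algebraic closure of the fraction field of the Novikov ring, the joint simple spectrum hypothesis produces a joint eigendecomposition of $H^*_{\mathbf{T}}(X)[\![z^{\mathrm{eff}}]\!]$ into rank-one subspaces. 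Any operator commuting with the full family $\{y\,\star_{\mathbf{T}}\}$ is then diagonal in this basis, and hence lies in a commutative semisimple algebra that contains no nonzero nilpotents. Therefore $N_0 = 0$, so $N$ is divisible by $t$.

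The main subtlety, and where care is needed, is that $[\nabla_y, N] = 0$ does not directly give $[y\,\star_{\mathbf{T}}, N] = 0$ but only modulo $t$, so a single mod-$t$ reduction is not enough. To finish, I would iterate: writing $N = t\widetilde N$, the operator $\widetilde N$ is again nilpotent and satisfies $[\nabla_y, \widetilde N] = 0$ (since $t$ is central in the ground ring), so the same mod-$t$ argument shows $\widetilde N$ is divisible by $t$. Since $N$ is a polynomial in $t$ (and $\theta$) of bounded degree, dictated by the explicit shapes of $Q\Sigma_x$ and $\nabla_x^p - t^{p-1}\nabla_x$, this descending chain of $t$-divisibilities forces $N = 0$, completing the proof.
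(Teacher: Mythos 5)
Your approach is correct and is essentially the argument one expects for this statement: subtract the two sides to get a covariantly constant nilpotent error $N$, reduce modulo $t$, invoke jointly simple spectrum to kill the reduction, and bootstrap. The chain of reductions — covariant constancy $\Rightarrow$ $[\nabla_y, N] = 0$ $\Rightarrow$ $[y\star_{\mathbf{T}}, N|_{t=0}] = 0$ $\Rightarrow$ $N|_{t=0}$ is simultaneously diagonalizable and nilpotent, hence zero $\Rightarrow$ $t \mid N$ — is sound, and the iteration via $N = t\widetilde N$ (preserving both nilpotency and covariant constancy because $t$ is central and a non-zero-divisor) is the right way to bootstrap.

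The one place where the write-up wobbles is the final step: you conclude $N = 0$ on the grounds that ``$N$ is a polynomial in $t$ (and $\theta$) of bounded degree.'' This is true for $\nabla_x^p - t^{p-1}\nabla_x$, but it is not obvious — and in general should not be assumed — for $Q\Sigma_x$, whose matrix entries are a priori power series in $t$, since each curve degree $d$ can contribute arbitrarily high $t$-powers. The correct way to close the argument is to observe that $H^*_{\mu_p\times\mathbf{T}}(X;\mathbb{F}_p)[\![z^{\mathrm{eff}}]\!]$ is a module over $\mathbb{F}_p[\![t,\theta]\!]$ which is $t$-adically separated (as is the corresponding endomorphism ring in which $N$ lives), so $N \in \bigcap_n t^n \mathrm{End}(\cdots) = 0$. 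With that replacement, the proof is complete. You should also remark, when reducing modulo $t$, that the odd parameter $\theta$ plays no role here: since $x$ has degree $2$, both $Q\Sigma_x$ and the $p$-curvature are even-degree operators, so $N$ has no $\theta$-component and the reduction $t=0$ already lands you in $\mathrm{End}(H^*_{\mathbf{T}}(X)[\![z^{\mathrm{eff}}]\!])$.
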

For example, such semisimplicity assumption is satisfied for Springer resolutions \cite{Lee23a} and hypertoric varieties \cite{bai-lee}.

\subsection{Degenerating $K$-theory to cohomology}\label{ssec:quantumSteenrod-limit}

We explain the relationship between the statement about the equivalence of $k$-curvature with quantum Adams operators and the corresponding statement in cohomology, by describing the cohomological limit $q \to \zeta$ for the quantum Adams operations. As one application, this yields a proposal for the algebro-geometric definition of the quantum Steenrod operations.

Here we follow the computational approach as explained in \cite[Section 5.1, 5.2]{koroteev-smirnov}, which should be considered as a formal and numerical recipe for taking the cohomological limit. The conceptual justification of this recipe is later explained in \cref{rem:degeneration-k-to-h}.

% Koroteev--Smirnov's idea: let $\pi^{p-1} = -p$ be the Dwork element and consider the field extension $\mathbb{Q}_p(\pi)$. The ring of integers $\mathbb{Z}_p[\pi]$ has the property that $\mathbb{Z}_p[\pi]/(\pi) \cong \mathbb{F}_p$. The ``cohomological limit'' of the $p$-curvature is taken by setting $q = \zeta =  1 + \pi + O(\pi^2) \in \mathbb{Q}_p(\pi)$ and extracting the coefficient of $\pi^p$-term. How should one try to make this rigorous?

Fix the integer $k$ to be a prime $k=p  >2$, so that $K_{\mu_p}(\mathrm{pt} ) \cong \mathbb{Z}[q]/(q^p=1)$, and $K_{\mu_p}(\mathrm{pt})|_{q=\zeta} \cong \mathbb{Z}[q]/(\Phi_p(q)) \cong \mathbb{Z}[\zeta]$ where $\zeta = \zeta_p$ is a $p$th root of unity. The projection map down to the quotient by the ideal $(\Phi_p(q))$ sends $q \mapsto \zeta$. The quantum Adams operators and the $p$-curvature described in the earlier subsections
\begin{equation}
    Q\psi^p_L (z) = M_{L, \zeta}(z) \in \mathrm{End}(K_{\mu_k \times \mathbf{T}} (X)|_{q=\zeta} ) [\![z^{\mathrm{eff}}]\!]
\end{equation}
are indeed linear over the ring $K_{\mu_p}(\mathrm{pt})|_{q=\zeta} \cong \mathbb{Z}[\zeta]$.

Now we formally introduce two invertible variables $\beta, t$ of (cohomological) degrees $|\beta|=-2$ and $|t|=2$ such that 
\begin{equation}
    q = \zeta = 1 + \beta t 
\end{equation}
and consider the coefficients of the opeartors $Q\psi^p_L(z)$ and $M_{L, \zeta}(z)$ in the $p$-completion $\mathbb{Z}_p[\zeta]$. 

\begin{rem}
    The passage to $p$-completion is  an artifact of the  ``2-periodization'' that we implicitly introduced by introducing the variables $\beta$ and $t$; see \cref{rem:degeneration-k-to-h} for a further discussion.
\end{rem}

The following is elementary:
\begin{lemma}
    
    There is a unique homomorphism $\mathbb{Z}_p[\zeta] \to \mathbb{F}_p$ which fits into the diagram

    \begin{center}
    \begin{tikzcd}
        \mathbb{Z}_p[q]/(q^p=1) \rar["q \mapsto 1"] \dar["q \mapsto \zeta"'] & \mathbb{Z}_p \dar["/(p)"] \\ \mathbb{Z}_p[\zeta] \rar & \mathbb{F}_p
    \end{tikzcd}
    \end{center}
    and sends $\zeta \mapsto 1$.
\end{lemma}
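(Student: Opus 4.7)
The plan is to construct the desired map directly as the composition of a quotient map with an explicit isomorphism, and then verify uniqueness and commutativity of the square by inspection.

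First I would use the standard presentation $\mathbb{Z}_p[\zeta] \cong \mathbb{Z}_p[x]/\Phi_p(x)$ where $\Phi_p(x) = 1 + x + x^2 + \cdots + x^{p-1}$ is the $p$th cyclotomic polynomial. Quotienting by the ideal $(1-\zeta)$ corresponds to further substituting $x=1$, and the key numerical input is that $\Phi_p(1)=p$:
\begin{equation*}
\mathbb{Z}_p[\zeta]/(1-\zeta) \cong \mathbb{Z}_p[x]/(\Phi_p(x), x-1) \cong \mathbb{Z}_p/(\Phi_p(1)) = \mathbb{Z}_p/(p) \cong \mathbb{F}_p.
\end{equation*}
Composing the quotient $\mathbb{Z}_p[\zeta] \twoheadrightarrow \mathbb{Z}_p[\zeta]/(1-\zeta)$ with this isomorphism produces a ring homomorphism $\mathbb{Z}_p[\zeta] \to \mathbb{F}_p$ that sends $\zeta \mapsto 1$.

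For commutativity of the square I would simply trace both paths on the generator $q$: going top-then-right sends $q \mapsto 1 \in \mathbb{Z}_p \mapsto 1 \in \mathbb{F}_p$, while going left-then-bottom sends $q \mapsto \zeta \mapsto 1 \in \mathbb{F}_p$ by construction, so the two compositions agree. For uniqueness, any ring homomorphism $\mathbb{Z}_p[\zeta] \to \mathbb{F}_p$ sending $\zeta \mapsto 1$ must vanish on the ideal $(1-\zeta)$ and hence factor through the quotient $\mathbb{Z}_p[\zeta]/(1-\zeta) \cong \mathbb{F}_p$; since the only ring endomorphism of $\mathbb{F}_p$ sending $1$ to $1$ is the identity, this map is uniquely determined.

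Conceptually, the lemma is a restatement of the well-known fact that $\mathbb{Z}_p \subset \mathbb{Z}_p[\zeta]$ is totally ramified of degree $p-1$ with uniformizer $1-\zeta$ and residue field $\mathbb{F}_p$. There is no substantive obstacle in the argument; the only real content is the identity $\Phi_p(1)=p$, which pins down that reducing modulo $(1-\zeta)$ is compatible with reducing modulo $p$. The main pedagogical point of including this lemma is presumably to make explicit the map used to justify the formal variable substitution $q = 1 + \beta t$ and the ensuing degeneration of quantum Adams operators to quantum Steenrod operations in the next step.
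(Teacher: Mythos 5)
Your proof is correct. The paper treats this lemma as elementary and gives no argument of its own, so there is nothing to compare against; your write-up simply supplies the routine details. The identification $\mathbb{Z}_p[\zeta]/(1-\zeta)\cong\mathbb{F}_p$ via $\Phi_p(1)=p$ is exactly the right content, and the commutativity and uniqueness checks are straightforward. One tiny point worth being explicit about: uniqueness among maps \emph{fitting in the square} is what is claimed, so one should first observe that commutativity forces $\zeta\mapsto 1$ (chasing $q$ around the square) and that the restriction to $\mathbb{Z}_p$ is forced to be reduction mod $p$ (the unique ring map $\mathbb{Z}_p\to\mathbb{F}_p$); you use this implicitly, and it is worth one clause to make it airtight.
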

In view of the expansion $q = 1 + \beta t$, the homomorphism $\mathbb{Z}_p[\zeta] \to \mathbb{F}_p$ can be understood as ``$ \beta \mapsto 0$''.

In \cite{koroteev-smirnov}, one uses the coefficients $\mathbb{Q}_p(\pi)$ for ``Dwork's element'' $\pi$, that is a solution to $\pi^{p-1} = -p$, instead of $\mathbb{Q}_p(\zeta) \cong \mathrm{Frac} \ \mathbb{Z}_p[\zeta]$. The relationship between these coefficients and our choice is given by another elementary lemma:

\begin{lemma}
    There is an isomorphism $\mathbb{Q}_p(\zeta) \cong \mathbb{Q}_p(\pi)$ of field extensions of $\mathbb{Q}_p$.
\end{lemma}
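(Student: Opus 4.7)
The plan is to exhibit an explicit $(p-1)$-th root of $-p$ inside $\mathbb{Q}_p(\zeta)$, which immediately gives the inclusion $\mathbb{Q}_p(\pi) \subseteq \mathbb{Q}_p(\zeta)$; since both extensions have degree $p-1$ over $\mathbb{Q}_p$, this inclusion will be forced to be an equality, giving the desired isomorphism as field extensions of $\mathbb{Q}_p$.

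First I would verify the degree claims. The polynomial $Y^{p-1} + p$ is Eisenstein at $p$, so $\mathbb{Q}_p(\pi)/\mathbb{Q}_p$ is totally ramified of degree $p-1$ with uniformizer $\pi$. For the cyclotomic side, the substitution $X = Y + 1$ turns $\Phi_p(X)$ into
\begin{equation*}
\Phi_p(Y+1) \;=\; \frac{(Y+1)^p - 1}{Y} \;=\; Y^{p-1} + \binom{p}{1} Y^{p-2} + \cdots + \binom{p}{p-1},
\end{equation*}
which is Eisenstein in $Y$, so $\zeta - 1$ is a uniformizer of $\mathbb{Q}_p(\zeta)$ and $[\mathbb{Q}_p(\zeta) : \mathbb{Q}_p] = p-1$.

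Next, I would use the classical factorization $\prod_{i=1}^{p-1}(1-\zeta^i) = \Phi_p(1) = p$, together with the fact that each quotient $(1-\zeta^i)/(1-\zeta) = 1 + \zeta + \cdots + \zeta^{i-1}$ reduces to $i \in \mathbb{F}_p^\times$ modulo the uniformizer and is therefore a unit in $\mathbb{Z}_p[\zeta]^\times$, to extract a relation of the form $(1-\zeta)^{p-1} u = p$ for a unit $u \in \mathbb{Z}_p[\zeta]^\times$ satisfying $u \equiv (p-1)! \pmod{(1-\zeta)}$. Wilson's theorem then gives $u \equiv -1$ modulo the maximal ideal, so $-u$ is a principal unit.

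The remaining step is a Hensel-style argument: since the principal unit group $U^{(1)} = 1 + (1-\zeta)\mathbb{Z}_p[\zeta]$ is pro-$p$ and $\gcd(p-1, p) = 1$, raising to the $(p-1)$-th power is a bijection on $U^{(1)}$. There exists a unique $w \in U^{(1)}$ with $w^{p-1} = -u$, and then $\tilde{\pi} := (1-\zeta) w \in \mathbb{Q}_p(\zeta)$ satisfies $\tilde{\pi}^{p-1} = (p/u)(-u) = -p$, producing the required root. The only subtle point is the invocation of Wilson's theorem, which places $-u$ in the pro-$p$ subgroup where $(p-1)$-th roots can be freely extracted; without it, one would only obtain a $(p-1)$-th root of $-p$ up to a nontrivial Teichm\"uller unit, and would need to track the residue more carefully. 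Everything else is routine in the tame ramification theory of local fields.
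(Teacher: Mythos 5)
Your proof is correct, and it proceeds in the opposite direction from the paper's: you build a $(p-1)$-th root of $-p$ inside $\mathbb{Q}_p(\zeta)$ and conclude $\mathbb{Q}_p(\pi)\subseteq\mathbb{Q}_p(\zeta)$, then appeal to the degree count, whereas the paper exhibits $\zeta$ inside $\mathbb{Q}_p(\pi)$. Concretely, the paper normalizes the minimal polynomial of $\zeta-1$ by the substitution $X\mapsto\pi X$ and then divides by $\pi^{p-1}=-p$, computing that $\Phi_p(\pi X+1)/\pi^{p-1}\equiv X^{p-1}-1\bmod\pi$; this reduced polynomial is separable with roots in $\mathbb{F}_p^{\times}$, so ordinary Hensel lifting supplies a root of $\Phi_p(\pi X+1)$ in $\mathbb{Z}_p[\pi]$, i.e.\ $(\zeta-1)/\pi\in\mathbb{Q}_p(\pi)$. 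Your route replaces this by the classical identity $\prod_{i=1}^{p-1}(1-\zeta^{i})=p$, Wilson's theorem to land the relevant unit $-u$ in the principal unit group, and the fact that $(p-1)$-th power is an automorphism of the pro-$p$ group $U^{(1)}$. Both arguments are ultimately Hensel's lemma in disguise (lifting the separable equation $X^{p-1}=\bar{u}$ over the residue field), so neither is more powerful; the paper's version is shorter because it needs no arithmetic input beyond the binomial expansion, while yours produces the uniformizer $\tilde\pi$ in $\mathbb{Q}_p(\zeta)$ explicitly, which is closer to the classical Gauss/Jacobi-sum construction of $\pi$ and makes the containment more transparent.
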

\begin{proof}
    Both fields are of degree $p-1$ over $\mathbb{Q}_p$, and the minimal polynomials $X^{p-1} + p $ and $\Phi_p(X)$ are both irreducible over $\mathbb{Q}_p$. Therefore it suffices to exhibit $\zeta$ as an element of $\mathbb{Q}_p(\pi)$. Consider the minimal polynomial of $(\zeta -1)$ in $\mathbb{Q}_p$ which is given by $\Phi_p(X+1) = [(X+1)^p - 1]/X$; it is easy to compute
    \begin{equation}
        \frac{\Phi_p(\pi X+1)}{\pi^{p-1}} \equiv X^{p-1}  - 1  \ (\mathrm{mod} \ \pi) .
    \end{equation}
    This has solutions in $\mathbb{Z}_p[\pi]/(\pi) \cong \mathbb{F}_p$, so by Hensel's lemma the original polynomial $\Phi_p(\pi X+ 1) $ has solutions in $\mathbb{Q}_p(\pi)$. Hence $(\zeta-1) /\pi \in \mathbb{Q}_p(\pi)$ which implies the desired result.
\end{proof}

Using the expansion in $\zeta - 1 = \beta t$ instead of $\pi \sim \zeta - 1 \ (\mathrm{mod} \ \pi^2)$ used in \cite{koroteev-smirnov}, the result from \cite[Lemma 5.1, Section 5.4]{koroteev-smirnov} can be rewritten as follows. As in the expansion $q = 1+ \beta t$, one expands the $\mathbf{T}$-equivariant parameters $a \in K_{\mathbf{T}}(\mathrm{pt})$ as $a = 1 + \beta \bar{a}$ where $\bar{a} \in H^*_{\mathbf{T}}(\mathrm{pt})$ are the cohomological equivariant parameters. 

\begin{prop}[{\cite[Section 5.2]{koroteev-smirnov}}]\label{prop:pcurvature-cohomological-limit}
    Consider the expansion of $M_{L, \zeta}(z)$ in $(q-1) = \beta t$; then 
    \begin{equation}
        M_{L, \zeta}(a, z) = 1 + \beta^p  F_{c_1(L)}(\bar{a}, z) + O(\beta^{p+1})
    \end{equation}
    where $F_{c_1(L)}(\bar{a}, z) = \nabla^p_{c_1(L)} - t^{p-1} \nabla_{c_1(L)}$ modulo $p$ is the $p$-curvature of the quantum differential connection in the direction of $c_1(L) \in H^*_{\mathbf{T}}(X)$, 
\end{prop}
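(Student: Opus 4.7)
The plan is to expand $M_{L,\zeta}(a,z)$ as a formal power series in $\beta$ (where $\zeta - 1 = \beta t$) and identify the leading nonzero coefficient with the $p$-curvature of the quantum differential connection. The argument follows the computation of Koroteev--Smirnov adapted to the present parametrization, which I would carry out in three steps.

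First I would establish the first-order cohomological degeneration. Under $q = 1 + \beta t$ and $a = 1 + \beta \bar{a}$, the $K$-theoretic K\"ahler shift operator degenerates as
\begin{equation*}
M_L^{(p)}(a,z) = 1 + \beta\,\nabla_{c_1(L)}(\bar a, z) + O(\beta^2),
\end{equation*}
where $\nabla_{c_1(L)} = t\partial_{\bar L} + c_1(L) \star_{\mathbf{T}}$ is the cohomological quantum connection operator from \eqref{eqn:qconn}. This follows from the classical Chern-character comparison between $K$-theoretic and cohomological quasimap counts, combined with a direct analysis of how the twist $\hat{\mathcal{O}}_{\mathrm{vir}}$ and the polarization contribute to first order. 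In parallel, the $z$-argument shift expands as $q^{kL} = 1 + k\beta t\,c_1(L) + O(\beta^2)$.

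Next I would substitute these expansions into the iterated composition
\begin{equation*}
M_{L,\zeta}(a,z) = M_L^{(p)}(a, zq^{(p-1)L}) \circ \cdots \circ M_L^{(p)}(a, zq^L) \circ M_L^{(p)}(a,z)\big|_{q=\zeta},
\end{equation*}
and extract coefficients in $\beta$. The crucial input is the cyclotomic identity $\sum_{k=0}^{p-1} \zeta^{km} = 0$ for $0 < m < p$; combined with the flatness of $\nabla_{c_1(L)}$ (from associativity of $\star_{\mathbf{T}}$), this forces the coefficient of $\beta^n$ for each $1 \le n \le p-1$ to vanish modulo $p$. The first nonzero contribution therefore appears at order $\beta^p$, and the surviving coefficient reorganizes into the iterated composition $\nabla_{c_1(L)}^p - t^{p-1}\nabla_{c_1(L)}$ modulo $p$ by the standard mod-$p$ formula for the $p$-curvature of a flat connection. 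This identifies it with $F_{c_1(L)}$ as defined in \cref{thm:qst-is-pcurv}.

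The main obstacle is the first step: rigorously justifying the first-order cohomological degeneration of $M_L^{(p)}$, particularly tracking the $\hat{\mathcal{O}}_{\mathrm{vir}}$ twist and the polarization through the Chern character in a way that exactly matches the cohomological normalization of $\nabla_{c_1(L)}$. An alternative route that avoids this analysis is to appeal directly to \cite[Section 5.2]{koroteev-smirnov}: by the second lemma above, $\mathbb{Q}_p(\zeta) \cong \mathbb{Q}_p(\pi)$ with $\pi \equiv \zeta - 1 = \beta t \pmod{\pi^2}$, so their expansion of $M_{L,\zeta}$ in Dwork's element $\pi$ translates directly into an expansion in $\beta$, yielding the stated formula.
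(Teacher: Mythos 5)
Your proposal correctly identifies two possible routes, and your ``alternative route'' is in fact exactly what the paper does: \cref{prop:pcurvature-cohomological-limit} is cited to Koroteev--Smirnov rather than proved from scratch, and the paper's entire contribution consists of the two preceding lemmas establishing the isomorphism $\mathbb{Q}_p(\zeta) \cong \mathbb{Q}_p(\pi)$, which justify replacing the Dwork expansion in $\pi$ used in \emph{loc.\ cit.}\ with the expansion in $\beta t = \zeta - 1$. So your second route is correct and matches the paper.

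Your first route attempts a self-contained proof that the paper does not provide. The overall shape is reasonable, and you are right to flag the first-order degeneration of $M_L^{(p)}$ to $1 + \beta\,\nabla_{c_1(L)} + O(\beta^2)$ as the real obstacle: it requires a careful Chern-character comparison of $K$-theoretic and cohomological quasimap counts including the symmetrization by $\hat{\mathcal{O}}_{\mathrm{vir}}$ and polarization twists, which is precisely the technical core of Koroteev--Smirnov's argument. Two smaller points are worth correcting. First, the invocation of ``flatness of $\nabla_{c_1(L)}$'' is misplaced: for a connection in a single direction, there is no curvature to vanish, and the identity $\nabla^p - t^{p-1}\nabla$ being the correct mod-$p$ combination is a purely formal algebraic fact (the restricted-Lie / Jacobson formula for the $p$th power of a derivation plus a zeroth-order term), independent of any commutativity among different directions. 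Second, the cyclotomic identity $\sum_{k=0}^{p-1}\zeta^{km}=0$ is not by itself what kills the $\beta^n$ coefficients for $1 \le n \le p-1$; the vanishing mod $p$ comes from a combination of the divisibility of $\binom{p}{n}$ by $p$, the binomial expansion of $q^{jL} = (1+\beta t)^{j\langle L,-\rangle}$ feeding into the derivative part $t\partial_{\bar L}$, and the relation $\Phi_p(\zeta)=0$ used to reduce powers of $\zeta$. These can be assembled correctly, but the mechanism is not as clean as your one-line summary suggests, and the paper deliberately avoids carrying it out by citing the reference.
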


Combining the previous \cref{thm:qadams=pcurvature} together with the cohomological limit taken by \cite{koroteev-smirnov}, we obtain the following diagram:
\begin{equation}\label{eqn:cohomology-degeneration-diagram}
    \begin{tikzcd}
        Q \psi_L^p(z) \arrow[r, leftrightarrow] \arrow[d,  dashed] & M_{L, \zeta_p}(z) \arrow[d, squiggly] \\ Q\Sigma_{c_1(L)} \arrow[r, leftrightarrow]  & F_{c_1(L)}(z),
    \end{tikzcd}
\end{equation}
where the horizontal arrows are \cref{thm:qadams=pcurvature} and \cref{thm:qst-is-pcurv-onthenose} respectively, and the vertical arrows refer to the ``cohomological limit'' as explained in \cref{prop:pcurvature-cohomological-limit}, i.e. the process of extracting the suitable term (the $\beta^p$-term) in the associated graded of the $\beta$-filtration. 

Based on the diagram we therefore propose that the cohomological limit of the quantum Adams operator is given by the quantum Steenrod operation. 
\begin{conj}\label{conj:qadams-degenerates-to-qst-divisors}
    Let $[L] \in K_{\mathbf{T}}(X)$ be a class of a line bundle and $c_1(L) \in H^2_{\mathbf{T}}(X;\mathbb{F}_p)$ be its first Chern class modulo $p$. Then
    \begin{equation}
        Q\psi_L^p(z) = 1 + \beta^p Q\Sigma_{c_1(L)}(z) + O(\beta^{p+1}).
    \end{equation}
\end{conj}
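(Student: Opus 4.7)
The plan is to chain together the three arrows of the commutative diagram \eqref{eqn:cohomology-degeneration-diagram}, with the conjectured dashed arrow being recovered as their composition. Concretely, the equivalence on the top, $Q\psi_L^p(z) = M_{L,\zeta_p}(z)$, is the content of \cref{thm:qadams=pcurvature}. The vertical arrow on the right is the cohomological degeneration of the K\"ahler $p$-curvature given by \cref{prop:pcurvature-cohomological-limit}: under the expansions $q = 1 + \beta t$ and $a = 1 + \beta \bar{a}$ with $p$-adic coefficients, one extracts the $\beta^p$-term of $M_{L,\zeta_p}(z)$ and identifies it with $F_{c_1(L)}(\bar{a},z) = \nabla_{c_1(L)}^p - t^{p-1} \nabla_{c_1(L)}$, the differential $p$-curvature. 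Finally, the bottom horizontal arrow is \cref{thm:qst-is-pcurv-onthenose}, which identifies $F_{c_1(L)}(z)$ with $Q\Sigma_{c_1(L)}(z)$ under generic semisimplicity.

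In more detail, I would first apply \cref{thm:qadams=pcurvature} to rewrite the left-hand side of the conjecture as $M_{L^p}^{(p)}(z)|_{q=\zeta_p}$, which is a concrete iterated product of K\"ahler shift operators obtained from quasimap counts. I would then read this iterated product through the $\beta$-filtration induced by $q - 1 = \beta t$, carrying out the Koroteev--Smirnov expansion to show that the operator is congruent to the identity modulo $\beta^p$, with $\beta^p$-coefficient equal to the classical $p$-curvature $\nabla_{c_1(L)}^p - t^{p-1}\nabla_{c_1(L)}$ of the quantum differential connection (this step is where the hypothesis that $L$ is a divisor class is crucial, as it ensures the associated graded picks out the quantum multiplication operator by $c_1(L)$ on the target). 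The final step is to invoke the equality $Q\Sigma_{c_1(L)}(z) = F_{c_1(L)}(z)$, which under the hypotheses of \cref{thm:qst-is-pcurv-onthenose} is an on-the-nose identification.

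The hard part will be twofold. First, \cref{prop:pcurvature-cohomological-limit} as stated in the text is essentially a numerical recipe extracted from \cite{koroteev-smirnov}; to promote it to a proof one needs a conceptual justification of the $\beta$-filtration in the setting of a general Higgs branch of cotangent type, probably via a Chern-character style comparison between the $K$-theoretic shift operators \eqref{eqn:kahler-shift} and the cohomological quantum connection \eqref{eqn:qconn} that is compatible with quasimap moduli in a uniform way. Second, for targets falling outside the semisimple regime of \cref{thm:qst-is-pcurv-onthenose}, one has only the weaker \cref{thm:qst-is-pcurv} with a nilpotent error $N$; to eliminate $N$ in the $\beta$-filtered picture one would need to argue that the cohomological limit of the $K$-theoretic operator, which is manifestly a count of stable quasimaps, cannot produce the ambiguity that appears on the symplectic (stable map) side. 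A reasonable intermediate target is therefore to first establish the conjecture for all $X$ satisfying the hypothesis of \cref{thm:qst-is-pcurv-onthenose} (for which both obstacles disappear), e.g.\ Nakajima quiver varieties of type $A$ and hypertoric varieties, and to use this as evidence that the nilpotent correction is an artifact of the symplectic stable map construction rather than an intrinsic feature.
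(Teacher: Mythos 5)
This statement is explicitly a \emph{conjecture} in the paper, not a theorem, and the paper does not give a proof of it in general. What the paper does provide is exactly the evidence you reconstruct: the commutative diagram \eqref{eqn:cohomology-degeneration-diagram} whose two horizontal arrows are \cref{thm:qadams=pcurvature} and \cref{thm:qst-is-pcurv-onthenose}, whose right vertical arrow is the Koroteev--Smirnov cohomological degeneration recorded in \cref{prop:pcurvature-cohomological-limit}, and whose left vertical dashed arrow is the conjectured content. The paper then states \cref{cor:qadams-degenerates-to-qst}, which proves the conjecture precisely under the semisimplicity hypotheses of \cref{thm:qst-is-pcurv-onthenose}, phrased as holding ``contingent on'' \cref{prop:pcurvature-cohomological-limit} since that proposition is imported from Koroteev--Smirnov as a formal recipe rather than proved from scratch in the present setting (the conceptual justification the paper offers is \cref{rem:degeneration-k-to-h}, via connective $K$-theory and the $\beta$-filtration, which is close in spirit to the ``Chern-character style comparison'' you describe). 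Your identification of the two obstacles — promoting the $\beta$-filtration recipe to a rigorous statement for general Higgs branches, and eliminating the nilpotent error $N$ in \cref{thm:qst-is-pcurv} outside the semisimple regime — matches the paper's own discussion exactly. So your proposal is not a new proof of the conjecture, nor could it be; it is a faithful reconstruction of the paper's heuristic argument and of the partial result \cref{cor:qadams-degenerates-to-qst}, with the same open gaps correctly flagged.
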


For more general insertions than line bundles, we need to consider the support filtration on $K$-theory to associate a cohomology class for a given $K$-theoretic insertion. Denote $X_i \subseteq X$ by the $i$-skeleton of $X$, and let $K_i(X) = \ker (K(X) \to K(X_{i-1}))$ to be the (decreasing) support filtration on $K$-theory.

\begin{assm}
    There exists a natural identification of
    \begin{equation}
        \mathrm{Gr} K(X) \otimes \mathbb{F}_p \cong \bigoplus_k H^{2k}(X;\mathbb{F}_p)
    \end{equation}
    of the associated graded of the support filtration on $K$-theory with even cohomology.
\end{assm}

Such assumption holds (in fact over $\mathbb{Z}$) for $X$ with no torsion in cohomology. Now let $K_{2\ell}(X) = \ker (K(X) \to K(X_{2\ell-1})) \subseteq K(X)$ to be the filtered piece in the support filtration which reduces to $\bigoplus_{k \ge \ell} H^{2k}(X;\mathbb{F}_p)$ in mod $p$.

\begin{conj}\label{conj:qadams-degenerates-to-qst}
    Let $\mathcal{F} \in K_{2\ell}(X)$ and $\overline{\mathcal{F}} \in H^{2\ell}(X;\mathbb{F}_p)$ be its image in the associated graded of the support filtration. Then
    \begin{equation}
        Q\psi_\mathcal{F}^p (z) = \beta^{p\ell} Q\Sigma_{\overline{\mathcal{F}}} (z) + O(\beta^{p\ell+1}).
    \end{equation}
\end{conj}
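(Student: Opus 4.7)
The plan is to deduce the general statement from the divisor case (\cref{conj:qadams-degenerates-to-qst-divisors}) via a reduction that exploits the multiplicative structure of both sides; the divisor case itself should fall out of the machinery already developed. Conceptually, the $\beta$-adic filtration on $K_{\mu_p \times \mathbf{T}}(X)|_{q=\zeta}$ (coming from $q = 1 + \beta t$) should pair with the support filtration on $K_\mathbf{T}(X)$ after mod-$p$ reduction, with quantum Adams operations respecting this pairing of filtrations. Verifying the filtration compatibility and then identifying the associated graded symbol is the backbone of the argument.

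For the divisor case: given a line bundle $L$, \cref{thm:qadams=pcurvature} identifies $Q\psi_L^p(z)$ with the K\"ahler $p$-curvature $M_{L,\zeta}(z)$. The cohomological expansion recorded in \cref{prop:pcurvature-cohomological-limit} produces
\begin{equation*}
M_{L,\zeta}(a,z) \;=\; 1 + \beta^p\, F_{c_1(L)}(\bar a, z) + O(\beta^{p+1}),
\end{equation*}
where $F_{c_1(L)}$ is the $p$-curvature of the quasimap-theoretic quantum differential connection. Under the semisimplicity hypothesis, \cref{thm:qst-is-pcurv-onthenose} identifies $F_{c_1(L)}$ with $Q\Sigma_{c_1(L)}$, so chaining these isomorphisms yields \cref{conj:qadams-degenerates-to-qst-divisors}. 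Already here there is a genuine hurdle: \cref{thm:qst-is-pcurv-onthenose} is stated for the \emph{stable-map} quantum connection, while $F_{c_1(L)}$ above is the $p$-curvature of a \emph{quasimap} connection; bridging these requires either a direct wall-crossing identification of the two quantum $D$-modules on divisor classes or upgrading Lee's theorem to the quasimap framework.

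To pass from divisors to arbitrary insertions, the Cartan and additivity relations take center stage. Namely, \cref{lem:qadams-additivity} and \cref{prop:adams-property}(3) say that $\mathcal F \mapsto Q\psi_{\mathcal F}^p$ behaves like the classical Adams operation with respect to the PSZ quantum product, and analogous Cartan/additivity relations for $Q\Sigma$ hold in the mod-$p$ symplectic setting. I would then argue by induction on the filtration degree $\ell$: use the splitting principle on an auxiliary flag bundle to express the symbol $\overline{\mathcal F}$ of $\mathcal F$ as a polynomial in line-bundle Chern classes, invert the Cartan relation to recover the value on $\mathcal F$, and check that the leading $\beta^{p\ell}$ coefficient is preserved because lower-filtration corrections to $\mathcal F$ contribute only at strictly higher powers of $\beta$ (since $\psi^p$ multiplies the filtration degree by $p$ at leading order, which matches $p\ell$ on the cohomological side via $c_1(L)^{\ell} \mapsto$ degree $2p\ell$ under the $p$-th Steenrod power).

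The main obstacle I anticipate is precisely the lack of functoriality of quantum operations with respect to the splitting principle: quantum cohomology and quantum $K$-theory are not pullback-compatible along flag bundles, so one cannot naively transfer computations from $F(V)$ to $X$. A more intrinsic approach would be to verify directly on $X$ that $Q\psi^p$ is a filtered operator of degree $0$ with respect to an appropriately defined $(\beta, \mathrm{support})$-bifiltration, and that its symbol on the associated graded is computed by $Q\Sigma$ on the cohomological side. Making this ``symbol calculus'' for quantum Adams operations rigorous, together with the stable-map-vs-quasimap comparison mentioned above, constitutes the bulk of the technical work. A possible simplification in special cases (Nakajima quiver varieties or hypertoric varieties) is to exploit the integrable-systems presentation from \cite{okounkov-smirnov} to write $M_{L,\zeta}$ explicitly as an iterated product of wall-crossing operators and expand order by order in $\beta$, reducing the conjecture to a finite check on the wall-crossing $R$-matrices.
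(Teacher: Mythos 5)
This statement is \cref{conj:qadams-degenerates-to-qst}, which the paper labels explicitly as a \emph{conjecture} and does not prove. The paper's only verification in this vicinity is \cref{cor:qadams-degenerates-to-qst}, which establishes the more restricted divisor version \cref{conj:qadams-degenerates-to-qst-divisors} for conical symplectic resolutions with isolated fixed points whose divisor quantum multiplications have jointly simple spectrum. There is therefore no ``paper's own proof'' to compare against; you are being asked about an open problem in the paper.

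That said, your outline is worth reviewing on its own terms. Your treatment of the divisor case is the correct logic and exactly matches what the paper uses for \cref{cor:qadams-degenerates-to-qst}: chain \cref{thm:qadams=pcurvature} ($Q\psi_L^p = M_{L,\zeta}$), \cref{prop:pcurvature-cohomological-limit} (Koroteev--Smirnov's $\beta$-expansion of the $K$-theoretic $p$-curvature), and \cref{thm:qst-is-pcurv-onthenose} (Lee's identification of the cohomological $p$-curvature with $Q\Sigma$ under semisimplicity). Your worry about stable maps versus quasimaps entering through Lee's theorem is genuine; the paper implicitly treats the quasimap and stable-map quantum $D$-modules as interchangeable here (and indeed for the relevant Higgs branch targets they agree), but this does deserve more care than either you or the paper gives it.

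Where your proposal runs into trouble is the passage from divisors to general $\mathcal{F}$. You correctly identify that a naive splitting-principle reduction is not available, since neither quantum $K$-theory nor quantum cohomology is pullback-compatible along flag bundles. But the fallback you propose --- proving directly that $Q\psi^p$ is a bifiltered operator whose symbol is $Q\Sigma$ --- is precisely where the conjecture's actual content lives, and you have not supplied an argument for it. In particular, the Cartan relation (\cref{prop:adams-property}(3)) controls $Q\psi^p$ only on classes generated under the PSZ quantum product, not under the classical tensor product, so even if $\overline{\mathcal{F}}$ is a polynomial in divisor classes cohomologically, it is not automatic that $\mathcal{F}$ is a PSZ-quantum polynomial in line bundles with controlled $\beta$-corrections. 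The heuristic that ``$\psi^p$ multiplies the filtration degree by $p$ at leading order, which matches $p\ell$'' is the right shape, but you would need to prove that the quasimap corrections to $Q\psi^p_\mathcal{F}$ do not degrade the leading $\beta$-order, and this is exactly what the paper signals as nontrivial by demoting the general statement to a conjecture while only the divisor case gets a corollary.

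In short: your divisor-case chain is sound and agrees with the paper's; your general-case reduction is a plan with the hard step (symbol calculus for the $(\beta,\text{support})$-bifiltration, or a rigorous replacement for the splitting-principle reduction) left open, which is appropriate since the statement you were given is itself an open conjecture.
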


\begin{rem}
    The conjecture can be addressed more easily in the context of topological $K$-theory, see also \cref{rem:degeneration-k-to-h} and \cref{rem:degeneration-atiyah}. Indeed for topological $K$-theory and $p > \dim_{\mathbb{C}} X$, assuming $X$ has only even cohomology with no torsion, there is a natural splitting of the mod $p$ Atiyah--Hirzebruch spectral sequence $K(X) \otimes \mathbb{F}_p \cong \bigoplus H^{2k}(X;\mathbb{F}_p)$ induced by the (truncated) Chern character. The case discussed in \cref{conj:qadams-degenerates-to-qst-divisors} corresponds to $c_1(L) = \mathrm{ch}_1(L) \in H^2$ being the associated graded image of $[L]-1 \in K_2(X)$ in the first filtered piece in $K$-theory.
\end{rem}

By \cref{thm:qadams=pcurvature} and contingent on \cref{prop:pcurvature-cohomological-limit}, the more restricted \cref{conj:qadams-degenerates-to-qst-divisors} is true for all examples for which the bottom horizontal arrow in the diagram \cref{eqn:cohomology-degeneration-diagram} is a theorem, as in \cref{thm:qst-is-pcurv-onthenose}:

\begin{cor}\label{cor:qadams-degenerates-to-qst}
    \cref{conj:qadams-degenerates-to-qst-divisors} is true for any conical Hamiltonian symplectic resolutions $X$ with isolated $T$-fixed points whose quantum multiplication operators by divisor classes have jointly simple spectrum.
\end{cor}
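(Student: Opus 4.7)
The plan is to chase the commutative diagram \eqref{eqn:cohomology-degeneration-diagram} from the top-left corner to the bottom-left corner using the top-right and bottom-right corners. Concretely, I would argue the identity by a composition of three already-established inputs: (i) the geometric identification of quantum Adams operators with the K\"ahler $q$-difference $p$-curvature (\cref{thm:qadams=pcurvature}), (ii) the $\beta$-adic expansion of the $p$-curvature recalled in \cref{prop:pcurvature-cohomological-limit} following Koroteev--Smirnov, and (iii) the cohomological identification of quantum Steenrod operators with the $p$-curvature of the quantum differential connection under the semisimplicity hypothesis (\cref{thm:qst-is-pcurv-onthenose}).

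First, I would apply \cref{thm:qadams=pcurvature} to rewrite
\begin{equation*}
    Q\psi_L^p(z) \;=\; M_{L,\zeta_p}(z) \;\in\; \mathrm{End}\bigl(K_{\mu_p\times \mathbf{T}}(X)|_{q=\zeta}\bigr)[\![z^{\mathrm{eff}}]\!].
\end{equation*}
Next, I would substitute $q=1+\beta t$ and $a_i=1+\beta\bar a_i$ and expand $M_{L,\zeta_p}(z)$ in powers of $\beta$. By \cref{prop:pcurvature-cohomological-limit}, the leading nontrivial term occurs at order $\beta^p$ and is equal to the mod $p$ $p$-curvature $F_{c_1(L)}(\bar a,z)=\nabla_{c_1(L)}^p-t^{p-1}\nabla_{c_1(L)}$ of the quantum differential connection in the direction $c_1(L)\in H^2_{\mathbf T}(X;\mathbb F_p)$. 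Finally, the semisimplicity assumption on divisor quantum multiplication is precisely the hypothesis of \cref{thm:qst-is-pcurv-onthenose}, which then lets me replace $F_{c_1(L)}$ on the nose (no nilpotent error) by the quantum Steenrod operator $Q\Sigma_{c_1(L)}(z)$. Stringing the three substitutions together gives
\begin{equation*}
    Q\psi_L^p(z) \;=\; 1 \,+\, \beta^{p}\, Q\Sigma_{c_1(L)}(z) \,+\, O(\beta^{p+1}),
\end{equation*}
which is \cref{conj:qadams-degenerates-to-qst-divisors}.

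The only substantive step here is invoking \cref{prop:pcurvature-cohomological-limit}, which is quoted from Koroteev--Smirnov but tacitly requires passing to $p$-adic coefficients and matching the Dwork element $\pi$ with the expansion parameter $\beta t=\zeta_p-1$; I would spell out this identification briefly, noting that $c_1(L)=\mathrm{ch}_1(L)$ is the associated graded of $[L]-1\in K_2(X)$ so that the leading $\beta$-degree matches the conjectural degree $p\ell$ with $\ell=1$. The other two ingredients are theorems that I can cite verbatim, so the main obstacle is really of a bookkeeping nature: keeping track that the $\beta$-expansion of the iterated $q$-shifted product $M_L^{(p)}(zq^{(p-1)L})\circ\cdots\circ M_L^{(p)}(z)|_{q=\zeta}$ indeed truncates as claimed, which is exactly the content already extracted in \cref{prop:pcurvature-cohomological-limit}.
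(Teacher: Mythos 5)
Your proposal is correct and follows exactly the same three-step diagram chase as the paper: the text immediately preceding \cref{cor:qadams-degenerates-to-qst} derives it by combining \cref{thm:qadams=pcurvature} (top horizontal arrow), \cref{prop:pcurvature-cohomological-limit} (right vertical arrow), and \cref{thm:qst-is-pcurv-onthenose} (bottom horizontal arrow). Your closing remark on the $\beta$-adic/Dwork bookkeeping is a reasonable caveat but matches the paper's own framing of \cref{prop:pcurvature-cohomological-limit} as an input from Koroteev--Smirnov.
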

In particular, \cref{conj:qadams-degenerates-to-qst-divisors} holds for Springer resolutions and hypertoric varieties \cite{Lee23b, bai-lee}.

Following \cref{conj:qadams-degenerates-to-qst}, it is also possible to \emph{define} the quantum Steenrod operations as a certain degenerating limit of the quantum Adams operations. Such approach provides an (indirect) definition of the quantum Steenrod operations based on quasimap quantum cohomology, which would agree with the $p$-curvature of the quantum connection in the case of divisor classes by construction and \cref{thm:qadams=pcurvature}.

We expect such quasimap quantum Steenrod operations (at least for sufficiently positive targets) agree with the Gromov--Witten quantum Steenrod operations as defined in \cite{seidel-wilkins, Lee23b}. A direct proof, involving the comparison of different compactifications of moduli spaces, is not pursued here. Nevertheless we note that \cref{cor:qadams-degenerates-to-qst} verifies this equivalence for a large class of examples.

\begin{rem}\label{rem:degeneration-k-to-h}
Following the approach in \cite[Remark 9.8]{devalapurkar-satake}, it is possible to explain the cohomological limit above if one replaces algebraic $K_0$ with \emph{connective $K$-theory} coefficients $ku$. The relevant computations following \cite{devalapurkar-satake}, are as follows (our $t$ is $t^{-1}$ in \emph{loc. cit.}), assuming the isomorphism $\mu_p(\mathbb{C}) \cong \mathbb{Z}/p$:
\begin{itemize}
    \item There is an isomorphism $ku^{t\mathbb{Z}/p} \simeq \mathbb{Z}_p[\zeta_p]^{tS^1} $, where unit map $ku \to ku^{t\mathbb{Z}/p}$ sends $\beta \mapsto (\zeta-1)/t$ for $\beta$ the Bott element. On the level of homotopy groups, $\pi_*(ku^{t\mathbb{Z}/p}) \cong \mathbb{Z}_p[\zeta_p, t^{\pm 1}]$, where $|t|=2$ in cohomological grading, 
    \item By inverting $\beta$ (hence $\zeta-1$ and $p$), connective $K$-theory degenerates to complex $K$-theory, where $ku^{t\mathbb{Z}/p}[\beta^{-1}] \simeq KU^{t\mathbb{Z}/p} \simeq \mathbb{Q}_p(\zeta)^{tS^1}$.
    \item By modding out $\beta$  (hence $\zeta-1$ and $p$), connective $K$-theory degenerates to (mod $p$) cohomology, where $ku^{t\mathbb{Z}/p} /\beta \simeq \mathbb{Z}^{t\mathbb{Z}/p} \simeq \mathbb{F}_p^{tS^1}$ with $\pi_*(\mathbb{F}_p^{tS^1}) \cong \mathbb{F}_p[t, t^{-1}]$.
\end{itemize}
In other words, working with connective $K$-theory has the effect of interpolating between $K$-theory and cohomology, which in the $\mathbb{Z}/p$-equivariant setting puts the cohomological limit we described in a rigorous footing.

As the theory of quasimap quantum Adams operators that we introduce in this paper is based on algebraic $K_0$ and its functorial properties as described in \cite[Chapter 5]{chriss-ginzburg}, this should be considered as a heuristic justification. We expect to be able to describe the limit from quantum Adams operators to quantum Steenrod operations based on the computations above in the setting of \emph{stable map} quantum Adams operators based on the formalism of Floer homotopy theory, which we will pursue separately.
\end{rem}

% \begin{rem}
%     Note that $\mathbb{Q}_p(\zeta) \cong \mathbb{Q}_p(\pi)$; both fields are degree $p-1$ over $\mathbb{Q}_p$ and it suffices to show that $\mathbb{Q}_p(\pi)$ contains a root of unity, i.e. solutions to $x^p=1$. This can be checked easily from the substitution $x = 1 + \pi y$ and Hensel's lemma. 
% \end{rem}

% Warning: note that Koroteev--Smirnov does not explain why their counts live in $\mathbb{Z}_p[\pi] \subseteq \mathbb{Q}_p(\pi)$ (the latter has no ``mod $\pi$'' quotient!).

\begin{rem}\label{rem:degeneration-atiyah}
    To describe the limit from Adams operations to Steenrod operations in the classical setting, Atiyah \cite[Lemma 5.1, Proposition 5.6, Theorem 6.5]{atiyah-power} uses the associated graded with respect to the skeletal filtration $K_{2m}(X) := \ker \left( K(X) \to K(X_{2m-1}) \right)$ where $X_i$ denotes the $i$-skeleton of $X$. The approach in \cref{rem:degeneration-k-to-h} is a modern reformulation of Atiyah's result in the sense that the filtration by powers of $\beta$ in $ku^0$ is exactly the skeletal filtration, as explained below. The elements of $K_{2m}(X)$ are (homotopy classes of) maps $X \to BU$ which lift to the $2m$'th Postnikov tower $BU\langle 2m \rangle \to BU$. Since $\{ BU\langle 2m\rangle \}$ represent connective $K$-theory, the skeletal filtration can be understood as
    \begin{equation}
        K_{2m}(X) = \beta^m ku^{2m}(X) \subseteq ku^0(X) = K(X).
    \end{equation}
    In particular, modding out by $\beta$ corresponds to passing to the associated graded of the skeletal filtration.
\end{rem}

\section{$K$-theoretic quantum Hikita conjecture at roots of unity}\label{sec:quantumHikita}
As an application of our theory, we formulate the $K$-theoretic quantum Hikita conjecture at roots of unity. The $K$-theoretic quantum Hikita conjecture is a $K$-theoretic refinement of the 3D mirror symmetry statement originally envisioned in \cite[Remark 1.4]{KMP21}, and introduced in \cite{zhou2021virtual}. It is a generalization of the $K$-theoretic Hikita conjecture of \cite{dumanski-krylov}, and is the subject of general consideration in the upcoming work of Dinkins--Karpov--Krylov. Our conjecture, which discusses the statement at $q^k=1$, should be considered as a $K$-theoretic analog of the cohomological ``arithmetic'' 3D mirror symmetry conjecture in characteristic $p$ \cite{bai-lee}.

To illustrate the arithmetic aspect of the $K$-theoretic quantum Hikita conjecture at roots of unity, we verify the conjecture for the smallest nontrivial example of abelian gauge theory $(\mathbb{G}_m, \mathbb{C})$ as a proof of principle.

\subsection{3D mirror symmetry and quantum Hikita conjecture}\label{ssec:quantumHikita-introduction}
In this subsection, we briefly survey 3D mirror symmetry and the statement of quantum Hikita conjecture. For a general introduction we refer the reader to \cite{kamnitzer-survey} and the original paper \cite{KMP21}.

In short, 3D mirror symmetry posits that (conical Hamiltonian) symplectic resolutions (see, e.g., \cite[Section 2]{bai-lee}) should come in pairs, $X$ and $X^!$, such that certain ``equivariant'' data of $X$ and ``K\"ahler'' data of $X^!$ are interchanged under this duality. The quantum Hikita conjecture is one manifestation of the above slogan, where the notion of equivariant and K\"ahler data are given by certain $D$-modules (cf. \cite{KMP21}) depending on the quantization of $X$ and curve counts of $X^!$. 

\begin{rem}
    Unlike the conventions used in previous sections, we use $X^!$ instead of $X$ to denote the Higgs branch whose enumerative geometry is the subject of our discussion following the conventions in \cite{KMP21, bai-lee}.
\end{rem}

\subsubsection{Cohomological quantum Hikita conjecture}
We first quickly recall the cohomological formulation of the quantum Hikita conjecture, to later compare with the $K$-theoretic version. To simplify the discussion we will assume that $X$ is a BFN Coulomb branch \cite{BFN2} of a gauge theory for a pair $(G, N)$ of a reductive group $G$ and its complex representation $N$, and correspondingly $X^! = T^*N /\!\!/\!\!/\!\!/G$ is a Higgs branch of the same theory. For the general formulation, we refer the reader to the original paper \cite{KMP21}; the notations are drawn from \cite{bai-lee}.

% We follow the notation in \cref{exmp:hypertoric}, so that we fix an exact sequence $1 \to K \to G \to T \to 1$ of algebraic tori and a complex representation $N$ of $G:=(\mathbb{G}_m)^n$. Then $X^!$ is the Higgs branch $T^*N/\!\!/\!\!/\!\!/ K$ with a Hamiltonian torus action of $T$, and $X$ is the BFN Coulomb branch construction associated to the pair $(K, N)$.

There are tautological line bundles on $X^!$ indexed by $X^\bullet(G) \cong H^2_G(\mathrm{pt})$ arising from the map
\begin{equation}
    X^\bullet(G) \cong \mathrm{Pic}^G (\mu^{-1}(0)) \to \mathrm{Pic}(X^!),
\end{equation} by a slight abuse of notation we denote these by $L \in X^\bullet (G) \cong \mathrm{Hom}(G, \mathbb{G}_m)$. These also generate the $K$-theory of $X^!$ by Kirwan surjectivity \cite[Section 2.3]{smirnov-zhou} in the case $G$ is abelian so that $X^!$ is the associated hypertoric variety. Denote by $d \in \mathrm{Eff}(X) \subseteq H_2(X^!;\mathbb{Z})$ the classes represented by rational curves in $X^!$, so we have a well-defined pairing $\langle L, d \rangle \in \mathbb{Z}$.

\begin{defn}
    The ring of K\"ahler differential operators is
    \begin{equation}
        R := \mathbb{Z}[\hbar][\![z^d : d \in \Sigma]\!][ \partial_L : L \in X^\bullet (G)]
    \end{equation}
    where the ring structure is subject to the relation $\partial_L z^d = z^d (\partial_L  + \hbar \langle L, d \rangle)$; the indices $d$ range over the submonoid $\Sigma \subseteq H_2(X^!;\mathbb{Z})$ generated by the K\"ahler roots of $X^!$, see \cite[Definition 2.21]{bai-lee}. 
\end{defn}

To describe the $D$-module of graded traces of $X$, we use the presentation of the BFN Coulomb branch of gauge theories as the spectrum of the $G[\![t]\!]$-equivariant Borel-Moore homology of a certain ind-scheme $\mathcal{R}_{G, N}$ which is homotopy equivalent to the affine Grassmannian $\mathrm{Gr}_G$, hence with connected components indexed by $\pi_0(\mathcal{R}) \cong \pi_0(\mathrm{Gr}_G) \cong \pi_1(G)$ \cite[3(v)]{BFN2}. The ring structure is given by convolution, see \cite[Section 3(iv), Remark 3.14]{BFN2}. Hence, the ring of functions on the Coulomb branch admits a decomposition into $\pi_1(G)$-graded pieces. The same decomposition applies to the quantization, which is obtained by considering the $\mathbb{C}^\times_q$-loop-equivariant Borel-Moore homology \cite[Definition 3.13]{BFN2}:
\begin{equation}
    \mathcal{A}=  H_{*, BM}^{G[\![t]\!] \rtimes \mathbb{C}^\times_q} (\mathcal{R}_{K, N}) \cong \bigoplus_{d \in \pi_1(G)} \mathcal{A}_d.
\end{equation}

Assuming that $X^\bullet(G) \to H^2(X^!)$ is surjective, which holds for, e.g., $X^!$ being a Nakajima quiver variety (cf. \cite{hyperkahler-kirwan}), we see that the monoid spanned by K\"ahler roots $\Sigma \subseteq H_2(X^!) \cong H^2(X^!)^\vee$ injects into $X^\bullet(G)^\vee$. Since $X^\bullet(G)^\vee \cong X_\bullet(Z(G)^\circ)$ are the coweights of the central torus $Z(G)^\circ \subseteq G$, injecting into the coweights of the maximal torus $X^\bullet(G)^\vee \hookrightarrow X_\bullet(T) \twoheadrightarrow \pi_1(G)$, one can also consider the indices $d \in \Sigma$ for the variables $z^d$ as indices in (a submonoid of) $\pi_1(G)$.

% To describe the $D$-module of graded traces of $X$, we use the presentation of the BFN Coulomb branch of gauge theories as the spectrum of the $G[\![t]\!]$-equivariant Borel-Moore homology of a certain ind-scheme $\mathcal{R}_{G, N}$ which is homotopy equivalent to the affine Grassmannian $\mathrm{Gr}_G$, hence with connected components indexed by $\pi_1(G)$ \cite[3(v)]{BFN2}. The ring structure is given by convolution, see \cite[Section 3(iv), Remark 3.14]{BFN2}. Hence, the ring of functions on the Coulomb branch admits a decomposition
% \begin{equation}
%     \mathcal{O}(X) \cong H^{K[\![t]\!]}_{*, BM} (\mathcal{R}_{K, N}) \cong \bigoplus_{d \in \mathrm{Lie(K)}_{\mathbb{Z}}} H_{*, BM}^{K}(\mathrm{pt}) \cdot r_d.
% \end{equation}
% We denote by $r_d$ the generator of the $d$th summand. The same decomposition applies to the additive description of the quantization, which is obtained by considering the $\mathbb{C}^\times_q$-loop-equivariant Borel-Moore homology \cite[Definition 3.13]{BFN2}:
% \begin{equation}
%     \mathcal{A}=  H_{*, BM}^{K[\![t]\!] \rtimes \mathbb{C}^\times_q} (\mathcal{R}_{K, N}) \cong \bigoplus_{d \in \mathrm{Lie(K)}_{\mathbb{Z}}} H_{*, BM}^{K \times \mathbb{C}^\times_q} (\mathrm{pt}) \cdot r_d =: \bigoplus_{d \in \mathrm{Lie}(K)_{\mathbb{Z}}} \mathcal{A}_d.
% \end{equation}

Note that the quantized Coulomb branch algebra is linear over the ring $H^*_{G[\![t]\!] \rtimes \mathbb{C}^\times_q} (\mathrm{pt})$. Denote by $L \in H^2_G(\mathrm{pt})\cong X^\bullet (G)$ the weight of $G$ giving rise to the tautological line bundle and, by a slight abuse of notation, its image in the quantized Coulomb branch algebra by the inclusion.

\begin{defn}
    The $D$-module of graded traces of $X$ is the left $R$-module
    \begin{equation}
        M_{ eq}(X) := R \otimes_{\mathbb{Z}[\hbar][\partial_L]} \mathcal{A}_0 / ( 1 \otimes ab - z^d \otimes ba : a \in \mathcal{A}_{d}, b \in \mathcal{A}_{-d}).
    \end{equation}
    where $R$ acts by the algebra structure on the first factor. The tensor product is taken over the commutative subalgebra $\mathbb{Z}[\hbar][\partial_L] \subseteq R$; $R$ is naturally a right module over the subalgebra, and $\partial_L$ acts by left multiplication by $L$ on $\mathcal{A}_0$.
\end{defn}

Recall that the Higgs branch $X^!$ is acted on by $\mathbf{T} = \mathbb{G}_m^{\hbar} \times T$. To simplify our discussion, we turn off the action along the $T$-direction when discussing equivariant cohomology, $K$-theory and quasimap counts; see Remark \ref{rem:flavor-symmetry} for how to bring back $T$-equivariance. For the $D$-module of $X^!$ we use a particular specialization of the quantum $D$-module from \cref{eqn:qconn}, where the equivariant parameters $t$ and $\hbar$ of the loop and conical equivariant parameters are identified. This specialization is called the  ``CY specialization'' in \cite[Remark 4.3]{KMP21}.

\begin{defn}
    The ($\hbar=t$ specialized) quantum $D$-module of $X^!$ is the left $R$-module
    \begin{equation}
        M_{Kah}(X^!) := H^*_{\mathbb{G}_m^{t} \times \mathbb{G}_m^{\hbar}}(X^!)[\![z^d]\!]|_{\hbar = t}
    \end{equation}
    where $\mathbb{Z}[\![z^d]\!] \subseteq R$ acts via multiplication and $\mathbb{Z}[\partial_L] \subseteq R$ acts by the quantum connection $\nabla_L|_{\hbar = t}$ from \cref{eqn:qconn} specialized at $t = \hbar$ after passing to the nonequivariant limit for the $T$-action.
\end{defn}

\begin{conj}[Quantum Hikita conjecture \cite{KMP21} for Higgs--Coulomb duality]
    For symplectically dual varieties $X$ and $X^!$ given by Coulomb branch and Higgs branch constructions for the gauge theory $(G, N)$, there is an isomorphism of $R$-modules
    \begin{equation}
        M_{eq}(X) \cong M_{Kah}(X^!).
    \end{equation}
\end{conj}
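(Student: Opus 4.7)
My plan is to construct an explicit isomorphism via a comparison of fundamental solutions, and then to reduce the verification of $R$-linearity to two geometric statements: a matching of K\"ahler shifts on both sides, and a matching of the $\partial_L$-action with quantum multiplication by divisor classes. The key organizing principle is that both modules should arise as the space of fundamental solutions to the same quantum $D$-module, viewed from the two ends of the Higgs--Coulomb duality.

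First, I would choose a cyclic vector on each side: on the Higgs side take the identity class $1 \in H^*_{\mathbb{G}_m^t \times \mathbb{G}_m^\hbar}(X^!)$, and on the Coulomb side take the class of the identity of $\mathcal{A}_0$. The candidate map $\Phi \colon M_{eq}(X) \to M_{\mathrm{Kah}}(X^!)$ sends $a \in \mathcal{A}_0$ to the image of $1 \in H^*(X^!)$ under a differential operator $P(a, \partial_L, z^d)$ built from a choice of presentation of $\mathcal{A}_0$ in terms of the generators $L \in X^\bullet(G)$ and the monopole operators $u_d$ that span the $\mathcal{A}_d$ summands. The natural source for this presentation is the Higgs-branch vertex function $V(z)$ of $X^!$: the Coulomb branch elements are expected to act on $V(z)$ via the quantum $q$-difference operators in the K\"ahler variable (under the CY-specialization $\hbar = t$), and the $D$-module of graded traces is by construction the quotient of $R$ by the annihilator of $V$.

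Second, I would verify well-definedness of $\Phi$ by checking that the defining relation $1 \otimes ab - z^d \otimes ba = 0$ for $a \in \mathcal{A}_d$, $b \in \mathcal{A}_{-d}$ maps to zero. This is the geometric heart: on the Higgs side the relation corresponds to a wall-crossing identity for quasimap counts across the chamber separating $d$ and $-d$ degrees, with the $z^d$ accounting for the shift in K\"ahler variable under translating the monopole. One natural way to prove this is to realize $M_{\mathrm{Kah}}(X^!)$ as the cyclic $R$-module generated by $V(z)$ and to use the factorization of the vertex function through the Macdonald/Whittaker limit that aligns with Kamnitzer--McBreen--Proudfoot's construction on the Coulomb side. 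Finally, matching $\partial_L$ with quantum multiplication by $c_1(L)$ reduces, after the CY specialization $\hbar = t$, to the quantum differential equation satisfied by $V(z)$, and matching the $z^d$-action is tautological from the grading.

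The main obstacle will be the middle step: establishing that the Coulomb monopole operators $u_d$ act on the Higgs-branch vertex function precisely as the K\"ahler shift operators $M_L(z)$ of \cref{ssec:oper-quantumK} composed with the appropriate normalizations. This is essentially a form of the 3D mirror symmetry statement at the level of $D$-modules, and is only accessible in cases where explicit formulas for vertex functions and stable envelopes are available (e.g.\ abelian gauge theories, type $A$ quiver varieties). For the general conjecture one would need either an intrinsic construction of the mirror symmetry isomorphism via elliptic stable envelopes or a case-by-case verification; our proposed strategy reduces the conjecture to this single geometric input, after which $R$-linearity and surjectivity follow from Kirwan surjectivity for $H^*(X^!)$ and the fact that $\mathcal{A}_0$ acts faithfully on its regular module.
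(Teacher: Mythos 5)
This statement is a \emph{conjecture} in the paper (Conjecture 5.6), cited from \cite{KMP21}; the paper does not prove it and offers no proof to compare against. The only Hikita-type statement the paper actually verifies is a different one---the $K$-theoretic quantum Hikita conjecture at roots of unity (\cref{conj:arithmetic-k-hikita})---and only for hypertoric varieties, by explicitly computing both $q$-difference modules (the Coulomb side from the BFN convolution algebra structure, the Higgs side from the Smirnov--Zhou vertex-function relations) and matching the relations term-by-term.

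Your proposal does not close the conjecture either, and you essentially say so: the ``middle step'' you identify---that Coulomb monopole operators act on the Higgs-side fundamental solution as prescribed shift/quantum-multiplication operators---\emph{is} the substantive content of the quantum Hikita conjecture, so what you present is a reformulation rather than a proof. There is also a category confusion that should be flagged. The conjecture you are addressing is the \emph{cohomological} one: $M_{Kah}(X^!)$ is the quantum $D$-module, $R$ is a ring of K\"ahler \emph{differential} operators $\partial_L$ (Leibniz rule with respect to $z$), and the Coulomb algebra $\mathcal{A}$ is built from Borel--Moore homology. Yet you invoke the K\"ahler shift operators $M_L(z)$ of \cref{ssec:oper-quantumK}, which are $q$-difference operators in quasimap \emph{quantum $K$-theory}; those belong to the $K$-theoretic variant \cref{conj:K-quantum-hikita}, not to the statement at hand. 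If you want to pursue the fundamental-solution strategy for the cohomological conjecture, the relevant Higgs-side object is the cohomological (equivariant Gromov--Witten/quasimap) $J$-function and its quantum differential equation, not a $q$-hypergeometric vertex function, and the annihilator you must compare with the graded-trace relations lives in the Weyl-algebra-like ring $R$, not in $R_q$. Even then, the reduction to ``monopole operators act as quantum multiplication by divisors'' remains exactly the open geometric content, so the approach does not give an unconditional proof beyond the cases (hypertoric, certain quiver varieties) where this matching is already a theorem in the literature.
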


In \cite{KMP21, bai-lee} a more general version applying to general symplectically dual pairs of conical symplectic resolutions are explained; we have given the simplest version so that the analogy with the $q$-difference equations below becomes more transparent.

\subsubsection{$K$-theoretic quantum Hikita conjecture}

The $K$-theoretic version of the quantum Hikita conjecture we explain below replaces the $D$-modules from \cite{KMP21} by $q$-difference modules. 

\begin{defn}
    The ring of K\"ahler $q$-difference operators is
    \begin{equation}
        R_q := \mathbb{Z}[q^\pm][\![z^d: d \in \Sigma]\!][q^L : L \in X^\bullet (G) ]
    \end{equation}
    where the ring structure is subject to the relation $q^L z^d  = q^{\langle L, d \rangle } z^d q^L$.
\end{defn}
Here, the symbol $q^L$ should be interpreted as the operator $q^L \cdot z^d = q^{\langle L, d \rangle}z^d$, hence the defining relation of $R_q$. Again, the indices $d$ range over the submonoid of $H_2(X^!;\mathbb{Z})$ or $\pi_1(G)$, generated by the K\"ahler roots of $X^!$ from  \cite[Definition 2.21]{bai-lee}.

\begin{rem}
    The usual definition of a $q$-difference module involves the $q$-shift operations $q^L : \mathbb{Z}[\![z^d ]\!] \to \mathbb{Z}[q][\![z^d]\!]$ and a $\mathbb{Z}[q^\pm][\![z^d]\!]$-module $V$ equipped $q^L$-twisted linear endomorphisms $A_L : V \to V$ satisfying $A_L(f(z)v) = f(q^L z) A_L(v)$. This definition is equivalent to having an $R_q$-module structure on $V$ where $q^L \in R_q$ acts by the operator $A_L$.
\end{rem}

The $q$-difference module of graded traces of $X$ is again described using the presentation of the BFN Coulomb branch as the spectrum of the $G[\![t]\!]$-equivariant $K$-homology of $\mathcal{R}_{G, N}$. The same argument as in the cohomological case yields the grading by $\pi_1(G)$ for the multiplicative  Coulomb branch algebra
\begin{equation}
\mathcal{O}:=   \mathcal{O}(X) \cong K^{G[\![t]\!]} (\mathcal{R}_{G, N}) \cong \bigoplus_{d \in \pi_1(G)} \mathcal{O}_d.
\end{equation}
and its quantization,
\begin{equation}
    \mathcal{A}=  K^{G[\![t]\!] \rtimes \mathbb{C}^\times_q} (\mathcal{R}_{G, N}) \cong \bigoplus_{d \in \pi_1(G)} \mathcal{A}_d.
\end{equation}

Note that the quantized Coulomb branch algebra is linear over the ring $K^{G[\![t]\!] \rtimes \mathbb{C}^\times_q} (\mathrm{pt})$. Denote by $L \in K^G(\mathrm{pt})$ the $K$-theory class of a tautological line bundle coming from a character of $G$ and its image in the quantized Coulomb branch algebra by the inclusion. Define a $\mathbb{Z}[q^L]$-module structure on $\mathcal{A}_0$ such that $q^L \cdot a = L a$.

\begin{defn}
    The $q$-difference module of graded traces of $X$ is the left $R_q$-module
    \begin{equation}\label{eqn:graded-trace-module}
        M_{q, eq}(X) := R_q \otimes_{\mathbb{Z}[q^\pm][q^L]} \mathcal{A}_0 / ( 1 \otimes ab - z^d \otimes ba : a \in \mathcal{A}_{d}, b \in \mathcal{A}_{-d}).
    \end{equation}
    where $R_q$ acts by the algebra structure on the first factor. The tensor product is taken over the commutative subalgebra $\mathbb{Z}[q^\pm][q^L] \subseteq R_q$, over which $R_q$ is naturally also a right module.
\end{defn}

The $q$-difference module of the Higgs branch $X^!$ replaces quantum cohomology with quantum $K$-theory, and also involves the identification of the equivariant parameters $q$ and $\hbar$ of the loop and conical equivariant parameters.

\begin{defn}
    The ($\hbar=q$ specialized) quantum $q$-difference module of $X^!$ is the left $R_q$-module
    \begin{equation}
        M_{q, Kah}(X^!) := K_{\mathbb{G}_m^q \times \mathbb{G}_m^{\hbar}}(X^!)[\![z^d]\!]|_{\hbar = q}
    \end{equation}
    where $\mathbb{Z}[\![z^d]\!] \subseteq R_q$ acts via multiplication and $\mathbb{Z}[q^L] \subseteq R_q$ acts by the K\"ahler $q$-difference operators from \cref{defn:kahler-shift} specialized at $q = \hbar$ after passing to the nonequivariant limit for the $T$-action.
\end{defn}

\begin{conj}[$K$-theoretic Quantum Hikita conjecture for Higgs--Coulomb duality]\label{conj:K-quantum-hikita}
    For symplectically dual varieties $X$ and $X^!$ given by Coulomb branch and (resolved) Higgs branch constructions for the gauge theory $(G,N)$, there is an isomorphism of $q$-difference modules
    \begin{equation}
        M_{q, eq}(X) \cong M_{q, Kah}(X^!).
    \end{equation}
\end{conj}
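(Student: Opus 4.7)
The plan is to verify the conjecture for abelian gauge theories as a proof of principle, and more specifically to work it out first for the simplest case $G = \mathbb{G}_m$ acting on $V = \mathbb{C}$ with standard weight $1$. For abelian $G$, both the multiplicative Coulomb branch $\mathcal{A}$ and the quantum $K$-theory of the hypertoric Higgs branch $X^!$ admit explicit presentations, reducing the isomorphism $M_{q, \mathrm{eq}}(X) \cong M_{q, \mathrm{Kah}}(X^!)$ to matching generators and relations.

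First I would write down explicit presentations of both modules. For the K\"ahler side, the equivariant quantum $K$-theory of a hypertoric variety is generated by tautological line bundles $L_i \in K_{\mathbf{T}}(X^!)$ associated to characters of $T$, and the K\"ahler $q$-difference operators $M_{L_i}^{(k)}(z)$ can be computed via $\mathbf{T}$-equivariant localization analogous to \cref{ssec:oper-example}. For the equivariant side, the BFN multiplicative Coulomb branch decomposes as $\mathcal{A} = \bigoplus_{d \in X_\bullet(T)} \mathcal{A}_d$, where each $\mathcal{A}_d$ is free of rank one over $K^{T[\![t]\!] \rtimes \mathbb{G}_m^q}(\mathrm{pt})$ with an explicit generator coming from the standard basis of $K$-homology of $\mathcal{R}_{T, N}$.

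Next, I would match the two module structures via a dictionary between tautological classes on the two sides. Both $R_q$-modules are generated by a canonical identity element, and the commutation relations $q^L z^d = q^{\langle L, d \rangle} z^d q^L$ in $R_q$ are implemented on the Coulomb side through the twist $1 \otimes ab - z^d \otimes ba$ in the quotient defining $M_{q, \mathrm{eq}}(X)$, and on the K\"ahler side via the intrinsic structure of $M_L^{(k)}(z)$ as a $q^L$-twisted linear K\"ahler shift. Matching vertex functions order-by-order in $z^d$ pins down the images of the generators $r_d \in \mathcal{A}_d$ as specific quantum $K$-classes. For the Lonergan-type statement, I would invoke \cref{thm:qadams=pcurvature}, which identifies the $k$-curvature $M_{L, \zeta_k}(z)$ with the descendant quantum Adams operator $Q\psi_L^{desc, k}(z)$; Lonergan's central element $\Lambda(L^k)$ should correspond to this $k$-curvature under the isomorphism, which can be verified by comparing the two explicit presentations.

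The main obstacle, even in the hypertoric case, is verifying the identification at the level of \emph{$q$-difference module actions} rather than just algebra structure: while Kirwan surjectivity provides generators of $K_{\mathbf{T}}(X^!)$ and the BFN presentation gives generators of $\mathcal{A}$, checking that the K\"ahler shift operators match left multiplication in $\mathcal{A}$ requires explicit vertex function calculations that become cumbersome beyond the simplest examples. The non-abelian case lies further out of reach, as neither side admits a tractable explicit presentation, and a proof there would likely require either shuffle-algebra descriptions of the Coulomb branch (as in quiver cases) or a geometric argument bypassing explicit presentations.
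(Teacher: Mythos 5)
Your overall strategy matches the paper's: the conjecture is verified only for hypertoric (abelian) Coulomb/Higgs pairs, by writing down explicit presentations of both $R_q$-modules and matching generators and relations, beginning with the rank $1$ case $(G,N)=(\mathbb{G}_m,\mathbb{C})$. However, there are tactical divergences worth noting. On the K\"ahler side the paper does not compute the shift operators $M_L^{(k)}(z)$ by $\mu_k$- or $\mathbf{T}$-localization afresh; it simply invokes the Smirnov--Zhou presentation (\cref{lem:kahler-qdiff-computation}), which already packages the vertex-function computation into an explicit set of relations, one for each \emph{circuit} of the hyperplane arrangement. On the Coulomb side the paper uses Lonergan's additive basis for the quantized multiplicative Coulomb branch and then shows (\cref{lem:trace-qdiff-computation}) that the defining relations $1\otimes ab - z^d\otimes ba$ of the graded-trace module are generated by those with $d$ ranging only over circuits $\beta_S$, via an inductive cancellation argument borrowed from the cohomological quantum Hikita paper. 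The isomorphism then falls out because the two sets of relations are \emph{literally the same} polynomial expressions in $\mathbb{Z}[q^\pm][y_i^\pm][\![z^d]\!]$; there is no need to match vertex functions order-by-order or to ``pin down images of generators'' as you propose. This is precisely the ``obstacle'' you flag, and it dissolves once one uses the circuit presentation on both sides. One further point: the conjecture being discussed is only the isomorphism of $q$-difference modules; the invocation of \cref{thm:qadams=pcurvature} and the Lonergan central action pertain to the separate \emph{arithmetic} refinement (\cref{conj:arithmetic-k-hikita}), which the paper treats in the same section but states as a distinct conjecture. Your proposal somewhat merges the two.
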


\begin{rem}\label{rem:flavor-symmetry}
    One may also allow \emph{flavor symmetries} in the formulation. For the deformation on the graded traces side, one first assumes that there is an extension $\widetilde{G}$ such that $T := \widetilde{G}/G$ is a torus, such that the $G$-action on $N$ extends to a $\widetilde{G}$-action. Then one replaces $\mathcal{A} = K^{G[\![t]\!] \rtimes \mathbb{C}^\times_q} (\mathcal{R}_{G, N})$ in the construction with the full $\widetilde{G}[\![t]\!] \rtimes \mathbb{C}^\times_q$-equivariant $K$-homology of $\mathcal{R}_{G, N}$. For the deformation on the quantum side, one replaces $\mathbb{G}_m \times \mathbb{G}_m^\hbar$-equivariant $K$-theory with $\mathbb{G}_m^q \times T \times \mathbb{G}_m^\hbar = \mathbb{G}_m^q \times \mathbf{T}$-equivariant $K$-theory. In the cohomological setting, this is the formulation in \cite{KMP21} and \cite{bai-lee}, where additional information from the $T$-action is encoded as a $X^{\bullet}(T)$-grading on the graded traces side.
\end{rem}

\subsection{Quantum Hikita conjecture at roots of unity}\label{ssec:quantumHikita-formulation}
In this subsection, we formulate an ``arithmetic'' version of the $K$-theoretic quantum Hikita conjecture by investigating the $q$-difference modules at root of unity. In the arithmetic setting, both $q$-difference modules are equipped with an extra module structure arising from ($q$-)Frobenius-constant quantizations \cite[Section 4]{lonergan} and quantum Adams operations respectively, whose actions are also intertwined under the isomorphism of $q$-difference modules. This discussion could be understood as the parallel of the cohomological case of our early work \cite{bai-lee}.

\subsubsection{Action of quantum Adams operations}
Following previous discussions, we turn off the $T$-action on $X^!$ to simplify the notations. As in \cref{ssec:qAdams}, one can further specialize the ($\hbar = q$ specialized) quantum $q$-difference module $M_{q, Kah}(X^!)$ to a root of unity $q = \zeta$ by quotienting the underlying module by the $k$th cyclotomic polynomial $\Phi_k(q)$;
\begin{equation}
    M_{q,Kah}(X^!)|_{q=\zeta} := \mathbb{Z}[q^\pm]/(\Phi_k(q)) \otimes M_{q,Kah}(X^!).
\end{equation}

This arithmetic reduction of the K\"ahler quantum $q$-difference module admits an additional structure of a module over the PSZ quantum $K$-theory ring.

\begin{prop}
    The quantum Adams operators $Q\psi^k_{\mathcal{F}}|_{\hbar=q}$, specialized at $\hbar = q$, define an algebra action of the PSZ quantum $K$-theory ring $QK(X^!) |_{\hbar = \zeta} := (K_{\mathbb{G}_m^{\hbar}}(X^!)[\![{z^{\mathrm{eff}}}]\!], \star)|_{\hbar = \zeta}$ on the quantum $q$-difference module $M_{q,Kah}(X^!)|_{q=\zeta}$  at $q = \zeta$. 
\end{prop}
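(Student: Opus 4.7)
The plan is to verify that the assignment $\mathcal{F} \mapsto Q\psi^k_{\mathcal{F}}|_{\hbar = q}$ is a ring homomorphism from $QK(X^!)|_{\hbar = \zeta}$ into the $R_q|_{q = \zeta}$-linear endomorphisms of $M_{q, Kah}(X^!)|_{q = \zeta}$. The four axioms to check--additivity, multiplicativity with respect to the PSZ product $\star$, sending the unit to the identity, and $R_q|_{q = \zeta}$-linearity--each correspond to a structural property of quantum Adams operators already established in \cref{sec:oper}, so the content reduces to assembling these ingredients coherently under the joint specialization $\hbar = q = \zeta$.

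For additivity I would simply cite \cref{lem:qadams-additivity}; note that this step genuinely requires $\zeta$ to be a primitive $k$th root of unity, because the vanishing of the classes induced from proper subgroups $\mu_{k'} < \mu_k$ is what makes the additivity work. Multiplicativity, $Q\psi^k_{\mathcal{F} \star \mathcal{G}} = Q\psi^k_{\mathcal{F}} \circ Q\psi^k_{\mathcal{G}}$, is item (3) of \cref{prop:adams-property}, proved via the $\mu_k$-equivariant WDVV-type degeneration underlying \eqref{eqn:cartan}; since that argument is purely geometric on the moduli of $\mu_k$-equivariant source curves, it is insensitive to the later $\hbar = q$ specialization. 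For unitality I would combine multiplicativity applied to $\mathbf{1}(z) \star \mathbf{1}(z) = \mathbf{1}(z)$ with item (2) of \cref{prop:adams-property}: the former shows that $A(z) := Q\psi^k_{\mathbf{1}(z)}$ is idempotent, while the latter identifies $A(z)|_{z = 0}$ with $\psi^k(1_{X^!}) \otimes (-) = \mathrm{Id}$; writing $A(z) = \mathrm{Id} + B(z)$ with $B(0) = 0$ and expanding $B + B^2 = 0$ at the lowest nontrivial Novikov degree forces $B = 0$.

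The remaining axiom, $R_q|_{q = \zeta}$-linearity, is the most substantive step. The $z$-linearity is built into the definition of $Q\psi^k_{\mathcal{F}}$ as an element of $\mathrm{End}(K_{\mu_k \times \mathbf{T}}(X^!))[\![z^{\mathrm{eff}}]\!]$. Commutation with the K\"ahler shifts $q^L$, whose action on $M_{q, Kah}(X^!)|_{q = \zeta}$ is implemented by $M^k_L(z)|_{q = \zeta}$ combined with the substitution $z \mapsto \zeta^L z$, is exactly the covariant constancy identity
\begin{equation*}
Q\psi^k_{\mathcal{F}}(\zeta^L z) \circ M_L^k(z)|_{q = \zeta} = M_L^k(z)|_{q = \zeta} \circ Q\psi^k_{\mathcal{F}}(z)
\end{equation*}
from item (4) of \cref{prop:adams-property}; once the action of $q^L$ on $M_{q, Kah}$ is unpacked, this is precisely the assertion that $Q\psi^k_{\mathcal{F}}$ and $q^L$ commute as endomorphisms.

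The hard part will be the bookkeeping of the two consecutive specializations $\hbar = q$ and $q = \zeta$: I would need to confirm that the operators $Q\psi^k_{\mathcal{F}}$ and $M_L^{(k)}$, both naturally living in $\mu_k \times \mathbb{G}_m^{\hbar}$-equivariant quasimap $K$-theory, descend consistently to endomorphisms of $M_{q, Kah}(X^!)|_{q = \zeta}$ in a way that is compatible with virtual pushforward, gluing, and composition of endomorphisms. Once this reconciliation of the equivariant parameters is settled, the four axioms follow mechanically from the cited results, yielding the desired algebra action.
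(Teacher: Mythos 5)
Your proposal is correct and takes essentially the same route as the paper: the paper's proof cites the covariant constancy statement (its \cref{thm:kahler-covariant-constancy}) for the $q$-difference endomorphism property, item (3) of \cref{prop:adams-property} for multiplicativity, and \cref{lem:qadams-additivity} for linearity, exactly the three pillars you identify. Your additional idempotent-plus-classical-limit argument for unitality and your more careful discussion of the $\hbar=q$, $q=\zeta$ bookkeeping fill in details the paper leaves implicit, but they are consistent with — and naturally subsidiary to — the paper's proof.
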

\begin{proof}
    \cref{thm:kahler-covariant-constancy} shows the claim that the quantum Adams operators act as $q$-difference module endomorphisms. Property (3) from \cref{prop:adams-property} and \cref{lem:qadams-additivity} show that the action respects PSZ quantum $K$-theory multiplication and linearity, respectively; hence it defines an algebra action of the PSZ quantum $K$-theory ring.
\end{proof}

\subsubsection{Action of Lonergan's Frobenius center}
The $q$-difference module $M_{q, eq}(X)$ of graded traces specialized to $q=\zeta$ also admits an action by a certain algebra, where the action is defined by the following construction from \cite{lonergan}. Recall that $X$ is the spectrum of the convolution algebra given by
\begin{equation}
    \mathcal{O} := \mathcal{O}(X) =  K^{G[\![t]\!]}(\mathcal{R}_{G, N}) \cong \bigoplus_{d \in \pi_1(G)}\mathcal{O}_d
\end{equation}
whose quantization is given by
\begin{equation}
    \mathcal{A} := K^{G[\![t]\!] \rtimes \mathbb{C}^\times_q}(\mathcal{R}_{G, N}) \cong \bigoplus_{d \in \pi_1(G)}\mathcal{A}_d.
\end{equation}
The quantization $\mathcal{A}$ is linear over $K^{\mathbb{C}^\times_q}(\mathrm{pt}) \cong \mathbb{Z}[q^\pm]$, and we denote by $\mathcal{A}_\zeta := \mathcal{A} / (\Phi_k(q))$ for the quotient by the $k$-th cyclotomic polynomial. The quantized algebra $\mathcal{A}_\zeta$ at $q = \zeta$ is in general non-commutative, and we denote its center by $Z(A_\zeta)$. \cite{lonergan} constructs the following $K$-theoretic analog of a ``Frobenius-constant quantization'' (cf. \cite{bezrukavnikov-kaledin-quantp}):

\begin{thm}[{\cite[Theorem 4.1]{lonergan}}]
    There is an injective map of algebras
    \begin{equation}
        \Lambda : \mathcal{O} \to Z (\mathcal{A}_\zeta)
    \end{equation}
    that is linear, transports multiplication $r \in K^{G[\![t]\!]}(\mathrm{pt})$ to multiplication by the Adams operation $\psi^k(r) \in K^{G[\![t]\!] \rtimes \mathbb{C}^\times_q}(\mathrm{pt})/(\Phi_k(q))$, and takes $\mathcal{O}_d$ to $\mathcal{A}_{kd}$.
\end{thm}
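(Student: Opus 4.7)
The plan is to construct $\Lambda$ as a $K$-theoretic analog of the Frobenius-constant quantization of \cite{bezrukavnikov-kaledin-quantp}, adapted to the multiplicative ($q$-deformed) setting. The key geometric idea is to exploit the cyclic symmetry of $k$-fold convolution on the variety of triples $\mathcal{R}_{G,N}$ together with the $\mu_k \subset \mathbb{C}^\times_q$ discretization of loop rotation, in close analogy with the construction of quantum cyclic powers in \cref{ssec:oper-definition}.

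First, I would work with the intermediate equivariant $K$-homology $K^{G[\![t]\!] \rtimes \mu_k}(\mathcal{R}_{G,N})$, which receives a natural surjection from $\mathcal{A}$ by restriction along $\mathbb{C}^\times_q \supset \mu_k$. Given $a \in \mathcal{O}_d = K^{G[\![t]\!]}(\mathcal{R}_{G,N,d})$, I would define $\Lambda(a)$ by pushing forward the cyclically-symmetrized external power $a^{\boxtimes k}_{eq}$ along the $k$-fold convolution diagram $\mathcal{R}_{G,N,d}^{\times k} \to \mathcal{R}_{G,N,kd}$, and then specializing at $q = \zeta$ to land in $\mathcal{A}_{\zeta, kd}$. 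Multiplicativity and the correct grading $\mathcal{O}_d \to \mathcal{A}_{kd}$ are built into the construction via the bigraded convolution on $\mathcal{R}_{G,N}$.

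The Adams operation equivariance $\Lambda(r \cdot a) = \psi^k(r) \cdot \Lambda(a)$ for $r \in K^{G[\![t]\!]}(\mathrm{pt})$ should follow directly from the definition of $\psi^k$ in \cref{sec:ordinary-adams}: the $\mu_k$-equivariant $k$-fold external power, pulled back along the diagonal, is precisely the total power operation $P^k$, and its $\Phi_k(q)$-projection is $\psi^k$ by construction. Injectivity should be reduced to the injectivity of the classical $k$-fold convolution power on the reduced commutative ring $\mathcal{O}$ via a flat specialization argument at the $q \to 1$ limit, using that $\mathcal{O}$ is a (reduced, even integral in many cases) multiplicative Coulomb branch.

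The main obstacle will be establishing the centrality of the image in $\mathcal{A}_\zeta$. The required commutation $[\Lambda(a), b] \equiv 0 \pmod{\Phi_k(q)}$ for any $b \in \mathcal{A}$ should be proved by analyzing the failure of the convolution with $\Lambda(a)$ to commute with $b$: the obstruction is expected to be a sum of error terms each carrying a factor of the form $q^j - 1$ with $k \nmid j$, which manifestly vanishes modulo $\Phi_k(q)$ as soon as the relevant $\mu_k$-symmetrization has been performed. Making this rigorous will require a careful geometric analysis of how the cyclic $\mu_k$-action interacts with the convolution diagram on $\mathcal{R}_{G,N}$, most likely using equivariant localization onto the $\mu_k$-fixed locus in the spirit of \cref{ssec:oper-example}.
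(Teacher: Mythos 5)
This theorem is cited from Lonergan (\cite[Theorem 4.1]{lonergan}); the present paper provides no proof of it, so there is no ``paper's proof'' to compare against. Your construction does match the broad outline of Lonergan's argument: $\Lambda$ is built as a $\mu_k$-equivariant $k$-fold external power pushed along the iterated convolution correspondence on $\mathcal{R}_{G,N}$, followed by reduction modulo $\Phi_k(q)$, and both the grading shift $\mathcal{O}_d \to \mathcal{A}_{kd}$ and the Adams-operation twist of the $K^{G[\![t]\!]}(\mathrm{pt})$-linearity fall out of that description more or less immediately, exactly as you say.

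However, your stated mechanism for centrality is incorrect. You claim the commutator obstructions carry factors ``$q^j - 1$ with $k \nmid j$, which manifestly vanishes modulo $\Phi_k(q)$.'' This is backwards: since $q^k - 1 = \prod_{m\mid k}\Phi_m(q)$, one has $q^j - 1 \equiv 0 \pmod{\Phi_k(q)}$ if and only if $k \mid j$, so the factors you name do \emph{not} vanish. The correct mechanism (and the one actually used in Lonergan, as well as in the proof of Lemma \ref{lem:qadams-additivity} in this paper) is that the error terms arising from $\mu_k$-symmetrization are \emph{induced} from proper subgroups $\mu_{k'} < \mu_k$, equivalently are multiples of geometric sums $1 + q^j + \cdots + q^{(k-1)j}$ with $k \nmid j$; such sums equal $(q^{kj}-1)/(q^j-1)$ and therefore \emph{do} vanish modulo $\Phi_k(q)$, because $\Phi_k(q)\mid q^{kj}-1$ while $\Phi_k(q)\nmid q^j-1$. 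Your proposal would go through with this correction, but as written the key step is justified by a false divisibility claim. Finally, you do not actually give an argument for injectivity beyond a slogan; a flat degeneration to $q=1$ lands in the commutative ring $\mathcal{O}$, where the classical $k$-th power map need not be injective in general, so this step needs the more careful description of $\Lambda$ on weights/monopole operators used in Lonergan's proof.
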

The map $\Lambda$ is the $K$-theoretic analog of the Frobenius-constant quantization in the sense that it centrally embeds the commutative algebra $\mathcal{O}(X)$ into its quantization.

The arithmetic reduction of the $q$-difference module of graded traces admits an additional structure of a module over the commutative ring $\mathcal{O}_0 \subseteq \mathcal{O}$ given by the Frobenius center $\Lambda$. The action can be further extended to a ring which should be considered the ``mirror'' of the PSZ quantum $K$-theory ring:

\begin{prop}[{cf. \cite[Proposition 3.14, Proposition 3.15]{bai-lee}}]
    Left multiplication by the central element $\Lambda(x)$, $x \in \mathcal{O}_0$ defines an algebra action of the ring
    \begin{equation}\label{eqn:quantum-b-algebra}
        B_z(X) := \mathbb{Z}[\![z^d]\!] \otimes \mathcal{O}_0 / ( 1 \otimes ab - z^d \otimes ba : a \in \mathcal{O}_{-d}, b \in \mathcal{O}_d)
    \end{equation}
    on the $q$-difference module of graded traces $M_{q, eq}(X)|_{q=\zeta}$ at $q = \zeta$. Here, $z^d \otimes 1 \in B_z(x)$ acts by multiplication by $z^{kd}$. 
\end{prop}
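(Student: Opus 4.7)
The plan is to define $\Phi : B_z(X) \to \mathrm{End}(M_{q,eq}(X)|_{q=\zeta})$ by $z^d \otimes x \mapsto z^{kd} \cdot L_{\Lambda(x)}$ (with $L_{(-)}$ denoting left multiplication) and verify that this is a well-defined algebra homomorphism. The argument splits into two verifications: first, that left multiplication by $\Lambda(x)$ for $x \in \mathcal{O}_0$ descends through the trace quotient defining $M_{q,eq}(X)|_{q=\zeta}$; second, that the defining relations of $B_z(X)$ are sent to zero by $\Phi$.

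For the first verification, note that $\Lambda(x) \in Z(\mathcal{A}_\zeta) \cap \mathcal{A}_{0,\zeta}$, so left multiplication by $\Lambda(x)$ is $\mathbb{Z}[q^\pm][q^L]|_{q=\zeta}$-linear on $\mathcal{A}_{0,\zeta}$ and extends to $R_q|_{q=\zeta} \otimes \mathcal{A}_{0,\zeta}$. Given a generator $1 \otimes ab - z^d \otimes ba$ of the trace ideal with $a \in \mathcal{A}_{d,\zeta}$, $b \in \mathcal{A}_{-d,\zeta}$, set $a' := \Lambda(x)a \in \mathcal{A}_{d,\zeta}$. Centrality of $\Lambda(x)$ yields $a'b = \Lambda(x)ab$ and $ba' = \Lambda(x)ba$, so the trace relation applied to the pair $(a',b)$ shows that $\Lambda(x) \cdot (1 \otimes ab - z^d \otimes ba)$ remains in the trace ideal, so $L_{\Lambda(x)}$ descends to $M_{q,eq}(X)|_{q=\zeta}$.

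For the second verification, only the relations $1 \otimes ab - z^d \otimes ba$ with $a \in \mathcal{O}_{-d}, b \in \mathcal{O}_d$ are nontrivial. Under $\Phi$, these go to $L_{\Lambda(ab)}$ and $M_{z^{kd}} \circ L_{\Lambda(ba)}$ respectively. Since $\mathcal{O}$ is commutative and $\Lambda$ is a ring map, $\Lambda(ab) = \Lambda(ba)$, so the task reduces to showing $(1 - z^{kd}) L_{\Lambda(ab)} = 0$ on $M_{q,eq}(X)|_{q=\zeta}$. The central calculation applies the trace relation in $M_{q,eq}(X)|_{q=\zeta}$ with $\alpha := \Lambda(b) \in \mathcal{A}_{kd,\zeta}$ and $\beta := \Lambda(a) v \in \mathcal{A}_{-kd,\zeta}$ for an arbitrary $v \in \mathcal{A}_{0,\zeta}$:
\[
1 \otimes \Lambda(b)\Lambda(a) v \;=\; z^{kd} \otimes \Lambda(a) v \Lambda(b)
\]
in $M_{q,eq}(X)|_{q=\zeta}$. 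Centrality of $\Lambda(b)$ and commutativity of $\mathcal{O}$ simplify both sides to $\Lambda(ab) v$, giving $(1 - z^{kd}) \otimes \Lambda(ab) v = 0$; by $R_q|_{q=\zeta}$-linearity, this is exactly what is required.

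The algebra property of $\Phi$ then follows from multiplicativity of $\Lambda$ and the fact that $z^d$-multiplication commutes with $L_{\Lambda(x)}$. There is no serious obstacle here; the only genuine content — and the unique place where the root-of-unity specialization is essential — is the matching between the $k$-fold degree shift $\Lambda : \mathcal{O}_d \to \mathcal{A}_{kd,\zeta}$ (the content of Lonergan's theorem) and the rescaling $z^d \mapsto z^{kd}$ built into $\Phi$, which precisely ensures compatibility of the $B_z(X)$-relations with the trace-ideal relations of $M_{q,eq}(X)|_{q=\zeta}$.
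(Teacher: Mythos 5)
Your proof is correct and takes essentially the same approach as the paper: both arguments descend left multiplication by $\Lambda(x)$ through the trace ideal using centrality of $\Lambda(x)$, verify $q$-difference module linearity via $\mathbb{Z}[q^\pm][q^L]$-linearity of $L_{\Lambda(x)}$ on $\mathcal{A}_0$, and then check the $B_z(X)$-relations by matching the $z^d \mapsto z^{kd}$ rescaling against Lonergan's $k$-fold degree shift $\Lambda\colon\mathcal{O}_d \to \mathcal{A}_{kd,\zeta}$ and the trace relation. Your final verification is slightly more explicit than the paper's (you apply the trace relation with the specific pair $\alpha = \Lambda(b)$, $\beta = \Lambda(a)v$ for arbitrary $v$), which fills in what the paper leaves as a one-line observation, but the content is identical.
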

\begin{proof}
    To show that the multiplication is a well-defined action on
    \begin{equation}
        M_{q, eq}(X) := R_q \otimes_{\mathbb{Z}[q^\pm][q^L]} \mathcal{A}_0 / ( 1 \otimes ab - z^d \otimes ba : a \in \mathcal{A}_{d}, b \in \mathcal{A}_{-d}),
    \end{equation}
    we must check that the action factors through the relations
    \begin{equation}
        J := ( 1 \otimes ab - z^d \otimes ba : a \in \mathcal{A}_{d}, b \in \mathcal{A}_{-d}).
    \end{equation}
    This claim follows from centrality, as for a fixed $x \in \mathcal{O}_0$ we have
    \begin{equation}
        \Lambda(x) (ab - z^d ba) = a \left( b \Lambda(x) \right) - z^d (b \Lambda(x)) a \in J.
    \end{equation}
    To check that the endomorphism respects $R_q$-linear structure, so that it is a $q$-difference module endomorphism, we again use the centrality of $\Lambda$. Indeed, for $L \in X^\bullet(G)$, we have
    \begin{align}
        q^L  \Lambda(x) (z^d \otimes a) & = q^L (z^d \otimes \Lambda(x) a) \\
        &= q^{\langle L, d\rangle} z^d \otimes L\Lambda(x)a \\
        &= q^{\langle L, d \rangle} z^d \otimes \Lambda(x) La \\
        &= \Lambda(x) \left( q^{\langle L, d \rangle} z^d \otimes La \right) \\
        &= \Lambda(x) \left( q^L (z^d \otimes a) \right).
    \end{align}
    Since $\Lambda$ is an algebra map, the action naturally defines an algebra action of $\mathcal{O}_0$ on $M_{q, eq}(X)|_{q=\zeta}$. We extend the action to $\mathbb{Z}[\![z^d ]\!] \otimes \mathcal{O}_0$ by letting $z^d \otimes 1 $ act by left multiplication by $z^{kd}$. This action then factors through $B_z(X)$, since elements in the relation
    \begin{equation}
        1 \otimes ab - z^d \otimes ba, \quad a \in \mathcal{O}_{-d},\  b \in \mathcal{O}_d
    \end{equation}
    of the algebra $B_z(X)$ act by
    \begin{equation}
        1 \otimes \Lambda(a) \Lambda(b) - z^{kd} \otimes \Lambda(b) \Lambda(a) \in J,
    \end{equation}
    which is zero in the $q$-difference module $M_{q,eq}(X)$.
    
\end{proof}

\subsubsection{Arithmetic $K$-theoretic quantum Hikita conjecture}

Having described the actions of quantum Adams operations and Lonergan's $K$-theoretic Frobenius center on the $q$-difference modules from quantum Hikita conjecture, we may now formulate the following ``arithmetic refinement'' of the $K$-theoretic quantum Hikita conjecture.

\begin{conj}[$K$-theoretic quantum Hikita conjecture at roots of unity]\label{conj:arithmetic-k-hikita}
    For symplectically dual varieties $X$ and $X^!$ given by Coulomb branch and (resolved) Higgs branch constructions for the gauge theory $(G,N)$, there is an isomorphism of $q$-difference modules
    \begin{equation}
        M_{q, eq}(X)|_{q=\zeta} \cong M_{q, Kah}(X^!)|_{q=\zeta}
    \end{equation}
    which is equivariant with respect to a ring isomorphism $B_z(X) \cong K_{\mathbb{G}_m}(X^!)[\![z^{\mathrm{eff}}]\!]$ and the actions of Frobnenius-constant quantizations and the quantum Adams operators.
\end{conj}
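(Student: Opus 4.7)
The plan is to reduce the arithmetic conjecture to a combination of (a) the underlying $K$-theoretic quantum Hikita conjecture \ref{conj:K-quantum-hikita}, which governs the isomorphism of $q$-difference modules away from roots of unity, and (b) a matching of two distinguished algebra actions that appear only after specializing $q = \zeta$. First, I would take as input the isomorphism $M_{q,eq}(X) \cong M_{q,Kah}(X^!)$ of \ref{conj:K-quantum-hikita}, which is the subject of the forthcoming work of Dinkins--Karpov--Krylov that we cite; specializing the coefficient ring $\mathbb{Z}[q^\pm]$ by the cyclotomic ideal $(\Phi_k(q))$ gives the required underlying isomorphism at $q = \zeta$. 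It then remains to upgrade this $q$-difference isomorphism to one that is equivariant for the two commuting algebra actions.

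The second step is to construct the ring isomorphism $B_z(X) \cong K_{\mathbb{G}_m}(X^!)[\![z^{\mathrm{eff}}]\!]$. The classical ($\hbar = 1$) $K$-theoretic Hikita conjecture of \cite{dumanski-krylov} gives an isomorphism of $\mathcal{O}_0 \cong K(X^!)$ after suitable completion, and by an analog of the twisted-product construction of the algebra $B_z(X)$ that is formally parallel to the cohomological discussion in \cite[Section 3]{bai-lee}, the relations in \eqref{eqn:quantum-b-algebra} are exactly those encoding the quantum deformation of the product on $K(X^!)$. Matching these relations with the PSZ quantum product (see Section \ref{ssec:oper-quantumK}) would give the desired ring isomorphism. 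This step is parallel to the proof of the corresponding statement in the cohomological arithmetic setting in \cite{bai-lee}, and I expect it to be approachable once \ref{conj:K-quantum-hikita} is available as a ring-level statement.

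The third step, and the heart of the matter, is to identify the action of Lonergan's Frobenius center with the action of quantum Adams operators. The key structural tool is Theorem \ref{thm:qadams=pcurvature}, which identifies the quantum Adams operator $Q\psi^k_L$ with the $k$-curvature $M_{L,\zeta}(z)$ of the K\"ahler $q$-difference connection. Under the iso from Step 1, the $k$-curvature operator on the K\"ahler side must correspond to a $q$-difference endomorphism of $M_{q,eq}(X)|_{q=\zeta}$ that is $z$-linear and covariantly constant (cf.\ Lemmas \ref{lem:p-curvature-constancy} and \ref{lem:p-curvarture-shift}); by the general yoga of large centers at roots of unity, such endomorphisms are precisely those implemented by left multiplication by central elements of $\mathcal{A}_\zeta$. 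One would then argue that Lonergan's map $\Lambda$ produces the correct central element by matching the geometric origins: Lonergan's construction uses the discretized loop rotation on $\mathcal{R}_{G,N}$ (homotopy equivalent to the affine Grassmannian), while quantum Adams operators arise from $\mu_k$-equivariant quasimap geometry with insertions at $k$-th roots of unity. Both arise from the same underlying $\mu_k \subseteq \mathbb{C}^\times_q$ and produce operators raising degree by $k$, so the matching should be verifiable via a localization computation on $\mu_k$-fixed loci on both sides.

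The main obstacle is the third step in general: while the matching is cleanly verifiable for abelian gauge theories as we demonstrate in the remainder of Section \ref{sec:quantumHikita}, the non-abelian case requires an intrinsic comparison of Lonergan's Frobenius center with $\mu_k$-equivariant vertex functions. A natural approach, at least for Nakajima quiver varieties, would be to use Okounkov--Smirnov's description of K\"ahler shifts as wall-crossing operators for the quantum affine algebra action \cite{okounkov-smirnov} to express the $k$-curvature as an iterated product of $R$-matrices at a root of unity, and then match it to an explicit formula for $\Lambda$ coming from a loop-rotation-equivariant analog of the geometric Satake correspondence. Consequently, while a complete proof is out of reach here, the conjecture reduces cleanly to two inputs each of independent interest: \ref{conj:K-quantum-hikita}, and an intrinsic geometric/representation-theoretic identification of Lonergan's center with $k$-curvatures of K\"ahler shifts under 3D mirror symmetry.
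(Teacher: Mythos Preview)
The statement is a \emph{conjecture} in the paper, not a theorem, so there is no general proof to compare against. What the paper actually does is verify the conjecture unconditionally for abelian gauge theories (hypertoric varieties) in Section~\ref{ssec:quantumHikita-proof}, by a route quite different from yours: both $q$-difference modules are computed explicitly (Lemma~\ref{lem:kahler-qdiff-computation} via Smirnov--Zhou on the K\"ahler side, Lemmas~\ref{lem:coulomb-algebra-computation-rank1}--\ref{lem:trace-qdiff-computation} by direct calculation on the graded-traces side), the relations are matched combinatorially via circuits, and the equivariance of the two algebra actions is checked by showing that \emph{both} Lonergan's center and the quantum Adams operators act by the $k$-curvature $(q^L)^k$ on line bundle generators --- the latter by Theorem~\ref{thm:qadams=pcurvature}, the former by Lonergan's explicit abelian computation \cite[Example~4.4]{lonergan}.

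Your proposal is not a proof but a conditional reduction: Step~1 assumes Conjecture~\ref{conj:K-quantum-hikita}, Step~2 assumes a $K$-theoretic Hikita-type statement, and Step~3 is the genuine open problem, as you yourself note. The gap to flag concretely is in Step~3: the claim that covariantly constant $z$-linear endomorphisms of $M_{q,eq}(X)|_{q=\zeta}$ are ``precisely those implemented by left multiplication by central elements of $\mathcal{A}_\zeta$'' is not established and is not obviously true --- covariant constancy does not by itself pin down an operator as coming from Lonergan's map $\Lambda$ rather than some other central element. The paper sidesteps this in the abelian case by invoking Lonergan's direct computation that $\Lambda$ on line bundles literally gives the $k$-th iterate of the shift, which is exactly the $k$-curvature; your proposed localization or $R$-matrix arguments for the nonabelian case remain speculative. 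So your outline is a reasonable roadmap, and its third step correctly isolates the crux, but it does not yield an independent proof even in the hypertoric case without reproducing the explicit matching the paper performs.
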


\cref{conj:arithmetic-k-hikita} is the $K$-theoretic analogue of the arithmetic 3D mirror symmetry conjecture we proposed in \cite[Conjecture 3.21]{bai-lee}. We will conclude by establishing the validity of the conjecture through a nontrivial example.

\begin{rem}
    As in Remark \ref{rem:flavor-symmetry}, we can introduce the flavor torus $T$ into the formulation of Conjecture \ref{conj:arithmetic-k-hikita} as well. On a different note, the conjectural isomorphism $B_z(X) \cong K_{\mathbb{G}_m}(X^!)[\![z^{\mathrm{eff}}]\!]$ is compatible with Conjecture \ref{conj:K-quantum-hikita} by specializing $q=1$: see \cite[Theorem 16]{PSZ-quantum} for recovering the PSZ quantum $K$-ring from the quantum $q$-difference module of $X^!$; on the Coulomb side, the defining relation in \eqref{eqn:quantum-b-algebra} is compatible with the one in \eqref{eqn:graded-trace-module} and the commutator relation of $q$-difference module.
\end{rem}

\subsection{Example: Abelian gauge theories}\label{ssec:quantumHikita-proof}
As an illustrative example, we verify the $K$-theoretic quantum Hikita conjecture for the Coulomb/Higgs mirrors for the pair for hypertoric varieties. We begin with the rank $1$ case to illustrate the structure of the argument, then generalize the argument to the general case where $G$ is abelian, i.e., when the varieties $X, X^!$ are hypertoric.

\subsubsection{Rank 1 abelian gauge theory}

We first consider the case of $(G, N) = (\mathbb{G}_m, \mathbb{C})$. The general case for $G$ abelian is similar, with the additional complexity being purely combinatorial. For $(G,N) =  (\mathbb{G}_m, \mathbb{C})$, the Coulomb branch is given by $X \cong \mathrm{Spec} K^{\mathbb{G}[\![t]\!]} (\mathcal{R}_{\mathbb{G}_m, \mathbb{C}})$, and the Higgs branch is given by $X^! \cong T^*\mathbb{P}^0$.

First consider the K\"ahler $q$-difference module. The corresponding variety is $T^*\mathbb{C}/\!\!/\!\!/\!\!/ \mathbb{G}_m \cong T^*\mathbb{P}^0$ as the hypertoric variety associated to the exact sequence of tori $1 \to \mathbb{G}_m \to \mathbb{G}_m \to 1 \to 1$. 

The computation of the K\"ahler $q$-difference module in the hypertoric case is completed by \cite{smirnov-zhou}. Recall that attached to a hypertoric variety defined from a short exact sequence of tori
\begin{equation}
    1 \to \mathbb{G}_m^k \overset{\iota}{\to} \mathbb{G}_m^n \overset{\beta}{\to} \mathbb{G}_m^d \to 1
\end{equation}
and a stability condition $\theta \in \mathrm{Lie}(\mathbb{G}_m^k)^\vee$ is a hyperplane arrangement $\{ H_1, \dots, H_n\}$ for $H_i := \{ x \in (\mathbb{R}^d)^\vee : \langle x, \beta(e_i) \rangle = - \langle \widetilde{\theta} , e_i \rangle \}$ where $\widetilde{\theta} \in \mathrm{Lie}(\mathbb{G}_m^n)^\vee$ is a lift of $\theta$ along $\iota^\vee$. The relations of the $q$-difference module are indexed by \emph{circuits}, which are minimal subsets $S \subseteq \{1, \dots, n\}$ such that $\bigcap_{i \in S} H_i = \emptyset$; that is, the normal vectors $\beta(e_i)$ must be linearly dependent, hence circuits are minimal subsets such that $e_i$ have linear dependence relations in $\ker \beta$. Each circuit admits a decomposition $S = S^+ \sqcup S^-$ depending on the sign of corresponding $e_i$'s in the linear dependence relation. (Primitive) effective curves in smooth hypertoric varieties are in bijection with circuits, see \cite[Section 2.5]{smirnov-zhou}.

\begin{lemma}[{\cite[Theorem 5.14(1)]{smirnov-zhou}}]\label{lem:kahler-qdiff-computation}
    The K\"ahler $q$-difference module of $X^!$ is defined as the ring $K_{\mathbb{G}_m^n \times \mathbb{G}_m^\hbar \times \mathbb{G}_m^q}(X)[\![z^d]\!]|_{\hbar = q}$ modulo the relations
    \begin{equation}
        \prod_{i \in S^+} (1-q^{L_i}) \prod_{i \in S^-} (1-\hbar q^{L_i}) - z^{\beta_S} \prod_{i \in S^+} (1-\hbar q^{L_i} ) (1-q^{L_i})
    \end{equation} 
    for circuits $S = S^+ \sqcup S^-$ and associated curve classes $\beta_S$.
\end{lemma}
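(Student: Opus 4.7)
The statement is invoked from Smirnov--Zhou, so my plan would essentially be to verify that the formulation matches the reference and to sketch the proof strategy in our conventions.

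First I would reduce to identifying generators and relations of the K\"ahler $q$-difference module. By Kirwan surjectivity in the hypertoric setting, the equivariant $K$-theory $K_{\mathbf{T}}(X^!)$ is generated as an algebra over $K_{\mathbf{T}}(\mathrm{pt})$ by the tautological line bundles $L_i$ associated to the characters of $K$ obtained from the composition $K \hookrightarrow \mathbb{G}_m^n \to \mathbb{G}_m$ onto the $i$-th coordinate, as explained in Example \ref{exmp:hypertoric}. Consequently, $M_{q, Kah}(X^!)$ is generated over $\mathbb{Z}[q^{\pm}, \hbar^{\pm}][\![z^d]\!]$ by the cyclic vector $1 \in K_{\mathbf{T}}(X^!)$ together with the K\"ahler shifts $q^{L_i}$, and it suffices to identify the ideal of relations annihilating this cyclic vector after specializing $\hbar = q$.

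Second, I would verify the circuit relations directly. The classical limit $z = 0$ recovers the Stanley--Reisner-type relations of $K_{\mathbf{T}}(X^!)$: for a circuit $S = S^+ \sqcup S^-$, the intersection $\bigcap_{i \in S} H_i$ is empty, which by the toric geometry of $X^!$ forces a specific product of $(1 - q^{L_i})$ and $(1 - \hbar q^{L_i})$ to vanish classically (the factors of $\hbar$ record the cotangent weights in the polarization $\sum L_i - \mathcal{O}^{\oplus k}$ of \cref{exmp:hypertoric}). The quantum correction proportional to $z^{\beta_S}$ comes from the moduli space $\mathsf{QM}_{\beta_S}(X^!)$ of quasimaps whose class wraps exactly the primitive rational curve associated with the circuit $S$, which is combinatorially very explicit: after fixing the polarization, the curve class $\beta_S$ corresponds to the sign decomposition of $S$, and the quasimap space is a weighted projective bundle whose equivariant structure decomposes via $S^+$ and $S^-$. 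The coefficient of $z^{\beta_S}$ can then be computed by $\mathbf{T}$-equivariant localization on $\mathsf{QM}_{\beta_S}(X^!)$ in the spirit of Section \ref{ssec:oper-example}, and the result is precisely the product appearing as the subtracted term.

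Third, I would prove completeness of the stated relations, i.e., that the quotient of $\mathbb{Z}[q^{\pm}, \hbar^{\pm}][\![z^d]\!][q^{L_1}, \dots, q^{L_n}]$ by the circuit ideal exhausts the $q$-difference module. This can be checked by base-changing to the fraction field of the equivariant parameters and comparing the rank with the number of $\mathbf{T}$-fixed points of $X^!$, which equals the number of bounded chambers of the hyperplane arrangement; the circuit relations are known to cut out an algebra of the correct rank over $K_{\mathbf{T}}(\mathrm{pt})_{loc}[\![z^d]\!]$ by a standard Gr\"obner-type argument on the arrangement.

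The main obstacle is the circuit-by-circuit localization computation of the quasimap contribution, in particular matching the signs in the twisted virtual structure sheaf $\hat{\mathcal{O}}_{\mathrm{vir}}$ from \eqref{eqn:o-twist} against the decomposition $S = S^+ \sqcup S^-$; the cleanest route is to cite \cite[Theorem 5.14(1)]{smirnov-zhou} after verifying that our conventions on polarization, stability condition, and sign of $\hbar$ agree with theirs.
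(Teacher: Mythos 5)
The paper does not prove this lemma at all: it is taken as a black-box citation of \cite[Theorem 5.14(1)]{smirnov-zhou}, and the only additional content in the paper is the remark immediately afterwards reconciling the polarization/shifted-K\"ahler-variable conventions --- which is precisely the ``main obstacle'' you flag at the end, and precisely where your proposal lands as well. Your preliminary sketch (Kirwan surjectivity for generators, the $z=0$ degeneration to Stanley--Reisner-type relations, localization on $\mathsf{QM}_{\beta_S}(X^!)$ for the $z^{\beta_S}$ correction, and a rank count against $\mathbf{T}$-fixed points for completeness) is a reasonable account of what the cited proof does, so this is essentially the same approach as the paper.
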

\begin{rem}
    In Smirnov--Zhou, the relations defining the quantum $q$-difference modules are described by ``shifted K\"ahler variables" $z_\#^\beta := z^\beta (-\hbar^{\frac{1}{2}\beta \cdot \det T^{1/2}X})$. This conventional difference can be remedied by an equivalent choice of polarization $T_X^{1/2} = \sum L_i - \mathcal{O}^{\oplus k} + (\mathcal{O}^{\oplus k} -  \hbar^{-1} \mathcal{O}^{\oplus k}) = \sum L_i - \hbar^{-1} \mathcal{O}^{\oplus k}$; replacing polarizations up to classes of the form $\mathcal{G} - \hbar^{-1} \mathcal{G}^\vee$ only acts by rescaling the K\"ahler variables (cf. \cite[7.5.8]{Oko17}) and does not affect the isomorphism type as a $q$-difference module.
\end{rem}

\begin{cor}\label{cor:kahler-qdiff-computation-rank1}
    The K\"ahler $q$-difference module of the Higgs branch $X^!$ for $(G,N) = (\mathbb{G}_m, \mathbb{C})$ is defined as $\mathbb{Z}[q^\pm][a^\pm][\![z]\!]$ modulo the unique relation
    \begin{equation}
        (1-q^L) - z (1- \hbar q^L).
    \end{equation}
\end{cor}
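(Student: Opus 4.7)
The plan is to obtain the corollary as a direct specialization of Lemma \ref{lem:kahler-qdiff-computation} (Smirnov--Zhou). The first step is to assemble the hypertoric data attached to the pair $(G,N) = (\mathbb{G}_m, \mathbb{C})$: in the notation of Example \ref{exmp:hypertoric} this corresponds to $n = 1$, $k = 1$, with trivial quotient torus, i.e.\ the short exact sequence $1 \to \mathbb{G}_m \to \mathbb{G}_m \to 1 \to 1$. There is a single character $L_1 = L \in X^\bullet(\mathbb{G}_m)$ giving the tautological line bundle, and the polarization is $T^{1/2}X^! = L - \mathcal{O}$. The equivariant parameter $a$ in the statement enters through the natural $\mathbb{G}_m$-flavor action on $N = \mathbb{C}$ (cf.\ Remark \ref{rem:flavor-symmetry}), which is what renders the $q$-difference module nontrivial despite $X^!$ being zero-dimensional.

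Next I identify the circuits of the associated arrangement. Since the defining map $\beta\colon \mathbb{Z} \to 0$ is the zero map, the singleton $\{1\}$ is the unique minimal dependent set, so there is exactly one circuit $S = \{1\}$. With the sign decomposition $S^+ = \{1\}$, $S^- = \emptyset$ read off from the polarization, and the primitive effective curve class $\beta_S = 1 \in \mathrm{Eff}(X^!) \cong \mathbb{Z}_{\geq 0}$, the empty products appearing in the formula of Lemma \ref{lem:kahler-qdiff-computation} collapse to $1$ and one is left with the single defining relation
\begin{equation*}
    (1 - q^L) \;-\; z\,(1 - \hbar q^L),
\end{equation*}
as claimed. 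Uniqueness of the relation is immediate from the uniqueness of the circuit.

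The computation itself is essentially bookkeeping, so there is no genuine obstacle; the only point that requires care is the sign/polarization convention. The opposite choice $S^+ = \emptyset$, $S^- = \{1\}$ would produce $(1 - \hbar q^L) - z(1 - q^L)$, which corresponds to swapping the polarization via $T^{1/2}X^! \curly \hbar^{-1}(T^{1/2}X^!)^\vee$ together with a compensating rescaling of the K\"ahler variable (cf.\ the remark following Lemma \ref{lem:kahler-qdiff-computation}), and so encodes the same $q$-difference module up to isomorphism. As an independent sanity check, I would recompute the vertex function directly from the identification $\mathsf{QM}_d(X^!) \cong \mathbb{P}\bigl(H^0(\mathbb{P}^1, a^{-1}\mathcal{O}(d))\bigr) = \mathbb{P}^d$ via $\mathbb{G}_m$-localization along the lines of Section \ref{ssec:oper-example}, and verify that its coefficients satisfy the stated $q$-difference relation.
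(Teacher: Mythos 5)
Your proof is correct and follows essentially the same route as the paper: both specialize Lemma~\ref{lem:kahler-qdiff-computation} to the rank-$1$ hypertoric arrangement, identify $\{1\}$ as the unique circuit since $\beta\colon \mathbb{Z}\to 0$ is zero, and read off the single relation. The extra remarks you add (tracking the flavor parameter $a$, the polarization/sign convention, and the vertex-function sanity check) are consistent with the paper's conventions but not needed; the paper's one-line argument that the arrangement is empty and hence $S = S^+ = \{1\}$ is the unique circuit is exactly your core observation.
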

\begin{proof}
    The hyperplane arrangement associated to $X^!$ is trivially empty, and hence there is a unique circuit $S = S^+ = \{1\}$ giving rise to the relation above. 
\end{proof}

We point out that the vertex functions (solutions of $q$-difference modules) used to compute the relations manifestly do not have poles at $1 -q^k$ after the $\hbar=q$ specialization (see \cite[Proposition 5.3]{smirnov-zhou}), so the $q$-difference module specialized at $\hbar = q$ do admit $q = \zeta$ specializations.

The computation of the $q$-difference module of graded traces is also simple in this case, with the Coulomb branch algebra given by the BFN convolution algebra $K^{\mathbb{G}[\![t]\!]} (\mathcal{R}_{\mathbb{G}_m, \mathbb{C}})$.

\begin{lemma}\label{lem:coulomb-algebra-computation-rank1}
    For the pair $(G, N) = (\mathbb{G}_m, \mathbb{C})$, the $K$-theoretic quantized Coulomb branch algebra $\mathcal{A}$ is the subalgebra of
    \begin{equation}
        K^{G[\![t]\!] \rtimes \mathbb{C}^\times_q}(\mathcal{R}_{\mathbb{G}_m, 0}) \cong \mathbb{Z}[q^\pm ][y^\pm]\langle x^\pm \rangle/(yx=qxy)
    \end{equation}
    spanned additively over $\mathbb{Z}[q^\pm][y^\pm]$ by
    \begin{equation}
        x_d := \begin{cases}
            x^{d} & d \le 0 \\ \prod_{i=0}^{d-1} (1-y/q^i) x^d & d >0 
        \end{cases}
    \end{equation}
    with $x_d \in \mathcal{A}_d$.
\end{lemma}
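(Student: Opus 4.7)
The plan is to unpack the BFN construction from \cite{BFN2} directly in this rank-one case. For $G = \mathbb{G}_m$ the affine Grassmannian $\mathrm{Gr}_G$ is the discrete set of coweights $\mathbb{Z}$, so the variety of triples decomposes as $\mathcal{R}_{G,N} = \bigsqcup_{d \in \mathbb{Z}} \mathcal{R}_d$. The matter-free case $N=0$ is standard: $\mathcal{R}_{G,0} \cong \mathrm{Gr}_G$ and the equivariant $K$-theory with convolution product is the quantum torus $\mathbb{Z}[q^\pm][y^\pm]\langle x^\pm\rangle/(yx = qxy)$, where $y$ is the $G$-equivariant parameter, $x^d$ is the fundamental class of the degree-$d$ point, and the commutation relation encodes the loop-rotation weight picked up by translation by $t^d$. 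I will take this computation as input.

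For $N = \mathbb{C}$, the natural forgetful map $\pi : \mathcal{R}_{G,N} \to \mathcal{R}_{G,0} = \mathrm{Gr}_G$ has fiber $\mathcal{R}_d$ over the degree-$d$ coweight identified with $N_\mathcal{O} \cap t^{-d} N_\mathcal{O} \subseteq N_\mathcal{K}$, where $N_\mathcal{O} = \mathbb{C}[\![t]\!]$ and $N_\mathcal{K} = \mathbb{C}(\!(t)\!)$. When $d \le 0$ this intersection equals $N_\mathcal{O}$, so $\mathcal{R}_d$ is a pro-affine space over a point and $\pi_*$ is an isomorphism in equivariant $K$-theory, giving $x_d = x^d$. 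When $d > 0$ the intersection is $t^d N_\mathcal{O}$, a pro-subspace of codimension $d$ in $N_\mathcal{O}$ with finite-dimensional normal quotient $N_\mathcal{O}/t^d N_\mathcal{O}$. Applying the Koszul resolution to this regular embedding and pushing forward to $\mathrm{Gr}_G$ yields $x_d = E_d \cdot x^d$, where $E_d \in \mathbb{Z}[q^\pm][y^\pm]$ is the $K$-theoretic Euler class of the normal bundle; tracking the $G \times \mathbb{C}^\times_q$-weights of $N_\mathcal{O}/t^d N_\mathcal{O}$ in the conventions of \cite{BFN2} gives the desired product $\prod_{i=0}^{d-1}(1 - y/q^i)$. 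Additive generation of each $\mathcal{A}_d$ as a rank-one free module over $\mathbb{Z}[q^\pm][y^\pm]$ is then immediate because each $\mathcal{R}_d$ is a torsor for a pro-affine space over a point. Closure under the convolution product and the claim $x_d \in \mathcal{A}_d$ follow from a direct bookkeeping using the relation $x^d \cdot y = q^{-d} y \cdot x^d$ in the ambient quantum torus.

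The main obstacle is the careful tracking of $\mathbb{C}^\times_q$- and $G$-weights and sign conventions entering the Euler class $E_d$: depending on whether loop rotation weights $t$ as $q$ or $q^{-1}$, and whether one uses $[\mathcal{O}_Y] = \Lambda^\bullet E^\vee$ or $\Lambda^\bullet E$, one could instead land on $\prod(1-yq^i)$ or $\prod(1-q^i/y)$, etc. Two independent sanity checks pin down the correct conventions: first, the classical limit $q \to 1$ should recover the known generators of the commutative multiplicative Coulomb branch $\mathcal{O}(X)$ for $(\mathbb{G}_m, \mathbb{C})$; and second, the image of $x_d$ under Lonergan's central embedding $\Lambda$ from \cite[Theorem 4.1]{lonergan} should reproduce the expected $k$-th quantum power, which rigidly fixes the sign of the $q$-shifts. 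Once these are aligned, the identification in the statement is forced.
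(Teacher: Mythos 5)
Your strategy — unpack the BFN construction for $(\mathbb{G}_m,\mathbb{C})$ by computing the fibers of the forgetful map $\mathcal{R}_{G,N}\to\mathrm{Gr}_G$ and applying a Koszul resolution — is exactly what the cited reference (Lonergan, Example 4.4) does; the paper's own ``proof'' is nothing but that citation, so your approach is the right one and not a different route.

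That said, two issues keep the argument from closing. First, a sign slip: you write the fiber over the degree-$d$ coweight as $N_\mathcal{O}\cap t^{-d}N_\mathcal{O}$, and then claim this equals $N_\mathcal{O}$ for $d\le 0$ and $t^dN_\mathcal{O}$ for $d>0$. But with the formula as written, for $d\le 0$ one has $t^{-d}N_\mathcal{O}\subseteq N_\mathcal{O}$, so the intersection is $t^{-d}N_\mathcal{O}$, not $N_\mathcal{O}$; and for $d>0$ the intersection is $N_\mathcal{O}$. Your subsequent case analysis is correct only for the lattice intersection $N_\mathcal{O}\cap t^{d}N_\mathcal{O}$, so the exponent should be $t^d$, not $t^{-d}$. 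Second, and more substantively, you leave the $q$- and $y$-weight conventions undetermined and defer to external ``sanity checks.'' This is honest but means the proof as written does not actually produce the stated Euler class $\prod_{i=0}^{d-1}(1-y/q^i)$ rather than, say, $\prod(1-yq^i)$ or a dual variant. The cleanest way to pin this down internally is to first re-derive the quantum torus relation $yx=qxy$ from the convolution diagram in the matter-free case with your chosen weights on $t$ and on the $G$-equivariant parameter; once those conventions are fixed, the weights of $N_\mathcal{O}/t^dN_\mathcal{O}$ and the choice $\Lambda^\bullet E^\vee$ versus $\Lambda^\bullet E$ are forced, and the product $\prod_{i=0}^{d-1}(1-y/q^i)$ comes out without further appeal to consistency with Lonergan's $\Lambda$ or the $q\to 1$ limit.
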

\begin{proof}
    This computation can be found in \cite[Example 4.4]{lonergan}; it is also the $K$-theoretic analogue of the computation in \cite[4(ii)]{BFN2}.
\end{proof}

\begin{cor}\label{cor:trace-qdiff-computation-rank1}
    The $q$-difference module of graded traces for the Coulomb branch $X$  for $(G,N) = (\mathbb{G}_m, \mathbb{C})$ is defined as $\mathbb{Z}[q^\pm][y^\pm][\![z]\!]$ modulo the unique relation
     \begin{equation}
        (1-y) - z (1- qy).
    \end{equation}
\end{cor}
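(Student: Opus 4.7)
The plan is to compute the tensor product $R_q \otimes_{\mathbb{Z}[q^\pm][q^L]} \mathcal{A}_0$ explicitly and then show that all the quotient relations reduce to a single one. From \cref{lem:coulomb-algebra-computation-rank1}, one has $\mathcal{A}_0 \cong \mathbb{Z}[q^\pm][y^\pm]$, with the $\mathbb{Z}[q^\pm][q^L]$-module structure sending $q^L$ to left multiplication by $y$ (the standard character of $G = \mathbb{G}_m$, identified with the generator of $X^\bullet(G)$). Since $\mathcal{A}_0$ is the localization of $\mathbb{Z}[q^\pm][q^L]$ inverting $q^L$, the tensor product is naturally the Ore localization $R_q[q^{-L}]$, whose underlying abelian group is identified with $\mathbb{Z}[q^\pm][y^\pm][\![z]\!]$ (under $q^L \leftrightarrow y$, in the normal form with $z$ placed on the left), subject to the inherited non-commutative relation $yz = qzy$ from $R_q$.

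The defining relation comes from the $d = 1$ case. Taking $a = x_1 = (1-y)x \in \mathcal{A}_1$ and $b = x_{-1} = x^{-1} \in \mathcal{A}_{-1}$, the commutation $x^{-1}yx = qy$ gives $ab = 1-y$ and $ba = 1 - qy$, so the relation $1 \otimes ab - z \otimes ba = 0$ specializes to the claimed $(1-y) - z(1-qy) = 0$. Equivalently, since $(1-y)z = z - yz = z(1-qy)$ holds identically in $R_q[q^{-L}]$, this relation is the same as $(1-y)(1-z) = 0$ in the Ore localization.

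The main technical task is to verify that all higher-degree relations $1 \otimes ab - z^d \otimes ba = 0$ for $a \in \mathcal{A}_d$, $b \in \mathcal{A}_{-d}$ are consequences of the $d=1$ relation. For the pair $a = x_d$, $b = x_{-d}$, one computes $ab = \prod_{k=0}^{d-1}(1-y/q^k)$ and $ba = \prod_{j=1}^d(1-q^jy)$; the resulting relation follows by iteration. In the quotient, $(1-y) = z(1-qy)$ together with the commutation $(1-qy)z = z(1-q^2y)$ gives $(1-y)(1-y/q) = z[(1-qy)(1-y)] = z^2(1-q^2y)(1-qy)$, and inducting on $d$ via the shifts $(1-q^k y) z = z(1-q^{k+1}y)$ yields $\prod_{k=0}^{d-1}(1-y/q^k) = z^d \prod_{j=1}^d(1-q^jy)$ in the quotient. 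Relations arising from more general $a = f(y) x_d$, $b = g(y) x_{-d}$ reduce similarly after unwinding $g(y)x^d = x^d g(q^dy)$ and absorbing $f(y) g(q^{-d}y)$ into the multiplicative prefactor.

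I expect the principal obstacle to be the careful tracking of non-commutative identities in $R_q[q^{-L}]$ while propagating the $d=1$ relation through products of $y$-factors. Once the iterative identity $(1-q^k y)z = z(1-q^{k+1}y)$ is set up, however, the reduction of every higher-degree relation is uniform in $d$, leaving only the single relation $(1-y) - z(1-qy)$ as the defining one.
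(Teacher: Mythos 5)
Your proof is correct and follows the paper's strategy: identify the module via the Ore localization $R_q[q^{-L}]$, compute the $d=1$ relation $ab = 1-y$, $ba = 1-qy$ from Lemma~\ref{lem:coulomb-algebra-computation-rank1}, and show that the higher-degree relations are $R_q$-multiples of this one. The one place you go beyond the paper's text is the explicit induction showing $\prod_{k=0}^{d-1}(1-y/q^k) - z^d \prod_{j=1}^{d}(1-q^j y)$ lies in the left ideal generated by $(1-y) - z(1-qy)$; the paper simply asserts this generation in the corollary (``the relation \dots is now generated by \dots'') and establishes it uniformly in the later Lemma~\ref{lem:trace-qdiff-computation}, which first reduces general $a = f(y)x_d$, $b = g(y)x_{-d}$ to the elements $x_dx_{-d}(1-z^d)$ exactly as you sketch, and then invokes the circuit-decomposition induction of \cite{KMP21}. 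Your direct $d \mapsto d-1$ reduction via the identities $(1-q^k y)z = z(1-q^{k+1}y)$ is a slightly more hands-on route to the same conclusion in the rank-one case; the compressed chain $(1-y)(1-y/q) = z[(1-qy)(1-y)] = z^2(1-q^2y)(1-qy)$ is correct once one interleaves the substitution $(1-y) = z(1-qy)$ with the commutation, though a reader would benefit from seeing the intermediate steps written out.
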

\begin{proof}
    The inclusion $K^{G[\![t]\!] \rtimes \mathbb{C}^\times_q}(\mathrm{pt}) \to \mathcal{A}_0 \subseteq \mathcal{A}$ is the inclusion of the underlying commutative ring $\mathbb{Z}[q^\pm, y^\pm]$, and in particular $q^L$ acts by multiplication of $y$. The relation $1 \otimes ab - z^d \otimes ba$ for $a \in \mathcal{A}_d$ is now generated by
    \begin{equation}
     1 \otimes x_1 x_{-1} - z \otimes x_{-1} x_1 =     1\otimes (1-y)xx^{-1} - z \otimes x^{-1}(1-y) x.
    \end{equation}
    Using the relation $yx = qxy$ to simplify the second term, this is indeed equivalent to
    \begin{equation}
        (1-y) - z(1- qy),
    \end{equation}
    as desired. 
\end{proof}

Therefore we now have the following:

\begin{prop}\label{prop:arithmetic-k-hikta-hypertoric-rank1}
    \cref{conj:arithmetic-k-hikita} holds for Coulomb/Higgs pair of symplectically dual varieties for $(G,N) = (\mathbb{G}_m, \mathbb{C})$.
\end{prop}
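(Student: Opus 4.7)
The proof proceeds by directly comparing the explicit presentations of the two $q$-difference modules produced by Corollaries \ref{cor:kahler-qdiff-computation-rank1} and \ref{cor:trace-qdiff-computation-rank1}. After the Calabi--Yau specialization $\hbar = q$ on the K\"ahler side, both modules take the form of a cyclic $R_q$-module on a generator $\mathbf{1}$ modulo a single relation $(1-w) - z(1-qw) = 0$, where $w$ denotes $q^{L_1}$ on the K\"ahler side (for $L_1$ the tautological character) and the Coulomb branch generator $y \in \mathcal{A}_0 \cong \mathbb{Z}[q^\pm][y^\pm]$ on the graded trace side. Upon specializing $q = \zeta$ at a primitive $k$-th root of unity, the identification $q^{L_1} \leftrightarrow y$ produces the isomorphism of $q$-difference modules $M_{q, Kah}(X^!)|_{q=\zeta} \cong M_{q, eq}(X)|_{q=\zeta}$ required by the conjecture.

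For the ring isomorphism $B_z(X) \cong K_{\mathbb{G}_m}(X^!)[\![z^{\mathrm{eff}}]\!]$, I compute $B_z(X)$ using the commutativity of $\mathcal{O}$: the relations $1 \otimes ab - z^d \otimes ba$ collapse to $(1-z^d) \cdot ab$, and by Lemma \ref{lem:coulomb-algebra-computation-rank1} at $q=1$, $\mathcal{O}_d \cdot \mathcal{O}_{-d} = (1-y)^d \mathcal{O}_0$. Since $(1-y)^d(1-z^d)$ is a multiple of $(1-y)(1-z)$ for all $d \geq 1$, the ideal is generated by $(1-y)(1-z)$, yielding $B_z(X) \cong \mathbb{Z}[y^\pm][\![z]\!]/((1-y)(1-z))$. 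This matches the $q = 1$ specialization of the K\"ahler relation from Corollary \ref{cor:kahler-qdiff-computation-rank1}, so after identifying $y$ with the tautological K-theoretic class on the K\"ahler side we obtain the desired ring isomorphism.

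The compatibility of Lonergan's $\Lambda$ and the quantum Adams action reduces to checking equality on the cyclic generator $\mathbf{1}$, since both operators act $R_q|_{q=\zeta}$-linearly and the corresponding rings are generated over the Novikov ring by the single element $y \leftrightarrow q^{L_1}$. On the Coulomb side, $\Lambda(y) = \psi^k(y) = y^k$ acts by multiplication. On the K\"ahler side, Theorem \ref{thm:qadams=pcurvature} combined with the flatness identity \eqref{eqn:flat} identifies $Q\psi^k_{[L_1]}$ with $(q^{L_1})^k|_{q=\zeta}$. Under the module identification both reduce to $w^k$ acting on $\mathbf{1}$. Solving the relation for $w \cdot \mathbf{1} = \frac{1-z}{1-\zeta z}\mathbf{1}$ and iterating using the $q$-shift rule $w \cdot f(z) = f(\zeta z) \cdot w$, I obtain the telescoping product
\begin{equation}
w^k \cdot \mathbf{1} = \prod_{i=0}^{k-1} \frac{1-\zeta^i z}{1-\zeta^{i+1} z}\mathbf{1} = \frac{1-z}{1-\zeta^k z}\mathbf{1} = \mathbf{1},
\end{equation}
where the last step crucially uses $\zeta^k = 1$. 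Hence both actions fix $\mathbf{1}$ and agree on the whole module.

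The technical core is the telescoping computation above: the $k$-th power of the K\"ahler $q$-shift acts trivially on the rank-one module precisely at roots of unity, mirroring the fact that $\Lambda(y) = y^k$ becomes constrained by the multiplicative Frobenius structure at $q = \zeta$. The main obstacle, minor in rank one but substantive in generalization, is the verification that these operators literally agree as $q$-difference module endomorphisms (as opposed to coinciding on the single generator), which follows here by $R_q|_{q=\zeta}$-linearity. Extending to hypertoric $X^!$ of higher rank involves repeating the argument for each circuit of the defining hyperplane arrangement, producing a multivariate generalization of this telescoping identity.
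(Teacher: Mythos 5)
Your proof is correct and follows essentially the same strategy as the paper's: establish the isomorphism of $q$-difference modules via Corollaries \ref{cor:kahler-qdiff-computation-rank1} and \ref{cor:trace-qdiff-computation-rank1}, then identify both the quantum Adams action and Lonergan's action with the $p$-curvature $(q^L)^k$ (via \cref{thm:qadams=pcurvature} on the K\"ahler side, and Lonergan's \textup{[Example 4.4]} on the Coulomb side). You add two explicit verifications that the paper's proof omits. First, you compute $B_z(X) \cong \mathbb{Z}[y^\pm][\![z]\!]/((1-y)(1-z))$ directly from the relations, whereas the paper simply notes both rings are generated by line bundle classes; your computation is correct (the key point being that $(1-z^d)(1-y)^d$ is divisible by $(1-z)(1-y)$). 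Second, you carry out the telescoping identity $w^k\cdot\mathbf{1} = \prod_{i=0}^{k-1}\frac{1-\zeta^i z}{1-\zeta^{i+1}z}\mathbf{1}=\mathbf{1}$. This is true but logically redundant: once you have identified both actions with $(q^L)^k$ acting on modules that are isomorphic as $q$-difference modules with $q^L \leftrightarrow y$, the intertwining is automatic without evaluating $(q^L)^k$ on the cyclic generator. That said, the telescoping computation does reveal the concrete (and somewhat surprising) fact that in this rank-one example both operators are literally the identity, which provides a useful sanity check and is worth recording. Your characterization of this computation as the ``technical core'' slightly misplaces the emphasis---the core is the characterization of both actions as the $p$-curvature---but the conclusion and the supporting lemmas you invoke are all correct.
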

\begin{proof}
    \cref{cor:kahler-qdiff-computation-rank1} and \cref{cor:trace-qdiff-computation-rank1} show that the $q$-difference modules $M_{q, eq} (X) \cong M_{q, Kah}(X^!)$ are isomorphic before passing to $q  = \zeta$, hence remains isomorphic after the $q=\zeta$ specialization. It suffices to check that the quantum Adams operators and the action of Lonergan's center are intertwined.

    In the hypertoric case, the quantum $K$-theory ring and the quantum $B$-algebra $B_z(X)$ are both generated by classes of line bundles. By the algebra property of the actions of quantum Adams operators and Lonergan's center, it suffices to check that the actions of line bundles are intertwined.

    It suffices to show the stronger claim that the action of line bundles $L$ as quantum Adams operators and the actions of Lonergan's center can be characterized as the $p$-curvature $(q^L)^k$ of both $q$-difference modules. This claim is \cref{thm:qadams=pcurvature} for the quantum Adams operators, and \cite[Example 4.4]{lonergan} for the Lonergan operators.
\end{proof}

\subsubsection{General hypertoric varieties}
The general case for pairs $(G,N)$ such that $G$ is abelian, so that the Higgs branch is a hypertoric variety for the exact sequence $1 \to G \to \widetilde{G} \to F \to 1$ with $\widetilde{G} \cong (\mathbb{G}_m)^n$ acting by coordinate-wise multiplication on $N$, can be dealt with with the same strategy. The proof for the more general ``flavor deformed'' version (cf. \cref{rem:flavor-symmetry}) is easier, which we outline below.

\begin{lemma}\label{lem:coulomb-algebra-computation}
For the pair $((\mathbb{G}_m)^n, \mathbb{C}^n)$, where action is by characters $r_1, \dots, r_n$, the $K$-theoretic quantized Coulomb branch algebra $\mathcal{A}$ is the subalgebra of
    \begin{equation}
        K^{G[\![t]\!] \rtimes \mathbb{C}^\times_q}(\mathcal{R}_{G, 0}) \cong \mathbb{Z}[q^\pm ][y_i^\pm]\langle x_i^\pm \rangle/(y_ix_i=qx_iy_i)
    \end{equation}
    spanned additively over $\mathbb{Z}[q^\pm][y^\pm_i]$ by products of monomials $x_d := \prod x_{i, d_i}$ where
    \begin{equation}
        x_{i,d_i} := \begin{cases}
            x^{d_i}_i & d_i \le 0 \\ \prod_{j=0}^{rd_i-1} (1-y_i/q^j) x^{d_i}_i & d_i >0 
        \end{cases}
    \end{equation}
    with $x_d = \prod x_{i,d_i} \in \mathcal{A}_{d} = \mathcal{A}_{(d_1, \dots, d_n)}$.    
\end{lemma}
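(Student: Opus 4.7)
The plan is to reduce Lemma \ref{lem:coulomb-algebra-computation} to an iterated application of the rank $1$ computation from Lemma \ref{lem:coulomb-algebra-computation-rank1} via a product structure argument, and then handle the characters $r_i$ by a direct modification of the weight $1$ case.

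First, I would observe that the gauge theory $((\mathbb{G}_m)^n, \mathbb{C}^n)$ decomposes as an external product of $n$ rank $1$ theories $(\mathbb{G}_m, \mathbb{C})$ with the $i$-th factor acting through its character of weight $r_i$. Because the BFN variety of triples $\mathcal{R}_{G,N}$ is built from the affine Grassmannian $\mathrm{Gr}_G$ together with a section of an associated vector bundle determined by $N$, and both pieces split under a direct product decomposition $G = G_1 \times \cdots \times G_n$ with $N = N_1 \oplus \cdots \oplus N_n$ where $G_j$ acts trivially on $N_i$ for $i \neq j$, there is a factorization
\begin{equation}
\mathcal{R}_{G,N} \cong \prod_{i=1}^n \mathcal{R}_{\mathbb{G}_m, \mathbb{C}}^{(r_i)}
\end{equation}
compatible with the $G[\![t]\!] \rtimes \mathbb{C}^\times_q$-actions, where the superscript indicates the weight. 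By the K\"unneth formula in equivariant $K$-theory and compatibility of the BFN convolution product with external products (see \cite[3(vi)(c)]{BFN2} for the cohomological statement, which has a direct $K$-theoretic analog), one obtains an isomorphism of $\mathbb{Z}[q^\pm]$-algebras
\begin{equation}
\mathcal{A} \cong \bigotimes_{i=1}^n \mathcal{A}^{(r_i)}
\end{equation}
where $\mathcal{A}^{(r_i)}$ is the quantized Coulomb branch of the rank $1$ theory $(\mathbb{G}_m, \mathbb{C})$ with weight $r_i$, and the $\mathbb{Z}[q^\pm]$-grading is taken with respect to the total grading.

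The second step is to generalize Lemma \ref{lem:coulomb-algebra-computation-rank1} from weight $1$ to general weight $r$. The argument of \cite[Example 4.4]{lonergan} computes the class $x_d$ for $d>0$ as the $K$-theoretic Euler class of the moving part of the tangent bundle to the relevant BFN locus inside $\mathcal{R}_{\mathbb{G}_m, \mathbb{C}}$; in the weight $r$ case, the relevant moving weights scale by a factor of $r$, producing the product $\prod_{j=0}^{rd-1}(1 - y/q^j)$ rather than $\prod_{j=0}^{d-1}(1 - y/q^j)$. For $d \leq 0$ the formula is unchanged because the matter contribution is trivial (the relevant section is identically $0$). The $q$-twisted commutation relation $yx = qxy$ is inherent to the loop rotation action and is independent of the weight.

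Finally, multiplying these rank $1$ generators across the factors produces the monomials $x_d = \prod_i x_{i, d_i}$, each lying in $\mathcal{A}_d$ by the multi-grading, and spanning $\mathcal{A}$ over $\mathbb{Z}[q^\pm][y_i^\pm]$ by the tensor product decomposition. The main obstacle I anticipate is verifying the compatibility of the BFN convolution with the product structure at the level of detail required here, particularly the identification of the $K$-theoretic fundamental class of the product BFN locus with the tensor product of individual classes; however, this should follow formally from flat base change and the observation that the factorization $\mathcal{R}_{G,N} \cong \prod \mathcal{R}_{\mathbb{G}_m, \mathbb{C}}^{(r_i)}$ is equivariant for the full $G[\![t]\!] \rtimes \mathbb{C}^\times_q$-action.
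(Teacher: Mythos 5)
Your proposal is correct and takes essentially the same approach as the paper: decompose the theory as an external product of rank-$1$ factors, invoke the compatibility of the quantized Coulomb branch with direct products to obtain a tensor product decomposition of $\mathcal{A}$ over $\mathbb{Z}[q^\pm]$, and rely on Lonergan's \cite[Example 4.4]{lonergan} (which you flesh out) for the rank-$1$ weight-$r$ computation. The paper's proof is just a terse version of what you wrote, citing \cite[3(vii)(a)]{BFN2} for the product compatibility and noting that the rank-$1$ Lemma is the $r=1$ specialization of Lonergan's example.
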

\begin{proof}
    When the rank is $n=1$, this is again the computation of \cite[Example 4.4]{lonergan}, for which \cref{lem:coulomb-algebra-computation-rank1} is the weight $1$ case. The case for higher $n$ follows from the fact that $\mathcal{A}(G_1 \times G_2, N_1 \oplus N_2) \cong \mathcal{A}(G_1,N_1) \otimes_{\mathbb{Z}[q^\pm]}\mathcal{A}(G_2, N_2)$ for quantized Coulomb branches, \cite[3(vii)(a)]{BFN2}.
\end{proof}

Let $\mathcal{A} = \bigoplus_{d \in \mathbb{Z}^n} \mathcal{A}_d$ be the resulting decomposition of the quantized Coulomb branch algebra for $(\mathbb{G}_m^n, \mathbb{C}^n)$ acting by usual coordinate multiplication. The index $\mathbb{Z}^n$ can be identified with $\pi_1(\mathbb{G}_m^n)$, or equivalently, $\mathrm{Lie}(\mathbb{G}_m^n)_{\mathbb{Z}}$. 
\begin{lemma}
    The (flavor deformed) $K$-theoretic quantized Coulomb branch algebra for the pair $(G,N)$ is isomorphic as associative algebras to the subalgebra $\bigoplus_{d \in \mathrm{Lie}(G)_{\mathbb{Z}}} \mathcal{A}_d \subseteq \mathcal{A}$ where the index is over the sublattice $\mathrm{Lie}(G)_{\mathbb{Z}} \to \mathbb{Z}^n$.
\end{lemma}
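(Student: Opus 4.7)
The plan is to identify $\mathcal{R}_{G, N}$ with the sub-ind-scheme of $\mathcal{R}_{\widetilde{G}, N}$ lying over the components of the affine Grassmannian indexed by the sublattice $\mathrm{Lie}(G)_{\mathbb{Z}} \subseteq \mathbb{Z}^n = \mathrm{Lie}(\widetilde{G})_{\mathbb{Z}}$. Since both $G$ and $\widetilde{G}$ are tori, their affine Grassmannians are discrete unions of points indexed by $\pi_1(G) \cong \mathrm{Lie}(G)_{\mathbb{Z}}$ and $\pi_1(\widetilde{G}) \cong \mathbb{Z}^n$ respectively, and the subtorus inclusion $G \hookrightarrow \widetilde{G}$ realizes $\mathrm{Gr}_G$ as a disjoint union of connected components of $\mathrm{Gr}_{\widetilde{G}}$. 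Over each component labeled by a cocharacter $\lambda$, the fiber of the variety of triples is the intersection $N[\![t]\!] \cap \lambda^{-1}(N[\![t]\!])$ inside $N(\!(t)\!)$, which depends only on $\lambda$ and the $\widetilde{G}$-representation structure on $N$, not on the ambient group. This gives an isomorphism $\mathcal{R}_{G, N} \cong \bigsqcup_{\lambda \in \mathrm{Lie}(G)_{\mathbb{Z}}} \mathcal{R}_{\widetilde{G}, N}^{(\lambda)}$ of $\widetilde{G}[\![t]\!] \rtimes \mathbb{C}^\times_q$-equivariant ind-schemes, and taking equivariant $K$-homology yields the claimed graded module isomorphism
\begin{equation*}
    K^{\widetilde{G}[\![t]\!] \rtimes \mathbb{C}^\times_q}(\mathcal{R}_{G, N}) \cong \bigoplus_{d \in \mathrm{Lie}(G)_{\mathbb{Z}}} \mathcal{A}_d.
\end{equation*}

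To upgrade this to an isomorphism of associative algebras, I would verify that the BFN convolution product for $\mathcal{R}_{G, N}$ agrees with the restriction of the convolution on $\mathcal{R}_{\widetilde{G}, N}$. The sublattice $\mathrm{Lie}(G)_{\mathbb{Z}} \hookrightarrow \mathbb{Z}^n$ is closed under addition, so the right-hand side is a subalgebra. For tori, the convolution diagram is particularly transparent: the twisted product $\mathcal{R} \tilde{\times} \mathcal{R}$ trivializes componentwise because $\mathrm{Gr}$ is discrete, so convolution of classes on components $\lambda$ and $\mu$ produces a class on component $\lambda + \mu$, and the $G[\![t]\!]$- versus $\widetilde{G}[\![t]\!]$-twist is inconsequential because the base of the relevant torsor is zero-dimensional. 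After matching the explicit generators $x_d$ produced in \cref{lem:coulomb-algebra-computation} for $d \in \mathrm{Lie}(G)_{\mathbb{Z}}$, the multiplication tables coincide on both sides.

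The main technical point is handling the equivariance-group change between $G[\![t]\!]$, which is intrinsic to the definition of the $(G, N)$-Coulomb branch convolution, and $\widetilde{G}[\![t]\!]$, which provides the flavor deformation. For abelian $G \subseteq \widetilde{G}$ this is essentially tautological: both groups act on $\mathcal{R}_{G, N}$ through the same data, namely cocharacter shifts on the $\mathrm{Gr}$-direction (compatible with the subtorus inclusion) and the common $\widetilde{G}$-representation structure on sections of $N$. I expect no obstruction beyond this compatibility; the proof thus reduces to the elementary combinatorics of the submonoid $\mathrm{Lie}(G)_{\mathbb{Z}} \subseteq \mathbb{Z}^n$ together with the explicit torus computation from \cref{lem:coulomb-algebra-computation}.
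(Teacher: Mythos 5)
Your argument is correct, and effectively unpacks the citation the paper relies on: the paper's proof consists only of a pointer to \cite[3(ix), 4(ii, vii)]{BFN2}, and your identification of $\mathcal{R}_{G,N}$ with the union of components of $\mathcal{R}_{\widetilde{G},N}$ indexed by the sublattice $\mathrm{Lie}(G)_{\mathbb{Z}} \subseteq \mathbb{Z}^n$, together with the observation that the $G[\![t]\!]$- versus $\widetilde{G}[\![t]\!]$-twisted convolutions coincide because $\mathrm{Gr}$ of a torus is discrete (so the torsor trivializes componentwise), is precisely the content behind that reference. There is no gap.
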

\begin{proof}
    It follows from \cite[3(ix), 4(ii, vii)]{BFN2}. 
\end{proof}

The discussion above in the rank $1$ case shows that, assuming \cref{lem:kahler-qdiff-computation}, it suffices to show that the $q$-difference module of graded traces for $X$ is defined by the same relations indexed by the circuits of $X^!$. 

\begin{lemma}\label{lem:trace-qdiff-computation}
    The $q$-difference module of graded traces for the (flavor deformed) Coulomb branch $X$  for $(G,N) = (\mathbb{G}_m^n, \mathbb{C}^n)$ is defined as $\mathbb{Z}[q^\pm][y^\pm_i][\![z^d]\!]$ modulo the relations
 \begin{equation}
        \prod_{i \in S^+} (1-y_i) \prod_{i \in S^-} (1-q y_i) - z^{\beta_S} \prod_{i \in S^+} (1-q y_i ) (1-y_i)
    \end{equation} 
    for circuits $S = S^+ \sqcup S^-$ and associated curve classes $\beta_S$ from $X^!$.
\end{lemma}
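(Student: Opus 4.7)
The plan is to extend the rank 1 computation in \cref{cor:trace-qdiff-computation-rank1} to the general abelian case using the explicit description of the quantized Coulomb branch algebra provided by \cref{lem:coulomb-algebra-computation}. Write $\mathcal{A} = \bigoplus_{d \in \mathrm{Lie}(G)_{\mathbb{Z}}} \mathcal{A}_d$ for the decomposition inherited from the ambient algebra for the pair $(\mathbb{G}_m^n, \mathbb{C}^n)$; each graded piece $\mathcal{A}_d$ is a free $\mathcal{A}_0$-module of rank one with canonical generator $x_d = \prod_i x_{i, d_i}$, and we identify $q^{L_i} \leftrightarrow y_i$ under the tensor product defining $M_{q, eq}(X)$.

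First I would reduce the verification of all relations $1 \otimes ab - z^d \otimes ba = 0$ to the basic relations $1 \otimes x_d x_{-d} - z^d \otimes x_{-d} x_d = 0$. For general $a = f(y) x_d$ and $b = g(y) x_{-d}$, the commutation $y_i x_d = q^{d_i} x_d y_i$ yields $ab = f(y) g(q^{-d} y) \cdot x_d x_{-d}$ and $ba = g(y) f(q^d y) \cdot x_{-d} x_d$. Combined with the identity $h(y) z^d = z^d h(q^d y)$ coming from the defining relation $q^L z^d = q^{\langle L, d\rangle} z^d q^L$ of $R_q$ and the commutativity of $\mathcal{A}_0$, this shows that the general relation is an algebraic consequence of the basic one at each degree $d$.

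Next I would evaluate the basic relation at a circuit class $d = \beta_S$ with $S = S^+ \sqcup S^-$, where $(\beta_S)_i = 1$ for $i \in S^+$, $(\beta_S)_i = -1$ for $i \in S^-$, and $0$ otherwise. Using the explicit forms $x_{i,1} = (1-y_i) x_i$ and $x_{i,-1} = x_i^{-1}$ from \cref{lem:coulomb-algebra-computation}, together with $x_i^{-1} y_i x_i = q y_i$ derived from $y_i x_i = q x_i y_i$, the computation factors over the indices to give
\begin{align*}
    x_{\beta_S} x_{-\beta_S} &= \prod_{i \in S^+}(1-y_i) \prod_{i \in S^-}(1 - q y_i), \\
    x_{-\beta_S} x_{\beta_S} &= \prod_{i \in S^+}(1 - q y_i) \prod_{i \in S^-}(1-y_i),
\end{align*}
which reproduces the stated circuit relation after identifying $y_i = q^{L_i}$.

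The main obstacle is showing that no additional relations are needed, i.e., that the basic relations for arbitrary effective $d \in \Sigma$ follow from those attached to circuits. Since the effective cone $\Sigma \subseteq \mathrm{Lie}(G)_{\mathbb{Z}}$ is generated as a monoid by the circuit classes $\beta_S$, one can inductively decompose $d = \beta_S + d'$ and track the correction factor $c_{\beta_S, d'} \in \mathcal{A}_0$ arising from $x_{\beta_S} \cdot x_{d'} = c_{\beta_S, d'} \cdot x_{\beta_S + d'}$; with such a decomposition in hand, the basic relation for $d$ follows by repeated application of the relations for $\beta_S$ and $d'$. The combinatorial bookkeeping parallels that of the classical statement for the quantum $B$-algebra $B_z(X)$ in the hypertoric case (cf.\ \cite[Proposition 3.15]{bai-lee}), and matches the circuit presentation of the K\"ahler $q$-difference module in \cref{lem:kahler-qdiff-computation}, thereby yielding the isomorphism $M_{q, eq}(X) \cong M_{q, Kah}(X^!)$ of \cref{conj:K-quantum-hikita} for hypertoric varieties.
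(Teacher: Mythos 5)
Your proposal is correct and follows essentially the same route as the paper's proof: reduce $1\otimes ab - z^d\otimes ba$ to the basic relation $x_d x_{-d}(1-z^d)$ using the commutation rules $h(y)x_d = x_d h(q^d y)$ and $h(y)z^d = z^d h(q^d y)$, verify the circuit relation by the explicit factorized computation of $x_{\beta_S}x_{-\beta_S}$ and $x_{-\beta_S}x_{\beta_S}$, and then appeal to an induction that expresses $x_d x_{-d}(1-z^d)$ for general effective $d$ in terms of the circuit relations.

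The one place where your writeup is a bit looser than the paper's is the final induction. You say to "inductively decompose $d = \beta_S + d'$ and track the correction factor $c_{\beta_S,d'}$" and reference the analogous argument in \cite{bai-lee}, whereas the paper pins this down by invoking the existence of a \emph{cancellation-free} decomposition of $d$ into circuits (\cite[Lemma 6.2]{KMP21}) and then runs the induction exactly as in \cite[Proposition 6.8]{KMP21}. The cancellation-free condition is what makes the ``correction factor'' bookkeeping go through --- without it, there is no guarantee that the factors $x_{i,d_i}$ of $x_d$ decompose compatibly across the summands $\beta_S$ and $d'$. Your phrasing gestures at the right idea and cites a correct parallel, but to be airtight you should make explicit that the decomposition must be chosen cancellation-free, which is the nontrivial combinatorial input. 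Otherwise the argument is the same as the paper's, and your explicit evaluation of the circuit relation is a welcome addition that the paper leaves implicit.
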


\begin{proof}
    First recall that all relations are of the form
    \begin{equation}
        1 \otimes ab - z^d \otimes ba, \ a \in \mathcal{A}_d, b \in \mathcal{A}_{-d}
    \end{equation}
    for $d \in \mathbb{N}\{ \beta_S: S \mbox{ circuit} \}$ in the monoid generated by the circuits by definition of the $q$-difference module of graded traces; therefore, the claim is that the ideal of relations is generated by the a priori smaller set $d \in \{ \beta_S : S \mbox{ circuit} \}$. 

    By \cref{lem:coulomb-algebra-computation}, such relations are of the form
    \begin{equation}
        f(y)x_d g(y)x_{-d} - z^d g(y)x_{-d} f(y) x_d
    \end{equation}
    for $f(y), g(y) \in \mathbb{Z}[q^\pm][y_i^\pm]$ and $x_d \in \mathcal{A}_d$, $x_{-d} \in \mathcal{A}_{-d}$ for $d \in \mathbb{N}\{ \beta_S: S \mbox{ circuit} \}$. Using the relations $f(y) x_d = x_d f(q^dy) \in \mathcal{A}$ and $f(y)z^d = z^df(q^dy) \in R_q$, one can simplify the relation to
    \begin{align*}
        f(y)x_d g(y)x_{-d} - z^d g(y)x_{-d} f(y) x_d &= f(y) g(q^{{-d}}y) x_d x_{-d} - z^d g(y) f(q^d y) x_{-d} x_d\\ &=     f(y) g(q^{{-d}}y) x_d x_{-d} - g(q^{-d }y) f( y) z^d  x_{-d} x_d     \\
        &=f(y) g(q^{-d}y) x_d x_{-d} (1-z^d).
    \end{align*}
    where in the last equality we use that $x_d x_{-d}|_{y \mapsto q^d y} = x_{-d} x_d$. Hence, the ideal of relations of the $q$-difference module is spanned as left $R_q$-modules by $x_dx_{-d} (1-z^d)$ for $d \in \mathbb{N}\{ \beta_S : S \mbox{ circuit} \}$. It suffices now to show tha it is spanned by the same elements $x_d x_{-d} (1-z^d)$ indexed by the smaller set $d \in \{ \beta_S : S \mbox{ circuit} \}$. The proof is by induction on $m$,  the number of summands in the expansion of $d \in\mathbb{N}\{ \beta_S : S \mbox{ circuit} \} $ into circuits $\delta_1, \dots, \delta_m$, in a way that
    \begin{equation}
        d = \delta_1 + \cdots + \delta_m
    \end{equation}
    is cancellation-free (see \cite[Lemma 6.2]{KMP21}). The argument from here is exactly the same as in \cite[Proposition 6.8]{KMP21}, to which we refer the reader.
\end{proof}

Combining \cref{lem:trace-qdiff-computation} with \cref{lem:kahler-qdiff-computation}, we arrive at:
\begin{thm}\label{thm:arithmetic-k-hikta-hypertoric}
    \cref{conj:arithmetic-k-hikita} holds for symplectically dual pairs of hypertoric varieties.
\end{thm}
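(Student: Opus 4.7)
The plan is to combine the two explicit presentations of the relevant $q$-difference modules, \cref{lem:kahler-qdiff-computation} on the K\"ahler side and \cref{lem:trace-qdiff-computation} on the Coulomb side, and then to check that the quantum Adams action and the action of Lonergan's Frobenius center are intertwined under the resulting isomorphism, following the template set by \cref{prop:arithmetic-k-hikta-hypertoric-rank1}.

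First, I would compare the defining relations of the two $q$-difference modules. By \cref{lem:kahler-qdiff-computation}, $M_{q, Kah}(X^!)$ is the quotient of $K_{\mathbb{G}_m^n \times \mathbb{G}_m^\hbar \times \mathbb{G}_m^q}(X^!)[\![z^d]\!]|_{\hbar = q}$ by the circuit relations in the generators $q^{L_i}$. By \cref{lem:trace-qdiff-computation}, the flavor-deformed $M_{q, eq}(X)$ is the quotient of $\mathbb{Z}[q^\pm][y_i^\pm][\![z^d]\!]$ by precisely the same shape of circuit relations, with the substitution $q^{L_i} \leftrightarrow y_i$ and $\hbar \leftrightarrow q$. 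The matching of generators comes from the Kirwan-surjectivity presentation of $K_{\mathbf{T}}(X^!)$ for hypertoric $X^!$ (\cite[Section 2.3]{smirnov-zhou}), together with the identification $y_i \in K^{\widetilde{G}[\![t]\!]\rtimes \mathbb{C}^\times_q}(\mathrm{pt})$ with the tautological characters giving rise to $L_i$. This yields the isomorphism $M_{q, eq}(X) \cong M_{q, Kah}(X^!)$ of $q$-difference modules before the $q=\zeta$ specialization, which then descends to the $q=\zeta$ quotient by naturality.

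Next, I would upgrade this to an equivariant isomorphism with respect to the actions of $B_z(X)$ and of the PSZ quantum $K$-theory ring $QK(X^!)|_{\hbar=\zeta}$. The key simplification for hypertoric $X^!$ is that both rings are generated (multiplicatively, over the base) by classes of line bundles: on the Coulomb side, $\mathcal{O}_0$ is generated by the $y_i$ (restrictions of the tautological characters), and on the Higgs side, $K_{\mathbf{T}}(X^!)$ is generated by the classes $[L_i]$ by Kirwan surjectivity. Using the additivity \cref{lem:qadams-additivity}, the multiplicativity \cref{prop:adams-property}(3), and the corresponding algebra property of $\Lambda$, it suffices to verify that the actions agree on each line-bundle generator. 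For a single line bundle $L$, \cref{thm:qadams=pcurvature} identifies the action of $Q\psi_L^k$ with the $k$-curvature $M_{L,\zeta}(z)$, i.e.\ the iterated product $(q^L)^k$ on the K\"ahler $q$-difference module. On the Coulomb side, \cite[Example 4.4]{lonergan} (applied factor-by-factor via the tensor decomposition in \cref{lem:coulomb-algebra-computation}) shows that $\Lambda$ sends a character of $G$ to the corresponding $y_i^k$, which acts on $M_{q,eq}(X)|_{q=\zeta}$ precisely as the $k$-th power of multiplication by $y_i$. Under the matching $q^{L_i} \leftrightarrow y_i$, these two actions coincide, and the ring isomorphism $B_z(X) \cong K_{\mathbf{T}}(X^!)[\![z^{\mathrm{eff}}]\!]$ is the specialization $q=1$ of the above, already recorded in the existing quantum Hikita literature.

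The main obstacle, and the step requiring the most care, is the compatibility of the shifted/unshifted K\"ahler variables between the Smirnov--Zhou convention and ours, and the corresponding matching of $\pi_1(G)$-gradings between $\mathcal{A}$ and $\mathrm{Eff}(X^!)$. This is already partly addressed in the remark following \cref{lem:kahler-qdiff-computation}, where rescaling by a polarization class of the form $\mathcal{G} - \hbar^{-1}\mathcal{G}^\vee$ acts by a change of variables on the K\"ahler torus without affecting the $q$-difference isomorphism type. Combined with the cancellation-free expansion of effective classes into circuits used in \cref{lem:trace-qdiff-computation} (cf.\ \cite[Lemma 6.2, Proposition 6.8]{KMP21}), this bookkeeping reduces to the rank-one verification in \cref{prop:arithmetic-k-hikta-hypertoric-rank1}, which completes the argument.
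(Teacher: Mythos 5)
Your proposal is correct and follows essentially the same route as the paper: match the circuit-relation presentations of the two $q$-difference modules via \cref{lem:kahler-qdiff-computation} and \cref{lem:trace-qdiff-computation}, then intertwine the quantum Adams and Lonergan actions on line-bundle generators via the $p$-curvature characterization, exactly as in the rank-one case \cref{prop:arithmetic-k-hikta-hypertoric-rank1}. The paper's own proof of \cref{thm:arithmetic-k-hikta-hypertoric} is just a one-line reference to these two lemmas, leaving the action-intertwining step implicit as following the rank-one template, while you spell it out explicitly; substantively the arguments coincide.
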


\bibliographystyle{amsalpha}
\bibliography{ref}

\providecommand{\bysame}{\leavevmode\hbox to3em{\hrulefill}\thinspace}
\providecommand{\MR}{\relax\ifhmode\unskip\space\fi MR }
% \MRhref is called by the amsart/book/proc definition of \MR.
\providecommand{\MRhref}[2]{%
  \href{http://www.ams.org/mathscinet-getitem?mr=#1}{#2}
}
\providecommand{\href}[2]{#2}
\begin{thebibliography}{CFKM14}

\bibitem[Ada62]{adams}
J.~F. Adams, \emph{Vector fields on spheres}, Ann. Math. (2) \textbf{75} (1962), 603--632.

\bibitem[AJS94]{zbMATH00596170}
H.~H. Andersen, J.~C. Jantzen, and Wolfgang Soergel, \emph{Representations of quantum groups at a {{\(p\)}}-th root of unity and of semisimple groups in characteristic {{\(p\)}}: independence of {{\(p\)}}}, Ast{\'e}risque, vol. 220, Paris: Soci{\'e}t{\'e} Math{\'e}matique de France, 1994.

\bibitem[AMS23]{AMS2}
Mohammed Abouzaid, Mark McLean, and Ivan Smith, \emph{Gromov--{W}itten invariants in complex-oriented cohomology theories}, \url{https://arxiv.org/abs/2307.01883}, 2023.

\bibitem[AO17]{aganagic-okounkov-bethe}
Mina Aganagic and Andrei Okounkov, \emph{Quasimap counts and {B}ethe eigenfunctions}, Mosc. Math. J. \textbf{17} (2017), no.~4, 565--600.

\bibitem[Ati66]{atiyah-power}
M.~F. Atiyah, \emph{Power operations in {$K$}-theory}, Quart. J. Math. Oxford Ser. (2) \textbf{17} (1966), 165--193.

\bibitem[BD00]{hyper-toric}
Roger Bielawski and Andrew~S. Dancer, \emph{The geometry and topology of toric hyperk\"ahler manifolds}, Comm. Anal. Geom. \textbf{8} (2000), no.~4, 727--760.

\bibitem[BF97]{behrend-fantechi}
K.~Behrend and B.~Fantechi, \emph{The intrinsic normal cone}, Invent. Math. \textbf{128} (1997), no.~1, 45--88.

\bibitem[BFN18]{BFN2}
Alexander Braverman, Michael Finkelberg, and Hiraku Nakajima, \emph{Towards a mathematical definition of {C}oulomb branches of 3-dimensional {$\mathcal{N}=4$} gauge theories, {II}}, Adv. Theor. Math. Phys. \textbf{22} (2018), no.~5, 1071--1147. \MR{3952347}

\bibitem[BK08]{bezrukavnikov-kaledin-quantp}
R.~Bezrukavnikov and D.~Kaledin, \emph{Fedosov quantization in positive characteristic}, J. Amer. Math. Soc. \textbf{21} (2008), no.~2, 409--438. \MR{2373355}

\bibitem[BL25]{bai-lee}
Shaoyun Bai and Jae~Hee Lee, \emph{3d mirror symmetry in positive characteristic}, 2025.

\bibitem[CFKM14]{quasi-map}
Ionu\c~t{} Ciocan-Fontanine, Bumsig Kim, and Davesh Maulik, \emph{Stable quasimaps to {GIT} quotients}, J. Geom. Phys. \textbf{75} (2014), 17--47.

\bibitem[CG97]{chriss-ginzburg}
Neil Chriss and Victor Ginzburg, \emph{Representation theory and complex geometry}, Birkh\"auser Boston, Inc., Boston, MA, 1997.

\bibitem[Che24]{chen2024exponential}
Zihong Chen, \emph{On the exponential type conjecture}, \url{https://arxiv.org/abs/2409.03922}, 2024.

\bibitem[Dev24]{devalapurkar-satake}
Sanath~K. Devalapurkar, \emph{Chromatic aberrations of geometric satake over the regular locus}, 2024.

\bibitem[DK25]{dumanski-krylov}
Ilya Dumanski and Vasily Krylov, \emph{K-theoretic hikita conjecture for quiver gauge theories}, 2025.

\bibitem[DV02]{q-grothendieck-katz}
Lucia Di~Vizio, \emph{Arithmetic theory of {{\(q\)}}-difference equations. {The} {{\(q\)}}-analogue of {Grothendieck}-{Katz}'s conjecture on {{\(p\)}}-curvatures}, Invent. Math. \textbf{150} (2002), no.~3, 517--578.

\bibitem[Fuk97]{Fuk97}
Kenji Fukaya, \emph{Morse homotopy and its quantization}, Geometric topology ({A}thens, {GA}, 1993), AMS/IP Stud. Adv. Math., vol.~2, Amer. Math. Soc., Providence, RI, 1997, pp.~409--440. \MR{1470740}

\bibitem[GL03]{givental-lee}
Alexander Givental and Yuan-Pin Lee, \emph{Quantum {$K$}-theory on flag manifolds, finite-difference {T}oda lattices and quantum groups}, Invent. Math. \textbf{151} (2003), no.~1, 193--219.

\bibitem[IS96]{3d-mirror}
K.~Intriligator and N.~Seiberg, \emph{Mirror symmetry in three-dimensional gauge theories}, Phys. Lett. B \textbf{387} (1996), no.~3, 513--519.

\bibitem[Kam22]{kamnitzer-survey}
Joel Kamnitzer, \emph{Symplectic resolutions, symplectic duality, and {C}oulomb branches}, Bull. Lond. Math. Soc. \textbf{54} (2022), no.~5, 1515--1551. \MR{4499589}

\bibitem[Kat70]{katz-nilpotent}
Nicholas~M. Katz, \emph{Nilpotent connections and the monodromy theorem: {A}pplications of a result of {T}urrittin}, Inst. Hautes \'Etudes Sci. Publ. Math. (1970), no.~39, 175--232.

\bibitem[KM94]{kontsevich-manin}
M.~Kontsevich and Yu. Manin, \emph{Gromov-{W}itten classes, quantum cohomology, and enumerative geometry}, Commun. Math. Phys. \textbf{164} (1994), no.~3, 525--562.

\bibitem[KMP21]{KMP21}
Joel Kamnitzer, Michael McBreen, and Nicholas Proudfoot, \emph{The quantum {H}ikita conjecture}, Adv. Math. \textbf{390} (2021), Paper No. 107947, 53. \MR{4295090}

\bibitem[KS24]{koroteev-smirnov}
Peter Koroteev and Andrey Smirnov, \emph{On the quantum k-theory of quiver varieties at roots of unity}, 2024.

\bibitem[Lee04]{lee-K}
Y.-P. Lee, \emph{Quantum {$K$}-theory. {I}. {F}oundations}, Duke Math. J. \textbf{121} (2004), no.~3, 389--424.

\bibitem[Lee23]{Lee23b}
Jae~Hee Lee, \emph{Quantum steenrod operations of symplectic resolutions}, 2023, arxiv:2312.02100, to appear in Geometry \& Topology.

\bibitem[Lee25]{Lee23a}
\bysame, \emph{Flat endomorphisms for mod {$p$} equivariant quantum connections from quantum {S}teenrod operations}, Selecta Math. (N.S.) \textbf{31} (2025), no.~1, Paper No. 15. \MR{4848887}

\bibitem[Li01]{li-relative}
Jun Li, \emph{Stable morphisms to singular schemes and relative stable morphisms}, J. Differential Geom. \textbf{57} (2001), no.~3, 509--578.

\bibitem[Lon21]{lonergan}
Gus Lonergan, \emph{Steenrod operators, the {C}oulomb branch and the {F}robenius twist}, Compos. Math. \textbf{157} (2021), no.~11, 2494--2552.

\bibitem[MN18]{hyperkahler-kirwan}
Kevin McGerty and Thomas Nevins, \emph{Kirwan surjectivity for quiver varieties}, Invent. Math. \textbf{212} (2018), no.~1, 161--187.

\bibitem[MO19]{MO19}
Davesh Maulik and Andrei Okounkov, \emph{Quantum groups and quantum cohomology}, Ast\'{e}risque (2019), no.~408, ix+209. \MR{3951025}

\bibitem[Nak94]{Nak94}
Hiraku Nakajima, \emph{Instantons on {ALE} spaces, quiver varieties, and {K}ac-{M}oody algebras}, Duke Math. J. \textbf{76} (1994), no.~2, 365--416. \MR{1302318}

\bibitem[Oko17]{Oko17}
Andrei Okounkov, \emph{Lectures on {K}-theoretic computations in enumerative geometry}, Geometry of moduli spaces and representation theory, IAS/Park City Math. Ser., vol.~24, Amer. Math. Soc., Providence, RI, 2017, pp.~251--380. \MR{3752463}

\bibitem[Oko18]{okounkov15}
\bysame, \emph{Enumerative geometry and geometric representation theory}, Algebraic geometry: {S}alt {L}ake {C}ity 2015, Proc. Sympos. Pure Math., vol. 97.1, Amer. Math. Soc., Providence, RI, 2018, pp.~419--457.

\bibitem[OS22]{okounkov-smirnov}
A.~Okounkov and A.~Smirnov, \emph{Quantum difference equation for {N}akajima varieties}, Invent. Math. \textbf{229} (2022), no.~3, 1203--1299. \MR{4462626}

\bibitem[PSZ20]{PSZ-quantum}
Petr~P. Pushkar, Andrey~V. Smirnov, and Anton~M. Zeitlin, \emph{Baxter {$Q$}-operator from quantum {$K$}-theory}, Adv. Math. \textbf{360} (2020), 106919, 63. \MR{4035952}

\bibitem[SW22]{seidel-wilkins}
Paul Seidel and Nicholas Wilkins, \emph{Covariant constancy of quantum {S}teenrod operations}, J. Fixed Point Theory Appl. \textbf{24} (2022), no.~2, Paper No. 52, 38.

\bibitem[SZ22]{smirnov-zhou}
Andrey Smirnov and Zijun Zhou, \emph{3d mirror symmetry and quantum {$K$}-theory of hypertoric varieties}, Adv. Math. \textbf{395} (2022), Paper No. 108081, 61.

\bibitem[Wil]{Wil-sur}
Nicholas Wilkins, \emph{A survey of equivariant quantum operations}, https://sites.google.com/site/nwilkinsmaths/a-survey-of-equivariant-quantum-operations?authuser=0.

\bibitem[Wil20]{Wil20}
\bysame, \emph{A construction of the quantum {S}teenrod squares and their algebraic relations}, Geom. Topol. \textbf{24} (2020), no.~2, 885--970. \MR{4153653}

\bibitem[Zho21]{zhou2021virtual}
Zijun Zhou, \emph{Virtual coulomb branch and vertex functions}, \url{https://arxiv.org/abs/2107.06135}, 2021.

\end{thebibliography}

\end{document}